\documentclass[11pt,a4paper]{article}
\usepackage[utf8]{inputenc}
\usepackage{amsmath}
\usepackage{amsfonts}
\usepackage{color}
\usepackage{amssymb}
\usepackage{mathrsfs}
\usepackage{esint}
\usepackage[colorinlistoftodos]{todonotes}
\usepackage[colorlinks=true]{hyperref}
\hypersetup{
	colorlinks=true,%
	citecolor=red,%
	filecolor=black,%
	linkcolor=blue,%
	urlcolor=blue
}
\usepackage[margin=1.1in]{geometry}

\usepackage[amsmath,thmmarks,hyperref]{ntheorem}
{
	\theoremstyle{nonumberplain}
	\theoremheaderfont{\bfseries}
	\theorembodyfont{\normalfont}
	\theoremsymbol{\mbox{$\Box$}}
	\newtheorem{pf}{Proof.}
}

\numberwithin{equation}{section}
\def\R{\mathbb{R}}
\def\B{\mathbb{B}}
\def\S{\mathbb{S}}

\def\e{\epsilon}
\newcommand{\ud}{\mathrm{d}}

\newtheorem{thm}{Theorem}[section]

\newtheorem{lem}{Lemma}[section]
\newtheorem{rem}{Remark}[section]

\newtheorem{pro}{\indent Proposition}[section]

\newtheorem*{thm A}{Theorem A}
\newtheorem*{thm B}{Theorem B}
\newtheorem*{thm C}{Theorem C}

\usepackage{upgreek}
\usepackage{graphicx}

\makeatletter

\newdimen\bibspace
\renewenvironment{thebibliography}[1]{%
	\section*{\refname 
		\@mkboth{\MakeUppercase\refname}{\MakeUppercase\refname}}%
	\list{\@biblabel{\@arabic\c@enumiv}}%
	{\settowidth\labelwidth{\@biblabel{#1}}%
		\leftmargin\labelwidth
		\advance\leftmargin\labelsep
		\itemsep\bibspace
		\parsep\z@skip     %
		\@openbib@code
		\usecounter{enumiv}%
		\let\p@enumiv\@empty
		\renewcommand\theenumiv{\@arabic\c@enumiv}}%
	\sloppy\clubpenalty4000\widowpenalty4000%
	\sfcode`\.\@m}
{\def\@noitemerr
	{\@latex@warning{Empty `thebibliography' environment}}%
	\endlist}

\makeatother

\makeatletter

\allowdisplaybreaks

\allowdisplaybreaks

\allowdisplaybreaks
\title{article}
\begin{document}
	\title{\bf Sharp quantitative  integral  inequalities for general conformally invariant extensions} 
	\date{\today}
	\author{\medskip Qiaohua  Yang\thanks{Q. Yang, School of Mathematics and Statistics, Wuhan University, Wuhan 430072, P. R. China. Email: qhyang.math@whu.edu.cn.},  Shihong Zhang\thanks{ S. Zhang, Yau Mathematical Sciences Center, Tsinghua university, Beijing 100084,  P. R. China. Email: shihong-zhang@tsinghua.edu.cn.}	}
	
	\renewcommand{\thefootnote}{\fnsymbol{footnote}}
	\maketitle

	{\noindent\small{\bf Abstract:  In this paper, we develop a refined analysis of hypergeometric functions to establish sharp quantitative integral inequalities for a general family of conformally invariant extension operators and their adjoints. Our results extend the recent work of Frank, Peteranderl, and Read \cite{Frank&Peteranderl&Read} to the full admissible parameter range under the natural index constraints. }

		\medskip 
		
		{{\bf $\mathbf{2020}$ MSC:} Primary 49J40,  33C05;  Secondary 47J20  47G10}
		
		\medskip 
		{\small{\bf Keywords:}
			Stability, conformally invariant extensions, hypergeometric functions}
		
		\tableofcontents	
		
		\section{Introduction}		
		Motivated by isoperimetric inequalities in Riemannian geometry,  sharp conformal isoperimetric-type inequalities have been extensively investigated. A prototypical two-dimensional example is the Carleman inequality \cite{Carleman}: for any $u \in C^{\infty}(\B^2)$  with $\Delta u \ge 0$,
		\begin{align*}
			\int_{\B^2} e^{2u}
			\le \frac{1}{4\pi}
			\left( \int_{\S^1} e^{u} \right)^2,
		\end{align*}
		with equality if and only if $u \equiv c$ or $u(x) = -2\log |x-x_0| + c$ for some $x_0 \in \mathbb{B}^2$.
		In 2008, Hang-Wang-Yan \cite{Hang&Wang&Yan} successfully generalized this
		isoperimetric inequality to higher dimensions. 
		We describe their result as follows. 
		Let $y=(y',0)\in \partial \R^n_{+}$,  and  for   given $f \in L^{\frac{2(n-1)}{n-2}}(\R^{n-1})$ and
		$g \in L^{\frac{2n}{n+2}}(\R^n_{+})$, introduce the convolution operators
		\begin{align*}
			P(f)(x)
			&=
			\int_{\R^{n-1}}
			\frac{x_n f(y')}{(x_n^2+|x'-y'|^2)^{\frac{n}{2}}}
			\,\ud y',\\
			T(g)(y')
			&=
			\int_{\R^n_{+}}
			\frac{x_n g(x)}{(x_n^2+|x'-y'|^2)^{\frac{n}{2}}}
			\,\ud x .
		\end{align*}
		Clearly, $P(f)$ is a harmonic extension of $f$; that is
		\begin{align*}
			\begin{cases}
				\displaystyle \quad~~~\Delta P(f)=0 \qquad&\mathrm{in}\qquad \R^{n}_{+},\\
				\displaystyle  P(f)(x',0)=|\S^{n-1}|f(x')/2\qquad&\mathrm{on}\qquad \partial\R^{n}_{+},         \end{cases}
		\end{align*}
		where $|\mathbb{S}^{n-1}|=\frac{2\pi^{n/2}}{\Gamma\left(\frac{n}{2}\right)}$
		denotes the surface area of the unit \((n-1)\)-sphere in \(\mathbb{R}^n\).
		Hang-Wang-Yan \cite{Hang&Wang&Yan} proved the following sharp  isoperimetric inequality and its dual inequality:
		\begin{align*}
			\|P(f)\|_{L^{\frac{2n}{n-2}}(\R^n_{+})}
			&\le c_{0,1}\,
			\|f\|_{L^{\frac{2(n-1)}{n-2}}(\R^{n-1})},\\
			\|T(g)\|_{L^{\frac{2(n-1)}{n}}(\R^{n-1})}
			&\le c_{0,1}\,
			\|g\|_{L^{\frac{2n}{n+2}}(\R^n_{+})},
		\end{align*}
		where  the optimal constant is
		\[
		c_{0,1}
		= 2^{-1}n^{-\frac{n-2}{2n}}|\S^{n-1}|^{1-\frac{n-2}{2n(n-1)}}.
		\]
		Recently, Frank, Peteranderl and Read \cite{Frank&Peteranderl&Read} demonstrated that the Hang--Wang--Yan inequalities and their dual are conformally invariant (see Appendix~A of \cite{Frank&Peteranderl&Read}).
		For the isoperimetric inequality for the polyharmonic extension operator with $T$-curvature on the boundary of the unit ball, see \cite{Chen Shibing,Chen&Zhang,Zhang}.
		
		A simple duality argument shows that  Hang--Wang--Yan inequalities are equivalent to 
		\begin{align*}
			\left|
			\int_{\R^n_{+}}
			\int_{\partial \R^n_{+}}
			\frac{x_n f(y')g(x)}{(x_n^2+|x'-y'|^2)^{\frac{n}{2}}}
			\ud x \ud y'
			\right|
			\le c_{0,1}\,
			\|f\|_{L^{\frac{2(n-1)}{n-2}}(\R^{n-1})}
			\|g\|_{L^{\frac{2n}{n+2}}(\R^n_{+})},       \end{align*}
		where $c_{0,1}$ is the same optimal constant as before. 	
		From a broader perspective, these inequalities can be viewed as special cases of a more general
		Hardy--Littlewood--Sobolev (HLS) inequality on the half-space.
		More precisely, for $f \in L^{p}(\R^{n-1})$ and $g \in L^{q'}(\R^n_{+})$, one has
		\begin{align}\label{Intro HLS}
			\left|
			\int_{\R^n_{+}}
			\int_{\partial \R^n_{+}}
			K_{\alpha,\beta}(x'-y',x_n)\,
			f(y')\, g(x)\,
			\ud x \ud y'
			\right|
			\le c_{\alpha,\beta}\,
			\|f\|_{L^{p}(\R^{n-1})}
			\|g\|_{L^{q'}(\R^n_{+})},
		\end{align}
		where the kernel is
		\begin{align*}
			K_{\alpha,\beta}(x)
			= \frac{x_n^{\beta}}{(|x'|^2+x_n^2)^{\frac{n-\alpha}{2}}}
		\end{align*}
		and  the exponents satisfy
		\begin{align*}
			\frac{1}{q'}+\frac{n-1}{np}
			= 1+\frac{\alpha+\beta-1}{n}.
		\end{align*}
		The sharp constant  $c_{\alpha,\beta}$ is defined by
		\begin{align}\label{sharp constant}
			c_{\alpha,\beta}
			= \sup \Biggl\{
			\left|
			\int_{\R^n_{+}}
			\int_{\partial \R^n_{+}}
			K_{\alpha,\beta}(x'-y',x_n)\,
			f(y')\, g(x)\,
			\ud x \ud y'
			\right| :
			\|f\|_{L^{p}(\R^{n-1})}
			= \|g\|_{L^{q'}(\R^n_{+})}
			=1
			\Biggr\}.
		\end{align}
		
		Taking $\alpha = 0$ and $\beta = 1$, the HLS inequality \eqref{Intro HLS} reduces to the inequality in the isoperimetric result of Hang--Wang--Yan. The borderline case $\alpha+\beta=1$ with $\alpha\in(2-n,1)$ was proved by  S. Chen \cite{Chen Shibing}. Afterwards, Dou and Zhu \cite{Dou&Zhu} established this HLS-type inequality for $\alpha\in(1,n)$ and $\beta=0$. More recently, Dou, Guo, and Zhu \cite{Dou&Guo&Zhu} developed a subcritical approach to prove \eqref{Intro HLS} for $\beta=1$ and $\alpha\in[2,n)$. All of these results  belong to  the regime $\alpha+\beta\geq 1$; the complementary case $\alpha+\beta<1$ turns out to be more delicate (see \cite{Gluck} or Lemma \ref{Bound d Lem}). Finally, Gluck \cite{Gluck} extended the subcritical approach  to establish the inequality for the full range of admissible parameters, including the delicate case $\alpha+\beta<1$.

		To present Gluck's integral inequalities, we first need to introduce the generalized conformally invariant extension operators. For $f\in L^{\frac{2(n-1)}{n+\alpha-2}}(\R^{n-1})$  and $g\in L^{\frac{2n}{n+\alpha+2\beta}}(\R^n_{+})$,  define 
		\begin{align*}
			E_{\alpha,\beta}(f)(x',x_n)=&\int_{\R^{n-1}}\frac{x_n^{\beta}f(y')}{(x_n^2+|x'-y'|^2)^{\frac{n-\alpha}{2}}}\ud y',\\
			R_{\alpha,\beta}(g)(y')=&\int_{\R^{n}_{+}}\frac{x_n^{\beta}g(x)}{(x_n^2+|x'-y'|^2)^{\frac{n-\alpha}{2}}}\ud x,            \end{align*}
		where $( \alpha,\beta)$ satisfies the constraints:
		\begin{align}\label{Index Condi}
			\displaystyle\beta\geq 0, \qquad  0<\alpha+\beta<n-\beta\qquad\mathrm{and} \qquad
			\displaystyle	\frac{n-\alpha-2\beta}{2n}+\frac{n-\alpha}{2(n-1)}<1.
		\end{align}
		In \cite{Gluck}, Gluck proved that the extension operator $E_{\alpha,\beta}$ exhibits the following boundary behavior (see \cite[Theorem 1.3]{Gluck}):
		\begin{itemize}
			\item $\alpha<1$. 
			\begin{align*}
				\lim_{x_n\to 0} x_{n}^{1-\alpha-\beta}E_{\alpha,\beta}f(x',x_n)=\pi^{\frac{n-1}{2}}
				\frac{\Gamma\!\left(\frac{1-\alpha}{2}\right)}
				{\Gamma\!\left(\frac{n-\alpha}{2}\right)} f(x')
			\end{align*}
			\item  $\alpha=1$.
			\begin{align*}
				\lim_{x_n\to 0} -x_{n}^{-\beta}(\log x_n)^{-1}E_{\alpha,\beta}f(x',x_n)=|\S^{n-1}|f(x')
			\end{align*}
			\item$\alpha>1$.
			\begin{align*}
				\lim_{x_n\to 0} x_{n}^{-\beta}E_{\alpha,\beta}f(x',x_n)=\frac{\Gamma\!\left(\frac{n-\alpha}{2}\right)}
				{2^{\alpha-1}\pi^{\frac{n-1}{2}}
					\Gamma\!\left(\frac{\alpha-1}{2}\right)}
				\,(-\Delta_{x'})^{\frac{1-\alpha}{2}} f(x').
			\end{align*}   
		\end{itemize}
		Clearly, the harmonic extension \(P(f)\) is a special case of \(E_{\alpha,\beta}\). Moreover, the  operator \(E_{\alpha,\beta}\) itself is also conformally invariant (see Appendix \ref{Appendix}). 
		
		In the sequel, we set 
		\begin{align*}
			p=p(\alpha,\beta)=\frac{2(n-1)}{n+\alpha-2}, \qquad q=q(\alpha,\beta)=\frac{2n}{n-\alpha-2\beta}
		\end{align*}
		and denote their dual exponents by
		\begin{align*}
			p'=\frac{p}{p-1}=\frac{2(n-1)}{n-\alpha},\qquad q'=\frac{q}{q-1}=\frac{2n}{n+\alpha+2\beta}.
		\end{align*}
		The last inequality in condition \eqref{Index Condi} is equivalent to $q>p$, or equivalently $q'<p'$. 
		We are now ready to state Gluck's theorem, which is equivalent to
		the HLS inequality \eqref{Intro HLS} on the upper
		half-space.
		\begin{thm A*}[\protect{Gluck \cite[Theorem 1.1 and Theorem 1.3]{Gluck}}]\label{thm A}
			Let $n \ge 3$ and suppose that $(\alpha,\beta)$ satisfies \eqref{Index Condi}. 
			Then there exists an optimal constant $c_{\alpha,\beta}$, given by \eqref{Intro const}, such that
			\begin{align}\label{Intro Gluck-1}
				\|E_{\alpha,\beta}(f)\|_{L^{q}(\R^{n}_{+})}
				\leq c_{\alpha,\beta}\,\|f\|_{L^{p}(\R^{n-1})},
			\end{align}
			and
			\begin{align}\label{Intro Gluck-2}				\|R_{\alpha,\beta}(g)\|_{L^{p'}(\R^{n-1})}
				\leq c_{\alpha,\beta}\,\|g\|_{L^{q'}(\R^{n}_{+})}.
			\end{align}
			Moreover, equality holds if and only if, up to a multiplicative constant,
			\[
			f = \bigl(\det \ud\Phi\big|_{\partial \R^{n}_{+}} \bigr)^{\frac{1}{p}},
			\qquad
			g = E^{q-1}_{\alpha,\beta}(f),
			\]
			where $\Phi$ is a conformal transformation from $\R^{n}_{+}$ onto itself and $\det \ud\Phi$ denotes the Jacobian of  $\Phi$.

		\end{thm A*}	
		\begin{rem}
			
			By Hölder’s inequality, if $f$ is a minimizer of \eqref{Intro Gluck-1}, then $E^{q-1}_{\alpha,\beta}(f)$ is, up to a multiplicative constant, a minimizer of \eqref{Intro Gluck-2}. Likewise, if $g$ is a minimizer of \eqref{Intro Gluck-2}, then $R^{p'-1}_{\alpha,\beta}(g)$ is, up to a multiplicative constant, a minimizer of \eqref{Intro Gluck-1}.
			
		\end{rem}
		To obtain an equivalent formulation on the unit ball, recall the conformal map
		\[
		I:(\mathbb{R}^n_{+},|\ud x|^2)\longrightarrow (\mathbb{B}^n,|\ud\xi|^2),\qquad
		\xi=I(x):=-e_n+\frac{2(x+e_n)}{|x+e_n|^2},
		\]
		where \(e_n=(0,\dots,0,1)\) is the unit vector in the \(x_n\)-direction. 		For $x=(x',x_n)\in\R^n_{+}$ and $y=(y',0)\in\partial\R^n_{+}$, 
		set $\xi=I(x)\in \B^n$ and $\eta=I(y)\in \S^{n-1}$. 
		Define the transformed boundary function
		\begin{align}\label{Introdu u}
			u(\eta)
			=\left(\frac{2}{|\eta+e_n|^2}\right)^{\frac{n+\alpha-2}{2}}f\circ I^{-1}(\eta)
		\end{align}
		and  the corresponding operator on $\B^n$ by
		\begin{align}\label{Introdu Q}
			Q_{\alpha,\beta}(u)(\xi)
			=\left(\frac{2}{|\xi+e_n|^2}\right)^{\frac{n-\alpha-2\beta}{2}}
			E_{\alpha,\beta}(f)\circ I^{-1}(\xi).
		\end{align}
		Then $Q_{\alpha,\beta}$ admits the integral representation (see Appendix \ref{Appendix})
		\[
		Q_{\alpha,\beta}(u)(\xi)
		=\int_{\S^{n-1}} H(\xi,\eta)\,u(\eta)\,
		\ud V_{\S^{n-1}}(\eta),
		\]
		where the kernel $H(\xi,\eta)$ is given by 
		\begin{align*}
			H(\xi,\eta)=\left(\frac{1-|\xi|^2}{2}\right)^{\beta}|\xi-\eta|^{\alpha-n}.
		\end{align*}
		Similarly,  define the dual operator
		\[
		S_{\alpha,\beta}(v)(\eta)
		=\int_{\B^n} H(\xi,\eta)\,v(\xi)\,\ud \xi.
		\]
		With these notations, the sharp constant $c_{\alpha,\beta}$ given in (\ref{sharp constant}) takes the form 
		\begin{align}\label{Intro const}
			c_{\alpha,\beta}
			=|\S^{n-1}|^{-\frac{1}{p}}
			\left\|
			\int_{\S^{n-1}}H(\cdot,\eta)\,
			\ud V_{\S^{n-1}}(\eta)
			\right\|_{L^q(\B^n)}.
		\end{align}
		
		Given a conformal transformation $\Psi:\B^{n}\to \B^{n}$ and two functions $u\in L^{p}(\S^{n-1})$ and $v\in L^{q'}(\B^n)$, we define their conformal transformation action by 
		\begin{align*}
			u_{\Psi}
			:=\left(\det \ud\Psi\big|_{\partial\B^n}\right)^{\frac{1}{p}}\,u\circ \Psi,
			\qquad
			v_{\Psi}
			:=\left(\det \ud\Psi\right)^{\frac{1}{q'}}\,v\circ \Psi,
		\end{align*}
		where $\det \ud\Psi$ stands for  the Jacobian of  $\Psi$ and $\det \ud\Psi\big|_{\partial\B^n}$ denote the Jacobian of $\Psi\big|_{\partial\B^n}$. Their precise expressions  can be found in Appendix \ref{Appendix}.  		
		
		Consequently, Theorem A can be restated on the unit ball as follows: 
		
		\begin{thm B*}\label{thm B}
			Let $n\geq 3$ and  suppose $(\alpha,\beta)$ satisfies  \eqref{Index Condi}. Then there exists a constant $c_{\alpha,\beta}$ such that 
			\begin{align*}
				\|Q_{\alpha,\beta}(u)\|_{L^{q}(\B^{n})}\leq c_{\alpha,\beta}\|u\|_{L^{p}(\S^{n-1}) }
			\end{align*}
			and 
			\begin{align*}
				\|S_{\alpha,\beta}(v)\|_{L^{p'}(\S^{n-1})}\leq c_{\alpha,\beta} \|v\|_{L^{q'}(\B^{n}) }.        \end{align*}
			Moreover, equality holds if and only if, up to a multiplicative constant,
			$
			u = \bigl(\det \ud\Psi\big|_{\partial \B^{n}} \bigr)^{\frac{1}{p}}$, $
			v =Q^{q-1}_{\alpha,\beta}(u),
			$
			where $\Psi$ is a conformal transformation from $\B^n$ to $\B^n$.

		\end{thm B*}
		\begin{rem}
			Using the conformal covariance of $Q_{\alpha,\beta}$ (see Appendix \ref{Appendix}), the second minimizer can be written as, up to a multiplicative constant, 
			$$
			Q_{\alpha,\beta}^{\,q-1}(1_{\Psi})
			=
			\left(\big(Q_{\alpha,\beta}(1)\big)_{\Psi}\right)^{q-1}
			=
			\left(\big(d_{\alpha,\beta}\big)_{\Psi}\right)^{q-1}=(d^{q-1}_{\alpha,\beta})_{\Psi},$$
			where $1_{\Psi}=\bigl(\det \ud\Psi\big|_{\partial \B^{n}} \bigr)^{\frac{1}{p}}$. Here, $d_{\alpha,\beta}\in L^q(\B^n), d_{\alpha,\beta}^{q-1}\in L^{q'}(\B^n)$ and the explicit formula for $d_{\alpha,\beta}$ is given in \eqref{Intro d-1}. 
			This shows that, even modulo the action of conformal transformations, the minimizer of the second inequality is, in general, non-constant; see the discussion below Theorem~\ref{Thm 1}. Such a phenomenon is in sharp contrast with the result of Hang-Wang-Yan \cite{Hang&Wang&Yan}.
		\end{rem}
		
		Very recently, Frank, Peteranderl and Read \cite{Frank&Peteranderl&Read} established a sharp quantitative   version of Hang--Wang--Yan inequalities:
		\begin{thm C}[\protect{Frank, Peteranderl and Read \cite[Theorem 1.3 and Theorem 1.4]{Frank&Peteranderl&Read}}]
			Let $n\geq 3$. There is a constant $c_0>0$ such that for all $u\not\equiv 0\in L^{\frac{2(n-1)}{n-2}}(\S^{n-1})$,  it holds
			\begin{align}\label{Intro FFR equ-1}
				&c^{\frac{2(n-1)}{n-2}}_{0,1}-\frac{\|Q_{0,1}u\|^{\frac{2(n-1)}{n-2}}_{L^{\frac{2n}{n-2}}(\B^n)}}{\|u\|^{\frac{2(n-1)}{n-2}}_{L^{\frac{2(n-1)}{n-2}}(\S^{n-1})}}
				\nonumber\\
				\geq c_0&\left(\inf_{\Psi,\lambda}\|\lambda|\S^{n-1}|^{\frac{n-2}{2(n-1)}}u_{\Psi}-1\|^{\frac{2(n-1)}{n-2}}_{L^{\frac{2(n-1)}{n-2}}(\S^{n-1})} +\|\lambda|\S^{n-1}|^{\frac{n-2}{2(n-1)}}u_{\Psi}-1\|^2_{L^2(\S^{n-1})}\right),
			\end{align}
			where $|\S^{n-1}|=\frac{2\pi^{\frac{n}{2}}}{\Gamma(\frac{n}{2})}$ is the surface area of the unit sphere. Moreover, for any $v\not\equiv 0\in L^{\frac{2n}{n+2}}(\B^n)$, we have
			\begin{align}\label{Intro FFR equ-2}				c^{\frac{2n}{n+2}}_{0,1}-\frac{\|S_{0,1}(v)\|^{\frac{2n}{n+2}}_{L^{\frac{2(n-1)}{n}}(\S^{n-1})}}{\|v\|^{\frac{2n}{n+2}}_{L^{\frac{2n}{n+2}}(\B^n)}}\geq c_0\inf_{\Psi,\lambda}\|\lambda|\S^{n-1}|^{\frac{n+2}{2n}}v_{\Psi}-1\|^{2}_{L^{\frac{2n}{n+2}}(\B^{n})}.
			\end{align}
			Here, $u_{\Psi}$ and $v_{\Psi}$ are given by 
			\begin{align*}
				u_{\Psi}
				:=\left(\det \ud\Psi\big|_{\partial\B^n}\right)^{\frac{1}{p(0,1)}}\,u\circ \Psi,
				\qquad
				v_{\Psi}
				:=\left(\det \ud\Psi\right)^{\frac{1}{q'(0,1)}}\,v\circ \Psi.
			\end{align*}
		\end{thm C}
		
		In the same work, the authors also showed that the exponents appearing in the distance functionals of (\ref{Intro FFR equ-1}) and (\ref{Intro FFR equ-2}) are optimal. 
		
		In recent years, stability analysis for geometric inequalities has attracted considerable attention. It can be traced back to a question posed  by Brezis and Lieb \cite{Brezis&Lieb}, which was later answered affirmatively by Bianchi and Egnell \cite{Bianchi&Egnell}. Since then, various new ideas have been introduced to establish stability results, including approaches based on concentration compactness, spectral analysis and rearrangement arguments, among others  \cite{Carlen,Carlen-1,Chen&Lu&Lu&Tang1,Chen&Lu&Lu&Tang2,Dolbeault&Esteban&Figalli&Frank&Loss,Frank&Peteranderl,Frank&Peteranderl-1,Figalli&Zhang,Konig&Peteranderl}. 
		For the existence of minimizers for the stability inequality and an upper bound for its sharp constant, we refer the reader to \cite{Tobias,Tobias-2}. Very recently, the authors  \cite{Gong&Yang&Zhang} established a stability type inequality for the reverse Sobolev inequality with a \textbf{explicit} sharp constant, albeit restricted to a specific range of parameters. Subsequently, T. König \cite{Tobias-3} confirmed the sharpness of this constant and  derived a complete stability inequality that holds for all admissible parameters.
		
		Among the works mentioned above, a particularly interesting result was established by Figalli and Zhang \cite{Figalli&Zhang}. In essence, they showed that the optimal exponent in the distance functional depends on $\max\{p,2\}$ for the $p$-Sobolev inequality with $p\in (1,n)$. This phenomenon is further supported by the work of Frank et al. \cite{Frank&Peteranderl&Read}, where—in contrast to earlier studies—the operator involved is nonlocal.
		
		In this paper, we extend the findings of \cite{Frank&Peteranderl&Read} from harmonic extensions to a broader class of conformally invariant extension operators. Our main theorem provides a family of such operators that are valid  for all $p\in (1,+\infty)$, and yields the sharp stability exponent for the distance functional ($L^p$ norm). This exponent reflects a degeneracy: it equals $2$ when $p\in(1,2)$ and becomes $p$ when $p\ge 2$, in accordance with the phenomenon observed by Figalli and Zhang~\cite{Figalli&Zhang}.
		A key distinction between our work and theirs lies in the range of $p$: 
		while their setting is restricted to $p\in(1,n)$, our exponent $p$ can be arbitrarily large. 
		For the Sobolev inequality on the specific cylinder, Frank~\cite{Frank} pointed out that the sharp exponent in the associated distance functional is $4$. 
		In contrast, our result shows that for the nonlocal operator under consideration, this degeneracy can in fact become arbitrarily large.

		\begin{thm}\label{Thm 1}
			For $n\geq 3$, assume that $( \alpha,\beta)$ satisfy \eqref{Index Condi}. Then the following sharp stability inequalities hold:
			\begin{itemize}
				\item[(1)]If $1\geq \alpha$, i.e., $p\geq2$, then there exists a constant $c_0>0$ such that for all $u\in L^{p}(\S^{n-1})$ and $u\not\equiv 0$, 
				\begin{align*}
					c^p_{\alpha,\beta}-\frac{\|Q_{\alpha,\beta}u\|^p_{L^q(\B^n)}}{\|u\|^p_{L^p(\S^{n-1})}}
					\geq c_0\left(\inf_{\Psi,\lambda}\|\lambda|\S^{n-1}|^{\frac{1}{p}}u_{\Psi}-1\|^p_{L^p(\S^{n-1})} +\|\lambda|\S^{n-1}|^{\frac{1}{p}}u_{\Psi}-1\|^2_{L^2(\S^{n-1})}\right)  . \end{align*} 
				\item[(2)]  If $n>\alpha>1$, i.e, $1<p<2$, then  there exists a constant $c_0>0$ such that for all $u\in L^{p}(\S^{n-1})$ and $u\not\equiv 0$, 
				\begin{align*}
					c^p_{\alpha,\beta}-\frac{\|Q_{\alpha,\beta}u\|^p_{L^q(\B^n)}}{\|u\|^p_{L^p(\S^{n-1})}}
					\geq c_0\inf_{\Psi,\lambda}\|\lambda|\S^{n-1}|^{\frac{1}{p}}u_{\Psi}-1\|^2_{L^p(\S^{n-1})}   . \end{align*}     \end{itemize}
			
		\end{thm}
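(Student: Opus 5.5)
The plan is to follow the scheme of Frank--Peteranderl--Read and Figalli--Zhang: a local (second-order) analysis near the manifold of optimizers $\mathcal M=\{c\,1_\Psi\}$, and a compactness argument to pass from local to global. The main new input is the spectral analysis of the linearized operator. By conformal covariance of $Q_{\alpha,\beta}$ and the invariance of both sides under $u\mapsto \lambda u_\Psi$, we may normalize $\|u\|_{L^p(\S^{n-1})}=|\S^{n-1}|^{1/p}$. We may also assume the infimum in the distance is attained at $\Psi=\mathrm{id}$, $\lambda=1$, and write $u=1+r$ with $r$ small in $L^p$.

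\textbf{Step 1: Spectral analysis.} Since $\int_{\S^{n-1}}H(\xi,\eta)\,\ud V(\eta)=d_{\alpha,\beta}(\xi)$ is radial, the weighted operator $r\mapsto S_{\alpha,\beta}\bigl(d_{\alpha,\beta}^{q-2}Q_{\alpha,\beta} r\bigr)$ commutes with rotations. Hence it is diagonalized by spherical harmonics of degree $k$, with eigenvalues $\mu_k$. I would compute $\mu_k$ by the Funk--Hecke formula applied to $|\xi-\eta|^{\alpha-n}$ with $\xi=\rho\omega$. This gives
\[
Q(Y_k)(\rho\omega)=\Bigl(\tfrac{1-\rho^2}{2}\Bigr)^{\beta} a_k(\rho)\,Y_k(\omega),
\]
where $a_k(\rho)$ is a hypergeometric function ${}_2F_1$ in $\rho^2$ times $\rho^k$. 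Then
\[
\mu_k=\int_0^1 d_{\alpha,\beta}(\rho)^{q-2}\Bigl(\tfrac{1-\rho^2}{2}\Bigr)^{2\beta}a_k(\rho)^2\rho^{n-1}\,\ud\rho .
\]
Optimality of the constants and conformal invariance force $\mu_0$ to correspond to $(q-1)$ and $\mu_1$ to $(p-1)$ in the Euler--Lagrange normalization. The core claim is the strict monotonicity $\mu_k/\mu_0<(p-1)/(q-1)$ with a uniform gap for all $k\ge2$. I expect this to be the main obstacle. I would try to show that $a_k(\rho)/a_1(\rho)$ is decreasing in $k$ pointwise in $\rho$, using contiguous relations and Euler transformations of ${}_2F_1$ together with positivity of the coefficients. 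Treating the regime $\alpha+\beta<1$, where $d_{\alpha,\beta}$ is singular at the boundary, will require care with convergence of the integrals.

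\textbf{Step 2: Local expansion.} Expanding $\|Q(1+r)\|_q^p$ and $\|1+r\|_p^p$ gives the second-order coercivity on the orthogonal complement of $\mathrm{span}\{1,Y_1\}$, which is the tangent space of $\mathcal M$ and is guaranteed by the choice of optimal $\Psi,\lambda$.

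For $p\ge2$ (case (1)), I would use the elementary inequality
\[
|1+t|^p\ge 1+pt+\tfrac{p(p-1)}{2}t^2\theta(t)+c|t|^p,
\]
in the style of Figalli--Zhang. For the $q$-norm on the ball I would use a Taylor upper bound with remainder controlled by $\|Q r\|_q^{\min(q,3)}$. Combined with the gap from Step 1 (after splitting $r$ into large and small parts as in Figalli--Zhang/FPR), this yields the lower bound $c(\|r\|_2^2+\|r\|_p^p)$.

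For $1<p<2$ (case (2)), the analogous inequality
\[
|1+t|^p\ge 1+pt+c\min(t^2,|t|^p)
\]
together with the $L^2$-weighted gap gives a bound by $\|r\|_p^2$. Here one uses that $\int\min(r^2,|r|^p)\gtrsim\|r\|_p^2$ when $\|r\|_p$ is small.

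\textbf{Step 3: Local to global.} If the deficit is small while the distance stays bounded below, a minimizing sequence has deficit tending to $0$. By Gluck's compactness modulo conformal maps (Theorem A/B, via concentration-compactness), it converges to an optimizer, which contradicts the lower bound on the distance. When the distance is large, the deficit is bounded below because the distance is bounded above after normalization. Combining the local and global regimes gives a uniform $c_0$.
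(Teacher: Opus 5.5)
Your Steps 1 and 3 follow the paper. That means the spherical-harmonic diagonalization of $\int_{\B^n}d_{\alpha,\beta}^{q-2}Q_{\alpha,\beta}^2(\cdot)$, the pointwise monotonicity of the hypergeometric coefficients via contiguous relations and Euler's transformation (Lemma~\ref{lem:hypergeo_monotonicity}, which needs separate arguments for $\alpha\ge1$, $0<\alpha<1$, $\alpha\le0$), and concentration--compactness for the global step. Getting the $k=1$ relation from conformal invariance rather than from the second variation (Lemma~\ref{Gap Inequ}) is fine.

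The gap is in Step~2, in how the spectral gap is actually used. The bound $\mu_k/\mu_0<(p-1)/(q-1)$ absorbs the ball-side term $\frac{p(q-1)}{2}c^p_{\alpha,\beta}|\S^{n-1}|\int_{\B^n}d^{q-2}_{\alpha,\beta}Q^2_{\alpha,\beta}(r)\big/\!\int_{\B^n}d^q_{\alpha,\beta}$ only if the sphere side supplies (almost) the full $\frac{p(p-1)}{2}c^p_{\alpha,\beta}\int_{\S^{n-1}}r^2$. The Figalli--Zhang inequality instead gives $\frac{p(1-\kappa)}{2}\int\bigl(r^2+(p-2)\zeta(r)(1-|1+r|)^2\bigr)$, and this integrand is not pointwise $\ge(1-o(1))(p-1)r^2$:
\begin{itemize}
\item for $p\ge2$ it equals $r^2$ at $r=-1$;
\item for $1<p<2$ it grows only linearly as $|r|\to\infty$, and $r$ need not lie in $L^2$ at all.
\end{itemize}
Bridging this is Proposition~\ref{Limit Prop}, a contradiction argument with renormalized sequences. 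It rests on compactness of $d^{\frac{q-2}{2}}_{\alpha,\beta}Q_{\alpha,\beta}:L^2(\S^{n-1})\to L^2(\B^n)$ (Lemma~\ref{Compact lem0}). For $p<2$ it also needs the Orlicz-type variant after rescaling by $\bigl(\int r^2(1+|r|)^{p-2}\bigr)^{1/2}$ (Lemma~\ref{Compact cor}).

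Your sketch neither identifies nor proves this compactness. It does not follow from Gluck's compactness of $Q_{\alpha,\beta}$, because $d_{\alpha,\beta}$ is unbounded when $\alpha+\beta<1$. In your diagonal picture it amounts to $\mu_k\to0$, which has to be shown; this is where $q(\alpha+\beta-1)+1>0$ enters (cf.\ Lemma~\ref{lem 2.7}).

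Moreover, your case-(2) inequality $|1+t|^p\ge1+pt+c\min\{t^2,|t|^p\}$ lacks two things. It does not carry the sharp coefficient $\frac{p(p-1)}{2}$: for $p=\frac32$ it fails at $t=1$ when $c$ is close to $\frac38$. Nor does it produce the quantity $\int r^2$ against which the gap is measured. So the ball-side quadratic term, itself of order $\|r\|^2_{L^p(\S^{n-1})}$, cannot be absorbed. You need \eqref{Fundamental Ine 2} with its $\zeta$-term, plus the compactness argument just described.

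Second, the orthogonality $r\perp\mathcal H$ is not ``guaranteed by the choice of optimal $\Psi,\lambda$''. For $p\neq2$ the first-order conditions for the $L^p$ distance involve $|r|^{p-2}r$, and the $L^2$ norm is not invariant under $u\mapsto u_\Psi$. The paper handles this in two ways:
\begin{itemize}
\item For $p\ge2$ it uses an $L^2$ almost-orthogonality lemma with error $O(\|r\|^2_{L^2}+\|r\|^p_{L^p})$ (Lemma~\ref{Almost Orothgonal Lem}), together with \eqref{Loc Ana f1}--\eqref{Loc Ana f3} to show that the $\mathcal H$-component of $r$ is negligible.
\item For $1<p<2$ it enforces exact orthogonality to the $\xi_l$ (Lemma~\ref{Orothgonal Lem}), then divides by the mean to kill the constant mode, using \eqref{Prop vital equ-a} to compare the resulting distance with the infimum.
\end{itemize}

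Finally, the remainder $\|Q_{\alpha,\beta}r\|^{\min\{q,3\}}_{L^q}$ is too crude for $p>3$. Then $\|r\|^3_{L^p}$ is not $o(\|r\|^2_{L^2}+\|r\|^p_{L^p})$: take $r=h$ on a set of measure $h^{2p/(p-3)}$ and let $h\to0$. Use Lemma~\ref{Inequ lem-2} instead, whose remainder is $C_\kappa\|Q_{\alpha,\beta}r\|^q_{L^q}\lesssim\|r\|^q_{L^p}=o(\|r\|^p_{L^p})$.
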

		\begin{rem}
			The index constraints \eqref{Index Condi} are necessary and sharp, as shown in \cite{Gluck}; see also Remark~\ref{Rem 2}, Lemma~\ref{Compact lem0}. The main difficulty of this work lies in establishing a sharp quantitative integral inequality for \textbf{all} pairs $(\alpha,\beta)$ satisfying \eqref{Index Condi}, while simultaneously dealing with the essentially nonlocal nature of the operator. 
			
		\end{rem}

		Although Frank et al.~\cite{Frank&Peteranderl&Read} developed a systematic approach for the harmonic extension operator and its dual operator, new obstructions arise in our setting. 
		Let \begin{align}\label{define H}
			\mathcal{H}=\mathrm{span}\{1,\xi_1,\xi_2,\cdots,\xi_n\}\subset L^2(\mathbb{S}^{n-1}),
		\end{align}
		where $\xi_i$ denotes the $i$-th coordinate function in $\R^n$. 
		For the harmonic extension operator $Q_{0,1}$, we have 
		\begin{align*}
			Q_{0,1}(1)=\frac{|\S^{n-1}|}{2}
			\qquad\mathrm{and}\qquad 
			Q_{0,1}(\xi_i)=\frac{|\S^{n-1}|}{2}\,\xi_i,\quad \mathrm{for}\quad i=1,2,\cdots,n.
		\end{align*}
		However, for general $( \alpha,\beta)$ satisfying \eqref{Index Condi}, our operators are essentially nonlocal. Define 
		\[
		d_{\alpha,\beta}(\xi)=Q_{\alpha,\beta}(1)(\xi).
		\]
		Then (see (\ref{d Def}))
		\begin{align}\label{Intro d-1}
			d_{\alpha,\beta}(\xi)
			=\frac{\pi^{\frac{n}{2}}2^{1-\beta}}{\Gamma\!\left(\frac{n}{2}\right)}
			(1-|\xi|^2)^{\beta+\alpha-1}
			F\!\left(\frac{n+\alpha}{2}-1,\frac{\alpha}{2}; \frac{n}{2}; |\xi|^2\right),
		\end{align}
		where $F$ is the hypergeometric function (see Subsection~\ref{Sec 2.2}). 
		Note that $d_{\alpha,\beta}$ is constant if and only if $\alpha=0$ and $\beta=1$; moreover, it may even become  unbounded when $\alpha+\beta<1$. 
		This phenomenon forces us to consider the weighted operator $d_{\alpha,\beta}^{\frac{q-2}{2}}Q_{\alpha,\beta}$ rather than $Q_{\alpha,\beta}$ as in~\cite{Frank&Peteranderl&Read}. 
		To prove Theorem~\ref{Thm 1}, we need to overcome the following two difficulties: 
		\begin{itemize}
			\item[(1)] How to establish the spectral gap lemma for the operator $d_{\alpha,\beta}^{\frac{q-2}{2}}Q_{\alpha,\beta}$ and obtain an effective upper bound for its optimal constant?
			\item[(2)] How to establish the compactness of the operator $d_{\alpha,\beta}^{\frac{q-2}{2}}Q_{\alpha,\beta}$?
		\end{itemize}
		
		To address (1), we expand $\|d_{\alpha,\beta}^{\frac{q-2}{2}}Q_{\alpha,\beta}(\varphi)\|^2_{L^2(\B^n)}$ in spherical harmonics for $\varphi\in L^2(\S^{n-1})$, and use properties of hypergeometric functions to compare adjacent coefficients in three different parameter regimes. 
		The desired upper bound for the optimal constant in the spectral gap lemma is obtained from the second variation of Gluck-type inequalities. 
		
		To handle (2), we introduce a new approach to prove compactness by establishing decay of the coefficients in the expansion of $\|d_{\alpha,\beta}^{\frac{q-2}{2}}Q_{\alpha,\beta}(\varphi)\|^2_{L^2(\B^n)}$. 
		For more details, see Section~\ref{Sec 2}.

		Regarding the stability of the dual operator $S_{\alpha,\beta}$, Frank et al.~\cite{Frank&Peteranderl&Read} observed in the case $\alpha=0$ and $\beta=1$ that the dual quantitative inequality \eqref{Intro FFR equ-2} does not appear to follow from the abstract stability theory developed by Carlen~\cite{Carlen-1}; see \cite[p.~3, Remark (e)]{Frank&Peteranderl&Read}. For general $\alpha$ and $\beta$, our setting exhibits two noteworthy phenomena. First, stability for the dual operator $S_{\alpha,\beta}$ occurs only in a single parameter regime. This contrasts with the situation for $Q_{\alpha,\beta}$ in Theorem~\ref{Thm 1}, where the analysis naturally splits into two distinct regimes. Second, the minimizer is $\tilde{\mathbf{1}}$ (up to the action of conformal transformations), which is generally non-constant.
		These structural differences help explain why Carlen’s duality-based framework~\cite{Carlen-1} does not yield stability for $S_{\alpha,\beta}$ in the present context, and our results further support the observation of Frank et al.~\cite{Frank&Peteranderl&Read}. Indeed, the mapping
		$
		S_{\alpha,\beta}: L^{q'}(\B^n)\to L^{p'}(\S^{n-1})
		$
		falls into the range $q'<2$, so the associated distance functional must be measured with exponent $2$. 
		This behavior is consistent with the observation of Figalli and Zhang~\cite{Figalli&Zhang}.

		\begin{thm}\label{Thm 2}
			For \( n \geq 3 \), assume that $(\alpha,\beta)$ satisfy \eqref{Index Condi}. There exists a constant $c_0>0$ such that for all  $v\in L^{q'}(\mathbb{B}^{n})$ and $v\not\equiv 0$,
			\[
			c^{q'}_{\alpha,\beta}-\frac{\|S_{\alpha,\beta}(v)\|^{q'}_{L^{p'}(\mathbb{S}^{n-1})}}{\|v\|^{q'}_{L^{q'}(\mathbb{B}^n)}}
			\geq c_0\inf_{\Psi,\lambda}\bigl\|\lambda|\mathbb{S}^{n-1}|^{\frac{1}{q'}}v_{\Psi}-\tilde{\mathbf{1}}\bigr\|^{2}_{L^{q'}(\mathbb{B}^{n})},
			\]
			where $\tilde{\mathbf{1}}(\xi)=\left(\fint_{\B^n}d^q_{\alpha,\beta}\right)^{\frac{1-q}{q}}d^{q-1}_{\alpha,\beta}(\xi)$  and $\fint_{\B^n}$ denotes the normalized integral over $\B^n$.
		\end{thm}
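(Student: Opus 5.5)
The plan is to follow the Frank--Peteranderl--Read strategy for the dual inequality, adapted to the weight $d_{\alpha,\beta}$, splitting the argument into a local (near the minimizer manifold) and a global (compactness) part. By homogeneity and conformal invariance of $S_{\alpha,\beta}$ (Appendix), we may normalize $\|v\|_{L^{q'}(\B^n)}=\|\tilde{\mathbf{1}}\|_{L^{q'}(\B^n)}$. The global part is a concentration-compactness argument: if the deficit $c^{q'}_{\alpha,\beta}-\|S_{\alpha,\beta}v\|^{q'}_{L^{p'}}/\|v\|^{q'}_{L^{q'}}$ tends to zero along a sequence $v_k$, then, after composing with suitable conformal maps $\Psi_k$ and rescaling by $\lambda_k$, the sequence converges in $L^{q'}(\B^n)$ to $\tilde{\mathbf{1}}$. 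I would obtain this by duality from the corresponding statement for $Q_{\alpha,\beta}$. Set $u_k=|S_{\alpha,\beta}v_k|^{p'-2}S_{\alpha,\beta}v_k$. It is almost optimal for Theorem B, so by the compactness used in Theorem \ref{Thm 1} it is close to $1_\Psi$ up to scaling. Hölder's inequality with near-equality then forces $v_k$ to be close to $Q_{\alpha,\beta}(1_\Psi)^{q-1}=(d^{q-1}_{\alpha,\beta})_\Psi$ in $L^{q'}$, using uniform convexity of $L^{q'}$ since $1<q'<2$. Consequently it suffices to prove the inequality for $v$ in a small $L^{q'}$-neighborhood of $\tilde{\mathbf{1}}$, with $\Psi,\lambda$ chosen to attain the infimum.

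For the local part, I write $v=\tilde{\mathbf{1}}+r$, where by the optimality of $(\Psi,\lambda)$ the function $r$ satisfies orthogonality conditions. These say that $\int_{\B^n} r\, d_{\alpha,\beta}\, h=0$ for the tangent directions generated by scaling and by conformal maps, that is, $h\in$ the functions $Q_{\alpha,\beta}(\mathcal{H})$ weighted appropriately. Because $q'<2$, the natural quantity is not $\|r\|_{L^{q'}}^2$ directly. Following Figalli--Zhang and Frank--Peteranderl--Read, I would use the pointwise inequality
\[
|a+b|^{q'}\ \ge\ |a|^{q'}+q'|a|^{q'-2}ab+c\,\frac{|a|^{q'}|b|^2}{(|a|+|b|)^{2-q'}}\cdot\frac{1}{|a|^{q'}}\,(\ldots),
\]
in its sharp form $|a+b|^{q'}\le |a|^{q'}+q'|a|^{q'-2}ab+\tfrac{q'(q'-1)}{2}\,\frac{|a|^{q'}b^2}{(|a|+|b|)^{2-q'}}\cdot|a|^{-q'}(\ldots)$ with a small-$\epsilon$ loss. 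Here $a=\tilde{\mathbf 1}\propto d^{q-1}_{\alpha,\beta}$, so the weight $|a|^{q'-2}=c\,d_{\alpha,\beta}^{(q-1)(q'-2)}=c\,d_{\alpha,\beta}^{2-q}$ appears. For the numerator, I use $p'>2$ when $q'<p'$ fails? No: $p'=2(n-1)/(n-\alpha)$ may be on either side of $2$. I therefore expand $\|S_{\alpha,\beta}(\tilde{\mathbf 1}+r)\|^{p'}_{L^{p'}}$ to second order, using that $S_{\alpha,\beta}\tilde{\mathbf 1}$ is constant on $\S^{n-1}$ (by the rotational symmetry together with the Euler--Lagrange identity), with a cubic remainder controlled by $\|S r\|^{\min(p',3)}$ and hence by $\|r\|_{L^{q'}}$.

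After cancellation of first-order terms, the deficit is bounded below by a quadratic form of the type
\[
\int_{\B^n}\frac{r^2}{(d^{q-1}_{\alpha,\beta}+|r|)^{2-q'}}\;-\;C\int_{\S^{n-1}}|S_{\alpha,\beta}r|^2\;-\;o(\ldots),
\]
which is the weighted $L^2$ problem dual to the one for $d^{\frac{q-2}{2}}_{\alpha,\beta}Q_{\alpha,\beta}$. The key step is a spectral gap: for $r$ orthogonal to the tangent space,
\[
\int_{\S^{n-1}}|S_{\alpha,\beta}r|^2\le (\Lambda_1-\delta)\int_{\B^n} d^{2-q}_{\alpha,\beta}\, r^2 .
\]
This follows by duality ($T T^*$) from the spectral gap and the compactness of $d^{\frac{q-2}{2}}_{\alpha,\beta}Q_{\alpha,\beta}$ established for Theorem \ref{Thm 1}, since $S_{\alpha,\beta}d^{\frac{q-2}{2}}$ is its adjoint and the two operators share their singular values. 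To pass from the weighted $L^2$ gap to the nonlinear denominator $(d^{q-1}+|r|)^{2-q'}$, I split $\B^n$ into the set where $|r|\le \epsilon d^{q-1}_{\alpha,\beta}$, where the gap applies with a small loss, and its complement, where $S_{\alpha,\beta}$ of that piece is small relative to the denominator by the Gluck inequality and Hölder. The concluding inequality $\int r^2/(d^{q-1}+|r|)^{2-q'}\gtrsim \|r\|_{L^{q'}}^2$ follows from Hölder, given $\|d^{q-1}\|_{L^{q'}}<\infty$.

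I expect the main obstacle to be this splitting step when $\alpha+\beta<1$. There $d_{\alpha,\beta}$ blows up at $\partial\B^n$ and $d^{2-q}$ degenerates, so controlling $\int|S(r\mathbf 1_{\{|r|>\epsilon d^{q-1}\}})|^2$ requires using the precise boundary asymptotics of $d_{\alpha,\beta}$ from \eqref{Intro d-1}, together with the fact that $S_{\alpha,\beta}$ maps $L^{q'}$ boundedly into $L^{p'}$. This step should also handle the case $p'<2$, where the expansion of the numerator must be done with the $|\cdot|^{p'}$ inequality in the reverse direction.
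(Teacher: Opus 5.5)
Your global step, near-equality in H\"older after passing to $|S_{\alpha,\beta}v_k|^{p'-2}S_{\alpha,\beta}v_k$, is the duality argument the paper invokes. The local step, however, has a genuine gap in the regime $p'<2$, i.e.\ $\alpha<1$.

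\textbf{The regime $p'<2$.} This regime contains the harmonic case $(\alpha,\beta)=(0,1)$ and is strictly larger than $\alpha+\beta<1$. So the real obstacle is not the blow-up of $d_{\alpha,\beta}$. When $p'<2$, your expansion of $\|S_{\alpha,\beta}(\tilde{\mathbf 1}+r)\|^{p'}_{L^{p'}(\S^{n-1})}$ does not work, for two reasons:
\begin{itemize}
\item The remainder is only bounded by $C\|S_{\alpha,\beta}r\|^{p'}_{L^{p'}}\lesssim\|r\|^{p'}_{L^{q'}}$, and this is not $o(\|r\|^2_{L^{q'}})$ when $p'<2$.
\item The quadratic term $\int_{\S^{n-1}}|S_{\alpha,\beta}r|^2$ is not controlled by $\|r\|_{L^{q'}}$ at all, because $S_{\alpha,\beta}$ maps $L^{q'}(\B^n)$ critically into $L^{p'}(\S^{n-1})$ only.
\end{itemize}
In particular, your bound for $S_{\alpha,\beta}(r\mathbf 1_{\{|r|>\epsilon\tilde{\mathbf 1}\}})$ ``by Gluck and H\"older'' goes the wrong way. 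The missing ingredient is the Orlicz-type upper bound of Lemma~\ref{Inequ lem-2} (second item), in which $|a|^{p'-2}b^2$ is replaced by $\frac{(|a|+C_\kappa|b|)^{p'}}{|a|^2+|b|^2}\,b^2$; this produces the functional $F_{p'}$. It must be combined with the compactness statement \eqref{Sec 3 Compactness thm-2 equ-a} and the contradiction argument of Proposition~\ref{Limit Prop-2}.

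For $p'\ge 2$ your direct splitting does close, even without compactness, given correct orthogonality (see below). You must still account for the fact that truncating $r$ destroys orthogonality. The defect is $O(\|r\mathbf 1_{\{|r|>\epsilon\tilde{\mathbf 1}\}}\|_{L^{q'}})$ because the tangent functions lie in $L^q$, so it is negligible. Note also that for $\|\tilde{\mathbf 1}+r\|^{q'}_{L^{q'}}$ you need a \emph{lower} bound with the sharp coefficient $\frac{q'(q'-1)}{2}(1-\kappa)$ near $r=0$, i.e.\ Lemma~\ref{Inequ lem}. Your displayed inequality mixes the two directions.

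\textbf{Orthogonality and the spectral gap.} For $q'<2$, orthogonality does not follow from choosing $(\Psi,\lambda)$ optimal. The first-order condition for the $L^{q'}$-distance is $\int_{\B^n}|r|^{q'-2}r\,h=0$, which is nonlinear in $r$. The paper proceeds differently:
\begin{itemize}
\item it enforces exact orthogonality by a separate normalization, Lemma~\ref{Dule Orothgonal Lem}, which gives $\int_{\B^n}\xi_l r=0$;
\item it obtains $\int_{\B^n}\tilde r\,\tilde{\mathbf 1}^{q'-1}=0$ by rescaling with $\alpha_i$;
\item it uses only the comparability \eqref{Prop vital equ-a-1} with the infimum.
\end{itemize}

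This choice matters for your spectral gap. Your $TT^*$ idea is correct and neater than the paper's Funk--Hecke computation. Set $T=d^{\frac{q-2}{2}}_{\alpha,\beta}Q_{\alpha,\beta}$ and $g=d^{\frac{2-q}{2}}_{\alpha,\beta}r$. Then $\int_{\S^{n-1}}S^2_{\alpha,\beta}(r)=\|T^*g\|^2$ and $\int_{\B^n}d^{2-q}_{\alpha,\beta}r^2=\|g\|^2$. So for $g\perp T(\mathcal H)$ the ratio is at most $1/K_{n,\alpha,\beta}$ by Lemma~\ref{Gap Lem}. After rewriting $\tilde{\mathbf 1}^{q'-2}$ and $\tilde d_{\alpha,\beta}$ in terms of $d_{\alpha,\beta}$, Lemma~\ref{Gap Inequ} is exactly the threshold in \eqref{Dual gap inequ}.

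But $g\perp T(\mathcal H)$ means $\int_{\B^n}r\,d_{\alpha,\beta}=0$ and $\int_{\S^{n-1}}S_{\alpha,\beta}(r)\,\eta_k=0$. The paper's conditions $\int_{\B^n}\xi_k r=0$ do not kill the $l=1$ modes, since $Q_{\alpha,\beta}(\eta_k)$ is not a multiple of $\xi_k$ in general. That is precisely why the paper needs Lemma~\ref{Basic Iequ Lem}. To run your route, you must therefore prove a different balancing lemma, producing $\Psi$ with $\int_{\S^{n-1}}S_{\alpha,\beta}(v_{\Psi})\,\eta_k=0$. One way is via a critical point of $\xi\mapsto\int_{\S^{n-1}}S_{\alpha,\beta}(v_{\Psi_\xi})$, in the spirit of Lemma~\ref{Orothgonal Lem}.
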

		
		\begin{rem}
			The exponents appearing in the distance functionals of Theorems \ref{Thm 1} and \ref{Thm 2} are also optimal; see Subsection \ref{Sec 3.3}.
		\end{rem}
		

		For the dual operator, when $\alpha=0$ and $\beta=1$, one has $Q_{0,1}(\mathcal{H})=\mathcal{H}$, where $\mathcal{H}$ is defined in (\ref{define H}). 
		Based on this identity, the authors in \cite{Frank&Peteranderl&Read} employed a duality argument to derive a spectral gap lemma. In our setting, however, this approach is no longer feasible.
		There is, nevertheless, an essential difference between the dual case and the one considered earlier.  If $\varphi\in L^2(\S^{n-1})$, we can expand $\varphi=\sum_{l=0}^{+\infty}a_lY_l$, and then
		\begin{align*}
			\varphi \perp \mathcal{H} \quad \Leftrightarrow\quad a_0=a_1=0.
		\end{align*}
		This characterization, however, has no direct analogue for functions $\psi\in L^2(\B^n)$. In particular, the condition $\psi\perp\tilde{\mathcal{H}}$ (see Lemma \ref{Gap Lem 2}) does not force the corresponding coefficients in the spherical harmonic decomposition to vanish. Our main idea is therefore to adapt the technique developed previously, relying on refined estimates for the previous approach. The details are provided in Subsection \ref{Sec 3.1}. A useful simplification for dual operator is that $\tilde{d}_{\alpha,\beta}=S_{\alpha,\beta}(\tilde{\mathbf{1}})$ is constant, which substantially streamlines the argument.
		
		\textbf{Notation}: Throughout this paper, we write $A \lesssim B$ to indicate that there exists a positive constant $C$, depending only on the relevant parameters, such that $A \le C B$. We use $\eta$ to denote a point on $\S^{n-1}$ and $\varphi$ to denote a function on $\S^{n-1}$.        
		Similarly, we use $\xi$ to denote a point in $\B^n$ and $\psi$ to denote a function on $\B^n$.

		This paper is organized as follows. In Section~\ref{Sec 2}, we establish Theorem~\ref{Thm 1} through a three-step argument.  
		Subsection~\ref{Sec 2.1} collects the preliminary material required for the concentration–compactness framework.
		In Subsection~\ref{Sec 2.2}, we carry out a delicate analysis of hypergeometric functions, which yields both a spectral
		gap lemma and the compactness of a certain weighted operator. 
		These ingredients are then combined in Subsection~\ref{Sec 2.3} to complete the proof of Theorem~\ref{Thm 1}. Section~\ref{Sec 3} addresses the dual problem. In Subsection~\ref{Sec 3.1}, we gather several basic properties of the dual operator, including its variational characterization and a concentration--compactness principle obtained via a duality argument. 
		Subsection~\ref{Sec 3.2} adapts the hypergeometric techniques introduced in Section \ref{Sec 2} to derive an analogous spectral gap lemma.
		Finally, in Subsection~\ref{Sec 3.3}, we complete the proof of Theorem~\ref{Thm 2} and explain the optimality of the exponent in the distance functional.

		\section{Stability for general integral isoperimetric inequality}		\label{Sec 2}
		\subsection{Concentration compactness}\label{Sec 2.1}
		In this subsection we establish  the concentration-compactness for our problem. Since the argument is standard, we only present the key ingredients of the  proof.   The following two Lemmas correspond to the basic estimates in \cite[P. 60]{Hang&Wang&Yan}.
		\begin{lem}\label{Basic esti Lem}
			For any $1\leq s\leq t\leq \infty$, the following basic inequalities hold:
			\begin{itemize}
				\item
				If $1+\frac{1}{t}<\frac{n-\alpha}{n-1}+\frac{1}{s}$, then
				\begin{align*}       \|E_{\alpha,\beta}(f)(\cdot,x_n)\|_{L^t(\R^{n-1})}\lesssim x_n^{(n-1)\left(1+\frac{1}{t}-\frac{1}{s}\right)-(n-\alpha-\beta)}\|f\|_{L^s(\R^{n-1})};
				\end{align*}
				\item  Assume $\mathrm{supp}(f)\subset B_{R}(0)$. Then
				\begin{align*}
					|E_{\alpha,\beta}(f)(x)|\lesssim \frac{x_n^{\beta}}{(((|x'|-R)^{+})^2+x_n^2)^{\frac{n-\alpha}{2}}}\|f\|_{L^1(\R^{n-1})};
				\end{align*}
				\item  Assume $\mathrm{supp}(f)\subset B^{c}_{R}(0)$ and $\frac{1}{s}>\frac{\alpha-1}{n-1}$. Then
				\begin{align*}
					\|E_{\alpha,\beta}(f)(\cdot,x_n)\|_{L^{\infty}(B_{R/2})}\lesssim x_n^{\beta}R^{(\alpha-1)-\frac{n-1}{s}}\|f\|_{L^s(\R^{n-1})}.
				\end{align*}
				Moreover, if $\frac{1}{s}>\frac{\alpha+\beta-1}{n-1}$, then 
				\begin{align*}
					\|E_{\alpha,\beta}(f)\|_{L^{\infty}(B^{+}_{R/2})}\lesssim R^{\alpha+\beta-1-\frac{n-1}{s}}\|f\|_{L^s(\R^{n-1})},
				\end{align*}
				where $B_R=\{x'\in \R^{n-1}:|x'|<R\}$ and $B^+_R=\{(x',x_n)\in \R^n_{+}:\sqrt{|x'|^2+x_n^2}<R\}$.
			\end{itemize}
			
		\end{lem}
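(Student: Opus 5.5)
The plan is to treat $E_{\alpha,\beta}(f)(\cdot,x_n)$, for fixed $x_n>0$, as the convolution $f*k_{x_n}$ with the kernel
\begin{align*}
k_{x_n}(z')=\frac{x_n^{\beta}}{(x_n^2+|z'|^2)^{\frac{n-\alpha}{2}}},\qquad z'\in\R^{n-1},
\end{align*}
and to derive each bound from elementary convolution estimates together with scaling.

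\textbf{First bullet.} Apply Young's inequality with exponent $r$ defined by $1+\frac1t=\frac1r+\frac1s$; the hypothesis $s\le t$ guarantees $r\in[1,\infty]$. This gives
\begin{align*}
\|E_{\alpha,\beta}(f)(\cdot,x_n)\|_{L^t}\le\|k_{x_n}\|_{L^r}\|f\|_{L^s}.
\end{align*}
Substituting $z'=x_n w'$ yields
\begin{align*}
\|k_{x_n}\|_{L^r}=x_n^{\beta-(n-\alpha)+\frac{n-1}{r}}\,\|(1+|w'|^2)^{-\frac{n-\alpha}{2}}\|_{L^r(\R^{n-1})}.
\end{align*}
The last norm is finite exactly when $r(n-\alpha)>n-1$, that is, when $\frac1r<\frac{n-\alpha}{n-1}$. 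This is precisely the assumed condition $1+\frac1t-\frac1s<\frac{n-\alpha}{n-1}$. The power of $x_n$ equals $(n-1)\left(1+\frac1t-\frac1s\right)-(n-\alpha-\beta)$, as stated. The case $r=\infty$ ($s=1$, $t=\infty$) is covered because $n-\alpha>0$.

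\textbf{Second bullet.} This is pointwise. For $|y'|<R$ we have $|x'-y'|\ge(|x'|-R)^+$. Since $n-\alpha>0$, it follows that
\begin{align*}
k_{x_n}(x'-y')\le x_n^\beta\bigl(((|x'|-R)^+)^2+x_n^2\bigr)^{-\frac{n-\alpha}{2}}.
\end{align*}
Integrating against $|f|$ gives the $L^1$ bound.

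\textbf{Third bullet.} For $|x'|<R/2$ and $|y'|\ge R$ we have $|x'-y'|\ge|y'|/2$. Hence
\begin{align*}
|E_{\alpha,\beta}f(x)|\lesssim x_n^\beta\int_{|y'|\ge R}|y'|^{\alpha-n}|f(y')|\,\ud y'
\le x_n^\beta\,\bigl\||y'|^{\alpha-n}\bigr\|_{L^{s'}(|y'|\ge R)}\|f\|_{L^s}.
\end{align*}
The weight norm is finite iff $(n-\alpha)s'>n-1$, which is equivalent to $\frac1s>\frac{\alpha-1}{n-1}$. In that case it equals $CR^{\alpha-n+\frac{n-1}{s'}}=CR^{\alpha-1-\frac{n-1}{s}}$, which gives the first estimate; the case $s=1$ is trivial.

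For the "moreover" part on $B^+_{R/2}$, I would not drop $x_n$. Instead I would use $x_n^2+|x'-y'|^2\gtrsim x_n^2+|y'|^2$, which holds because $|x'|<R/2\le|y'|/2$. This yields
\begin{align*}
|E_{\alpha,\beta}f(x)|\lesssim\int_{|y'|\ge R}\frac{x_n^\beta|f(y')|}{(x_n^2+|y'|^2)^{\frac{n-\alpha}{2}}}\,\ud y'.
\end{align*}
The main point is to handle $x_n^\beta$ uniformly for $x_n<R/2$. If $\alpha+\beta\le n$, then
\begin{align*}
x_n^\beta(x_n^2+|y'|^2)^{-\frac{n-\alpha}{2}}\le(x_n^2+|y'|^2)^{\frac{\beta-(n-\alpha)}{2}}\le|y'|^{\alpha+\beta-n}.
\end{align*}
If instead $\alpha+\beta>n$, then, since $x_n<R/2\le|y'|$, the bound becomes $\lesssim|y'|^{\alpha+\beta-n}$ after using $x_n\le|y'|$ in the growing factor. (Condition \eqref{Index Condi} gives $\alpha+\beta<n$ anyway.) Hölder's inequality then gives the bound $R^{\alpha+\beta-1-\frac{n-1}{s}}\|f\|_{L^s}$, with integrability equivalent to $\frac1s>\frac{\alpha+\beta-1}{n-1}$.

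I expect the only delicate step to be this last uniformity in $x_n$, together with tracking which integrability condition is genuinely needed. Everything else reduces to Young's or Hölder's inequality plus scaling.
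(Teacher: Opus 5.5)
Your proposal is correct and follows the paper's proof essentially step for step. The first bullet uses Young's inequality with the scaled $L^r$ norm of the kernel, the second is the direct pointwise bound, and the third uses H\"older's inequality against the kernel restricted to $|y'|\ge R$, with $x_n^\beta$ absorbed into the kernel in the ``moreover'' part to get $|y'|^{\alpha+\beta-n}$ decay. Your case split on whether $\alpha+\beta\le n$ is harmless but unnecessary: $x_n<R/2\le|y'|$ already gives $x_n^\beta(x_n^2+|y'|^2)^{-\frac{n-\alpha}{2}}\le|y'|^{\alpha+\beta-n}$ in every case.
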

		
		\begin{pf}
			By Young's inequality, for $1+\frac{1}{t}=\frac{1}{r}+\frac{1}{s} $, $r>\frac{n-1}{n-\alpha}$, we have 
			\begin{align*}
				\|E_{\alpha,\beta}(f)(\cdot,x_n)\|_{L^t(\R^{n-1})}\lesssim& \|E_{\alpha,\beta}(\cdot,x_n)\|_{L^r(\R^{n-1})}\|f\|_{L^s(\R^{n-1})}\\
				\lesssim& x_n^{\frac{n-1}{r}-(n-\alpha-\beta)}\|f\|_{L^s(\R^{n-1})}\\
				=&x_n^{(n-1)\left(1+\frac{1}{t}-\frac{1}{s}\right)-(n-\alpha-\beta)}\|f\|_{L^s(\R^{n-1})}.
			\end{align*}
			The second inequality is trivial by the direct estimate.
			For the last inequality, since $\frac{1}{s}>\frac{\alpha-1}{n-1}$, we can   get 
			\begin{align*}
				\|E_{\alpha,\beta}(f)(\cdot,x_n)\|_{L^{\infty}(B_{R/2})}\lesssim& x_n^{\beta}\left(\int_{\R^{n-1}\backslash B_R}\frac{1}{(x_n^2+|x'-y'|^2)^{\frac{(n-\alpha)s}{2(s-1)}}}\ud y'\right)^{\frac{s-1}{s}}\|f\|_{L^s(\R^{n-1})}\\
				\lesssim& x_n^{\beta}R^{(\alpha-1)-\frac{n-1}{s}}\|f\|_{L^s(\R^{n-1})}.    \end{align*}
			If $\frac{1}{s}>\frac{\alpha+\beta-1}{n-1}$, we can also estimate			\begin{align*}
				\|E_{\alpha,\beta}(f)\|_{L^{\infty}(B^{+}_{R/2})}\lesssim& \left(\int_{\R^{n-1}\backslash B_R}\frac{1}{(x_n^2+|x'-y'|^2)^{\frac{(n-\alpha-\beta)s}{2(s-1)}}}\ud y'\right)^{\frac{s-1}{s}}\|f\|_{L^s(\R^{n-1})}\\
				\lesssim& \left(\int_{\R^{n-1}\backslash B_R}\frac{1}{|y'|^{\frac{(n-\alpha-\beta)s}{s-1}}}\ud y'\right)^{\frac{s-1}{s}}\|f\|_{L^s(\R^{n-1})}\\				\lesssim& R^{(\alpha+\beta-1)-\frac{n-1}{s}}\|f\|_{L^s(\R^{n-1})}.    \end{align*}
		\end{pf}
		\begin{rem}
			If $t=q=\frac{2n}{n-\alpha-2\beta}$ and $s=p=\frac{2(n-1)}{n+\alpha-2}$ with $( \alpha,\beta)$ satisfying \eqref{Index Condi}, then all the assumptions in the above lemma are fulfilled.
		\end{rem}
		
		The following lemma cannot be proved directly by the method of  \cite{Hang&Wang&Yan}; instead, some necessary modifications are required. For the reader’s convenience, we present a detailed proof below. 
		\begin{lem}\label{Basic esti Lem 2}
			Let $\varphi\in C^{1}_{c}(\mathbb{R}^{n-1})$ and let $( \alpha,\beta)$ satisfy \eqref{Index Condi}. For any fixed $0<\theta<\min\!\bigl\{1,\beta+\frac{1}{q}\bigr\}$, we have
			\begin{align}\label{Sec 2.1 Lem 2 euq-a}
				\|\varphi(x')E_{\alpha,\beta}(f)(x)-E_{\alpha,\beta}(\varphi(x')f)(x)\|_{L^q(\R^{n-1})}\lesssim x_n^{\theta-\frac{1}{q}}\|\varphi\|_{C^{1}(\R^{n-1})}  \|f\|_{L^p(\R^{n-1})}.
			\end{align}
		\end{lem}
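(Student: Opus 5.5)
The plan is to write the commutator as a single integral operator in $y'$ with an explicit kernel. We bound that kernel pointwise by a translation-invariant function of $x'-y'$ whose size is tuned by $\theta$, and then apply Young's convolution inequality on $\R^{n-1}$ exactly as in the proof of the first item of Lemma \ref{Basic esti Lem}. For fixed $x_n>0$ we have
\begin{align*}
\varphi(x')E_{\alpha,\beta}(f)(x)-E_{\alpha,\beta}(\varphi f)(x)=\int_{\R^{n-1}}\frac{x_n^{\beta}\bigl(\varphi(x')-\varphi(y')\bigr)f(y')}{(x_n^2+|x'-y'|^2)^{\frac{n-\alpha}{2}}}\,\ud y'.
\end{align*}

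The first step is a pointwise bound on the difference. By the mean value theorem and the sup bound, $|\varphi(x')-\varphi(y')|\le 2\|\varphi\|_{C^1}\min\{1,|x'-y'|\}$. Since $0<\theta<1$, this gives $|\varphi(x')-\varphi(y')|\le 2\|\varphi\|_{C^1}|x'-y'|^{\theta}$. Using $|x'-y'|^\theta\le (x_n^2+|x'-y'|^2)^{\theta/2}$, the modulus of the commutator is bounded by $2\|\varphi\|_{C^1}\,(k_{x_n}*|f|)(x')$, where
\begin{align*}
k_{x_n}(z)=\frac{x_n^{\beta}}{(x_n^2+|z|^2)^{\frac{n-\alpha-\theta}{2}}}.
\end{align*}
This is the only place where the $C^1$ norm of $\varphi$ enters. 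Because the bound no longer involves $\varphi$, the estimate reduces to an $L^p\to L^q$ bound for convolution with $k_{x_n}$.

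The second step applies Young's inequality with $1+\frac1q=\frac1r+\frac1p$; here $r\ge1$ because $q>p$, which is the last condition in \eqref{Index Condi}. By scaling $z=x_n w$ we get $\|k_{x_n}\|_{L^r}=x_n^{\beta+\frac{n-1}{r}-(n-\alpha-\theta)}\|k_1\|_{L^r}$, and $\|k_1\|_{L^r}$ is finite exactly when $r(n-\alpha-\theta)>n-1$. The bookkeeping uses $(n-1)(1-\frac1p)=\frac{n-\alpha}{2}$ and $\frac nq=\frac{n-\alpha-2\beta}{2}$. These give $\frac{n-1}{r}=\frac{n-\alpha}{2}+\frac{n-1}{q}$, so the power of $x_n$ is
\begin{align*}
\beta+\frac{n-\alpha}{2}+\frac{n-1}{q}-n+\alpha+\theta=\theta-\frac1q,
\end{align*}
which is the power claimed in \eqref{Sec 2.1 Lem 2 euq-a}. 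The integrability condition $\frac{n-1}{r}<n-\alpha-\theta$ becomes $\theta<\frac{n-\alpha}{2}-\frac{n-1}{q}=\beta+\frac1q$, which is precisely the hypothesis on $\theta$. Local integrability of $k_1$ is automatic, since $k_1$ is bounded.

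The main point needing care is this exponent bookkeeping, in particular checking that the upper bound $\theta<\min\{1,\beta+\frac1q\}$ guarantees both the interpolation inequality for $|\varphi(x')-\varphi(y')|$ (which needs $\theta\le1$) and the $L^r$ integrability of the kernel at infinity (which needs $\theta<\beta+\frac1q$). Once both hold, the argument is a direct modification of the first item of Lemma \ref{Basic esti Lem}. The point is that the gain $|x'-y'|^\theta$ improves the decay by exactly $\theta$, which is why the estimate is sharper than the first item of Lemma \ref{Basic esti Lem} by the factor $x_n^{\theta}$.
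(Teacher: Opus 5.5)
Your proof is correct and follows essentially the same route as the paper: the H\"older bound $|\varphi(x')-\varphi(y')|\le 2\|\varphi\|_{C^1}|x'-y'|^{\theta}$, then Young's inequality with $1+\frac1q=\frac1p+\frac1s$, and a scaling computation of the kernel's $L^s$ norm whose finiteness is exactly the condition $\theta<\beta+\frac1q$. The differences are cosmetic. The paper writes out the H\"older argument behind Young's inequality and keeps the factor $|x'-y'|^{\theta}$ inside the kernel, splitting the radial integral into two pieces. You instead absorb that factor via $|z|^{\theta}\le (x_n^2+|z|^2)^{\theta/2}$ to get a single bounded kernel, which is slightly cleaner.
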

		
		\begin{pf}
			Since $\varphi\in C^{1}_{c}(\R^{n-1})$, for any fixed $\theta\in (0,1]$ and any $x',y'\in \R^{n-1}$, we have
			\begin{align*}
				|\varphi(x')-\varphi(y')|\leq 2\|\varphi\|_{C^{1}(\R^{n-1})}|x'-y'|^{\theta}.
			\end{align*}
			Indeed, if $|x'-y'|\leq 1$,  the mean value theorem gives
			\begin{align*}
				|\varphi(x')-\varphi(y')|
				\leq \|\nabla\varphi\|_{L^{\infty}(\R^{n-1})}|x'-y'|
				\leq  \|\nabla\varphi\|_{L^{\infty}(\R^{n-1})}|x'-y'|^{\theta},
			\end{align*}
			while if $|x'-y'|\geq 1$, 
			\begin{align*}
				|\varphi(x')-\varphi(y')|
				\leq 2\|\varphi\|_{L^{\infty}(\R^{n-1})}
				\leq 2\|\varphi\|_{L^{\infty}(\R^{n-1})}|x'-y'|^{\theta}.
			\end{align*}
			Denote
			$
			L=(\varphi(x')E_{\alpha,\beta}(f)(x)-E_{\alpha,\beta}(\varphi(x')f)(x)).
			$
			Then
			\begin{align*}
				L=\int_{\R^{n-1}}\frac{x_n^{\beta}(\varphi(x')-\varphi(y'))f(y')}{(x_n^2+|x'-y'|^2)^{\frac{n-\alpha}{2}}}\ud y'.
			\end{align*}
			Consequently,
			\begin{align*}
				|L|
				\leq& 2\|\varphi\|_{C^{1}(\R^{n-1})}\int_{\R^{n-1}}\mathscr{Q}(x,y')|f(y')|\ud y'\\
				\leq&2\|\varphi\|_{C^{1}(\R^{n-1})}\left(\int_{\R^{n-1}}\mathscr{Q}^s(x,y')|f(y')|^p\ud y' \right)^{1/q}\left(\int_{\R^{n-1}} \mathscr{Q}^s\right)^{\frac{1}{s}-\frac{1}{q}}\left(\int_{\R^{n-1}} |f|^p\right)^{\frac{1}{p}-\frac{1}{q}},
			\end{align*}
			where $1+1/q=1/p+1/s$ and
			\begin{align*}
				\mathscr{Q}(x,y')=\frac{x_n^{\beta }|x'-y'|^{\theta}}{(x_n^2+|x'-y'|^2)^{\frac{n-\alpha}{2}}}.
			\end{align*}
			Since $\theta < \beta + \frac{1}{q}$, we have
			\begin{align*}
				s=\frac{1}{1+\frac{1}{q}-\frac{1}{p}}=\frac{n-1}{n-\alpha-\beta-\frac{1}{q}}>\frac{n-1}{n-\alpha-\theta}.
			\end{align*}
			Therefore, 			
			\begin{align*}
				\left(\int_{\R^{n-1}}\mathscr{Q}^s(x,y')\ud x'\right)^{1/s}=&x_n^{\theta+\alpha+\beta-n+\frac{n-1}{s}}\left(
				\int_{\R^{n-1}}\frac{|x'|^{s\theta}}{(1+|x'|^2)^{\frac{n-\alpha}{2}s}}\ud x'\right)^{1/s}\\
				\lesssim& x_n^{\theta-\frac{1}{q}}\left(\int_{0}^{1}r^{n-2+s\theta}\ud r+\int_{0}^{1}r^{n-2-s(n-\alpha-\theta)}\ud r\right)^{1/s}\\
				\lesssim& x_n^{\theta-\frac{1}{q}},
			\end{align*}
			where we used $n-\alpha-\beta=\frac{1}{q}+\frac{n-1}{s}$. 
			The desired estimate \eqref{Sec 2.1 Lem 2 euq-a} follows.
		\end{pf}

		Using the estimates in Lemma \ref{Basic esti Lem}, Lemma \ref{Basic esti Lem 2} and following the strategy of \cite[Theorem 3.1]{Hang&Wang&Yan}, we obtain the following concentration-compactness proposition. We omit the proof as it follows a standard argument that is well-known in the field.

		\begin{pro}\label{Prop 1}
			Let $\{u_i\}_{i=1}^{+\infty}$ be a sequence with the properties:
			\begin{align*}
				\|u_i\|_{L^p(\S^{n-1})}\to 1\qquad\mathrm{and}\qquad \|Q_{\alpha,\beta}(u_i)\|_{L^q(\B^n)}\to c_{\alpha,\beta}			\end{align*}
			Then, we have
			\begin{align*}
				\inf_{\Psi,\lambda\in\{\pm 1\}}\|\lambda|\S^{n-1}|^{\frac{1}{p}}(u_i)_{\Psi}-1\|_{L^p(\S^{n-1})}\to 0.
			\end{align*}
		\end{pro}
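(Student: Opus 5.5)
The plan is to follow the concentration–compactness scheme of \cite[Theorem 3.1]{Hang&Wang&Yan}, working in the half-space picture of Theorem A via the map $I$. Set $f_i$ to be the functions corresponding to $u_i$ through \eqref{Introdu u}. Then $\|f_i\|_{L^p(\R^{n-1})}\to 1$ and $\|E_{\alpha,\beta}(f_i)\|_{L^q(\R^n_+)}\to c_{\alpha,\beta}$.

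\textbf{Normalization.} Since $q>p$, replacing $f_i$ by $|f_i|$ does not decrease $\|E_{\alpha,\beta}f\|_q$. Hence we may first treat $f_i\ge0$ and recover the sign $\lambda\in\{\pm1\}$ at the end, because a near-optimizer cannot carry substantial mass of both signs. Using translations and dilations of $\partial\R^n_+$, which are conformal maps of $\R^n_+$ and correspond to some $\Psi$ on $\B^n$, we normalize so that
\[
\int_{B_1}|f_i|^p=\tfrac12 \qquad\text{and}\qquad \sup_{y'}\int_{B_1(y')}|f_i|^p\le \tfrac12 .
\]
Passing to a subsequence, $|f_i|^p\,\ud y'\rightharpoonup \mu$ and $|E_{\alpha,\beta}f_i|^q\,\ud x\rightharpoonup\nu$ weakly, and $f_i\rightharpoonup f$ in $L^p$.

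\textbf{Localization.} Take a cutoff $\varphi\in C^1_c$ with $\varphi\ge 0$, and set $\varphi(x)=\varphi(x')$ on $\R^n_+$ after an additional cutoff in $x_n$. Lemma \ref{Basic esti Lem 2} shows that $\varphi E_{\alpha,\beta}(f_i)-E_{\alpha,\beta}(\varphi f_i)$ is small in $L^q$ near the boundary, since $\theta-\frac1q>-\frac1q$ makes the quantity integrable in $x_n$. Lemma \ref{Basic esti Lem} (first and third items) controls the region $x_n\gtrsim 1$ and the far-away part. Together these give a reverse-Hölder relation between the defect measures:
\[
\Big(\int|\varphi|^q\,\ud\nu\Big)^{1/q}\le c_{\alpha,\beta}\Big(\int|\varphi|^p\,\ud\mu\Big)^{1/p}.
\]
The standard Lions argument then applies because $q>p$ (this is the last condition in \eqref{Index Condi}). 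It yields the splitting
\[
\mu=|f|^p+\sum_j\mu_j\delta_{y_j}+\mu_\infty, \qquad \nu=|E_{\alpha,\beta}f|^q+\dots,
\]
together with the inequality
\[
1\le \|f\|_p^{\,q}+\sum_j\mu_j^{q/p}+\mu_\infty^{q/p}.
\]
Strict convexity of $t\mapsto t^{q/p}$ forces all the mass into a single piece. The normalization rules out a Dirac mass, since no ball of radius $1$ carries more than $\frac12$, and it also rules out loss at infinity, since $B_1$ carries $\frac12$. Hence $\|f\|_p=1$ and $f_i\to f$ strongly in $L^p$, so $f$ is an optimizer.

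\textbf{Conclusion.} By the equality characterization in Theorem A (respectively Theorem B), $f$ is a constant multiple of $(\det\ud\Phi|_{\partial\R^n_+})^{1/p}$. Undoing the conformal normalization and transporting back to the sphere, we obtain
\[
\inf_{\Psi,\lambda}\big\|\lambda|\S^{n-1}|^{1/p}(u_i)_\Psi-1\big\|_{L^p(\S^{n-1})}\to0 .
\]

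\textbf{Main obstacle.} The hardest step is the localization estimate in the regime $\alpha+\beta<1$. There $E_{\alpha,\beta}f$ blows up like $x_n^{\alpha+\beta-1}$ at the boundary (cf. Gluck's boundary behaviour), so commutator and tail errors must be integrated carefully in $x_n$. This is exactly why Lemma \ref{Basic esti Lem 2} requires $\theta<\beta+\frac1q$, and I would rely on that lemma together with the condition $\frac1p>\frac{\alpha+\beta-1}{n-1}$ from the third item of Lemma \ref{Basic esti Lem}.
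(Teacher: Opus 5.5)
Your proposal is correct and matches the paper's approach. The paper omits the proof of Proposition~\ref{Prop 1}, saying only that it follows \cite[Theorem 3.1]{Hang&Wang&Yan} with Lemma~\ref{Basic esti Lem} and Lemma~\ref{Basic esti Lem 2} as input, which is the Lions-type concentration--compactness scheme in the half-space that you describe, with the boundary commutator handled by Lemma~\ref{Basic esti Lem 2}.

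Two small points of precision. First, the reduction to $f_i\ge 0$ is unnecessary, and $|E_{\alpha,\beta}f|\le E_{\alpha,\beta}|f|$ comes from positivity of the kernel, not from $q>p$; the argument runs verbatim for signed $f_i$, with $\lambda\in\{\pm1\}$ read off from the equality cases in Theorem A, whereas your claim that near-optimizers cannot carry mass of both signs would itself need proof. Second, the commutator $[\varphi,E_{\alpha,\beta}]$ is compact and so only kills weakly null sequences; the reverse H\"older inequality should therefore be derived either for the measures generated by $f_i-f$ (then combined with Brezis--Lieb via the pointwise convergence $E_{\alpha,\beta}f_i\to E_{\alpha,\beta}f$ in $\R^n_+$), or for $\nu,\mu$ only at atoms via shrinking cutoffs.
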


		\subsection{Spectral gaps}\label{Sec 2.2}
		\subsubsection{Spectral gaps with weight}
		
		We first recall some basic notation concerning the  hypergeometric function; for a complete treatment, see \cite{Hypergeometry,Gradshteyn&Ryzhik}.
		Given  real numbers $a,b,c$,  define
		\begin{align*}
			F\left(a,b;c; z \right)=\sum_{r=0}^{+\infty}\frac{(a)_{r}(b)_{r}}{(c)_{r}}\frac{z^r}{r!}\qquad\mathrm{for}\qquad |z|<1.
		\end{align*}
		where $ c \neq 0, -1, -2, \dots $ and $ (a)_k $ denotes the rising Pochhammer symbol:	
		$$
		(a)_{0}=1,\;(a)_{k}=a(a+1)\cdots(a+k-1), \;k\geq1.
		$$	Clearly, we have $F(a,b;c;z)=F(b,a;c;z)$. If $\mathrm{Re}~ c>\mathrm{Re}~ b>0$, it has the integral representation (cf. \cite[p.59]{Hypergeometry} ):
		\begin{align}\label{Pre 1.8}
			F(a,b;c;z)=\frac{\Gamma(c)}{\Gamma(b)\Gamma(c-b)}\int_{0}^{1}t^{b-1}(1-t)^{c-b-1}(1-tz)^{-a}\ud t.
		\end{align}
		Below we list several classical identities  that will be used in the subsequent analysis:
		\begin{itemize}	
			\item Transformation (cf. \cite[p.~1018: 9.131-1]{Gradshteyn&Ryzhik}):
			\begin{equation}\label{Pre 1.4}
				\begin{split}
					F(a,b;c;z)=(1-z)^{c-a-b} F(c-a,c-b;c;z).
				\end{split}
			\end{equation}
			
			\item     Gauss' recursion functions (cf. \cite{Gradshteyn&Ryzhik}, p.~1019, 9.137(11) and (17))
			\begin{align}\label{Pre 1.6}
				&F(a + 1, b; c; z)-F(a, b; c; z)=\frac{b}{c}z F\left( a + 1, b + 1; c + 1; z \right);\\
				\label{Pre 1.7}
				&F(a, b; c; z) = \frac{b}{c}\, F(a, b + 1; c + 1; z) +\frac{c - b}{c}\, F(a, b; c + 1; z).
			\end{align}		
		\end{itemize}
		
		We begin by recalling the integral representation lemma that expands $Q_{\alpha,\beta}$
		into a series involving harmonic polynomials and hypergeometric functions (see 
		\cite[Theorem 3.1]{yang} or  \cite[Theorem 4.1]{Gong&Yang&Zhang}).

		\begin{lem}\label{Sec 2 Spherical thm}  
			Let  $f\in C^{\infty}(\S^{n-1})$ and set $s=\frac{n-1}{2}+\gamma$ with $\gamma>0$. Define 
			\begin{align}\label{Sec 5 Integral formu solution}
				u(\xi)=\pi^{-\frac{n-1}{2}}\frac{\Gamma\left(\frac{n-1}{2}+\gamma\right)}{\Gamma(\gamma)}\int_{\S^{n-1}}  \left(\frac{1-|\xi|^2}{2|\xi-\eta|^2}\right)^sf(\eta) \ud V_{{\S^{n-1}}}(\eta).
			\end{align}
			Assume that $f$ has a spherical harmonic expansion $f = \sum\limits_{l=0}^{+\infty} Y_l$, where $Y_l\in \mathscr{H}_l$ (the space of  harmonic polynomials with degree $l$ in $\R^n$).
			Then $u$ can be expanded as
			\begin{align}\label{Sec 5 Series formu solution}
				u(\xi)=\left(\frac{1-r^2}{2}\right)^{n-1-s} \frac{\Gamma\left(\gamma+\frac{1}{2}\right)}{\Gamma(2\gamma)}\sum_{l=0}^{+\infty}\varphi_l(r^2)r^{l}Y_l,
			\end{align}
			where $r=|\xi|$ and
			\begin{align*}
				\varphi_l(t)
				=&\frac{\Gamma\left(l+\gamma+\frac{n-1}{2}\right)}{\Gamma(l+\frac{n}{2})}F\left(l+\frac{n-1}{2}-\gamma,\frac{1}{2}-\gamma; l+\frac{n}{2}; t\right).
			\end{align*}
		\end{lem}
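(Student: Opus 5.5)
The plan is to use the Funk--Hecke formula on each spherical shell. Then I reduce the radial factor to a one-variable hypergeometric integral and identify it via the Euler representation \eqref{Pre 1.8} together with the transformation \eqref{Pre 1.4}.

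\textbf{Step 1: Funk--Hecke on a shell.} Fix $\xi=r\zeta$ with $0\le r<1$ and $\zeta\in\S^{n-1}$, and write
\[
|\xi-\eta|^2=1+r^2-2r\,\zeta\cdot\eta .
\]
The kernel $\eta\mapsto(1+r^2-2rt)^{-s}$, with $t=\zeta\cdot\eta$, is zonal. By Funk--Hecke,
\[
\int_{\S^{n-1}}(1+r^2-2r\zeta\cdot\eta)^{-s}Y_l(\eta)\,\ud V(\eta)=\lambda_l(r)\,Y_l(\zeta),
\]
where
\[
\lambda_l(r)=|\S^{n-2}|\int_{-1}^{1}(1+r^2-2rt)^{-s}\,\frac{C_l^{\frac{n-2}{2}}(t)}{C_l^{\frac{n-2}{2}}(1)}\,(1-t^2)^{\frac{n-3}{2}}\,\ud t .
\]
Since $f$ is smooth, its harmonic series converges rapidly, so summation and integration commute. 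It therefore suffices to show
\[
\pi^{-\frac{n-1}{2}}\frac{\Gamma(s)}{\Gamma(\gamma)}\,2^{-s}(1-r^2)^s\lambda_l(r)\,r^{-l}
=\Big(\tfrac{1-r^2}{2}\Big)^{n-1-s}\frac{\Gamma(\gamma+\frac12)}{\Gamma(2\gamma)}\,\varphi_l(r^2),
\]
using $r^lY_l(\zeta)=Y_l(\xi)$.

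\textbf{Step 2: compute $\lambda_l(r)$.} Apply Rodrigues' formula for the Gegenbauer polynomial and integrate by parts $l$ times. Each derivative of $(1+r^2-2rt)^{-s}$ brings down a factor $2r(s+k)$, which gives
\[
\lambda_l(r)=c_{n,l}\,(s)_l\,r^l\int_{-1}^1(1+r^2-2rt)^{-s-l}(1-t^2)^{l+\frac{n-3}{2}}\,\ud t,
\]
with an explicit constant $c_{n,l}$. The remaining integral is classical. After the substitution $t=1-2\tau$ one has $1+r^2-2rt=(1-r)^2+4r\tau$. Combined with the quadratic transformation (equivalently, expanding $(1+r^2-2rt)^{-s-l}$ through the Gegenbauer generating function), it evaluates to a constant times
\[
F\Big(s+l,\;s-\tfrac{n}{2}+1;\;l+\tfrac n2;\;r^2\Big).
\]
This is the standard formula for the Poisson-type kernel coefficients. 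Next apply \eqref{Pre 1.4} with $c-a-b=n-1-2s$. It produces the factor $(1-r^2)^{n-1-2s}$ and converts the function into
\[
F\Big(l+\tfrac n2-s,\;n-1-s;\;l+\tfrac n2;\;r^2\Big).
\]
With $s=\frac{n-1}{2}+\gamma$, this is $F\big(l+\frac12-\gamma,\ \frac{n-1}{2}-\gamma;\ l+\frac n2;\ r^2\big)$. I still need to match it against $F\big(l+\frac{n-1}{2}-\gamma,\frac12-\gamma;l+\frac n2;r^2\big)$. I expect the matching to come from performing the $t$-integral via the expansion in the other order (summing first in the Gegenbauer index rather than the radial one). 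If the parameters are off by this swap, I will redo the reduction using the generating function with parameter $\frac{n-2}{2}$ directly, which yields parameters $a=s+l$ and $b=s-\frac{n-2}{2}=\gamma+\frac12$. After \eqref{Pre 1.4} these become exactly $l+\frac{n-1}{2}-\gamma$ and $\frac12-\gamma$, with $c=l+\frac n2$. This version is the route I would commit to.

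\textbf{Step 3: constants.} Multiply out $|\S^{n-2}|$, $c_{n,l}$, $(s)_l$, the Beta-function value of the $t$-integral, and the prefactor $\pi^{-\frac{n-1}{2}}\Gamma(s)/\Gamma(\gamma)$. Simplifying with the duplication formula $\Gamma(2\gamma)=\pi^{-1/2}2^{2\gamma-1}\Gamma(\gamma)\Gamma(\gamma+\frac12)$ should give
\[
\frac{\Gamma(\gamma+\frac12)}{\Gamma(2\gamma)}\cdot\frac{\Gamma(l+s)}{\Gamma(l+\frac n2)},
\]
and the powers of $2$ combine into $2^{-(n-1-s)}$.

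The main obstacle is Step 2: getting the right pair of hypergeometric parameters and the exact normalizing constant. A quick sanity check is available at $l=0$, $r=0$, where the left side equals $\pi^{-\frac{n-1}{2}}\frac{\Gamma(s)}{\Gamma(\gamma)}2^{-s}|\S^{n-1}|$ times $Y_0$, and this must agree with the right side.
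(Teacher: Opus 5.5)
Your committed route is correct. It is essentially the computation the paper itself carries out for $w_l$ in the proof of Lemma~\ref{Gap Lem 2}; the present lemma is only quoted from the literature. That computation uses Funk--Hecke, Rodrigues' formula with $l$ integrations by parts, the Beta-type identity \eqref{Const Lem equ-1} (whose third parameter should read $\nu+1$, as its derivation shows), then \eqref{Pre 1.4} and duplication. Indeed one gets $\lambda_l(r)=|\S^{n-1}|\frac{\Gamma(\frac n2)(s)_l}{\Gamma(l+\frac n2)}\,r^l F(s+l,\gamma+\frac12;l+\frac n2;r^2)$, which yields exactly your Step 3 constant.

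The ``mismatch'' in Step 2 is only an arithmetic slip, not a parameter swap. For $F(s+l,\,s-\frac n2+1;\,l+\frac n2;\,r^2)$ one has $c-a=\frac n2-s=\frac12-\gamma$ and $c-b=l+n-1-s=l+\frac{n-1}{2}-\gamma$; you attached the $l$ to the wrong parameter. So your first computation already produces $\varphi_l$. Your ``second'' route is literally the same one, since $s-\frac{n-2}{2}=s-\frac n2+1$.
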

		
		Recall that 
		\begin{align*}
			Q_{\alpha,\beta}(\varphi)(\xi)=\left(\frac{1-|\xi|^2}{2}\right)^{\beta}\int_{\S^{n-1}}  \frac{\varphi(\eta)}{|\xi-\eta|^{n-\alpha}} \ud V_{{\S^{n-1}}}(\eta).
		\end{align*}
		Suppose that $\varphi$ admits a spherical harmonic expansion $\varphi = \sum\limits_{l=0}^{+\infty} a_lY_l$; for simplicity we assume the spherical harmonics are normalized so that
		$$\int_{\mathbb{S}^{n-1}}|Y_l|^2=1,\quad l\in \mathbb{N}.$$ 
		Set
		\begin{align*}
			\gamma=\frac{n-\alpha}{2},\quad s=\frac{n}{2}+\gamma=n-\frac{\alpha}{2}.
		\end{align*}
		Applying Lemma \ref{Sec 2 Spherical thm} with this choice of parameters and using the duplication formula
		\begin{align}\label{Gamma 2z}
			\Gamma(2z)=\frac{2^{2z-1}}{\sqrt{\pi}}\Gamma(z)\Gamma\left(z+\tfrac{1}{2}\right),
		\end{align}
		we  rewrite $Q_{\alpha,\beta}$ as follows:
		\begin{align}\label{Gap lem equ a}
			Q_{\alpha,\beta}(\varphi)(\xi)
			=&\left(\frac{1-|\xi|^2}{2}\right)^{\frac{2\beta+\alpha-n}{2}}
			\int_{\S^{n-1}}  \left(\frac{1-|\xi|^2}{2|\xi-\eta|^2}\right)^{\frac{n-\alpha}{2}}
			\varphi(\eta) \ud V_{{\S^{n-1}}}(\eta)\nonumber\\
			=&\left(\frac{1-|\xi|^2}{2}\right)^{\beta+\alpha-1}\frac{\pi^{\frac{n}{2}}2^{\alpha}}{\Gamma(\frac{n-\alpha}{2})}
			\sum_{l=0}^{+\infty}\varphi_l(r^2)r^{l}a_lY_l,
		\end{align}
		where now
		\begin{align}\label{Sec 5 Series varphi}
			\varphi_l(t)
			=&\frac{\Gamma\left(l+\frac{n-\alpha}{2}\right)}{\Gamma(l+\frac{n}{2})}
			F\left(l+\frac{n+\alpha}{2}-1,\frac{\alpha}{2}; l+\frac{n}{2}; t\right).
		\end{align}
		We recall  the function $d_{\alpha,\beta}(\xi)=Q_{\alpha,\beta}(1)(\xi)$. By (\ref{Gap lem equ a}) and (\ref{Sec 5 Series varphi}), it can be written explicitly as
		\begin{align}\label{d Def}
			d_{\alpha,\beta}(\xi)=\frac{\pi^{\frac{n}{2}}2^{1-\beta}}{\Gamma(\frac{n}{2})}(1-|\xi|^2)^{\beta+\alpha-1}
			F\left(\frac{n+\alpha}{2}-1,\frac{\alpha}{2}; \frac{n}{2}; |\xi|^2\right).
		\end{align}	
		
		Now, we begin to present the following spectral gap lemma. It plays a central role in the proof of Theorem \ref{Thm 1}.

		\begin{lem}\label{Gap Lem}
			Assume $\varphi\in L^2(\S^{n-1})\cap \mathcal{H}^{\perp}$, where $\mathcal{H}$ is defined in (\ref{define H}). Then
			\begin{align*}
				\frac{1}{K_{n,\alpha,\beta}}
				=\sup_{\varphi\in L^2(\S^{n-1})\cap \mathcal{H}^{\perp},\ \varphi\not\equiv 0 }
				\frac{\int_{\B^n}d^{q-2}_{\alpha,\beta}Q^2_{\alpha,\beta}(\varphi)}{\|\varphi\|^2_{L^2(\S^{n-1})}},
			\end{align*}
			where the constant $K_{n,\alpha,\beta}$ is given by 
			\begin{align}\label{Gap lem K const}
				\frac{1}{K_{n,\alpha,\beta}}
				=&\frac{\pi^{n}2^{1-2\beta}}{\Gamma^2(\frac{n-\alpha}{2})} \frac{\Gamma\left(\frac{ n - \alpha+4}{2}\right)^{2}}{\Gamma\left( \frac{n+4}{2}\right)^{2}}\times
				\nonumber\\
				&\int_0^1 d^{q-2}_{\alpha,\beta}(\sqrt{t}) (1 - t)^{2(\beta + \alpha-1)} t^{n/2 +  1} F\left(\frac{n+\alpha}{2}+1,\frac{\alpha}{2}; 2+\frac{n}{2}; t\right)^{2} \mathrm{d}t.
			\end{align}
		\end{lem}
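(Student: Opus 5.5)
Expand $\varphi\in L^2(\S^{n-1})\cap\mathcal{H}^\perp$ as $\varphi=\sum_{l\ge2}a_lY_l$, with orthonormal spherical harmonics. The expansion \eqref{Gap lem equ a} then gives
\begin{align*}
Q_{\alpha,\beta}(\varphi)(r\eta)=\left(\frac{1-r^2}{2}\right)^{\beta+\alpha-1}\frac{\pi^{\frac n2}2^{\alpha}}{\Gamma(\frac{n-\alpha}{2})}\sum_{l\ge2}\varphi_l(r^2)r^la_lY_l(\eta).
\end{align*}
The weight $d^{q-2}_{\alpha,\beta}$ is radial by \eqref{d Def}. Integrating in polar coordinates $\xi=r\eta$, the orthogonality of the $Y_l$ on each sphere makes the quadratic form diagonal:
\begin{align*}
\int_{\B^n}d^{q-2}_{\alpha,\beta}Q^2_{\alpha,\beta}(\varphi)=\sum_{l\ge2}\lambda_l a_l^2,
\end{align*}
where, after substituting $t=r^2$,
\begin{align*}
\lambda_l=\frac{\pi^n 2^{2\alpha}}{2\Gamma^2(\frac{n-\alpha}{2})}2^{-2(\beta+\alpha-1)}\int_0^1 d^{q-2}_{\alpha,\beta}(\sqrt t)(1-t)^{2(\beta+\alpha-1)}t^{l+\frac n2-1}\varphi_l(t)^2\,\ud t.
\end{align*}
For $l=2$, inserting \eqref{Sec 5 Series varphi} gives $\Gamma(2+\frac{n-\alpha}{2})/\Gamma(2+\frac n2)$ times $F(\frac{n+\alpha}{2}+1,\frac\alpha2;2+\frac n2;t)$. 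The constants combine to $\pi^n2^{1-2\beta}$, so $\lambda_2$ is exactly the right-hand side of \eqref{Gap lem K const}. The supremum therefore equals $\sup_{l\ge2}\lambda_l$; it is attained at $\varphi=Y_2$, and the lemma reduces to showing $\lambda_l\le\lambda_2$ for all $l\ge2$. The natural route is to prove that the sequence is nonincreasing, $\lambda_{l+1}\le\lambda_l$, which I would establish pointwise in $t$.

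The key step is the comparison of the integrands
\begin{align*}
t^{l+1}\varphi_{l+1}(t)^2\le t^{l}\varphi_l(t)^2,
\end{align*}
which suffices because the weight $d^{q-2}(1-t)^{2(\beta+\alpha-1)}t^{\frac n2-1}$ is positive. It is enough to show $\sqrt t\,|\varphi_{l+1}(t)|\le|\varphi_l(t)|$. Write $\varphi_l=\frac{\Gamma(l+\frac{n-\alpha}2)}{\Gamma(l+\frac n2)}F(a_l,b;c_l;t)$ with $a_l=l+\frac{n+\alpha}2-1$, $b=\frac\alpha2$ and $c_l=l+\frac n2$; the index $l+1$ shifts $a,c$ by one each. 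I would split into regimes according to the sign of $\alpha$.

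When $\alpha>0$, all Pochhammer coefficients are positive. Comparing the series term by term, the coefficient ratio is $\frac{(a+1)_r(c)_r}{(a)_r(c+1)_r}=\frac{(a+r)c}{a(c+r)}$, and together with the prefactor ratio $\frac{l+\frac{n-\alpha}2}{l+\frac n2}$ the termwise bound should follow from $\frac{(a+r)c}{a(c+r)}\cdot\frac{l+\frac{n-\alpha}{2}}{l+\frac n2}$ being controlled. This needs care, since $a>c$ when $\alpha>2$. For large $\alpha$ the ratio grows in $r$, and I would instead use the factor $\sqrt t<1$: shift the series index, or use the Gauss relations \eqref{Pre 1.6}--\eqref{Pre 1.7}. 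Alternatively, the Euler transformation \eqref{Pre 1.4} gives $F(a_l,b;c_l;t)=(1-t)^{\frac{1-\alpha}{2}}F(c_l-a_l,c_l-b;c_l;t)$ with $c_l-a_l=1-\frac\alpha2$ independent of $l$, which makes the $l$-dependence cleaner. When $\alpha<0$ the coefficients alternate in sign structure ($b<0$), and I would use the integral representation \eqref{Pre 1.8} on the transformed form, where $c-b>0$ permits it, to obtain positivity and monotonicity in $l$.

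I expect the main obstacle to be this monotonicity $\lambda_{l+1}\le\lambda_l$ uniformly over all admissible $(\alpha,\beta)$. The sign structure changes across the regimes $\alpha\le 0$, $0<\alpha\le 2$ and $\alpha>2$, and a pointwise comparison may fail near $t=1$, where the $F$'s can blow up like $(1-t)^{1-\alpha}$. If a pointwise comparison is unavailable in some regime, I would fall back on comparing the integrals $\lambda_l$ directly via the Gauss recursion, writing $\varphi_l-\sqrt{\cdot}\,\varphi_{l+1}$ as a positive combination of hypergeometric functions.
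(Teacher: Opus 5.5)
Your setup is the paper's, and the details are right. You diagonalize via \eqref{Gap lem equ a} and compute $\lambda_l$; your constant $\pi^n2^{1-2\beta}/\Gamma^2(\frac{n-\alpha}{2})$ and your identification of $\lambda_2$ with \eqref{Gap lem K const} are correct. You then reduce to $\lambda_{l+1}\le\lambda_l$ via the pointwise bound $\sqrt t\,\varphi_{l+1}\le\varphi_l$. But that bound is the entire content of the lemma (the paper proves it separately as Lemma~\ref{lem:hypergeo_monotonicity}), and you only list possible tools, so the proof has a genuine gap.

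Your map of where the difficulty lies is also off. For $\alpha\ge1$ nothing is delicate: with $a=l+\frac{n+\alpha}{2}-1$ and $c=l+\frac n2$,
\[
\frac{(l+\frac{n-\alpha}{2})(a+r)}{a(c+r)}-1=-\frac{\frac{\alpha}{2}a+r(\alpha-1)}{a(c+r)}<0,
\]
so $\varphi_{l+1}<\varphi_l$ coefficientwise, even without $\sqrt t$. For $\alpha>2$ this ratio \emph{decreases} in $r$, so you have the direction reversed. The singular factor $(1-t)^{1-\alpha}$ is the same for every $l$, so it causes no trouble. The coefficientwise comparison breaks down only for $\alpha<1$, where the ratio tends to $\frac{l+\frac{n-\alpha}{2}}{l+\frac{n+\alpha}{2}-1}>1$. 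For $0<\alpha<1$ your Euler-transform idea does close the case once carried out; note the exponent is $1-\alpha$, not $\frac{1-\alpha}{2}$. The transformed function $F(1-\frac\alpha2,l+\frac{n-\alpha}{2};l+\frac n2;t)$ has positive coefficients and termwise ratio $\frac{l+\frac{n-\alpha}{2}+r}{l+\frac n2+r}<1$.

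The real obstacle is $\alpha\le0$, and none of your suggestions resolves it. There the factor $\sqrt t$ is indispensable, because $\varphi_l(0)=\Gamma(l+\frac{n-\alpha}{2})/\Gamma(l+\frac n2)$ increases in $l$. Moreover, $\sqrt t\,\varphi_{l+1}\le\varphi_l$ becomes an equality at $t=1$, since $\varphi_l(1)=\Gamma(1-\alpha)/\Gamma(1-\frac\alpha2)$ for every $l$. So positivity from \eqref{Pre 1.8}, or any lossy bound, cannot work.

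Set $b_l=l+\frac{n-\alpha}{2}$, $c_l=l+\frac n2$, $G_l=F(1-\frac\alpha2,b_l;c_l;t)$ and $H_l=F(-\frac\alpha2,b_l;c_l;t)$. The goal is $\frac{b_l^2}{c_l^2}tG_{l+1}^2<G_l^2$. The linear relation \eqref{Pre 1.6} gives only $G_l-\frac{b_l}{c_l}tG_{l+1}=H_l>0$, which is the bound with $t$ in place of $\sqrt t$ and is too weak. The missing idea is to multiply \eqref{Pre 1.6} by \eqref{Pre 1.7}, which yields
\[
G_l^2-\frac{b_l^2}{c_l^2}\,t\,G_{l+1}^2=H_lG_l+\frac{\alpha}{2c_l}\cdot\frac{b_l}{c_l}\,t\,F\Big(1-\frac\alpha2,b_l;c_l+1;t\Big)\,G_{l+1}.
\]
The last term is nonpositive. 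One then shows it is dominated by $H_lG_l$ through two termwise bounds: $0\le-\frac{\alpha}{2c_l}F(1-\frac\alpha2,b_l;c_l+1;t)<H_l$ and $0<\frac{b_l}{c_l}tG_{l+1}<G_l$. Your fallback of writing $\varphi_l-\sqrt t\,\varphi_{l+1}$ as a positive combination does not lead here. The right-hand side above is a difference, and everything hinges on that domination step.
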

		
		The proof of Lemma \ref{Gap Lem} relies on the following monotonicity lemma. Its proof is highly nontrivial,  because different ranges of the parameter
		$\alpha$ require distinct strategies for estimating the  underlying hypergeometric functions.
		\begin{lem}\label{lem:hypergeo_monotonicity}
			Let \(t \in (0,1)\) and let \(l\) be a non-negative integer.
			\begin{enumerate}
				\item For \(0 < \alpha < n\), the quantity
				\[
				\frac{\Gamma\!\left(l + \frac{n - \alpha}{2}\right)^2}
				{\Gamma\!\left(l + \frac{n}{2}\right)^2}
				F\left(l+\frac{n+\alpha}{2}-1,\frac{\alpha}{2}; l+\frac{n}{2}; t\right)^2
				\]
				is strictly decreasing in \(l\).
				
				\item For \(\alpha \leq 0\),
				the quantity
				\[
				\frac{\Gamma\!\left(l + \frac{n - \alpha}{2}\right)^2}
				{\Gamma\!\left(l + \frac{n}{2}\right)^2} t^{l}
				F\left(l+\frac{n+\alpha}{2}-1,\frac{\alpha}{2}; l+\frac{n}{2}; t\right)^2
				\]
				is strictly decreasing in \(l\).
			\end{enumerate}
		\end{lem}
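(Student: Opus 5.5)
\emph{Part (1), $0<\alpha<n$.} Here everything is positive, so I would work directly with the Euler integral representation \eqref{Pre 1.8}. Take $a=l+\frac{n+\alpha}{2}-1$, $b=\frac{\alpha}{2}>0$ and $c=l+\frac n2$, so that $c-b=l+\frac{n-\alpha}{2}>0$. The factor $\Gamma(c)/\Gamma(c-b)$ from \eqref{Pre 1.8} cancels exactly against the prefactor $\Gamma(l+\frac{n-\alpha}{2})/\Gamma(l+\frac n2)$. This gives
\begin{align*}
G_l(t):=&\frac{\Gamma(l+\frac{n-\alpha}{2})}{\Gamma(l+\frac n2)}F\Big(l+\tfrac{n+\alpha}{2}-1,\tfrac{\alpha}{2};l+\tfrac n2;t\Big)\\
=&\frac{1}{\Gamma(\frac{\alpha}{2})}\int_0^1 s^{\frac{\alpha}{2}-1}(1-s)^{\frac{n-\alpha}{2}-1}(1-st)^{1-\frac{n+\alpha}{2}}\Big(\frac{1-s}{1-st}\Big)^{l}\,\ud s .
\end{align*}
For $s,t\in(0,1)$ we have $0<\frac{1-s}{1-st}<1$, so the integrand is positive and strictly decreasing in $l$. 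Hence $G_l(t)>0$ is strictly decreasing in $l$, and so is $G_l(t)^2$. I expect this part to be routine.

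\emph{Part (2), $\alpha\le 0$.} Now $b=\frac{\alpha}{2}\le0$ and \eqref{Pre 1.8} is not available directly. First I would apply the Euler transformation \eqref{Pre 1.4}. Here $c-a-b=1-\alpha$, $c-a=1-\frac{\alpha}{2}$ and $c-b=l+\frac{n-\alpha}{2}$, so
\[
F\Big(l+\tfrac{n+\alpha}{2}-1,\tfrac{\alpha}{2};l+\tfrac n2;t\Big)=(1-t)^{1-\alpha}F\Big(1-\tfrac{\alpha}{2},l+\tfrac{n-\alpha}{2};l+\tfrac n2;t\Big).
\]
The right-hand $F$ has positive series coefficients, so $G_l(t)>0$. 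Since $(1-t)^{1-\alpha}$ does not depend on $l$, it suffices to show that
\[
h_l(t):=t^{l/2}\frac{\Gamma(l+\frac{n-\alpha}{2})}{\Gamma(l+\frac n2)}F\Big(1-\tfrac{\alpha}{2},l+\tfrac{n-\alpha}{2};l+\tfrac n2;t\Big)
\]
is strictly decreasing in $l$.

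\emph{Main obstacle.} This is the delicate step, and the factor $t^{l/2}$ is indispensable. At $t=1$, Gauss' summation shows that $G_l(1)=\Gamma(1-\alpha)/\Gamma(1-\frac\alpha2)$ is independent of $l$, so any monotonicity must degenerate as $t\to1$. For the same reason a naive termwise comparison of the power series fails: the ratio of corresponding coefficients is $\sqrt t\,\frac{l+r+\frac{n-\alpha}{2}}{l+r+\frac n2}$, which exceeds $1$ for $t$ near $1$. I would therefore proceed as follows.

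\begin{enumerate}
\item Derive from \eqref{Pre 1.6} and \eqref{Pre 1.7} the contiguous identity
\[
F(a+1,b;c+1;t)-F(a,b;c;t)=\frac{b(c-a)}{c(c+1)}\,t\,F(a+1,b+1;c+2;t),
\]
applied to the original parameters. Here $b(c-a)=\frac{\alpha}{2}(1-\frac{\alpha}{2})\le0$.
\item Write $G_{l+1}$ in terms of $G_l$ using this identity together with the Gamma ratio $\frac{l+\frac{n-\alpha}{2}}{l+\frac n2}\ge1$.
\item Control the correction term by applying \eqref{Pre 1.4} once more to $F(a+1,b+1;c+2;t)$, which turns it into a positive series times $(1-t)^{-\alpha}$.
\item Combine these to reduce $h_l>h_{l+1}$ to an inequality between two positive power series in $t$, to be checked coefficientwise after multiplying through by $(1-t)$. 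The extra factor $t$ coming from $t^{l}$ is what should absorb the loss of the Gamma ratio.
\end{enumerate}

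If the coefficientwise comparison does not close, my fallback is an ODE argument. Set $D_l=h_l-h_{l+1}$. Using the hypergeometric differential equation for both terms, I would derive a first-order differential inequality for $D_l$ and combine it with the boundary behaviour: $D_l\sim h_l>0$ as $t\to0$, since $h_{l+1}=O(t^{(l+1)/2})$, while $D_l\to0$ as $t\to1$ with a sign fixed by the next-order term of the expansion at $t=1$. These two facts together should rule out a sign change of $D_l$ on $(0,1)$.
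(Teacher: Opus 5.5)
Part (1) is correct and takes a cleaner route than the paper. Part (2), however, has a genuine gap.

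\textbf{Part (1).} The paper splits into $\alpha\ge1$, handled by termwise comparison of the series, and $0<\alpha<1$, handled by Euler's transformation \eqref{Pre 1.4} followed by termwise comparison. Your use of the Euler integral \eqref{Pre 1.8} cancels the Gamma prefactor exactly. Since $0<\frac{1-s}{1-st}<1$, the integrand is pointwise strictly decreasing in $l$, which covers all $0<\alpha<n$ at once and gives positivity for free.

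\textbf{Part (2): the gap.} This is the only delicate case, and the inequality that carries all the difficulty is never proved. Your reductions are sound. The contiguous identity in your first step is correct; in your third step the prefactor should be $(1-t)^{1-\alpha}$, not $(1-t)^{-\alpha}$. But step 4 is a hope rather than an argument.

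\begin{itemize}
\item The target $h_l>h_{l+1}$ reads $G_l>\sqrt t\,G_{l+1}$. This is not an inequality between power series in $t$, and the available factor is $\sqrt t$, not $t$.
\item For $\alpha<0$ the inequality is extremely tight at $t=1$. With $C=\Gamma(1-\alpha)/\Gamma(1-\frac\alpha2)$ one has $G_l(t)=C\bigl(1+\tfrac12(l+\tfrac{n+\alpha}2-1)(1-t)\bigr)+o(1-t)$, and $1-\sqrt t=\frac{1-t}{2}+O((1-t)^2)$. So both the zeroth- and first-order terms of $G_l-\sqrt t\,G_{l+1}$ cancel.
\item You never say which coefficientwise comparison is supposed to survive this cancellation.
\end{itemize}

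The ODE fallback is not a proof either. No differential inequality is written down, and the boundary input is wrong: your $h_l$ diverges like $(1-t)^{\alpha-1}$, so $D_l$ need not tend to $0$. For example, for $\alpha=0$ you get $h_l=t^{l/2}(1-t)^{-1}$ and $D_l=t^{l/2}/(1+\sqrt t)\to\frac12$.

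\textbf{The missing idea.} Square first, which removes $\sqrt t$, and then use two contiguous relations at once. Put
\[
\rho=\frac{l+\frac{n-\alpha}2}{l+\frac n2},\qquad F_1=F\bigl(1-\tfrac\alpha2,l+\tfrac{n-\alpha}2;l+\tfrac n2;t\bigr),\qquad F_2=F\bigl(1-\tfrac\alpha2,l+1+\tfrac{n-\alpha}2;l+1+\tfrac n2;t\bigr).
\]
Then \eqref{Pre 1.6} and \eqref{Pre 1.7} give
\begin{align*}
F_1-\rho tF_2&=F_0:=F\bigl(-\tfrac\alpha2,\,l+\tfrac{n-\alpha}2;\,l+\tfrac n2;\,t\bigr),\\
\rho F_2-F_1&=E:=-\frac{\alpha}{2(l+\frac n2)}\,F\bigl(1-\tfrac\alpha2,\,l+\tfrac{n-\alpha}2;\,l+1+\tfrac n2;\,t\bigr).
\end{align*}
Hence
\[
F_1^2-\rho^2tF_2^2=F_1(F_1-\rho tF_2)+\rho tF_2(F_1-\rho F_2)=F_1F_0-\rho tF_2\,E.
\]
The right-hand side is strictly positive for two reasons.
\begin{itemize}
\item $F_0\ge1$, because all its coefficients are nonnegative and its constant term is $1$. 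Hence $F_1>\rho tF_2>0$.
\item Termwise $0\le E<F_0$: the $j$-th coefficient of $E$ equals that of $F_0$ times $\frac{j-\alpha/2}{l+\frac n2+j}<1$. This uses $-\alpha<n$, which follows from \eqref{Index Condi}.
\end{itemize}
Therefore $F_1F_0>\rho tF_2F_0>\rho tF_2E$, which gives $F_1>\rho\sqrt t\,F_2$, i.e.\ $h_l>h_{l+1}$.

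This is exactly the paper's identity \eqref{2.19} together with the comparisons that follow it. So the extra $t$ does absorb the Gamma ratio, but only multiplicatively after squaring, not through a single linear comparison of two power series.
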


		\begin{pf}
			The proof  is divided into three cases according to the sign of \(\alpha\).
			
			\medskip
			\noindent\textbf{Case 1:} $\alpha\geq1$. For $t\in (0,1)$, it suffices to verify
			\begin{align}\label{a2.13}
				\frac{l+\frac{n-\alpha}{2}}{l+\frac{n}{2}}F\left(l+\frac{n+\alpha}{2},\frac{\alpha}{2}; l+1+\frac{n}{2}; t\right)
				<F\left(l-1+\frac{n+\alpha}{2},\frac{\alpha}{2}; l+\frac{n}{2}; t\right).
			\end{align}
			Using the definition of the hypergeometric function, we expand the left-hand side as
			\begin{align*}
				&\frac{l+\frac{n-\alpha}{2}}{l+\frac{n}{2}}F\left(l+\frac{n+\alpha}{2},\frac{\alpha}{2}; l+1+\frac{n}{2}; t\right)\\
				=&\frac{l+\frac{n-\alpha}{2}}{l+\frac{n}{2}}\sum_{j=0}^{\infty}\frac{(l+ \frac{n + \alpha}{2})_j (\frac{\alpha}{2})_j}{(l + 1+\frac{n}{2})_j j!}t^{j}\\
				=& \sum_{j=0}^{\infty}\frac{(l+ \frac{n + \alpha}{2}-1)_j (\frac{\alpha}{2})_j}{(l + \frac{n}{2})_j j!}t^{j}\times \frac{(l+\frac{n-\alpha}{2})(l-1+\frac{n+\alpha}{2}+j)}{(l-1+\frac{n+\alpha}{2})(l+\frac{n}{2}+j)}.
			\end{align*}
			Since for $\alpha\geq 1$,
			\begin{align*}
				&\frac{(l+\frac{n-\alpha}{2})(l-1+\frac{n+\alpha}{2}+j)}{(l-1+\frac{n+\alpha}{2})(l+\frac{n}{2}+j)}-1\\
				=&-\frac{1}{(l-1+\frac{n+\alpha}{2})(l+\frac{n}{2}+j)}\left\{\frac{\alpha}{2}(l-1+\frac{n+\alpha}{2})+j(\alpha-1)\right\}<0,
			\end{align*}
			we obtain
			\begin{align*}
				&\frac{l+\frac{n-\alpha}{2}}{l+\frac{n}{2}}F\left(l+\frac{n+\alpha}{2},\frac{\alpha}{2}; l+1+\frac{n}{2}; t\right)\\
				<& \sum_{j=0}^{\infty}\frac{(l+ \frac{n + \alpha}{2}-1)_j (\frac{\alpha}{2})_j}{(l + \frac{n}{2})_j j!}t^{j}
				=F\left(l-1+\frac{n+\alpha}{2},\frac{\alpha}{2}; l+\frac{n}{2}; t\right),
			\end{align*}
			which proves \eqref{a2.13}.
			
			\medskip
			\noindent\textbf{Case 2:} $0< \alpha<1$. Using the  transformation formula
			\begin{align}\label{a2.15}
				F\left( l-1+ \frac{n +\alpha}{2}, \frac{\alpha }{2}; l + \frac{n}{2}; t \right)
				=(1-t)^{1-\alpha}F\left(1-\frac{\alpha}{2}, l + \frac{n-\alpha}{2};  l + \frac{n}{2}; t \right),
			\end{align}
			it remains to check
			\begin{align}\label{a2.14}
				\frac{l+\frac{n-\alpha}{2}}{l+\frac{n}{2}}F\left(1-\frac{\alpha}{2}, l +1+ \frac{n-\alpha}{2};  l +1+ \frac{n}{2}; t \right)
				<F\left(1-\frac{\alpha}{2}, l + \frac{n-\alpha}{2};  l + \frac{n}{2}; t \right).
			\end{align}
			Noting that $1-\frac{\alpha}{2}>0$ and $\alpha>0$, we expand
			\begin{align*}
				&\frac{l+\frac{n-\alpha}{2}}{l+\frac{n}{2}}F\left(1-\frac{\alpha}{2}, l +1+ \frac{n-\alpha}{2};  l +1+ \frac{n}{2}; t \right)\\
				=&\frac{l+\frac{n-\alpha}{2}}{l+\frac{n}{2}}\sum_{j=0}^{\infty}\frac{(1-\frac{\alpha}{2})_j(l+1+ \frac{n - \alpha}{2})_j }{(l + 1+\frac{n}{2})_j j!}t^j\\
				=&\sum_{j=0}^{\infty}\frac{(1-\frac{\alpha}{2})_j(l+ \frac{n - \alpha}{2})_j }{(l + \frac{n}{2})_j j!}t^j\times \frac{l+\frac{n-\alpha}{2}+j}{l+\frac{n}{2}+j}\\
				<&\sum_{j=0}^{\infty}\frac{(1-\frac{\alpha}{2})_j(l+ \frac{n - \alpha}{2})_j }{(l + \frac{n}{2})_j j!}t^j=
				F\left(1-\frac{\alpha}{2}, l + \frac{n-\alpha}{2};  l + \frac{n}{2}; t \right),
			\end{align*}
			which establishes \eqref{a2.14}.
			
			\medskip
			\noindent\textbf{Case 3:} $\alpha\leq 0$.
			By  \eqref{a2.15}, the monotonicity for this range is equivalent to
			\begin{align}\label{a2.16}
				\frac{\left(l +\frac{ n - \alpha}{2}\right)^{2}}{\left(l  + \frac{n}{2}\right)^{2}}tF\left(1-\frac{\alpha}{2}, l + 1+\frac{n-\alpha}{2};  l + 1+\frac{n}{2}; t \right)^2
				<F\left(1-\frac{\alpha}{2}, l + \frac{n-\alpha}{2};  l + \frac{n}{2}; t \right)^{2}.
			\end{align}
			
			We first derive the identity
			\begin{align}\label{}
				\nonumber &F\left(1-\frac{\alpha}{2}, l + \frac{n-\alpha}{2};  l + \frac{n}{2}; t \right)^{2}-\frac{\left(l +\frac{ n - \alpha}{2}\right)^{2}}{\left(l  + \frac{n}{2}\right)^{2}}tF\left(1-\frac{\alpha}{2}, l + 1+\frac{n-\alpha}{2};  l + 1+\frac{n}{2}; t \right)^2\\
				\nonumber =
				&\frac{ \frac{\alpha}{2}}{l + \frac{n}{2}}
				\frac{l + \frac{n-\alpha}{2}}{l + \frac{n}{2}}tF\left(1-\frac{\alpha}{2}, l + \frac{n-\alpha}{2};  l+1 + \frac{n}{2}; t \right)F\left(1-\frac{\alpha}{2}, l +1+ \frac{n-\alpha}{2};  l+1 + \frac{n}{2}; t \right)\\
				&+F\left(-\frac{\alpha}{2}, l + \frac{n-\alpha}{2};  l + \frac{n}{2}; t \right)F\left(1-\frac{\alpha}{2}, l + \frac{n-\alpha}{2};  l + \frac{n}{2}; t \right).
				\label{2.19}
			\end{align}
			Indeed, applying \eqref{Pre 1.6} and \eqref{Pre 1.7} yields
			\begin{align*}
				&
				F\left(1-\frac{\alpha}{2}, l + \frac{n-\alpha}{2};  l + \frac{n}{2}; t \right)-
				F\left(-\frac{\alpha}{2}, l + \frac{n-\alpha}{2};  l + \frac{n}{2}; t \right)\\
				=&\frac{l +\frac{ n - \alpha}{2}}{l  + \frac{n}{2}}tF\left(1-\frac{\alpha}{2}, l + 1+\frac{n-\alpha}{2};  l + 1+\frac{n}{2}; t \right)
			\end{align*}
			and
			\begin{align*}
				&F\left(1-\frac{\alpha}{2}, l + \frac{n-\alpha}{2};  l + \frac{n}{2}; t \right)\\
				=&\frac{l +\frac{ n - \alpha}{2}}{l  + \frac{n}{2}}F\left(1-\frac{\alpha}{2}, l + 1+\frac{n-\alpha}{2};  l + 1+\frac{n}{2}; t \right)+\frac{\frac{ \alpha}{2}}{l  + \frac{n}{2}}
				F\left(1-\frac{\alpha}{2}, l + \frac{n-\alpha}{2};  l + 1+\frac{n}{2}; t \right).
			\end{align*}
			Multiplying the two expressions gives
			\begin{align*}
				&\left\{F\left(1-\frac{\alpha}{2}, l + \frac{n-\alpha}{2};  l + \frac{n}{2}; t \right)-
				F\left(-\frac{\alpha}{2}, l + \frac{n-\alpha}{2};  l + \frac{n}{2}; t \right)\right\}
				F\left(1-\frac{\alpha}{2}, l + \frac{n-\alpha}{2};  l + \frac{n}{2}; t \right)\\
				=&\frac{\left(l +\frac{ n - \alpha}{2}\right)^{2}}{\left(l  + \frac{n}{2}\right)^{2}}tF\left(1-\frac{\alpha}{2}, l + 1+\frac{n-\alpha}{2};  l + 1+\frac{n}{2}; t \right)^2+\\
				&\frac{ \frac{\alpha}{2}}{l + \frac{n}{2}}
				\frac{l + \frac{n-\alpha}{2}}{l + \frac{n}{2}}tF\left(1-\frac{\alpha}{2}, l + \frac{n-\alpha}{2};  l+1 + \frac{n}{2}; t \right)F\left(1-\frac{\alpha}{2}, l +1+ \frac{n-\alpha}{2};  l+1 + \frac{n}{2}; t \right),
			\end{align*}
			which proves (\ref{2.19}).

			Observe that
			\begin{align*}
				0<&-\frac{ \frac{\alpha}{2}}{l + \frac{n}{2}}F\left(1-\frac{\alpha}{2}, l + \frac{n-\alpha}{2};  l+1 + \frac{n}{2}; t \right)\\			=&\sum_{j=0}^{\infty}\frac{(-\frac{\alpha}{2})_{j}( l + \frac{n-\alpha}{2})_j}{(l+ \frac{n}{2})_{j}j!}\frac{j-\frac{\alpha}{2}}{l+\frac{n}{2}+j}t^{j}\\
				<&\sum_{j=0}^{\infty}\frac{(-\frac{\alpha}{2})_{j}( l + \frac{n-\alpha}{2})_j}{(l+ \frac{n}{2})_{j}j!}t^{j}
				=F\left(-\frac{\alpha}{2}, l + \frac{n-\alpha}{2};  l + \frac{n}{2}; t \right)
			\end{align*}
			and
			\begin{align*}
				0<&\frac{l + \frac{n-\alpha}{2}}{l + \frac{n}{2}}tF\left(1-\frac{\alpha}{2}, l +1+ \frac{n-\alpha}{2};  l+1 + \frac{n}{2}; t \right)\\
				=&\sum_{j=0}^{\infty}\frac{(1-\frac{\alpha}{2})_{j}( l + \frac{n-\alpha}{2})_{j+1}}{(l+ \frac{n}{2})_{j+1}j!}t^{j+1}=
				\sum_{j=0}^{\infty}\frac{(1-\frac{\alpha}{2})_{j+1}( l + \frac{n-\alpha}{2})_{j+1}}{(l+ \frac{n}{2})_{j+1}(j+1)!}\frac{j+1}{j+1-\frac{\alpha}{2}}t^{j+1}\\
				<&\sum_{j=0}^{\infty}\frac{(1-\frac{\alpha}{2})_{j+1}( l + \frac{n-\alpha}{2})_{j+1}}{(l+ \frac{n}{2})_{j+1}(j+1)!}t^{j+1}\\
				=&F\left(1-\frac{\alpha}{2}, l + \frac{n-\alpha}{2};  l + \frac{n}{2}; t \right)-1
				<F\left(1-\frac{\alpha}{2}, l + \frac{n-\alpha}{2};  l + \frac{n}{2}; t \right).
			\end{align*}
			Consequently,
			\begin{align}
				&-\frac{ \frac{\alpha}{2}}{l + \frac{n}{2}}
				\frac{l + \frac{n-\alpha}{2}}{l + \frac{n}{2}}t
				F\left(1-\frac{\alpha}{2}, l + \frac{n-\alpha}{2};  l+1 + \frac{n}{2}; t \right)
				F\left(1-\frac{\alpha}{2}, l +1+ \frac{n-\alpha}{2};  l+1 + \frac{n}{2}; t \right)\nonumber\\
				<&F\left(-\frac{\alpha}{2}, l + \frac{n-\alpha}{2};  l + \frac{n}{2}; t \right)
				F\left(1-\frac{\alpha}{2}, l + \frac{n-\alpha}{2};  l+ \frac{n}{2}; t \right). \label{2.21}
			\end{align}
			Combining \eqref{2.21} with \eqref{2.19} yields
			\begin{align*}
				F\left(1-\frac{\alpha}{2}, l + \frac{n-\alpha}{2};  l + \frac{n}{2}; t \right)^{2}
				>\frac{\left(l +\frac{ n - \alpha}{2}\right)^{2}}{\left(l  + \frac{n}{2}\right)^{2}}tF\left(1-\frac{\alpha}{2}, l + 1+\frac{n-\alpha}{2};  l + 1+\frac{n}{2}; t \right)^2,
			\end{align*}
			which is exactly   \eqref{a2.16}. This completes the proof for \(\alpha \le 0\).
			
		\end{pf}
		
		\textbf{Proof of Lemma \ref{Gap Lem}}.
		Let $\varphi=\sum_{l=0}^{+\infty}a_lY_l$ be the  spherical harmonic expansion of $\varphi$. Using \eqref{Gap lem equ a}, we compute
		\begin{align}\label{Gap lem A0}
			\int_{\B^n}d^{q-2}_{\alpha,\beta}Q^2_{\alpha,\beta}(\varphi)
			=&\frac{\pi^{n}2^{2\alpha}}{\Gamma^2(\frac{n-\alpha}{2})}\sum_{l=0}^{+\infty}a_l^2\int_{0}^{1}d^{q-2}_{\alpha,\beta}(r)\left(\frac{1-r^2}{2}\right)^{2(\beta+\alpha-1)}
			r^{n+2l-1}\varphi^2_l(r^2)\ud r\nonumber\\
			=&\frac{\pi^{n}2^{1-2\beta}}{\Gamma^2(\frac{n-\alpha}{2})}\sum_{l=0}^{+\infty}a_l^2\int_{0}^{1}
			d^{q-2}_{\alpha,\beta}(\sqrt{t})\left(1-t\right)^{2(\beta+\alpha-1)}t^{n/2+l-1}\varphi^2_l(t)\ud t\nonumber\\
			=&\frac{\pi^{n}2^{1-2\beta}}{\Gamma^2(\frac{n-\alpha}{2})} \sum_{l=0}^{+\infty} a_l^2 A^{\{\alpha,\beta\}}_{l},
		\end{align}
		where
		\begin{align}\label{Gap lem Al}
			A^{\{\alpha,\beta\}}_{l}=& \int_{0}^{1}
			d^{q-2}_{\alpha,\beta}(\sqrt{t})\left(1-t\right)^{2(\beta+\alpha-1)}t^{n/2+l-1}\varphi^2_l(t)\ud t\nonumber\\
			=&\frac{\Gamma\left(l +\frac{ n - \alpha}{2}\right)^{2}}{\Gamma\left(l + \frac{n}{2}\right)^{2}}
			\int_0^1 d^{q-2}_{\alpha,\beta}(\sqrt{t})(1 - t)^{2(\beta + \alpha-1)} t^{n/2 + l - 1} F\left(l+\frac{n+\alpha}{2}-1,\frac{\alpha}{2}; l+\frac{n}{2}; t\right)^{2} \mathrm{d}t.
		\end{align}
		By Lemma \ref{lem:hypergeo_monotonicity},  we have
		\begin{align}\label{Gap lem claim}
			A^{\{\alpha,\beta\}}_{l+1}<A^{\{\alpha,\beta\}}_{l},\qquad l\in \mathbb{N}.
		\end{align}
		
		With the monotonicity \eqref{Gap lem claim} established, the conclusion follows from the orthogonality condition. Indeed, the condition $\varphi\in\mathcal{H}^\perp$ is equivalent to $a_0=a_1=0$. Hence
		\begin{align*}
			\int_{\B^n}d^{q-2}_{\alpha,\beta}Q^2_{\alpha,\beta}(\varphi)
			&=\frac{\pi^{n}2^{1-2\beta}}{\Gamma^2(\frac{n-\alpha}{2})} \sum_{l=2}^{+\infty} a_l^2 A^{\{\alpha,\beta\}}_{l}\\
			&\leq\frac{\pi^{n}2^{1-2\beta}}{\Gamma^2(\frac{n-\alpha}{2})} A^{\{\alpha,\beta\}}_{2}\|\varphi\|_{L^2(\mathbb{S}^{n-1})}^2.
		\end{align*}
		This shows that the supremum is attained by the second-order spherical harmonics and yields the explicit constant in \eqref{Gap lem K const}. The proof of Lemma \ref{Gap Lem} is thereby completed.
		\begin{rem}
			Consider the special case \(\alpha = 0\) and \(\beta = 1\). A straightforward calculation gives
			\[
			A^{\{0,1\}}_{2} = \frac{2}{n+4}\; d_{0,1}^{\,q-2},
			\qquad\text{where}\qquad
			d_{0,1} = \frac{\pi^{\frac{n}{2}}}{\Gamma\!\left(\frac{n}{2}\right)}.
			\]
			Consequently, the sharp constant becomes
			\[
			K_{n,0,1} = \frac{4(n+4)}{|\mathbb{S}^{n-1}|^2}\; d_{0,1}^{\,2-q}.
			\]
			Up to a multiplicative factor arising from a different normalization of the spherical measure in \cite{Frank&Peteranderl&Read}, this constant coincides with the one obtained in Lemma~2.7 of that work.
			
		\end{rem}
		
		Now, we begin to give the effective upper bound of $K^{-1}_{n,\alpha,\beta}$. This bound will play a crucial role in the proofs   of Proposition \ref{Limit Prop} and Proposition \ref{Loc Bound Prop}.

		\begin{lem}\label{Gap Inequ}
			Let $K_{n,\alpha,\beta}$ be defined by \eqref{Gap lem K const}. Then
			\begin{align}\label{Gap Inequ-equ}
				\frac{1}{K_{n,\alpha,\beta}}<\frac{p-1}{q-1}\frac{\int_{\B^n}d^q_{\alpha,\beta}}{|\S^{n-1}|}.
			\end{align}
		\end{lem}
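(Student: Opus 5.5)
The plan is to read off \eqref{Gap Inequ-equ} from the second variation of the Gluck functional at its maximizer $u\equiv 1$. The key point is that in the degree-one direction the second variation is \emph{exactly} zero, because these directions are tangent to the conformal orbit of maximizers. Strictness then comes from the monotonicity $A^{\{\alpha,\beta\}}_2<A^{\{\alpha,\beta\}}_1$ in \eqref{Gap lem claim}. Throughout, write $C_0=\frac{\pi^{n}2^{1-2\beta}}{\Gamma^2(\frac{n-\alpha}{2})}$, so that \eqref{Gap lem A0} reads $\int_{\B^n}d^{q-2}_{\alpha,\beta}Q^2_{\alpha,\beta}(\varphi)=C_0\sum_l a_l^2A^{\{\alpha,\beta\}}_l$ and $1/K_{n,\alpha,\beta}=C_0A^{\{\alpha,\beta\}}_2$.

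\emph{Step 1 (second variation).} By Theorem B, $u\equiv1$ maximizes
\[
\mathcal R(u)=\frac{\|Q_{\alpha,\beta}u\|_{L^q(\B^n)}}{\|u\|_{L^p(\S^{n-1})}}.
\]
Hence for every smooth $\varphi$ the function $\varepsilon\mapsto\log\mathcal R(1+\varepsilon\varphi)$ has a maximum at $\varepsilon=0$. Its first derivative therefore vanishes in every direction, and this is the Euler--Lagrange identity
\[
\frac{\int_{\B^n} d^{q-1}_{\alpha,\beta}Q_{\alpha,\beta}\varphi}{\int_{\B^n} d^q_{\alpha,\beta}}=\frac{\int_{\S^{n-1}}\varphi}{|\S^{n-1}|}.
\]
Now take $\varphi$ with $\int_{\S^{n-1}}\varphi=0$, so both first-order terms vanish. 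Expanding $\int|d_{\alpha,\beta}+\varepsilon Q_{\alpha,\beta}\varphi|^q$ and $\int|1+\varepsilon\varphi|^p$ to second order, the second derivative of $\log\mathcal R$ at $\varepsilon=0$ is
\[
\mathcal{B}(\varphi):=(q-1)\frac{\int_{\B^n} d^{q-2}_{\alpha,\beta}(Q_{\alpha,\beta}\varphi)^2}{\int_{\B^n} d^q_{\alpha,\beta}}-(p-1)\frac{\int_{\S^{n-1}}\varphi^2}{|\S^{n-1}|},
\]
up to a positive factor, and maximality forces $\mathcal B(\varphi)\le0$. To justify the expansion I will use dominated convergence with $|Q_{\alpha,\beta}\varphi|\lesssim\|\varphi\|_\infty d_{\alpha,\beta}$, which holds since the kernel $H$ is positive. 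This bound handles the possible unboundedness of $d_{\alpha,\beta}$ when $\alpha+\beta<1$, and the case $q<2$ if it occurs; finiteness of the weighted integral is already encoded in $A^{\{\alpha,\beta\}}_l<\infty$. Taking $\varphi=Y_2$ with $\|Y_2\|_{L^2}=1$ gives only the non-strict bound $C_0A^{\{\alpha,\beta\}}_2\le\frac{p-1}{q-1}\frac{\int d^q_{\alpha,\beta}}{|\S^{n-1}|}$.

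\emph{Step 2 (degree-one directions are null).} Let $\Psi_\varepsilon$ be the one-parameter family of Möbius maps of $\B^n$ generated by the conformal vector field associated with $\xi_i$. By Theorem B, $1_{\Psi_\varepsilon}$ is a maximizer for every $\varepsilon$, so $\log\mathcal R(1_{\Psi_\varepsilon})$ is constant. Moreover $1_{\Psi_\varepsilon}=1+c\,\varepsilon\,\xi_i+O(\varepsilon^2)$ with $c\neq0$, since the derivative of $(\det d\Psi_\varepsilon|_{\partial\B^n})^{1/p}$ at $\varepsilon=0$ is a multiple of $\xi_i$. Because the first variation vanishes in all directions, the second derivative along this curve equals $\mathcal B(c\,\xi_i)$; the $O(\varepsilon^2)$ part of the curve only enters through the first variation and contributes nothing. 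Hence $\mathcal B(\xi_i)=0$, and since $\xi_i$ is (a multiple of) a degree-one harmonic, this gives the identity
\[
C_0A^{\{\alpha,\beta\}}_1=\frac{p-1}{q-1}\frac{\int_{\B^n}d^q_{\alpha,\beta}}{|\S^{n-1}|}.
\]

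\emph{Step 3 (conclusion).} By Lemma \ref{lem:hypergeo_monotonicity}, through \eqref{Gap lem claim}, we have $A^{\{\alpha,\beta\}}_2<A^{\{\alpha,\beta\}}_1$. In the case $\alpha\le0$ the factor $t^l$ is already built into $A^{\{\alpha,\beta\}}_l$, so the lemma applies there as well. Therefore
\[
\frac{1}{K_{n,\alpha,\beta}}=C_0A^{\{\alpha,\beta\}}_2<C_0A^{\{\alpha,\beta\}}_1=\frac{p-1}{q-1}\frac{\int_{\B^n}d^q_{\alpha,\beta}}{|\S^{n-1}|},
\]
which is \eqref{Gap Inequ-equ}. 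With this route Step 1 is needed only to confirm the sign convention of $\mathcal B$.

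I expect the main obstacle to be Step 2. It requires verifying that the curve $1_{\Psi_\varepsilon}$ is $C^2$ in $\varepsilon$ with values in $L^\infty(\S^{n-1})$, so that the second-order expansion of both norms is legitimate, and identifying its tangent vector as a nonzero multiple of $\xi_i$. This should follow from the explicit formulas for $\det d\Psi$ in Appendix \ref{Appendix}. If it proves awkward, the fallback is to verify the $l=1$ identity directly: differentiate the Euler--Lagrange relation $S_{\alpha,\beta}(d^{q-1}_{\alpha,\beta})=\mathrm{const}$ along the conformal flow, or compute $A^{\{\alpha,\beta\}}_1$ with the contiguous relations \eqref{Pre 1.6}--\eqref{Pre 1.7}.
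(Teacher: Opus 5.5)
Your argument is correct. Steps 1 and 3 are exactly the paper's proof, and the only difference is Step 2, which you treat as the crux but which the paper does not need.

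You set the inequality $\mathcal B(\varphi)\le 0$ aside after testing it on $Y_2$. Testing it instead on a degree-one harmonic, for instance the mean-zero function $\varphi=\xi_i$, gives directly $C_0A^{\{\alpha,\beta\}}_1\le\frac{p-1}{q-1}\frac{\int_{\B^n}d^q_{\alpha,\beta}}{|\S^{n-1}|}$. The paper phrases this as extending \eqref{Variation 2} to $L^2$ and observing, via monotonicity, that the supremum over mean-zero $\varphi$ equals $C_0A^{\{\alpha,\beta\}}_1$. Combined with $A^{\{\alpha,\beta\}}_2<A^{\{\alpha,\beta\}}_1$, this already gives the strict inequality. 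So strictness comes entirely from Lemma \ref{lem:hypergeo_monotonicity}, and no information about the conformal orbit is required. Your remark that Step 1 serves only to fix the sign convention therefore undersells it: applied at $l=1$ rather than $l=2$, it closes the proof.

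Your Step 2 is nevertheless sound, and its technical worries are easy to settle. The conformal covariance in the Appendix makes $\mathcal R(1_{\Psi_{\e e_i}})$ constant in $\e$; you do not need the uniqueness part of Theorem B. By \eqref{Appendix equ-a}, $1_{\Psi_{\e e_i}}(\eta)=\bigl(\frac{1-\e^2}{1-2\e\eta_i+\e^2}\bigr)^{(n-1)/p}=1+\frac{2(n-1)}{p}\e\,\eta_i+O(\e^2)$ uniformly in $\eta$, and this is real-analytic in $\e$. The bound $|Q_{\alpha,\beta}\varphi|\le\|\varphi\|_\infty d_{\alpha,\beta}$ justifies the second-order expansion along this curved path, and the Euler--Lagrange identity kills the contribution of the $O(\e^2)$ term.

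What Step 2 buys is the sharper identity $C_0A^{\{\alpha,\beta\}}_1=\frac{p-1}{q-1}\frac{\int_{\B^n}d^q_{\alpha,\beta}}{|\S^{n-1}|}$. This says the degree-one modes are exactly the null directions of the second variation, so the ratio of the two sides of \eqref{Gap Inequ-equ} is precisely $A^{\{\alpha,\beta\}}_2/A^{\{\alpha,\beta\}}_1$. The paper's route is shorter because it only expands along straight lines $1+\e\varphi$.
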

		
		\begin{pf}
			Fix $\e>0$ and $\varphi\in C^{\infty}(\S^{n-1})$. Consider the functional 
			\begin{align*}
				f(\e)=\|Q_{\alpha,\beta}(1+\e\varphi)\|_{L^q(\B^{n})}\|1+\e\varphi\|^{-1}_{L^p(\S^{n-1})}.
			\end{align*}
			Differentiating directly gives
			\begin{align*}
				f^{'}(\e)=&\int_{\B^n}Q_{\alpha,\beta}(\varphi)\left(d_{\alpha,\beta}+\e Q_{\alpha,\beta}(\varphi) \right)^{q-1}
				\left(\int_{\B^n}\left(d_{\alpha,\beta}+\e Q_{\alpha,\beta}(\varphi) \right)^{q} \right)^{-1}f(\e)\\
				-&\int_{\S^{n-1}}\varphi(1+\e\varphi)^{p-1}\left(\int_{\S^{n-1}}(1+\e\varphi)^{p} \right)^{-1}f(\e).
			\end{align*}
			Since \(f(0)=c_{\alpha,\beta}\) and the constant function \(1\) is an extremal, the point \(\e = 0\) is a local maximizer of \(f\).  Hence $f'(0)=0$, and we obtain
			\begin{align*}
				0=f^{'}(0)=c_{\alpha,\beta}\int_{\B^n}d^{q-1}_{\alpha,\beta}Q_{\alpha,\beta}(\varphi)\left(\int_{\B^n}d^q_{\alpha,\beta}\right)^{-1}
				-c_{\alpha,\beta}\fint_{\S^{n-1}}\varphi,
			\end{align*}
			which implies
			\begin{align}\label{Variation 1}
				\frac{\int_{\B^n}d^{q-1}_{\alpha,\beta}Q_{\alpha,\beta}(\varphi)}{\int_{\B^n}d^q_{\alpha,\beta}}=\fint_{\S^{n-1}}\varphi.
			\end{align}
			
			On the other hand, since $f'(0)=0$, the second derivative at $\e=0$ satisfies $f''(0)\le 0$. A straightforward computation gives
			\begin{align*}
				0\geq \frac{f^{''}(0)}{c_{\alpha,\beta}}=& (q-1)\frac{\int_{\B^n}d^{q-2}_{\alpha,\beta}Q^2_{\alpha,\beta}(\varphi)}{\int_{\B^n}d^q_{\alpha,\beta}}
				-q  \left(\frac{\int_{\B^n}d^{q-1}_{\alpha,\beta}Q_{\alpha,\beta}(\varphi) }{\int_{\B^n}d^q_{\alpha,\beta}}\right)^2\\
				-&(p-1)\fint_{\S^{n-1}}\varphi^2+p\left(\fint_{\S^{n-1}}\varphi\right)^2.
			\end{align*}
			In particular, if $\fint_{\S^{n-1}}\varphi=0$, then by \eqref{Variation 1} we also have $\int_{\B^n}d^{q-1}_{\alpha,\beta}Q_{\alpha,\beta}(\varphi)=0$, and the above inequality reduces to
			\begin{align}\label{Variation 2}
				\fint_{\S^{n-1}}\varphi^2\geq  \frac{q-1}{p-1}\,
				\frac{\int_{\B^n}d^{q-2}_{\alpha,\beta}Q^2_{\alpha,\beta}(\varphi)}{\int_{\B^n}d^q_{\alpha,\beta}}
				\qquad\mathrm{for}\qquad  \int_{\S^{n-1}}\varphi=0.
			\end{align}
			By \eqref{Gap lem A0} and \eqref{Gap lem claim}, the operator $d_{\alpha,\beta}^{\frac{q-2}{2}}Q_{\alpha,\beta}$ is bounded from $L^2(\S^{n-1})$ to $L^2(\B^{n})$, and hence it is continuous. 
			Therefore \eqref{Variation 2} remains valid for every
			$\varphi \in L^2(\mathbb{S}^{n-1})$. Now recall that if $\varphi=\sum_{l=1}^{+\infty}a_lY_l\in L^2(\S^{n-1})$, then
			\begin{align*}
				\int_{\B^n}d^{q-2}_{\alpha,\beta}Q^2_{\alpha,\beta}(\varphi)
				=&\frac{\pi^{n}2^{1-2\beta}}{\Gamma^2(\frac{n-\alpha}{2})} \sum_{l=0}^{+\infty} a_l^2 A^{\{\alpha,\beta\}}_{l},
			\end{align*}
			and $A^{\{\alpha,\beta\}}_{l}$ is strictly decreasing in $l$. Therefore,
			\begin{align}\label{a2.25}
				\frac{\pi^{n}2^{1-2\beta}}{\Gamma^2(\frac{n-\alpha}{2})} A^{\{\alpha,\beta\}}_{1}
				=\sup_{\int_{\S^{n-1}}\varphi=0,\ \varphi\not\equiv 0\in L^2(\S^{n-1})}
				\frac{\int_{\B^n}d^{q-2}_{\alpha,\beta}Q^2_{\alpha,\beta}(\varphi)}{\int_{\S^{n-1}}\varphi^2}.
			\end{align}
			Combing (\ref{Variation 2}) and (\ref{a2.25}) yields
			\begin{align*}
				\frac{\pi^{n}2^{1-2\beta}}{\Gamma^2(\frac{n-\alpha}{2})} A^{\{\alpha,\beta\}}_{1}\leq\frac{p-1}{q-1}\frac{\int_{\B^n}d^q_{\alpha,\beta}}{|\S^{n-1}|}.
			\end{align*}
			Consequently,  
			\begin{align*}
				\frac{1}{K_{n,\alpha,\beta}}=\frac{\pi^{n}2^{1-2\beta}}{\Gamma^2(\frac{n-\alpha}{2})} A^{\{\alpha,\beta\}}_{2}<\frac{\pi^{n}2^{1-2\beta}}{\Gamma^2(\frac{n-\alpha}{2})} A^{\{\alpha,\beta\}}_{1}\leq \frac{p-1}{q-1}\frac{\int_{\B^n}d^q_{\alpha,\beta}}{|\S^{n-1}|},
			\end{align*}
			which  completes the proof.
		\end{pf}
		\begin{rem}\label{Rem 2}
			Here we emphasize that the inequality \eqref{Gap Inequ-equ} is genuinely nontrivial. When $\alpha=0$, a direct computation shows that \eqref{Gap Inequ-equ} is equivalent to
			\begin{align*}
				B\left(\frac{n}{2}+2,q(\beta-1)+1\right)<\frac{p-1}{q-1}B\left(\frac{n}{2},q(\beta-1)+1\right),
			\end{align*}
			where $B(\cdot,\cdot)$ denotes the Beta function.
			This inequality is, in turn, equivalent to the condition $q(\beta-1)+1>0$, which is exactly \eqref{Index Condi}; see also \eqref{Compact lem0-equ}. 
			
		\end{rem}

		\subsubsection{Compactness}
		The second variation involves, via Lemma \ref{Gap Inequ}, the quadratic term \(\int_{\mathbb{B}^n} d_{\alpha,\beta}^{\,q-2} Q_{\alpha,\beta}^2(\varphi)\). In this subsection, we prove the compactness of the weighted operator $d^{\frac{q-2}{2}}_{\alpha,\beta}Q_{\alpha,\beta}$ from $L^2(\S^{n-1})$ to $L^2(\B^n)$. As mentioned in the introduction, the weight $d_{\alpha,\beta}$ may be unbounded when $\alpha+\beta<1$. This lack of boundedness requires a refined approach and additional estimates to address the resulting technical difficulty.

		\begin{lem}\label{Bound d Lem}
			Suppose $( \alpha,\beta)$ satisfy \eqref{Index Condi} and let $d_{\alpha,\beta}$ be defined in \eqref{d Def}. Then the following asymptotic estimate holds: 
			\begin{align*}
				d_{\alpha,\beta}(\xi)\lesssim \begin{cases}
					\displaystyle (1-|\xi|)^{\beta} \qquad&\mathrm{for}\qquad \alpha>1,\\
					\displaystyle (1-|\xi|)^{\beta} \log \frac{1}{1-|\xi|}\qquad&\mathrm{for}\qquad \alpha=1,\\
					\displaystyle (1-|\xi|)^{\alpha+\beta-1} \qquad&\mathrm{for}\qquad \alpha<1.
				\end{cases}
			\end{align*}
		\end{lem}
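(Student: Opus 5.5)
From the explicit formula \eqref{d Def}, the estimate reduces to the boundary behaviour of the single hypergeometric function
\[
G(t):=F\!\left(\tfrac{n+\alpha}{2}-1,\tfrac{\alpha}{2};\tfrac{n}{2};t\right)\quad\text{as } t=|\xi|^2\to1^-.
\]
Indeed, $d_{\alpha,\beta}(\xi)=C(1-|\xi|^2)^{\alpha+\beta-1}G(|\xi|^2)$ and $1-|\xi|^2\simeq 1-|\xi|$ near the boundary. Away from the boundary, say on $|\xi|\le 1/2$, $d_{\alpha,\beta}$ is a convergent power series and hence bounded, and each right-hand side is bounded below there. So only $|\xi|\to1$ matters. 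The relevant quantity is
\[
c-a-b=\tfrac n2-\big(\tfrac{n+\alpha}{2}-1\big)-\tfrac{\alpha}{2}=1-\alpha,
\]
which naturally produces the three regimes.

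\textbf{Case $\alpha<1$.} Here $c-a-b=1-\alpha>0$, so the Gauss series converges at $t=1$ and $G(t)\le G(1)<\infty$ when all terms are nonnegative. If $\alpha\le0$ (so $b=\alpha/2\le0$ and the coefficients may change sign), I use the transformation \eqref{Pre 1.4}:
\[
G(t)=(1-t)^{1-\alpha}F\!\left(1-\tfrac{\alpha}{2},\tfrac{n-\alpha}{2};\tfrac n2;t\right).
\]
The latter series has positive parameters, and its own $c-a-b=\alpha-1<0$, so it is $\simeq(1-t)^{\alpha-1}$. Hence $G$ is bounded either way, since Abel's theorem on the absolutely convergent series suffices. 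This gives $d_{\alpha,\beta}\lesssim(1-|\xi|)^{\alpha+\beta-1}$.

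\textbf{Case $\alpha>1$.} Now $a,b,c>0$ with $c-a-b<0$. Using \eqref{Pre 1.4} again,
\[
G(t)=(1-t)^{1-\alpha}F\!\left(1-\tfrac{\alpha}{2},\tfrac{n-\alpha}{2};\tfrac n2;t\right),
\]
and the new function has $c-a-b=\alpha-1>0$, so it converges absolutely at $t=1$ and is bounded on $[0,1]$. Convergence holds because the coefficients are eventually of one sign with size $j^{-\alpha}$, from the standard Gamma-ratio asymptotics of $(a)_j(b)_j/((c)_j j!)\sim C j^{a+b-c-1}$. Therefore
\[
d_{\alpha,\beta}\lesssim(1-t)^{\alpha+\beta-1}(1-t)^{1-\alpha}=(1-t)^{\beta}.
\]

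\textbf{Case $\alpha=1$.} Here $c=a+b$, and the coefficients of $G$ behave like $j^{-1}$ since all parameters are positive. Comparing with $\sum_j t^j/j=\log\frac1{1-t}$ gives $G(t)\lesssim\log\frac{1}{1-t}$ near $t=1$. Alternatively, the integral representation \eqref{Pre 1.8} with $b=1/2$, $c=n/2$ gives
\[
\int_0^1 s^{-1/2}(1-s)^{\frac{n-3}{2}}(1-st)^{-\frac{n-1}{2}}\,\mathrm ds\lesssim\log\frac1{1-t}
\]
by direct estimation near $s=1$. Since $\alpha+\beta-1=\beta$, this yields $d_{\alpha,\beta}\lesssim(1-|\xi|)^{\beta}\log\frac1{1-|\xi|}$.

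The only mildly delicate point is the sign issue for $\alpha\le0$ (and the coefficient asymptotics). I handle it uniformly by the transformation \eqref{Pre 1.4} combined with the coefficient asymptotics from Stirling's formula, which I expect to be routine.
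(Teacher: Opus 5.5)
Your proof is correct and follows the paper's route: you split by the sign of $c-a-b=1-\alpha$, use convergence of the series at $t=1$ for $\alpha<1$, apply the transformation \eqref{Pre 1.4} for $\alpha>1$, and get a logarithmic bound for $\alpha=1$, where your coefficient asymptotics and integral-representation estimate replace the paper's citations. One small slip: for $\alpha=1$ the right-hand side $(1-|\xi|)^{\beta}\log\frac{1}{1-|\xi|}$ vanishes at $\xi=0$ while $d_{1,\beta}(0)>0$, so it is not bounded below on $|\xi|\le 1/2$, and the estimate must be read, as the paper does, as an asymptotic statement for $|\xi|\to1$ (or with $\log$ replaced by $1+\log$).
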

		\begin{pf}
			Recall that
			\begin{align*}
				d_{\alpha,\beta}(\xi)=\frac{\pi^{\frac{n}{2}}2^{1-\beta}}{\Gamma(\frac{n}{2})}(1-|\xi|^2)^{\beta+\alpha-1}F\left(\frac{n+\alpha}{2}-1,\frac{\alpha}{2}; \frac{n}{2}; |\xi|^2\right).
			\end{align*}
			If $\alpha<1$, then
			\begin{align*}
				\frac{n}{2}-\left(\frac{n+\alpha}{2}-1\right)-\frac{\alpha}{2}=1-\alpha>0,
			\end{align*}
			which ensures convergence of $F\left(\frac{n+\alpha}{2}-1,\frac{\alpha}{2}; \frac{n}{2}; |\xi|^2\right)$  at $|\xi|=1$, see \cite[P. 1017. 9.122-1]{Gradshteyn&Ryzhik}. Consequently, $F\left(\frac{n+\alpha}{2}-1,\frac{\alpha}{2}; \frac{n}{2}; |\xi|^2\right)$ is bounded in $\B^n$. 
			
			If $\alpha>1$, using \eqref{Pre 1.4}, we can get
			\begin{align*}
				d_{\alpha,\beta}(\xi)=\frac{\pi^{\frac{n}{2}}2^{1-\beta}}{\Gamma(\frac{n}{2})}(1-|\xi|^2)^{\beta}F\left(1-\frac{\alpha}{2},\frac{n-\alpha}{2}; \frac{n}{2}; |\xi|^2\right).
			\end{align*}
			Here
			\begin{align*}
				\frac{n}{2}-\left(1-\frac{\alpha}{2}\right)-\frac{n-\alpha}{2}=\alpha-1>0.
			\end{align*}
			So again $F\left(1-\frac{\alpha}{2},\frac{n-\alpha}{2}; \frac{n}{2}; |\xi|^2\right)$ is bounded in $\B^n$.
			
			The remaining case  is $\alpha=1$. By the formula \cite[P. 75 (4)]{Hypergeometry}, we can get
			\begin{align*}
				d_{\alpha,\beta}(\xi)=\frac{\pi^{\frac{n}{2}}2^{1-\beta}}{\Gamma(\frac{n}{2})}(1-|\xi|^2)^{\beta}F\left(\frac{n-1}{2},\frac{1}{2}; \frac{n}{2}; |\xi|^2\right)=O\left((1-|\xi|)^{\beta} \log \frac{1}{1-|\xi|} \right).
			\end{align*}
			This completes the proof of Lemma \ref{Bound d Lem}.
		\end{pf}

		To prove compactness, a standard approach is to first establish boundedness and then combine interior uniform convergence with a uniform estimate near the boundary; see, for instance, \cite[Lemma 2.5]{Gluck}. However, this method does not cover all parameter regimes in \eqref{Index Condi}, due to the possible unboundedness of $d_{\alpha,\beta}$. To overcome this, our key idea is  to show that the expansion coefficients $A^{\alpha,\beta}_l$ exhibit sufficient decay as $l\to+\infty$. This decay immediately yields compactness; see Lemma \ref{Compact lem0}.
		
		\begin{lem}\label{lem 2.7}	
			Suppose \(\alpha,\beta\) satisfy \eqref{Index Condi} and let \(A_{l}^{\{\alpha,\beta\}}\) be defined by \eqref{Gap lem Al}. Then for sufficiently large \(l\) and any \(\epsilon > 0\),
			\begin{align*}
				A_{l}^{\{\alpha,\beta\}}=\left\{
				\begin{array}{ll}
					O(l^{-(1+q(\alpha+\beta-1))} ), & \hbox{$\alpha<1$}; \\
					O(l^{-1-q\beta+\epsilon} ), & \hbox{$\alpha=1$}; \\
					O(l^{-1-q\beta} ) , & \hbox{ $\alpha>1$.}
				\end{array}
				\right.
			\end{align*}
		\end{lem}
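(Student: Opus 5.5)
The plan is to estimate
\[
A_l^{\{\alpha,\beta\}}=\frac{\Gamma(l+\frac{n-\alpha}{2})^2}{\Gamma(l+\frac n2)^2}\int_0^1 d^{q-2}_{\alpha,\beta}(\sqrt t)(1-t)^{2(\beta+\alpha-1)}t^{n/2+l-1}F_l(t)^2\,\ud t,
\]
where $F_l(t)=F(l+\frac{n+\alpha}{2}-1,\frac{\alpha}{2};l+\frac n2;t)$. The approach splits the integrand into the Gamma prefactor, the boundary behavior of the weight $d_{\alpha,\beta}$ and of $F_l$, and the concentrating factor $t^{l}$. The prefactor is $\sim l^{-\alpha}$ by Stirling. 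The factor $t^{n/2+l-1}$ localizes the integral to $1-t\lesssim 1/l$, so heuristically
\[
\int_0^1 (1-t)^{\kappa}t^{l}\,\ud t\sim l^{-1-\kappa}\qquad(\kappa>-1).
\]
The task is therefore to identify the effective power $\kappa$ of $(1-t)$ together with any extra $l$-dependence coming from $F_l$.

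First, Lemma \ref{Bound d Lem} gives $d^{q-2}_{\alpha,\beta}(\sqrt t)\lesssim (1-t)^{(q-2)\gamma}$, with $\gamma=\alpha+\beta-1$ for $\alpha<1$ and $\gamma=\beta$ for $\alpha>1$. For $\alpha=1$ there is an extra factor $\log^{q-2}$, which I absorb into $(1-t)^{-\epsilon}$. The case $q<2$ does not occur since $q>2$ here, but any sign issue would be handled with lower bounds on $d$; $F$ at $|\xi|=1$ is positive and finite. Second, I need a uniform-in-$l$ bound for $F_l(t)$. I rewrite it via \eqref{Pre 1.4} as
\[
F_l(t)=(1-t)^{1-\alpha}F\Bigl(1-\tfrac\alpha2,\,l+\tfrac{n-\alpha}{2};\,l+\tfrac n2;\,t\Bigr)
\]
and use the integral representation \eqref{Pre 1.8} with $b=l+\frac{n-\alpha}{2}$ and $c-b=\frac\alpha2$ when $\alpha>0$. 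The integral has the form
\[
\frac{\Gamma(l+\frac n2)}{\Gamma(l+\frac{n-\alpha}2)\Gamma(\frac\alpha2)}\int_0^1 s^{b-1}(1-s)^{\alpha/2-1}(1-st)^{\alpha/2-1}\,\ud s .
\]
Its prefactor is $\sim l^{\alpha/2}$, and the $s$-integral localizes at $1-s\sim 1/l$. This should give
\[
F(\cdots)\lesssim l^{\alpha/2}\,\bigl(l^{-1}+(1-t)\bigr)^{\alpha/2-1}\,l^{-\alpha/2}.
\]
That is, for $\alpha<2$ the factor is $\lesssim \min\{l,(1-t)^{-1}\}^{1-\alpha/2}$ (up to logs), and for $\alpha\ge2$ it is bounded by $O(1)$ or by the simpler Case 1 of Lemma \ref{lem:hypergeo_monotonicity}. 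For $\alpha\le 0$ I will instead use the series directly: it has nonnegative terms controlled by $(1-t)^{\alpha/2}\cdot(\ldots)$, or equivalently $F_l\le F_0$-type bounds from the monotonicity lemma, but with the $t^l$ factor retained.

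Third, I insert these bounds and split the integral at $1-t=1/l$. On $1-t\ge 1/l$, the bound $t^l\le e^{-l(1-t)}$ gives an exponentially damped integral of a power of $(1-t)$, which yields $l^{-1-\kappa}$. On $1-t<1/l$, the integrand is bounded by a power of $(1-t)$ times powers of $l$, and this piece is integrable precisely when the exponent exceeds $-1$. Collecting exponents for $\alpha<1$ should give total $(1-t)$-power $(q-2)\gamma+2\gamma=q\gamma$. The $F$-contribution combines as follows: the $(1-t)^{2(1-\alpha)}$ from the transformation, times the $\min\{\ldots\}^{2-\alpha}$ factor, times the prefactor $l^{-\alpha}$, gives $l^0$ at scale $1-t\sim 1/l$. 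Hence $A_l\lesssim l^{-1-q(\alpha+\beta-1)}$. The analogous count for $\alpha>1$ gives $l^{-1-q\beta}$, and for $\alpha=1$ the logarithms cost $l^{\epsilon}$. Integrability near $t=1$ needs $q\gamma>-1$, which is exactly the condition \eqref{Index Condi} in the form $q(\beta-1)+1>0$ and its analogues (cf.\ Remark \ref{Rem 2}).

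The main obstacle is the second step: obtaining a sharp, \emph{uniform in $l$} bound on $F_l(t)$ near $t=1$ at the correct scale $1-t\sim1/l$, in all three regimes of $\alpha$, including the sign changes of $\frac\alpha2$ and $1-\frac\alpha2$ that invalidate the integral representation. My first attempt is the Euler integral with the scaling $s=1-\sigma/l$. Failing that, I would compare coefficients of the series termwise against $(1-t)^{-\mu}$ expansions, as in the proof of Lemma \ref{lem:hypergeo_monotonicity}.
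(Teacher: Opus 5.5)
There is a genuine gap. Your overall plan is sound: the prefactor $\sim l^{-\alpha}$, Lemma \ref{Bound d Lem} for the weight, localization of $t^{l}$ to $1-t\lesssim 1/l$, and integrability from $q(\alpha+\beta-1)+1>0$. The final exponents you state are also right. But the hypergeometric step, which carries the whole proof, fails for $0<\alpha<1$ and is missing for $\alpha\le 0$.

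\textbf{The bound for $0<\alpha<1$ is false.} The estimate $G_l(t):=F(1-\frac\alpha2,l+\frac{n-\alpha}{2};l+\frac n2;t)\lesssim\min\{l,(1-t)^{-1}\}^{1-\alpha/2}$ fails, and not merely by logarithms. Here $c-a-b=\alpha-1<0$, so $G_l(t)=(1-t)^{\alpha-1}F_l(t)\sim F_l(1)(1-t)^{\alpha-1}\to\infty$ as $t\to1$, with $F_l(1)=\frac{\Gamma(l+\frac n2)\Gamma(1-\alpha)}{\Gamma(1-\frac\alpha2)\Gamma(l+\frac{n-\alpha}{2})}\sim l^{\alpha/2}$. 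When $1-t\ll 1/l$, the singular factor $(1-s)^{\alpha/2-1}(1-st)^{\alpha/2-1}$ has total exponent $\alpha-2<-1$. The Euler integral therefore concentrates at $1-s\sim 1-t$, not at $1-s\sim 1/l$. As a result, your bound for the integrand on $\{1-t<1/l\}$ is too small by the factor $(l(1-t))^{2\alpha-2}\ge 1$.

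The correct statement is that $\frac{\Gamma(l+\frac{n-\alpha}{2})^2}{\Gamma(l+\frac n2)^2}F_l(t)^2=O(1)$ uniformly; for instance $F_l(t)\le F_l(1)$ because the coefficients are positive. This still yields $l^{-1-q(\alpha+\beta-1)}$, so the conclusion survives. The paper avoids pointwise analysis altogether for every $\alpha>0$. Part 1 of Lemma \ref{lem:hypergeo_monotonicity} bounds the prefactor times $F_l^2$ by its value at $l=0$. Since $(1-t)^{\alpha+\beta-1}F_0(t)$ is a constant multiple of $d_{\alpha,\beta}(\sqrt t)$ by \eqref{d Def}, this gives $A_l\lesssim\int_0^1 d^{q}_{\alpha,\beta}(\sqrt t)\,t^{n/2+l-1}\,\ud t$. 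Lemma \ref{Bound d Lem} and a Beta integral then give all three regimes at once. For $\alpha>1$ your bound on $G_l$ is actually correct, and your count even beats $l^{-1-q\beta}$.

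\textbf{For $\alpha\le 0$ you have no working bound.} Your Euler integral is unavailable, since $c-b=\frac\alpha2\le 0$, and the series idea is not specified. Lemma \ref{lem:hypergeo_monotonicity} also cannot supply an ``$F_l\le F_0$-type bound with $t^l$ retained.'' In part 2 the factor $t^l$ sits inside the monotone quantity. Comparing with $l=0$ consumes it and yields only $A_l\lesssim\int_0^1 d^{q}_{\alpha,\beta}(\sqrt t)\,t^{n/2-1}\,\ud t=O(1)$. Without $t^l$ the monotonicity is false: for $\alpha=-2$ the quantity equals $\bigl((l+\frac n2)(1-t)+2t\bigr)^2$, which increases in $l$.

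The missing idea is to apply \eqref{Pre 1.8} to $F_l$ itself with the roles swapped: $b=l+\frac{n+\alpha}{2}-1$ and $c-b=1-\frac\alpha2>0$. The kernel is then $(1-ts)^{-\alpha/2}$ with a nonnegative exponent. Splitting $(1-ts)^{-\alpha/2}=[(1-s)+s(1-t)]^{-\alpha/2}\lesssim(1-s)^{-\alpha/2}+s^{-\alpha/2}(1-t)^{-\alpha/2}$ gives $F_l(t)\lesssim\frac{\Gamma(l+\frac n2)}{\Gamma(l+\frac{n-\alpha}{2})}+(1-t)^{-\alpha/2}$. Hence $A_l\lesssim l^{-1-q(\alpha+\beta-1)}+l^{-\alpha}\,l^{-1-q(\alpha+\beta-1)+\alpha}$, where the second Beta integral converges because $q(\alpha+\beta-1)-\alpha>-1$.
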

		\begin{pf}
			\textbf{Case 1: \(\alpha > 0\).}
			By Lemma \ref{lem:hypergeo_monotonicity}, the coefficient \(A^{\{\alpha,\beta\}}_{l}\) satisfies
			\begin{align*}
				A^{\{\alpha,\beta\}}_{l}
				&= \frac{\Gamma\!\left(l + \frac{n-\alpha}{2}\right)^2}{\Gamma\!\left(l + \frac{n}{2}\right)^2}
				\int_0^1 d_{\alpha,\beta}^{\,q-2}(\sqrt{t}) (1 - t)^{2(\beta+\alpha-1)} t^{\,n/2 + l - 1}
				F\!\left(l+\frac{n+\alpha}{2}-1,\frac{\alpha}{2}; l+\frac{n}{2}; t\right)^2 \ud t \\
				&\le \frac{\Gamma\!\left(\frac{n-\alpha}{2}\right)^2}{\Gamma\!\left(\frac{n}{2}\right)^2}
				\int_0^1 d_{\alpha,\beta}^{\,q-2}(\sqrt{t}) (1 - t)^{2(\beta+\alpha-1)} t^{\,n/2 + l - 1}
				F\!\left(\frac{n+\alpha}{2}-1,\frac{\alpha}{2}; \frac{n}{2}; t\right)^2 \ud t \\
				&= \frac{\Gamma\!\left(\frac{n-\alpha}{2}\right)^2}{\Gamma\!\left(\frac{n}{2}\right)^2}
				\left(\frac{\Gamma(\frac{n}{2})}{\pi^{\frac{n}{2}}2^{1-\beta}}\right)^{\!2}
				\int_0^1 d_{\alpha,\beta}^{\,q}(\sqrt{t}) t^{\,n/2 + l - 1} \ud t,
			\end{align*}
			where the last equality uses \eqref{d Def}. Applying Lemma~\ref{Bound d Lem} we obtain:
			\begin{itemize}
				\item For \(0< \alpha < 1\),
				\[
				A^{\{\alpha,\beta\}}_{l} \lesssim \int_0^1 (1 - t)^{q(\beta+\alpha-1)} t^{\,n/2 + l - 1} \ud t
				= \frac{\Gamma\!\left(q(\beta+\alpha-1)+1\right) \Gamma(n/2 + l)}
				{\Gamma\!\left(q(\beta+\alpha-1)+1 + n/2 + l\right)}
				\lesssim l^{-(1+q(\alpha+\beta-1))}.
				\]
				
				\item For \(\alpha > 1\),
				\[
				A^{\{\alpha,\beta\}}_{l} \lesssim \int_0^1 (1 - t)^{q\beta} t^{\,n/2 + l - 1} \ud t
				= \frac{\Gamma\!\left(q\beta+1\right) \Gamma(n/2 + l)}
				{\Gamma\!\left(q\beta+1 + n/2 + l\right)}
				\lesssim l^{-1-q\beta}.
				\]
				
				\item For \(\alpha = 1\),
				\begin{align*}					A_{l}^{\{\alpha,\beta\}}\lesssim&  \int_0^1 (1 - t)^{q\beta}|\log(1-t)|^{q} t^{n/2 + l - 1}	 \mathrm{d}t\\
					\lesssim& \int_0^1 (1 - t)^{q\beta-\epsilon} t^{n/2 + l - 1}	 \mathrm{d}t\\
					=&\frac{\Gamma\left(q\beta-\epsilon +1\right)\Gamma(n/2 + l )}{\Gamma\left(q\beta-\epsilon + 1+n/2 + l \right)}\lesssim l^{-1-q\beta+\epsilon}.
				\end{align*}
			\end{itemize}
			
			\textbf{Case 2: \(\alpha \leq 0\).}
			Using the integral representation \eqref{Pre 1.8} of the hypergeometric function, we have, for  $l>1-\frac{n+\alpha}{2}$, 
			\begin{align*}
				F\!\left(l+\frac{n+\alpha}{2}-1,\frac{\alpha}{2}; l+\frac{n}{2}; t\right) 
				&= F\!\left(\frac{\alpha}{2}, l+\frac{n+\alpha}{2}-1; l+\frac{n}{2}; t\right) \\
				&= \frac{\Gamma\!\left(l+\frac{n}{2}\right)}
				{\Gamma\!\left(l+\frac{n+\alpha}{2}-1\right)\Gamma\!\left(1-\frac{\alpha}{2}\right)}
				\int_0^1 s^{l+\frac{n+\alpha}{2}-2} (1-s)^{-\frac{\alpha}{2}} (1-ts)^{-\frac{\alpha}{2}} \ud s .
			\end{align*}
			Observing that
			\[
			(1-ts)^{-\frac{\alpha}{2}} = [(1-s) + s(1-t)]^{-\frac{\alpha}{2}}
			\lesssim (1-s)^{-\frac{\alpha}{2}} + s^{-\frac{\alpha}{2}}(1-t)^{-\frac{\alpha}{2}},
			\]
			we have, for  $l>1-\frac{n+\alpha}{2}$, 
			\begin{align*}
				0 &< F\!\left(l+\frac{n+\alpha}{2}-1,\frac{\alpha}{2}; l+\frac{n}{2}; t\right) \\
				&\lesssim \frac{\Gamma\!\left(l+\frac{n}{2}\right)}
				{\Gamma\!\left(l+\frac{n+\alpha}{2}-1\right)\Gamma\!\left(1-\frac{\alpha}{2}\right)}
				\int_0^1 s^{l+\frac{n+\alpha}{2}-2} (1-s)^{-\alpha} \ud s \\
				&\quad + \frac{\Gamma\!\left(l+\frac{n}{2}\right)}
				{\Gamma\!\left(l+\frac{n+\alpha}{2}-1\right)\Gamma\!\left(1-\frac{\alpha}{2}\right)}
				(1-t)^{-\frac{\alpha}{2}} \int_0^1 s^{l+\frac{n}{2}-2} (1-s)^{-\frac{\alpha}{2}} \ud s \\
				&= \frac{\Gamma\!\left(l+\frac{n}{2}\right)\Gamma(1-\alpha)}
				{\Gamma\!\left(l+\frac{n-\alpha}{2}\right)\Gamma\!\left(1-\frac{\alpha}{2}\right)}
				+ \frac{\Gamma\!\left(l+\frac{n}{2}\right)\Gamma\!\left(l+\frac{n}{2}-1\right)}
				{\Gamma\!\left(l+\frac{n+\alpha}{2}-1\right)\Gamma\!\left(l+\frac{n-\alpha}{2}\right)}
				(1-t)^{-\frac{\alpha}{2}} \\
				&\lesssim \frac{\Gamma\!\left(l+\frac{n}{2}\right)}{\Gamma\!\left(l+\frac{n-\alpha}{2}\right)}
				+ (1-t)^{-\frac{\alpha}{2}} .
			\end{align*}
			Consequently,
			\[
			F\!\left(l+\frac{n+\alpha}{2}-1,\frac{\alpha}{2}; l+\frac{n}{2}; t\right)^2
			\lesssim \frac{\Gamma\!\left(l+\frac{n}{2}\right)^2}{\Gamma\!\left(l+\frac{n-\alpha}{2}\right)^2}
			+ (1-t)^{-\alpha}.
			\]
			Applying Lemma~\ref{Bound d Lem} again, we estimate
			\begin{align*}
				A^{\{\alpha,\beta\}}_{l}
				&= \frac{\Gamma\!\left(l + \frac{n-\alpha}{2}\right)^2}{\Gamma\!\left(l + \frac{n}{2}\right)^2}
				\int_0^1 d_{\alpha,\beta}^{\,q-2}(\sqrt{t}) (1 - t)^{2(\beta+\alpha-1)} t^{\,n/2 + l - 1}
				F\!\left(l+\frac{n+\alpha}{2}-1,\frac{\alpha}{2}; l+\frac{n}{2}; t\right)^2 \ud t \\
				&\lesssim \int_0^1 d_{\alpha,\beta}^{\,q-2}(\sqrt{t}) (1 - t)^{2(\beta+\alpha-1)} t^{\,n/2 + l - 1} \ud t \\
				&\quad + \frac{\Gamma\!\left(l + \frac{n-\alpha}{2}\right)^2}{\Gamma\!\left(l + \frac{n}{2}\right)^2}
				\int_0^1 d_{\alpha,\beta}^{\,q-2}(\sqrt{t}) (1 - t)^{2(\beta+\alpha-1)} t^{\,n/2 + l - 1} (1-t)^{-\alpha} \ud t \\
				&\lesssim \int_0^1 (1 - t)^{q(\beta+\alpha-1)} t^{\,n/2 + l - 1} \ud t  + \frac{\Gamma\!\left(l + \frac{n-\alpha}{2}\right)^2}{\Gamma\!\left(l + \frac{n}{2}\right)^2}
				\int_0^1 (1 - t)^{q(\beta+\alpha-1)-\alpha} t^{\,n/2 + l - 1} \ud t \\
				&= \frac{\Gamma\!\left(q(\beta+\alpha-1)+1\right)\Gamma(n/2 + l)}
				{\Gamma\!\left(q(\beta+\alpha-1)+1 + n/2 + l\right)}  + \frac{\Gamma\!\left(l + \frac{n-\alpha}{2}\right)^2}{\Gamma\!\left(l + \frac{n}{2}\right)^2}
				\frac{\Gamma\!\left(q(\beta+\alpha-1)-\alpha+1\right)\Gamma(n/2 + l)}
				{\Gamma\!\left(q(\beta+\alpha-1)-\alpha+1 + n/2 + l\right)} \\
				&\lesssim l^{-(1+q(\alpha+\beta-1))}.
			\end{align*}
			This completes the proof of Lemma~\ref{lem 2.7}.
		\end{pf}
		
		\begin{rem}
			When $\alpha=0$ and $\beta=1$, we have $A_l^{\{0,1\}}=O(l^{-1})$, which is consistent with the decay rate in \cite[Lemma 2.7]{Frank&Peteranderl&Read}.
		\end{rem}

		With the decay of $A^{\alpha,\beta}_l$ at hand, we can now prove that the operator $d^{\frac{q-2}{2}}_{\alpha,\beta}Q_{\alpha,\beta}$ can be approximated by a sequence of compact operators; in particular, it is the limit of projection operators.
		
		\begin{lem}\label{Compact lem0}
			Suppose $( \alpha,\beta)$ satisfy \eqref{Index Condi}. Then the operator
			\begin{align*}
				d^{\frac{q-2}{2}}_{\alpha,\beta}Q_{\alpha,\beta}: L^2(\S^{n-1})\to  L^2(\B^n)
			\end{align*}
			is compact.
		\end{lem}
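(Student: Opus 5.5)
The plan is to show that $T:=d^{\frac{q-2}{2}}_{\alpha,\beta}Q_{\alpha,\beta}$ is a norm limit of finite-rank operators. The key input is the diagonal structure in \eqref{Gap lem A0}. By the spherical harmonic expansion \eqref{Gap lem equ a}, $Q_{\alpha,\beta}$ sends $a_lY_l$ to a radial function times $r^lY_l$, and $d_{\alpha,\beta}$ is radial. Hence the images $T(\mathscr{H}_l)$ for different $l$ are mutually orthogonal in $L^2(\B^n)$, since the angular parts are orthogonal on each sphere $|\xi|=r$. This orthogonality is what gives \eqref{Gap lem A0}: for $\varphi=\sum_l a_lY_l$ with $\sum_l a_l^2<\infty$,
\begin{align*}
\|T\varphi\|^2_{L^2(\B^n)}=\frac{\pi^{n}2^{1-2\beta}}{\Gamma^2(\frac{n-\alpha}{2})}\sum_{l=0}^{+\infty}a_l^2A^{\{\alpha,\beta\}}_l .
\end{align*}

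I will first verify that each $A^{\{\alpha,\beta\}}_l$ is finite, so that $T$ is bounded. By the monotonicity \eqref{Gap lem claim} it suffices to check $A_0^{\{\alpha,\beta\}}<\infty$. Since $d_{\alpha,\beta}^{q-2}(1-t)^{2(\beta+\alpha-1)}F^2$ at $l=0$ equals a constant times $d^q_{\alpha,\beta}$, this reduces to $d_{\alpha,\beta}\in L^q(\B^n)$. By Lemma \ref{Bound d Lem}, that holds provided $q(\alpha+\beta-1)+1>0$ when $\alpha<1$ (and $q\beta+1>0$ otherwise, which is automatic). This condition is exactly what \eqref{Index Condi} encodes. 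I would check the algebra here: $q(\alpha+\beta-1)>-1$ should be equivalent to $\frac{n-\alpha-2\beta}{2n}+\frac{n-\alpha}{2(n-1)}<1$, i.e. to $q>p$, and this should be stated as the equation \eqref{Compact lem0-equ} alluded to in Remark \ref{Rem 2}.

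Next, define the finite-rank operators $T_N\varphi:=T\bigl(\sum_{l\le N}a_lY_l\bigr)$. Each $T_N$ has finite rank because each $\mathscr{H}_l$ is finite dimensional, and $T_N$ is bounded by the previous step. By the orthogonality above,
\begin{align*}
\|(T-T_N)\varphi\|^2_{L^2(\B^n)}=\frac{\pi^{n}2^{1-2\beta}}{\Gamma^2(\frac{n-\alpha}{2})}\sum_{l>N}a_l^2A^{\{\alpha,\beta\}}_l\le \frac{\pi^{n}2^{1-2\beta}}{\Gamma^2(\frac{n-\alpha}{2})}\Bigl(\sup_{l>N}A^{\{\alpha,\beta\}}_l\Bigr)\|\varphi\|^2_{L^2(\S^{n-1})}.
\end{align*}
Lemma \ref{lem 2.7} gives $A_l^{\{\alpha,\beta\}}\to0$ as $l\to\infty$, since all the exponents there are negative:
\begin{itemize}
\item for $\alpha<1$, $1+q(\alpha+\beta-1)>0$ by the step above;
\item for $\alpha\ge1$, $1+q\beta-\epsilon>0$ once $\epsilon<1$.
\end{itemize}
Hence $\|T-T_N\|\to0$, and $T$ is compact as an operator-norm limit of finite-rank operators.

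The main obstacle is justifying the orthogonal decomposition and the formula for $\|T\varphi\|^2$ for general $L^2$ data rather than smooth $\varphi$. I would first take $\varphi$ to be a finite sum of spherical harmonics, for which everything is explicit via Lemma \ref{Sec 2 Spherical thm}. I would then extend by density, using the uniform bound $\sup_l A_l<\infty$ to define $T$ on $L^2$. This extension must be consistent with the integral definition of $Q_{\alpha,\beta}$. To check that, I would pair against test functions $\psi\in C_c^\infty(\B^n)$: for fixed $\psi$, the map $\varphi\mapsto\int_{\B^n}\psi\, d^{\frac{q-2}{2}}_{\alpha,\beta}Q_{\alpha,\beta}\varphi$ is continuous on $L^2(\S^{n-1})$, because the kernel $H(\xi,\eta)$ is bounded for $\xi$ in compact subsets of the ball.
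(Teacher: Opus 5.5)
Your proposal is correct and follows essentially the same route as the paper. You approximate $T=d^{\frac{q-2}{2}}_{\alpha,\beta}Q_{\alpha,\beta}$ by the finite-rank truncations $T_N=T\circ P_N$, and use the diagonal identity \eqref{Gap lem A0} to get $\|T-T_N\|^2\lesssim A^{\{\alpha,\beta\}}_{N+1}$. You then invoke Lemma~\ref{lem 2.7}, together with the equivalence $q(\alpha+\beta-1)+1>0\Leftrightarrow q>p$, to conclude that $A^{\{\alpha,\beta\}}_l\to 0$. Your extra steps are details the paper takes for granted, and both are sound: the finiteness of $A^{\{\alpha,\beta\}}_0$, obtained from $d_{\alpha,\beta}\in L^q(\B^n)$, and the density argument with test functions that justifies \eqref{Gap lem A0} for general $L^2$ data.
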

		\begin{pf}
			Let $\varphi=\sum_{l=0}^{+\infty}a_lY_l$ with $\|\varphi\|^2_{L^2(\S^{n-1})}=\sum_{l=0}^{+\infty}a_l^2$. From \eqref{Gap lem A0} we have
			\begin{align*}
				\int_{\B^n}d^{q-2}_{\alpha,\beta}Q^2_{\alpha,\beta}(\varphi)	=\frac{\pi^{n}2^{1-2\beta}}{\Gamma^2(\frac{n-\alpha}{2})} \sum_{l=0}^{+\infty} a_l^2 A^{\{\alpha,\beta\}}_{l},
			\end{align*}
			where \(A^{\{\alpha,\beta\}}_{l+1} < A^{\{\alpha,\beta\}}_{l}\) for all \(l\). Notice the following equivalence of two inequalities \begin{align}\label{Compact lem0-equ}
				q(\alpha+\beta-1)+1>0\qquad\Leftrightarrow \qquad\frac{n-\alpha-2\beta}{2n}+\frac{n-\alpha}{2(n-1)}<1,       \end{align}
			which is exactly the last constraint in \eqref{Index Condi}.					Together with Lemma~\ref{lem 2.7} , we obtain $ A_{l}^{\alpha,\beta}=O(l^{-\delta})$ for some $\delta>0$.
			
			Let $T$ be the operator
			$
			d^{\frac{q-2}{2}}_{\alpha,\beta}Q_{\alpha,\beta}: L^2(\S^{n-1})\to  L^2(\B^n) .$     
			Let $P_N:L^2(\S^{n-1})\to L^2(\S^{n-1})$ be the orthogonal projection onto
			$\bigoplus_{l=0}^{N}\mathcal{H}_l$ and set $T_N:=T\circ P_N$.
			Then \(T_N\) has finite rank and is therefore compact. For any $\varphi$ with $\|\varphi\|_{L^2(\S^{n-1})}=\sum_{l=0}^{+\infty}a_l^2=1$,
			\begin{align*}
				\|(T-T_N)\varphi\|_{L^2(\B^n)}^2
				&=\|T(\varphi-P_N\varphi)\|_{L^2(\B^n)}^2\\
				&=\frac{\pi^{n}2^{1-2\beta}}{\Gamma^2(\frac{n-\alpha}{2})}
				\sum_{l\ge N+1} a_{l}^2\,A^{\alpha,\beta}_{l}\\
				&\leq  \frac{\pi^{n}2^{1-2\beta}}{\Gamma^2(\frac{n-\alpha}{2})}A^{\alpha,\beta}_{N+1}\sum_{l \ge N+1} a_{l}^2\\
				&\leq \frac{\pi^{n}2^{1-2\beta}}{\Gamma^2(\frac{n-\alpha}{2})}A^{\alpha,\beta}_{N+1}.
			\end{align*}
			Hence, we can get that 
			\[
			\|T-T_N\|_{\mathcal{L}(L^2(\S^{n-1}),L^2(\B^n))}
			\le 
			\left(\frac{\pi^{n}2^{1-2\beta}}{\Gamma^2(\frac{n-\alpha}{2})}A^{\alpha,\beta}_{N+1}\right)^{\frac12}.
			\]
			Since $A^{\alpha,\beta}_{N+1}\to0$ as $N\to\infty$, we obtain  $\|T-T_N\|\to0$.
			Thus \(T\) is the operator-norm limit of the finite-rank operators \(T_N\), and consequently \(T\) is compact.
		\end{pf}
		
		To treat the case $p< 2$, we need the following compactness lemma, which essentially relies on Lemma \ref{Compact lem0}. Our argument is partly inspired by \cite[Lemma 3.4]{Figalli&Zhang} and \cite[Lemma 3.4]{Frank&Peteranderl&Read}.

		\begin{lem}\label{Compact cor}
			Let $\varphi_i$ be a sequence in $L^{p}(\S^{n-1})$ for $1<p<2$ and let $(\e_i)\subset \R_{+}$ with  $\e_i\to 0$. If
			\begin{align*}
				\int_{\S^{n-1}}\frac{\varphi_i^2}{(1+\e_i|\varphi_i|)^{2-p}}\leq 1,
			\end{align*}
			then there exists a subsequence (still denoted by $\varphi_i$) such that $\varphi_i \rightharpoonup\varphi$ in $L^{p}(\S^{n-1})$,  $d^{\frac{q-2}{2}}_{\alpha,\beta}Q_{\alpha,\beta}(\varphi)\in L^2(\B^{n})$, and
			\begin{align*}
				\lim_{i\to+\infty}\int_{\B^{n}}d^{q-2}_{\alpha,\beta}Q^2_{\alpha,\beta}(\varphi_i)=\int_{\B^{n}}d^{q-2}_{\alpha,\beta} Q^2_{\alpha,\beta}(\varphi).
			\end{align*}
		\end{lem}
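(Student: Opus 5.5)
The plan is to split each $\varphi_i$ into a bounded part and a large part, modelled on \cite[Lemma 3.4]{Figalli&Zhang}. Set
\[
\varphi_i=\varphi_i\mathbf 1_{\{\e_i|\varphi_i|\le 1\}}+\varphi_i\mathbf 1_{\{\e_i|\varphi_i|>1\}}=:g_i+h_i .
\]
On $\{\e_i|\varphi_i|\le1\}$ we have $(1+\e_i|\varphi_i|)^{2-p}\le 2^{2-p}$, so $\|g_i\|_{L^2(\S^{n-1})}^2\le 2^{2-p}$. On the complement, $(1+\e_i|\varphi_i|)^{2-p}\le (2\e_i|\varphi_i|)^{2-p}$, and therefore
\[
\int|h_i|^p\le (2\e_i)^{2-p}\int\frac{\varphi_i^2}{(1+\e_i|\varphi_i|)^{2-p}}\le (2\e_i)^{2-p}\to0 .
\]
So $h_i\to0$ strongly in $L^p$, while $g_i$ is bounded in $L^2$ and hence in $L^p$, since $p<2$ on the compact sphere. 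In particular $\varphi_i$ is bounded in $L^p$. Passing to a subsequence, $g_i\rightharpoonup\varphi$ weakly in $L^2$, thus weakly in $L^p$, and therefore $\varphi_i\rightharpoonup\varphi$ in $L^p$. The limit satisfies $\varphi\in L^2(\S^{n-1})$, so Lemma~\ref{Compact lem0} already gives $d^{\frac{q-2}{2}}_{\alpha,\beta}Q_{\alpha,\beta}(\varphi)\in L^2(\B^n)$.

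Next, Lemma~\ref{Compact lem0} says $T=d^{\frac{q-2}{2}}_{\alpha,\beta}Q_{\alpha,\beta}:L^2(\S^{n-1})\to L^2(\B^n)$ is compact, so it maps weakly convergent sequences to strongly convergent ones. Hence $Tg_i\to T\varphi$ in $L^2(\B^n)$. It then remains to show $Th_i\to0$ in $L^2(\B^n)$, because then $T\varphi_i=Tg_i+Th_i\to T\varphi$ in $L^2$ and the integrals $\int d^{q-2}Q^2(\varphi_i)$ converge. (One should note that $T\varphi_i$ makes sense for $\varphi_i\in L^p$ only through this decomposition, and that is exactly where the difficulty sits.)

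\textbf{Main obstacle: controlling $Th_i$.} Here $h_i$ is small only in $L^p$ with $p<2$, and $T$ is not a priori bounded from $L^p$ into $L^2$ with weight $d^{q-2}$. My first attempt uses Hölder: write $d^{q-2}Q^2(h)=d^{q-2}\,|Q h|^2$, and Theorem B gives $\|Qh\|_{L^q(\B^n)}\lesssim\|h\|_{L^p}$. Hölder with exponents $q/2$ and $q/(q-2)$ then gives
\[
\int_{\B^n} d^{q-2}Q(h)^2\le \|d\|_{L^q}^{q-2}\|Qh\|_{L^q}^2\lesssim \|d\|_{L^q}^{q-2}\|h\|_{L^p}^2 .
\]
This needs $d_{\alpha,\beta}\in L^q(\B^n)$. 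That holds by Remark~1.2 (equivalently, by Lemma~\ref{Bound d Lem} together with $q(\alpha+\beta-1)+1>0$, i.e.\ \eqref{Compact lem0-equ}), and it requires $q\ge2$, which is automatic here. Hence $\|Th_i\|_{L^2}\lesssim \e_i^{(2-p)/p}\to0$.

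This Hölder step is the key point. If it works, the whole lemma reduces to the splitting above plus compactness from Lemma~\ref{Compact lem0}. The argument also shows that $T$ extends boundedly to $L^p$, so $T\varphi_i$ is well defined and $T\varphi_i\to T\varphi$ in $L^2$ as required.
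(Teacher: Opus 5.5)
Your proof is correct and follows the paper's argument essentially step for step. You use the same splitting at the threshold $\e_i|\varphi_i|=1$, and you apply compactness of $d^{\frac{q-2}{2}}_{\alpha,\beta}Q_{\alpha,\beta}$ from Lemma~\ref{Compact lem0} to the $L^2$-bounded part. For the remainder, which is $O(\e_i^{(2-p)/p})$ in $L^p$, you combine H\"older (with $d_{\alpha,\beta}\in L^q(\B^n)$) with the boundedness of Theorem~B, exactly as the paper does. The differences are cosmetic: you get the uniform $L^p$ bound on $\varphi_i$ directly from the splitting, and you handle the cross term by proving strong $L^2(\B^n)$ convergence of $d^{\frac{q-2}{2}}_{\alpha,\beta}Q_{\alpha,\beta}(\varphi_i)$. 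The paper instead uses a separate H\"older bound for the $L^p$ estimate and bounds the cross term explicitly.
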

		\begin{pf}
			For $1<p<2$, by H\"older's inequality we obtain
			\begin{align*}
				\int_{\S^{n-1}}|\varphi_i|^{p}\leq&
				\left(\int_{\S^{n-1}}(1+\e_i|\varphi_i|)^{p-2}\varphi_i^2\right)^{\frac{p}{2}}
				\left(\int_{\S^{n-1}}(1+\e_i|\varphi_i|)^{p}\right)^{1-\frac{p}{2}}\\
				\leq&\left(1+\|\varphi_i\|^{p}_{L^{p}(\S^{n-1})}\right)^{1-\frac{p}{2}},
			\end{align*}
			which implies $\|\varphi_i\|_{L^{p}(\S^{n-1})}\leq C$. Hence, up to a subsequence, we may assume that $\varphi_i\rightharpoonup\varphi$ in $L^{p}(\S^{n-1})$.
			
			Define
			\begin{align*}
				A_i=\{\eta\in  \S^{n-1}:\e_i|\varphi_i(\eta)|<1\} \qquad\mathrm{and}\qquad
				A^c_i=\{\eta\in  \S^{n-1}:\e_i|\varphi_i(\eta)|\geq 1\}.
			\end{align*}
			Then
			\begin{align*}
				\int_{A_i}\varphi^2_i+\e_i^{p-2}\int_{A^c_i}\varphi^p_i\lesssim 1.
			\end{align*}
			Since $\e_i^{-p}|A_i^c|\leq \|\varphi_i\|^p_{L^{p}(\S^{n-1})}\leq C$, we have $|A_i^c|\to 0$ as $i\to +\infty$.
			Moreover, $\varphi_i\chi_{A_i}$ is a bounded sequence in $L^2(\S^{n-1})$, and hence $\varphi_i\chi_{A_i}\rightharpoonup \varphi'$ in $L^2(\S^{n-1})$ for some $\varphi'$.
			
			We claim that $\varphi'=\varphi$.
			Indeed, writing $\varphi_i\chi_{A_i}=\varphi_i-\varphi_i\chi_{A^c_i}$ and using $\varphi_i\chi_{A^c_i}\to 0$ in $L^p(\S^{n-1})$, we also have $\varphi_i\chi_{A_i}\rightharpoonup \varphi$ in $L^p(\S^{n-1})$, whence $\varphi'=\varphi$.
			
			Therefore, by the compactness lemma \ref{Compact lem0}, we obtain
			\begin{align}\label{Compact cor equa-a}
				\lim_{i\to+\infty}\int_{\B^{n}}d^{q-2}_{\alpha,\beta}Q^2_{\alpha,\beta}(\varphi_i\chi_{A_i})
				=\int_{\B^{n}}d^{q-2}_{\alpha,\beta} Q^2_{\alpha,\beta}(\varphi).
			\end{align}
			On the other hand, 
			\begin{align*}
				\left|\int_{\B^{n}}d^{q-2}_{\alpha,\beta}Q^2_{\alpha,\beta}(\varphi_i\chi_{A^c_i})\right|
				\lesssim \|Q_{\alpha,\beta}(\varphi_i\chi_{A^c_i})\|^2_{L^q(\B^n)}
				\lesssim \|\varphi_i\chi_{A^c_i}\|^2_{L^p(\S^{n-1})}
				\lesssim \e_i^{2(2-p)/p}\to 0,
			\end{align*}
			and similarly,
			\begin{align*}
				\left|\int_{\B^{n}}d^{q-2}_{\alpha,\beta}Q_{\alpha,\beta}(\varphi_i\chi_{A^c_i})Q_{\alpha,\beta}(\varphi_i\chi_{A_i}) \right|
				\lesssim \|Q_{\alpha,\beta}(\varphi_i\chi_{A^c_i})\|_{L^q(\B^n)}
				\lesssim \|\varphi_i\chi_{A^c_i}\|_{L^p(\S^{n-1})}
				\lesssim \e_i^{(2-p)/p}\to 0.
			\end{align*}
			Consequently,
			\begin{align}\label{Compact cor equa-b}
				\lim_{i\to+\infty}\int_{\B^{n}}d^{q-2}_{\alpha,\beta}\bigl(Q^2_{\alpha,\beta}(\varphi_i\chi_{A_i})-Q^2_{\alpha,\beta}(\varphi_i)\bigr)=0.
			\end{align}
			Combining \eqref{Compact cor equa-a} and \eqref{Compact cor equa-b} yields the desired limit.
		\end{pf}

		\subsubsection{Comparison of second order variation}
		Formally, the proof of Theorem~\ref{Thm 1} reduces to establishing a lower bound for
		\begin{align}\label{Sec 2.23 euq}
			\|1+\varphi\|^p_{L^p(\S^{n-1})}c^p_{\alpha,\beta}
			-
			\|Q_{\alpha,\beta}(1+\varphi)\|^p_{L^q(\B^n)}
		\end{align}
		when $\varphi$ is sufficiently  small and satisfies $\varphi\perp \mathcal{H}$. 
		To this end, we shall employ the following fundamental inequality concerning the second variation, due to Figalli and Zhang~\cite{Figalli&Zhang}; see also~\cite{Figalli&Robin}. 
		These inequalities\footnote{When $r=2$, the two types of inequalities in these two Lemmas are both equivalent.} 
		will play a crucial role in establishing the desired stability estimate.

		\begin{lem}[\protect{\cite[Lemma 2.1]{Figalli&Zhang}}]\label{Inequ lem}
			For any $\kappa>0$, there exists $c_{\kappa}$ such that for any $a\in \R$, the following inequalities hold:
			\begin{itemize}
				\item If  $r\geq 2$, then
				\begin{align}\label{Fundamental Ine 1}  
					|1+a|^r\geq 1+ra+\frac{r(1-\kappa)}{2}(a^2+(r-2)\zeta(a)(1-|1+a|)^2)+c_{\kappa}|a|^r,
				\end{align}
				where 
				\begin{align*}
					\zeta(a)=\begin{cases}
						\displaystyle 1 \qquad&\mathrm{for}\qquad a\not\in [-2,0]\\
						\displaystyle |1+a|^{r-1} \qquad&\mathrm{for}\qquad a\in [-2,0].        \end{cases}
				\end{align*}
				\item If  $1<r< 2$, then 
				\begin{align}\label{Fundamental Ine 2}
					|1+a|^r\geq 1+ra+\frac{r(1-\kappa)}{2}(a^2+(r-2)\zeta(a)(1-|1+a|)^2)+c_{\kappa}\min\{|a|^r,|a|^2\}
				\end{align}
				where 
				\begin{align*}
					\zeta(a)=\begin{cases}
						\displaystyle \frac{|1+a|}{(2-r)|1+a|+(r-1)}\qquad&\mathrm{for}\qquad a\not\in [-2,0]\\
						\displaystyle 1 \qquad&\mathrm{for}\qquad a\in [-2,0].        \end{cases}
				\end{align*}

			\end{itemize}    
		\end{lem}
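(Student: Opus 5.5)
The statement is a one-variable inequality, and I would prove it by elementary calculus: a Taylor expansion at the origin, an asymptotic comparison at infinity, and a compactness argument in between. Set $b=|1+a|$ and
\[
B(a)=a^2+(r-2)\zeta(a)(1-b)^2,\qquad g_\kappa(a)=|1+a|^r-1-ra-\tfrac{r(1-\kappa)}{2}B(a).
\]
Then $g_\kappa=g_0+\tfrac{r\kappa}{2}B$. The goal is $g_\kappa(a)\ge c_\kappa|a|^r$ when $r\ge2$, and $g_\kappa(a)\ge c_\kappa\min\{|a|^r,a^2\}$ when $1<r<2$.

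\textbf{Step 1: $B\ge0$, and $B\gtrsim a^2$ near $0$.} In both regimes $(1-b)^2\le a^2$ by the reverse triangle inequality. For $r\ge2$ every term of $B$ is nonnegative, so $B\ge a^2$. For $1<r<2$ I would use $(2-r)\zeta(a)\le 1$ outside $[-2,0]$ (immediate from the formula for $\zeta$) and $(2-r)\zeta=2-r<1$ on $[-2,0]$. This gives $B\ge a^2-\max\{2-r,\,(2-r)\zeta\}(1-b)^2\ge0$. Near $a=0$ we have $\zeta\to1$ and $(1-b)^2=a^2$, so $B=(r-1)a^2+o(a^2)$.

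\textbf{Step 2: the case $\kappa=0$.} I would show $g_0\ge0$ on $\R$, with equality only at $a=0$. This is the main obstacle.
\begin{itemize}
\item On $a\ge-1$, where $b=1+a$, I would write $g_0$ as a function of $b$ and check that $g_0(0)=g_0'(0)=g_0''(0)=0$. Then I would study the sign of $g_0'$ on each side of $0$, or use convexity of $h(b)=b^r-1-r(b-1)-\tfrac{r}{2}\bigl[(r-1)(b-1)^2\bigr]$ modified by $\zeta$. For $r\ge2$ on $[-1,0]$ with $\zeta=b^{r-1}$, the identity becomes a polynomial-type inequality in $b\in[0,1]$, which I would verify by differentiating twice. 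For $1<r<2$ and $a>0$, the specific rational form of $\zeta$ is designed so that the inequality reduces to comparing $b^r$ with its second-order expansion weighted by $b/((2-r)b+r-1)$. I would verify this via the mean-value/integral form $b^r-1-r(b-1)=r(r-1)\int_1^b(b-s)s^{r-2}\,ds$ and compare integrands.
\item On $a<-1$, where $b=-(1+a)$, the term $-ra=r(1+b)$ is large and positive. The inequality there is much slacker, and I would check it directly by monotonicity in $b$, separately on $[-2,-1)$ and $(-\infty,-2)$.
\end{itemize}
Since I expect this step to be the fiddly one, I would first check the endpoint cases $r=2$ (where $g_0\equiv0$ and $B=a^2$) and $a=-1$ to calibrate.

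\textbf{Step 3: assemble.}
\begin{itemize}
\item \emph{Near $0$.} By Taylor, $g_\kappa(a)=\tfrac{r(r-1)\kappa}{2}a^2+o(a^2)$. This dominates $|a|^r$ for $r\ge2$, and dominates $\min\{|a|^r,a^2\}=a^2$ for $r<2$.
\item \emph{Near infinity.} For $r>2$ we have $|1+a|^r\sim|a|^r$ while $B=O(a^2)$, so $g_\kappa\ge\tfrac12|a|^r$. For $r=2$, directly $g_\kappa=\kappa a^2$. For $1<r<2$ one has $\zeta\to\frac1{2-r}$ and $B=O(|a|)$, so $g_\kappa\ge\tfrac12|a|^r$ for large $|a|$.
\item \emph{Compact middle region.} On any compact set avoiding a neighbourhood of $0$, Step 2 gives $g_\kappa\ge g_0>0$, and continuity of $g_0$ yields a positive minimum.
\end{itemize}
Taking $c_\kappa$ to be the smallest of the three constants proves both \eqref{Fundamental Ine 1} and \eqref{Fundamental Ine 2}.
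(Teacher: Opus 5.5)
Your overall plan (prove the $\kappa=0$ inequality with no remainder, then recover $c_\kappa$ from the Taylor expansion at $0$, the behaviour at infinity, and compactness in between) is reasonable. The paper gives no proof of its own here; it simply quotes Figalli--Zhang. However, Step~2 as you state it is false, and Step~3 relies on the false part.

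\textbf{The gap.} Note that $1+ra+\tfrac r2a^2=1+\tfrac r2\bigl((1+a)^2-1\bigr)$, and that $\zeta(a)$ and $(1-|1+a|)^2$ depend only on $b=|1+a|$ (indeed $a\in[-2,0]$ iff $b\le 1$). Hence $g_0(a)=G(b)$ with
\[
G(b)=b^r-1-\tfrac r2(b^2-1)-\tfrac{r(r-2)}{2}\,\zeta\,(1-b)^2 .
\]
So $g_0(a)=g_0(-2-a)$, and in particular $g_0(-2)=g_0(0)=0$ for every $r>1$. You can also see this directly: at $a=-2$ one has $|1+a|^r=1$ and $1+ra+\tfrac r2a^2=1$, and the $\zeta$-term vanishes.

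This has three consequences:
\begin{itemize}
\item $g_0$ does not vanish only at the origin.
\item The region $a<-1$ is not ``much slacker''; it is the exact mirror image of $a>-1$.
\item The claim ``$g_\kappa\ge g_0>0$ on compact sets avoiding a neighbourhood of $0$'' fails at $a=-2$. It also fails everywhere when $r=2$, where $g_0\equiv 0$, as you note yourself.
\end{itemize}
This is exactly why $\kappa>0$ is needed globally, not only near $0$. With $\kappa=0$ the inequality fails at $a=-2$ for every $c>0$. Since $g_\kappa(-2)=\tfrac{r\kappa}{2}B(-2)=2r\kappa$, one is forced to take $c_\kappa\le r2^{1-r}\kappa$.

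\textbf{The repair.} Use your Step~1 quantitatively. What Step~1 actually gives is:
\begin{itemize}
\item $B\ge a^2$ for $r\ge 2$;
\item $B\ge\bigl(1-(2-r)\zeta(a)\bigr)a^2$ for $1<r<2$.
\end{itemize}
Here $1-(2-r)\zeta$ equals $r-1$ on $[-2,0]$ and $\frac{r-1}{(2-r)|1+a|+r-1}$ elsewhere, so it is bounded below by a positive constant on bounded sets.

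Once $g_0\ge 0$ is known, $g_\kappa=g_0+\tfrac{r\kappa}{2}B\gtrsim_R \kappa a^2$ on $\{|a|\le R\}$. This handles $0$ and $-2$ simultaneously and makes your separate Taylor step unnecessary. Your asymptotics at infinity then finish the proof.

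The reduction to $G$ also halves Step~2: it suffices to show $G\ge 0$ on $[0,\infty)$.
\begin{itemize}
\item Where $\zeta\equiv 1$ (namely $r\ge 2$, $b\ge 1$, and $1<r<2$, $b\le 1$), one has $G(1)=G'(1)=0$ and $G''(b)=r(r-1)(b^{r-2}-1)\ge 0$.
\item Only the two remaining branches need a genuine computation: $r\ge 2$, $b<1$ with $\zeta=b^{r-1}$, and $1<r<2$, $b>1$ with the rational $\zeta$. In both, $G$ vanishes to third order at $b=1$, so the sign check there is delicate.
\end{itemize}
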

		\begin{rem}\label{Rem 1}
			In \eqref{Fundamental Ine 2}, we note that $1\leq\zeta< \frac{1}{2-r}$. If $a>-1$, then
			\begin{align*}
				a^2+(r-2)\zeta(a)(1-|1+a|)^2=a^2(1+(r-2)\zeta(a))\geq 0.
			\end{align*}
			If $a<-1$, we observe that $|a|>|2+a|$; consequently, 
			\begin{align*}
				a^2+(r-2)\zeta(a)(1-|1+a|)^2=&a^2+(r-2)\zeta(a)(2+a)^2\\
				\geq&a^2+(r-2)\zeta(a)a^2 \geq 0.
			\end{align*}       
		\end{rem}
		
		\begin{lem}[\protect{\cite[Lemma 2.4]{Figalli&Zhang}}]\label{Inequ lem-2}    
			For any $\kappa>0$, there exists $c_{\kappa}$ such that for any $a\in \R$, the following inequalities hold:
			\begin{itemize}
				\item If $2\leq r<+\infty$, there exists a constant $C_{\kappa}$ such that
				\begin{align*}
					|a+b|^r\leq |a|^r+r|a|^{r-1}|b|+\left(\frac{r(r-1)}{2}+\kappa\right)|a|^{r-2}|b|^2+C_{\kappa}|b|^r.
				\end{align*}
				\item If $1<r< 2$, there exists a constant $C_{\kappa}$ such that for any $a,b\in \R$, 
				\begin{align*}
					|a+b|^r\leq |a|^r+r|a|^{r-1}|b|+\left(\frac{r(r-1)}{2}+\kappa\right)\left(\frac{(|a|+C_{\kappa}|b|)^r}{|a|^2+|b|^2}|b|^2\right).
				\end{align*}
			\end{itemize}
			
		\end{lem}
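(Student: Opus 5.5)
By homogeneity, both inequalities reduce to a one-variable statement, which I would prove by splitting into a small-argument regime handled by Taylor expansion and a large-argument regime where the remainder term dominates crudely.

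\textbf{Degenerate case $a=0$.} For $r\ge 2$ the claim reads $|b|^r\le C_\kappa|b|^r$, which holds once $C_\kappa\ge 1$. For $1<r<2$ the right-hand side equals $\bigl(\frac{r(r-1)}{2}+\kappa\bigr)C_\kappa^r|b|^r$. This dominates $|b|^r$ once $C_\kappa^r\ge \frac{2}{r(r-1)}$.

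\textbf{Reduction for $a\neq 0$.} Divide by $|a|^r$ and set $t=b/a$, using $|b|=|a||t|$. The case $r\ge 2$ becomes
\begin{align*}
|1+t|^r\le 1+r|t|+\Bigl(\tfrac{r(r-1)}{2}+\kappa\Bigr)t^2+C_\kappa|t|^r .
\end{align*}
The case $1<r<2$ becomes
\begin{align*}
|1+t|^r\le 1+r|t|+\Bigl(\tfrac{r(r-1)}{2}+\kappa\Bigr)\frac{(1+C_\kappa|t|)^r}{1+t^2}\,t^2 .
\end{align*}

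\textbf{Small $|t|$.} Fix $\delta=\delta(\kappa,r)\in(0,1/2)$. The function $g(t)=|1+t|^r=(1+t)^r$ is smooth on $|t|\le\delta$, so Taylor's theorem gives
\begin{align*}
g(t)=1+rt+\tfrac{r(r-1)}{2}t^2+R(t),\qquad |R(t)|\le M_r|t|^3 .
\end{align*}
Shrinking $\delta$ so that $M_r\delta\le\kappa/2$ yields $|1+t|^r\le 1+r|t|+\bigl(\frac{r(r-1)}{2}+\frac{\kappa}{2}\bigr)t^2$. This settles $r\ge 2$ in this range, with any $C_\kappa\ge 0$.

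For $1<r<2$ one also needs the weight $w(t)=\frac{(1+C_\kappa|t|)^r}{1+t^2}$ to be at least $\frac{r(r-1)+\kappa}{r(r-1)+2\kappa}$ on $|t|\le\delta$. Since $(1+C|t|)^r\ge 1+C|t|$, it suffices that $C|t|\ge t^2$ there, which holds whenever $C\ge\delta$. In fact this gives $w\ge 1$, and the bound follows.

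\textbf{Large $|t|$.} For $|t|\ge\delta$ use $|1+t|^r\le(1+|t|)^r\le(1+\delta^{-1})^r|t|^r$. For $r\ge 2$, take $C_\kappa=(1+\delta^{-1})^r$; the other right-hand-side terms are nonnegative, since $1+r|t|\ge 0$ and the quadratic coefficient is positive. For $1<r<2$, use $\frac{t^2}{1+t^2}\ge\frac{\delta^2}{1+\delta^2}$ to bound the last term below by
\begin{align*}
\tfrac{r(r-1)}{2}\,C_\kappa^r|t|^r\,\frac{\delta^2}{1+\delta^2}.
\end{align*}
Choosing $C_\kappa^r\ge \frac{2(1+\delta^2)(1+\delta^{-1})^r}{r(r-1)\delta^2}$ makes this dominate $|1+t|^r$.

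The only delicate step is choosing $\delta$ from the Taylor remainder before fixing $C_\kappa$; all choices depend only on $r$ and $\kappa$.
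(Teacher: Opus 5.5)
Your proof is correct and complete. The homogeneity reduction to $t=b/a$ works, as does the Taylor bound on $|t|\le\delta$ (with the weight $\ge 1$ once $C_\kappa\ge\delta$ when $1<r<2$). The crude bound $|1+t|^r\le(1+\delta^{-1})^r|t|^r$ on $|t|\ge\delta$ also holds, and all the constraints you place on $C_\kappa$ are compatible. The paper gives no proof of its own here; it simply imports the lemma from Figalli--Zhang, whose argument is this same homogeneity-plus-splitting scheme.

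One remark is worth making. Where the paper actually applies the lemma (Propositions \ref{Loc Bound Prop} and \ref{Dule Loc Bound Prop}), it needs the signed linear term $r|a|^{r-2}ab$ rather than $r|a|^{r-1}|b|$ as quoted. That term is later cancelled using \eqref{Variation 1} and \eqref{Integral Identity}, which would not be possible with an absolute value. Your argument gives the signed version with one extra step:
\begin{itemize}
\item In the Taylor regime, keep $rt$ instead of bounding it by $r|t|$.
\item On $|t|\ge\delta$, use $1+rt\ge -r|t|\ge -r\delta^{1-r}|t|^r$ and absorb this negative part into the $C_\kappa$-term by enlarging $C_\kappa$. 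The same works for the weighted term when $1<r<2$.
\end{itemize}
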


		Now, using the spectral gap inequality from Lemma \ref{Gap Inequ} together with the compactness results of Lemma \ref{Compact lem0} and Lemma \ref{Compact cor},  we can compare the two second-variation terms in \eqref{Sec 2.23 euq}, as they appear in Lemma \ref{Inequ lem} and Lemma \ref{Inequ lem-2}. The following proof is essentially a modification of \cite[Proposition 2.5 and Proposition 3.7]{Frank&Peteranderl&Read} or \cite[Proposition 3.8]{Figalli&Zhang}.

		\begin{pro}\label{Limit Prop}
			For $n\geq 3$ and for any $\lambda\in (0,K_{n,\alpha,\beta})$ and any $\gamma>0$, there exists $\delta=\delta(\lambda,\gamma)$ such that for any $\varphi\in L^p(\S^{n-1})\cap\mathcal{H}^{\perp}$ with $\|\varphi\|_{L^p(\S^{n-1})}<\delta$,
			\begin{align*}
				\|\varphi\|^2_{L^2(\S^{n-1})}
				+&(p-2)\int_{\S^{n-1}}\zeta(\varphi)(1-|1+\varphi|)^2
				+\gamma\mathrm{1}_{\{1<p< 2\}}\int_{\S^{n-1}}\min\{|\varphi|^p,|\varphi|^2\}\nonumber\\
				\geq&(p-1)(K_{n,\alpha,\beta}-\lambda)\int_{\B^n}d^{q-2}_{\alpha,\beta}Q^2_{\alpha,\beta}(\varphi),
			\end{align*}
			where $\mathrm{1}_{\{1<p< 2\}}=1$ for $1<p< 2$, and $\mathrm{1}_{\{1<p< 2\}}=0$ for $p\geq 2$.
		\end{pro}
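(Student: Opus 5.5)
I will argue by contradiction, following the scheme of \cite[Proposition 3.8]{Figalli&Zhang} and \cite[Propositions 2.5 and 3.7]{Frank&Peteranderl&Read}. Suppose the statement fails for some $\lambda\in(0,K_{n,\alpha,\beta})$ and $\gamma>0$. Then there is a sequence $\varphi_i\in L^p(\S^{n-1})\cap\mathcal{H}^\perp$ with $\|\varphi_i\|_{L^p}\to0$ such that
\begin{align*}
D_i:=\|\varphi_i\|^2_{L^2}+(p-2)\int_{\S^{n-1}}\zeta(\varphi_i)(1-|1+\varphi_i|)^2+\gamma\mathrm{1}_{\{1<p<2\}}\int_{\S^{n-1}}\min\{|\varphi_i|^p,|\varphi_i|^2\}
\end{align*}
satisfies $D_i<(p-1)(K_{n,\alpha,\beta}-\lambda)\int_{\B^n}d^{q-2}_{\alpha,\beta}Q^2_{\alpha,\beta}(\varphi_i)$. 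The right-hand side must then be positive, so $\varphi_i\not\equiv0$. I will normalise $\varphi_i$ by a suitable quantity $\e_i$, set $\psi_i=\varphi_i/\e_i$, pass to a weak limit $\psi$, and reach a contradiction with the spectral gap Lemma~\ref{Gap Lem}. That lemma gives, for every $\psi\in L^2(\S^{n-1})\cap\mathcal{H}^\perp$,
\begin{align*}
\int_{\B^n}d^{q-2}_{\alpha,\beta}Q^2_{\alpha,\beta}(\psi)\le K^{-1}_{n,\alpha,\beta}\|\psi\|^2_{L^2}.
\end{align*}

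\textbf{Case $p\ge2$.} Here $\zeta\ge0$ and $p-2\ge0$, so $D_i\ge\|\varphi_i\|^2_{L^2}$. Take $\e_i=\|\varphi_i\|_{L^2}$, which is finite for $p\ge2$. Then $\|\psi_i\|_{L^2}=1$, and up to a subsequence $\psi_i\rightharpoonup\psi$ in $L^2$ with $\psi\in\mathcal{H}^\perp$, since $\mathcal{H}^\perp$ is weakly closed. By Lemma~\ref{Compact lem0} the weighted operator is compact, so $\int_{\B^n} d^{q-2}_{\alpha,\beta}Q^2_{\alpha,\beta}(\psi_i)\to\int_{\B^n} d^{q-2}_{\alpha,\beta}Q^2_{\alpha,\beta}(\psi)$. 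Dividing the contradiction hypothesis by $\e_i^2$ gives $1\le(p-1)(K_{n,\alpha,\beta}-\lambda)\,\lim_i\int_{\B^n} d^{q-2}_{\alpha,\beta}Q^2_{\alpha,\beta}(\psi_i)$. Combining this with the spectral gap and weak lower semicontinuity yields $1\le(p-1)(K_{n,\alpha,\beta}-\lambda)K^{-1}_{n,\alpha,\beta}\|\psi\|_{L^2}^2\le (p-1)(1-\lambda/K_{n,\alpha,\beta})$. This is a contradiction only if $p-1\le1$, which fails when $p>2$.

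So for $p>2$ the term $(p-2)\int\zeta(1-|1+\varphi|)^2$ has to be used, and this is where the real work lies. On the region where $|\varphi_i|$ is small, $\zeta\approx1$ and $(1-|1+\varphi_i|)^2=\varphi_i^2$. The full quadratic part is then $\approx(p-1)\|\varphi_i\|^2_{L^2}$, and the factor $p-1$ on both sides cancels, leaving $1-\lambda/K_{n,\alpha,\beta}<1$ as required. To make this rigorous I will take $\e_i^2$ to be the whole quantity $D_i$ (or $\int_{\S^{n-1}}\varphi_i^2(1+(p-2)\zeta)$ truncated appropriately). I will split $\S^{n-1}$ into $\{|\varphi_i|\le\tau\}$ and its complement. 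On the complement, $\zeta(1-|1+\varphi|)^2\gtrsim\varphi^2$ still holds for $\varphi\notin[-2,0]$, and on $[-2,-\tau]$ it is controlled from below, while $|\cdot|^2$ of that set is controlled by $\|\varphi_i\|_{L^p}\to0$. On the small set the pointwise ratio tends to $p-1$ as $\tau\to0$.

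\textbf{Case $1<p<2$.} Now $p-2<0$, but Remark~\ref{Rem 1} shows the integrand $a^2+(p-2)\zeta(a)(1-|1+a|)^2$ is nonnegative, and for $|a|$ small it equals $(p-1+o(1))a^2$. I will use $\e_i^2$ defined through $\int\varphi_i^2/(1+|\varphi_i|)^{2-p}$, which is comparable to $\int\min\{|\varphi_i|^p,|\varphi_i|^2\}$. Then $\psi_i=\varphi_i/\e_i$ satisfies the hypothesis of Lemma~\ref{Compact cor} with $\e_i$ in the role of the small parameter. That lemma yields a weak limit $\psi\in\mathcal{H}^\perp$ in $L^p$ and convergence of the weighted quadratic form. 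The $\gamma$-term bounds $D_i/\e_i^2$ from below by $\gamma c$ plus the contribution of the set where $|\varphi_i|$ is small, which tends to $(p-1)\|\psi\|_{L^2}^2$ by Fatou's lemma. The contradiction is then
\begin{align*}
(p-1)\|\psi\|^2_{L^2}+\gamma c\le(p-1)(K_{n,\alpha,\beta}-\lambda)K^{-1}_{n,\alpha,\beta}\|\psi\|^2_{L^2}.
\end{align*}
This is impossible: if $\psi\ne0$ because of the factor $1-\lambda/K_{n,\alpha,\beta}<1$, and if $\psi=0$ because $\gamma c>0$ while the right-hand side vanishes by the compactness in Lemma~\ref{Compact cor}.

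The main obstacle is handling the large-$|\varphi_i|$ region so that, after normalisation, it contributes nonnegatively on the left and vanishingly on the right. The right-hand side is controlled by $\|Q_{\alpha,\beta}(\varphi_i\chi)\|_{L^q}^2\lesssim\|\varphi_i\chi\|_{L^p}^2$, using boundedness of $d_{\alpha,\beta}^{q-2}$ in $L^{q/(q-2)}$ via H\"older's inequality and Theorem~B. Showing that this is $o(\e_i^2)$ requires the set where $|\varphi_i|>\tau$ to have small measure relative to the normalisation, which is exactly what the $\min\{|\varphi|^p,|\varphi|^2\}$ and $(p-2)\zeta$ terms provide.
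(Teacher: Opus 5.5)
Your case $1<p<2$ is essentially the paper's argument: the same normalisation $\e_i^2=\int_{\S^{n-1}}\varphi_i^2(1+|\varphi_i|)^{p-2}$, Lemma~\ref{Compact cor}, and $\min\{|a|^p,a^2\}\ge a^2(1+|a|)^{p-2}$ to exclude $\psi\equiv0$. One correction is needed there. Fatou's lemma is not available, because $\psi_i$ converges only weakly. What you need is $\liminf_i\int_{\{|\varphi_i|\le\delta\}}\psi_i^2\ge\|\psi\|^2_{L^2}$. This follows from weak lower semicontinuity once you check that $\psi_i\chi_{\{|\varphi_i|\le\delta\}}\rightharpoonup\psi$ in $L^2$. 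That check uses $\int_{\{|\varphi_i|>\delta\}}|\psi_i|^p\lesssim_\delta \e_i^{2-p}\to0$, which comes from the normalisation.

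The genuine gap is the case $p>2$.

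\textbf{Your heuristic is false.} The claim that the quadratic part is $\approx(p-1)\|\varphi_i\|^2_{L^2}$ fails because the $L^2$-mass can sit on $\{|\varphi_i|>\tau\}$. For example, near $\varphi_i=-1$ the weight $\zeta(\varphi_i)=|1+\varphi_i|^{p-1}$ vanishes.

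\textbf{Your mechanism for the large set fails.} You propose making the large set contribute $o(\e_i^2)$ to the right-hand side via $\|Q_{\alpha,\beta}(\varphi_i\chi)\|_{L^q}\lesssim\|\varphi_i\chi\|_{L^p}$. For $p>2$ this cannot work. Since $\zeta\le1$ and $(1-|1+a|)^2\le a^2$, one has $\|\varphi_i\|^2_{L^2}\le D_i\le(p-1)\|\varphi_i\|^2_{L^2}$. So nothing in $D_i$, in particular not the $(p-2)\zeta$-term, controls $L^p$ beyond $L^2$. A spike $M_i\chi_{E_i}$ with $|E_i|\to0$ has $\|\cdot\|^2_{L^p}/\|\cdot\|^2_{L^2}=|E_i|^{2/p-1}\to\infty$.

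\textbf{The missing idea} (this is what the paper does) is that the large set never needs to be small.
\begin{itemize}
\item On the left it is simply dropped, since both terms are nonnegative for $p\ge2$.
\item On the right no splitting is needed. The sequence $\hat\varphi_i=\varphi_i/\|\varphi_i\|_{L^2}$ is bounded in $L^2$, so Lemma~\ref{Compact lem0} sends $\int_{\B^n} d^{q-2}_{\alpha,\beta}Q^2_{\alpha,\beta}(\hat\varphi_i)$ to its value at the weak limit $\hat\varphi$.
\item The only limiting step is a weighted lower semicontinuity. Set $w_i=\zeta(\varphi_i)\chi_{\{|\varphi_i|\le1\}}$; then $0\le w_i\le1$ and $w_i\to1$ a.e.\ along a subsequence. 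One has $\int w_i\hat\varphi_i^2\ge\int w_i\bigl(\hat\varphi^2+2\hat\varphi(\hat\varphi_i-\hat\varphi)\bigr)$, and $w_i\hat\varphi\to\hat\varphi$ strongly in $L^2$. Hence the liminf is at least $\|\hat\varphi\|^2_{L^2}$.
\item Mass lost in the weak limit is absorbed by the first term, since $\|\hat\varphi_i\|^2_{L^2}=1\ge\|\hat\varphi\|^2_{L^2}$.
\end{itemize}
Putting these together gives $(p-1)\|\hat\varphi\|^2_{L^2}\le 1+(p-2)\|\hat\varphi\|^2_{L^2}\le(p-1)(K_{n,\alpha,\beta}-\lambda)\int_{\B^n}d^{q-2}_{\alpha,\beta}Q^2_{\alpha,\beta}(\hat\varphi)$. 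Then Lemma~\ref{Gap Lem} forces $\hat\varphi\equiv0$, and the inequality reduces to $1\le0$. Your alternative normalisation $\e_i^2=D_i$ also works once this same step is inserted; it yields $1\ge(p-1)\|\psi\|^2_{L^2}$ directly.
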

		
		\begin{pf}
			\medskip	
			We argue by contradiction, following the scheme of \cite{Frank&Peteranderl&Read}. Suppose there exists a sequence $\{\varphi_i\}\subset L^p(\S^{n-1})\cap \mathcal{H}^{\perp}$ with $\|\varphi_i\|_{L^p(\S^{n-1})}\to 0$ and $\gamma_0,\lambda_0>0$ such that
			\begin{align}\label{Limit Prop eua-1}
				\|\varphi_i\|_{L^2(\S^{n-1})}
				+&(p-2)\int_{\S^{n-1}}\zeta(\varphi_i)\left(1-|1+\varphi_i|\right)^2
				+\gamma_0\mathrm{1}_{\{1<p<2\}}\int_{\S^{n-1}}\min\{|\varphi_i|^p,|\varphi_i|^2\}  \nonumber\\
				<& (p-1)(K_{n,\alpha,\beta}-\lambda_0)\int_{\B^n}d^{q-2}_{\alpha,\beta}Q^2_{\alpha,\beta}(\varphi_i).
			\end{align}
			\noindent\textbf{Case 1:} $\alpha\leq  1$ (i.e. $p\geq  2$).	Set $\hat{\varphi}_i=\frac{\varphi_i}{\|\varphi_i\|_{L^2(\S^{n-1})}}$.
			Clearly, $\|\hat{\varphi}_i\|_{L^2(\S^{n-1})}=1$, hence up to a subsequence we may assume that $\hat{\varphi}_i\rightharpoonup\hat{\varphi}$ in $L^2(\S^{n-1})$.	Set $\psi_i=\hat{\varphi}_i-\hat{\varphi}$. Then
			\begin{align*}
				\int_{\S^{n-1}}\zeta(\varphi_i)\left(\frac{1-|1+\varphi_i|}{\|\varphi_i\|_{L^2(\S^{n-1})}}\right)^2
				\geq \int_{\{|\varphi_i|\leq 1\}}\zeta(\varphi_i)|\hat{\varphi}_i|^2
				\geq\int_{\{|\varphi_i|\leq 1\}}\zeta(\varphi_i)(\hat{\varphi}^2+2\hat{\varphi}\psi_i).
			\end{align*}
			Since $\varphi_i\to 0$ almost everywhere, we have $\chi_{\{|\varphi_i|\leq 1\}}\to 1$  and $\zeta(\varphi_i)\chi_{\{|\varphi_i|\leq 1\}}\to 1$  pointwise, and  $|\zeta(\varphi_i)\chi_{\{|\varphi_i|\leq 1\}}|\leq 1$. Hence
			$\zeta(\varphi_i)\chi_{\{|\varphi_i|\leq 1\}}\hat{\varphi}\to \hat{\varphi}$ in $L^2(\S^{n-1})$, and therefore
			\begin{align*}
				\left|\int_{\S^{n-1}}\zeta(\varphi_i)\chi_{\{|\varphi_i|\leq 1\}}\hat{\varphi}\psi_i-\int_{\S^{n-1}}\hat{\varphi}\psi_i\right|
				\lesssim\|\zeta(\varphi_i)\chi_{\{|\varphi_i|\leq 1\}}\hat{\varphi}- \hat{\varphi} \|_{L^2(\S^{n-1})}
				\to 0
			\end{align*}
			as $i\to +\infty$.
			Since $\psi_i\rightharpoonup 0$ in $L^2(\S^{n-1})$, we obtain
			\begin{align*}
				\liminf_{i\to+\infty}\int_{\S^{n-1}}\zeta(\varphi_i)\left(\frac{1-|1+\varphi_i|}{\|\varphi_i\|_{L^2(\S^{n-1})}}\right)^2
				\geq \|\hat{\varphi}\|^2_{L^2(\S^{n-1})}.
			\end{align*}
			By the Compactness Lemma \ref{Compact lem0}, we have
			\begin{align*}
				\lim_{i\to+\infty}\int_{\B^n}d^{q-2}_{\alpha,\beta}Q^2_{\alpha,\beta}(\hat{\varphi}_i)
				=\int_{\B^n}d^{q-2}_{\alpha,\beta}Q^2_{\alpha,\beta}(\hat{\varphi}).
			\end{align*}
			Dividing both sides of \eqref{Limit Prop eua-1} by $\|\varphi_i\|^2_{L^2(\S^{n-1})}$ and letting $i\to+\infty$ yields
			\begin{align}\label{Sec 2 Gap Lem equ-a}
				1+(p-2)\|\hat{\varphi}\|^2_{L^2(\S^{n-1})}
				\leq (p-1)(K_{n,\alpha,\beta}-\lambda_0)\int_{\B^n}d^{q-2}_{\alpha,\beta}Q^2_{\alpha,\beta}(\hat{\varphi}).
			\end{align}
			Observing   $\|\hat{\varphi}\|_{L^2(\S^{n-1})}\leq 1$ and $\hat{\varphi}\in \mathcal{H}^{\perp}$, we obtain
			\begin{align*}
				K_{n,\alpha,\beta}\int_{\B^n}d^{q-2}_{\alpha,\beta}Q^2_{\alpha,\beta}(\hat{\varphi})
				\leq \|\hat{\varphi}\|^2_{L^2(\S^{n-1})}
				\leq (K_{n,\alpha,\beta}-\lambda_0)\int_{\B^n}d^{q-2}_{\alpha,\beta}Q^2_{\alpha,\beta}(\hat{\varphi}).
			\end{align*}
			This forces $\hat{\varphi}\equiv 0$, contradicting  \eqref{Sec 2 Gap Lem equ-a}.
			
			\medskip
			\noindent\textbf{Case 2:} $n>\alpha> 1$ (i.e.  $1<p< 2$).
			We note that for any $\e>0$, there exists $\delta(\e)\in (0,1)$ such that 
			on the set
			\[
			I_i=\{|\varphi_i|\le\delta\},
			\]
			we have $\zeta(\varphi_i)\le 1+\frac{p-1}{2-p}\e$. In this case, we have   
			\begin{align*}
				\varphi_i^2+(p-2)\zeta(\varphi_i)(1-|\varphi_i+1|)^2\geq (p-1)(1-\e)\varphi_i^2.
			\end{align*}
			Set $\e_i=\left(\int_{\S^{n-1}}(1+|\varphi_i|)^{p-2}\varphi_i^2\right)^{1/2}\lesssim \|\varphi_i\|^{\frac{p}{2}}_{L^p(\S^{n-1})}\to 0$ and $\hat{\varphi}_i=\e_i^{-1}\varphi_i$, it means that 
			\begin{align*}
				\int_{\S^{n-1}}\frac{\hat{\varphi}_i^2}{(1+\e_i|\hat{\varphi}_i|)^{2-p}}= 1            \end{align*}
			Using Remark \ref{Rem 1},  we can  estimate
			\begin{align}\label{Sec 2 Gap Lem equ-b}
				(1-\e)\int_{I_i}\hat{\varphi}_i^2
				+\frac{\gamma_0}{p-1}\e_i^{p-2}\delta^{2-p}\int_{I_i^{c}} |\hat{\varphi}_i|^p
				<(K_{n,\alpha,\beta}-\lambda_0)\int_{\B^n}d^{q-2}_{\alpha,\beta}Q^2_{\alpha,\beta}(\hat{\varphi}_i).
			\end{align}
			Clearly, $\hat{\varphi}_i\chi_{I_i}$ is uniformly bounded in $L^2(\S^{n-1})$. We claim that $\hat{\varphi}_i\chi_{I_i}\rightharpoonup \hat{\varphi}$ in $L^2(\S^{n-1})$.
			Indeed, for any $f\in L^{p'}(\S^{n-1})$, we have
			\begin{align*}
				\left|\int_{I_i}f\hat{\varphi}_i-\int_{\S^{n-1}}f\hat{\varphi}\right|
				\lesssim \left|\int_{\S^{n-1}}f(\hat{\varphi}_i-\hat{\varphi})\right|
				+\|f\|_{L^{p'}(I_i^{c})}\|\hat{\varphi}_i\|_{L^p(I_i^{c})}
				\to 0
			\end{align*}
			as $i\to+\infty$.
			The first term follows from $\hat{\varphi}_i\rightharpoonup \hat{\varphi}$, while the second term follows from \eqref{Sec 2 Gap Lem equ-b}.
			
			Taking $i\to+\infty$ in \eqref{Sec 2 Gap Lem equ-b} and using Lemma \ref{Compact cor}, we obtain
			\begin{align*}
				(1-\e)\|\hat{\varphi}\|^2_{L^2(\S^{n-1})}
				\leq (K_{n,\alpha,\beta}-\lambda_0)\int_{\B^n}d^{q-2}_{\alpha,\beta}Q^2_{\alpha,\beta}(\hat{\varphi}).
			\end{align*}
			Moreover,
			\begin{align*}
				1=\int_{\B^n}(1+\e_i|\hat{\varphi}_i|)^{p-2}\hat{\varphi}^2_i
				\leq \int_{I_i}|\hat{\varphi}_i|^2+\e_i^{p-2}\int_{I_i^{c}}|\hat{\varphi}_i|^{p},
			\end{align*}
			combining with \eqref{Sec 2 Gap Lem equ-b}, which implies $\int_{\B^{n}}d^{q-2}_{\alpha,\beta}Q^2_{\alpha,\beta}(\hat{\varphi})\geq C_0>0$, and hence $\hat{\varphi}\not\equiv 0$. Since $\e>0$ is arbitrary, this again yields a contradiction. The proof is thereby completed.
		\end{pf}

		\subsection{Proof of Theorem \ref{Thm 1}}\label{Sec 2.3}
		For the concentrating sequence $\{u_i\}$ in Proposition \ref{Prop 1}, we will impose different orthogonality conditions depending on whether $p$ is greater than $2$. When $p\geq  2$, an almost orthogonality condition is sufficient, whereas for $1<p< 2$, we must enforce exact orthogonality.

		\begin{lem}[\protect{\cite[Proposition 2.8]{Frank&Peteranderl&Read}}]\label{Almost Orothgonal Lem}		
			Let $p\geq 2$ and let $\{u_i\}_{i=1}^{+\infty}$  be a sequence satisfying
			\begin{align*}
				\|u_i\|_{L^p(\S^{n-1})}\to 1\qquad\mathrm{and}\qquad \inf_{\Psi}\||\S^{n-1}|^{\frac{1}{p}}(u_i)_{\Psi}-1\|_{L^p(\S^{n-1})}\to 0.
			\end{align*}
			Then, there exists a sequence of conformal transformations $\Psi_i$, setting $r_i:=|\S^{n-1}|^{\frac{1}{p}}(u_i)_{\Psi_i}-1$, such that
			\begin{align*}
				\|r_i\|_{L^2(\S^{n-1})}=\inf_{\Psi}\||\S^{n-1}|^{\frac{1}{p}}(u_i)_{\Psi}-1\|_{L^2(\S^{n-1})},\quad \|r_i\|_{L^p(\S^{n-1})}\lesssim\inf_{\Psi}\||\S^{n-1}|^{\frac{1}{p}}(u_i)_{\Psi}-1\|_{L^p(\S^{n-1})}
			\end{align*}	
			and
			\begin{align}\label{Orothgonal Lem f1}
				\left|\int_{\S^{n-1}}r_i\right|+\sum_{l=1}^{n}\left|\int_{\S^{n-1}}\xi_lr_i\right|\lesssim \|r_i\|^2_{L^2(\S^{n-1})} +\|r_i\|^p_{L^p(\S^{n-1})}\end{align}

		\end{lem}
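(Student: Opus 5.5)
We would prove this by minimizing the $L^2$ distance over the conformal group and then reading off the orthogonality relations from the first-order optimality conditions. For each $i$, consider $G_i(\Psi)=\||\S^{n-1}|^{\frac1p}(u_i)_\Psi-1\|^2_{L^2(\S^{n-1})}$ over Möbius transformations $\Psi$ of $\B^n$.

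\textbf{Existence of the minimizer $\Psi_i$.} By hypothesis there are $\tilde\Psi_i$ with $\tilde r_i:=|\S^{n-1}|^{\frac1p}(u_i)_{\tilde\Psi_i}-1\to0$ in $L^p$, and since $p\ge2$ also in $L^2$. Replacing $u_i$ by $(u_i)_{\tilde\Psi_i}$, we may assume $\tilde\Psi_i=\mathrm{id}$. The conformal group modulo rotations is parametrized by the ball (dilation centers $a\in\B^n$). If $|a|\to1$, then $1_{\Psi_a}=(\det \ud\Psi_a|_{\partial\B^n})^{1/p}$ concentrates. Hence $\|(1+\tilde r_i)_{\Psi_a}-1\|_{L^2}$ stays bounded below by a positive constant (note $1_{\Psi_a}$ is bounded in $L^p$ but not close to $1$ in $L^2$). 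Thus near-infimum sequences stay in a compact set of parameters, and the infimum is attained at some $\Psi_i$ close to the identity. Set $r_i$ accordingly; the first claim $\|r_i\|_{L^2}=\inf_\Psi\|\cdot\|_{L^2}$ then holds by construction. For the $L^p$ claim, since $\Psi_i$ is close to the optimal $L^p$ transformation (both tend to the identity after normalization), use the triangle inequality: $\|r_i\|_{L^p}\le \|\tilde r_i\|_{L^p}+\|1_{\Psi}-1\|_{L^p}$, and bound $\|1_\Psi-1\|\lesssim \|1_\Psi -1\|_{L^2}\lesssim \|\tilde r_i\|_{L^2}\lesssim\|\tilde r_i\|_{L^p}$. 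This uses the local comparability of norms on the finite-dimensional manifold $\{1_\Psi\}$ near $1$, together with $\|\tilde r_i\|_{L^2}\le C\|\tilde r_i\|_{L^p}$.

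\textbf{Orthogonality estimate \eqref{Orothgonal Lem f1}.} First expand the normalization. Writing $\||\S^{n-1}|^{1/p}u_i\|_p\to|\S^{n-1}|^{1/p}$ means $\int|1+r_i|^p=|\S^{n-1}|+o(1)$. Since $\|u_i\|_{L^p}\to1$ only asymptotically, normalize so that $\|u_i\|_p=1$; scaling by $\lambda_i\to1$ is harmless. Then $\int|1+r_i|^p=|\S^{n-1}|$. By the elementary estimate $|1+a|^p=1+pa+O(a^2+|a|^p)$ for $p\ge2$, we get $p\int r_i=O(\|r_i\|_2^2+\|r_i\|_p^p)$, which controls $|\int r_i|$. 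Next, differentiate $G_i$ at $\Psi_i$ along the $n$ infinitesimal dilation generators. Their action on $1$ is $\frac{d}{dt}1_{\Psi_t}=c\,\xi_l$ with $c\ne0$ (proportional to $(n-1)/p$). Stationarity gives $\int (1+r_i)_{\Psi}\cdot \partial_t (\cdots)=0$, i.e. $\int (1+r_i)\, X_l(1+r_i)=0$, where $X_l$ is the first-order generator. Since $X_l$ is anti-symmetric up to a multiplicative zeroth-order term in $L^2$, we obtain $\int r_i\xi_l=O(\|r_i\|_2^2)$ after integrating by parts. This is exactly the Frank--Peteranderl--Read computation, which we would follow.

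\textbf{Main obstacle.} The delicate point is the existence and compactness step: showing that the $L^2$-minimizer does not escape to the boundary of the group, and that the $L^2$- and $L^p$-optimal transformations are comparable. We would handle this by an explicit computation of $\|1_{\Psi_a}-1\|_{L^2}$ as a function of $|a|$.
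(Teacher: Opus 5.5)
Your strategy is the standard one behind \cite[Proposition~2.8]{Frank&Peteranderl&Read}, and that citation is all the paper gives for this lemma. You take $\Psi_i$ to be an $L^2$-minimizer over the M\"obius group, read off $\int r_i$ from the $L^p$ normalization, and read off $\int\xi_l r_i$ from the first variation along the dilations. However, the first-variation identity you wrote is wrong. Minimality of $\Psi\mapsto\|v_\Psi-1\|_{L^2}^2$ at the identity, with $v=1+r_i$, gives $\int_{\S^{n-1}} r_i\,X_l(1+r_i)=0$, not $\int_{\S^{n-1}}(1+r_i)X_l(1+r_i)=0$. The latter is the derivative of $\|v_\Psi\|_{L^2}^2$ alone and is not implied by minimality; for $p=2$ its linear terms cancel, so it would give nothing. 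A clean route, which also avoids integrating by parts against a nonsmooth $r_i$, is the change of variables
\[
\|v_\Psi-1\|_{L^2(\S^{n-1})}^2=\int_{\S^{n-1}}v^2\,J_{\Psi^{-1}}^{1-\frac2p}-2\int_{\S^{n-1}}v\,J_{\Psi^{-1}}^{1-\frac1p}+|\S^{n-1}|,\qquad J_\Psi:=\det \ud\Psi\big|_{\partial\B^n}.
\]
This expression is smooth in $\Psi$. Along the $l$-th dilation, $\frac{d}{dt}J_{\Psi_t^{-1}}\big|_{t=0}=c\,\xi_l$ with $c\neq0$, so the Euler--Lagrange equation is exactly $\int_{\S^{n-1}}\xi_l r_i=\bigl(\tfrac p2-1\bigr)\int_{\S^{n-1}}\xi_l r_i^2=O(\|r_i\|_{L^2}^2)$.

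Two further corrections are needed. First, for $p>2$ the $L^2$ norm is not invariant under $u\mapsto u_\Psi$. So your step $\|1_{\Psi_i}-1\|_{L^2}\lesssim\|\tilde r_i\|_{L^2}$, and likewise your non-escape argument, must go through $\sup_\Psi\|(\tilde r_i)_\Psi\|_{L^2}\le|\S^{n-1}|^{\frac12-\frac1p}\|\tilde r_i\|_{L^p}$; this is where $p\ge2$ enters. With it, $\|1_{\Psi_i}-1\|_{L^2}\le\|\tilde r_i\|_{L^2}+C\|\tilde r_i\|_{L^p}\lesssim\|\tilde r_i\|_{L^p}$. Non-escape then follows from $\|1_{\Psi_a}-1\|_{L^2}^2\to|\S^{n-1}|$ as $|a|\to1$ (or $2|\S^{n-1}|$ when $p=2$).

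Second, rescaling by $\lambda_i\to1$ is not harmless. It shifts $\int r_i$ by roughly $(\lambda_i-1)|\S^{n-1}|$, which is linear rather than quadratic. For example, $u_i=(1+\delta_i)|\S^{n-1}|^{-1/p}$ with small $\delta_i\neq0$ has only rotations as $L^2$-minimizers. Then $r_i\equiv\delta_i$, and the bound on $\int r_i$ fails. That bound therefore needs $\|u_i\|_{L^p}=1$ exactly, which is how the lemma is actually used in Proposition~\ref{Loc Bound Prop}. Under that normalization, and with the corrected first variation, your argument is complete.
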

		
		The following orthogonality lemma for $1<p< 2$ is obtained by  a straightforward modification of \cite[Proposition 3.9]{Frank&Peteranderl&Read}. More precisely, one considers the minimization of the functional
		$
		F_u(\xi)=\int_{\S^{n-1}} u_{\Psi_\xi}$,
		rather than
		$
		F_v(\xi)=\int_{\B^{n}} v_{\Psi_\xi}
		$
		as in \cite{Frank&Peteranderl&Read}. Since the argument carries over verbatim after this adjustment, we omit the details.
		
		\begin{lem}\label{Orothgonal Lem} Let  $1<p< 2$ and  let $\{u_i\}_{i=1}^{+\infty}$ satisfying 
			\begin{align*}
				\|u_i\|_{L^p(\S^{n-1})}\to 1\qquad\mathrm{and}\qquad \inf_{\Psi}\||\S^{n-1}|^{\frac{1}{p}}(u_i)_{\Psi}-1\|_{L^p(\S^{n-1})}\to 0.
			\end{align*}
			Then there exists a sequence of conformal transformations $\Psi_i$, setting $r_i:=|\S^{n-1}|^{\frac{1}{p}}(u_i)_{\Psi_i}-1$, such that $\|r_i\|_{L^p(\S^{n-1})}\to 0$ as $i\to+\infty$ and 
			\begin{align*}
				\int_{\S^{n-1}}\xi_lr_i=0 \qquad\mathrm{for}\qquad l=1,2,\cdots,n.
			\end{align*}
		\end{lem}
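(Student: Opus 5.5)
We will follow the scheme of \cite[Proposition 3.9]{Frank&Peteranderl&Read}, replacing their functional on $\B^n$ by one on $\S^{n-1}$. For $\zeta\in\B^n$, let $\Psi_\zeta$ be the conformal automorphism of $\B^n$ that moves the origin to $\zeta$, composed with a fixed rotation. Write $w_i:=|\S^{n-1}|^{\frac1p}u_i$. By hypothesis there are conformal maps $\Phi_i$ with $\|(w_i)_{\Phi_i}-1\|_{L^p(\S^{n-1})}\to0$. Replacing $u_i$ by $(u_i)_{\Phi_i}$, which is allowed because the action is a group action and preserves $L^p$ norms, we may assume $\|w_i-1\|_{L^p}\to 0$ from the outset. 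We then look for $\zeta_i$ close to $0$ such that $(w_i)_{\Psi_{\zeta_i}}$ has vanishing first moments.

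\textbf{Step 1: the moment map.} Define
\[
G_i(\zeta):=\int_{\S^{n-1}}\eta\,(w_i)_{\Psi_\zeta}(\eta)\,\ud V_{\S^{n-1}}(\eta)\in\R^n .
\]
Changing variables $\eta\mapsto\Psi_\zeta^{-1}(\eta)$ gives
\[
G_i(\zeta)=\int_{\S^{n-1}}\Psi_\zeta^{-1}(\eta)\,\bigl(\det \ud\Psi_\zeta^{-1}\big|_{\partial\B^n}\bigr)^{1-\frac1p}(\eta)\,w_i(\eta)\,\ud V(\eta).
\]
For $|\zeta|\le\rho<1$ the weight is smooth and bounded, so $G_i$ is continuous in $\zeta$. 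Moreover $G_i\to G_\infty$ uniformly on $\overline{B_\rho}$, by H\"older's inequality and $w_i\to1$ in $L^p$. Here $G_\infty(\zeta)=\int_{\S^{n-1}}\eta\,1_{\Psi_\zeta}$. (Alternatively, one can follow the route the paper suggests: minimise $F_u(\zeta)=\int_{\S^{n-1}} (w_i)_{\Psi_\zeta}$ over $\B^n$ and differentiate at an interior critical point to obtain the moment conditions.)

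\textbf{Step 2: degree argument.} We compute $G_\infty$ explicitly. The function $1_{\Psi_\zeta}$ is a power of a Poisson-type kernel concentrating towards $\pm\zeta/|\zeta|$, with the sign fixed by the convention for $\Psi_\zeta$. Hence $G_\infty(\zeta)=g(|\zeta|)\zeta/|\zeta|$ with $g(0)=0$ and $g>0$ on $(0,1)$. Its derivative at $0$ is a nonzero multiple of the identity: $\partial_\zeta 1_{\Psi_\zeta}|_{\zeta=0}$ is a nonzero multiple of $(n-1)\eta\cdot v/p$ times a constant. So $G_\infty$ has topological degree $\pm1$ on $B_\rho$ and does not vanish on $\partial B_\rho$. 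By uniform convergence, $\deg(G_i,B_\rho,0)\neq0$ for large $i$, which produces a zero $\zeta_i\in B_\rho$. Since this holds for every $\rho>0$, a diagonal argument gives $\zeta_i\to0$.

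\textbf{Step 3: conclusion.} Set $\Psi_i:=\Phi_i\circ\Psi_{\zeta_i}$, or the appropriate composition order. Then $r_i=(w_i)_{\Psi_{\zeta_i}}-1$ satisfies $\int\xi_l r_i=\int \xi_l (w_i)_{\Psi_{\zeta_i}}=0$, using $\int\xi_l=0$. We also need $\|r_i\|_{L^p}\to 0$. This follows from
\[
\|(w_i)_{\Psi_{\zeta_i}}-1\|_{L^p}\le\|w_i-1\|_{L^p}+\|1_{\Psi_{\zeta_i}}-1\|_{L^p},
\]
where the first term equals $\|(w_i-1)_{\Psi_{\zeta_i}}\|_{L^p}$ by isometry, and the second tends to $0$ because $\zeta_i\to0$.

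The main obstacle is Step 2. We must check the nondegeneracy of $DG_\infty(0)$ and the radial sign of $G_\infty$ for the exponent $1/p$ with $1<p<2$, so that the degree is nonzero. If the explicit computation turns out messy, we will instead use the monotonicity in $t$ of $\int \eta\cdot e\,(1_{\Psi_{te}})$, obtained by differentiating along the one-parameter subgroup, to get the boundary sign needed for the degree argument.
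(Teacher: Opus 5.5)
Your proof is correct, but it takes a different route from the one the paper indicates.

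\textbf{The paper's route.} Deferring to \cite[Proposition 3.9]{Frank&Peteranderl&Read}, the paper produces $\Psi_i$ variationally. One optimizes the scalar functional
\[
F_u(\xi)=\int_{\S^{n-1}}u_{\Psi_\xi}=\int_{\S^{n-1}}u\,\bigl(1_{\Psi_\xi^{-1}}\bigr)^{p-1}
\]
over $\xi\in\B^n$ and locates an interior optimizer near $0$. The conditions $\int_{\S^{n-1}}\xi_l\,u_{\Psi}=0$ then come from the first-order condition, after recentering with the group law.

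\textbf{Your route.} You impose the $n$ moment conditions directly, as zeros of the vector field $G_i$, and get existence from the Brouwer degree. This is a perturbative version of Hersch's center-of-mass normalization.

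\textbf{What your route buys.} It needs only local information on the limit map. Indeed $G_\infty(\zeta)=\frac{2(n-1)}{np}|\S^{n-1}|\,\zeta+O(|\zeta|^2)$, which already gives $\deg(G_\infty,B_\rho,0)=1$ for small $\rho$. So the global positivity of $g$ on $(0,1)$ that you flag as the main obstacle is not needed. It does hold anyway, because $1_{\Psi_{te}}$ is increasing in $\eta\cdot e$. Combined with
\[
\sup_{\zeta\in\B^n}|G_i(\zeta)-G_\infty(\zeta)|\le|\S^{n-1}|^{1-\frac1p}\|w_i-1\|_{L^p(\S^{n-1})},
\]
this yields zeros $\zeta_i\to0$. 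You never analyze anything near $\partial\B^n$ or differentiate at an optimizer.

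\textbf{What the variational route buys.} It avoids degree theory and selects a canonical $\Psi_i$. The price is a global study of $F_1$. By H\"older, $F_1(\xi)\le|\S^{n-1}|$ with equality only at $\xi=0$. Also $F_1(\xi)\to0$ as $|\xi|\to1$. These two facts force an interior optimizer near $0$ once $\sup_\xi|F_u-F_1|\lesssim\|u-1\|_{L^p}$ is small.

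\textbf{A correction to your parenthetical alternative.} With the present normalization, the variational problem is a \emph{maximization} for $u$ near $1$. Minimizing $F_u$, as written in your Step 1 (and in the paper's wording), would not produce a critical point near $\xi=0$: the infimum of $F_1$ is approached only as $|\xi|\to1$.
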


		For any $u\in L^2(\S^{n-1})$,  define the projection operator $\Pi: L^2(\S^{n-1})\to \mathcal{H}$ by
		\begin{align*}
			\Pi (u)=\fint_{\S^{n-1}}u+n\sum_{l=1}^{n}\xi_l\fint_{\S^{n-1}}\xi_lu\in \mathcal{H},
		\end{align*}
		where $\fint_{\S^{n-1}}=\frac{1}{|\S^{n-1}|}\int_{\S^{n-1}}$. Set
		\[
		u^{\perp}=u-\Pi (u)\in\mathcal{H}^{\perp}.
		\]
		For an almost-orthogonal sequence, using the similar arguments in \cite[Lemma 2.9]{Frank&Peteranderl&Read}, we can  show that the component in $\mathcal{H}^{\perp}$ provides the dominant contribution, while the projected part $\Pi(u)$ is negligible in the relevant estimates. We leave the details of the proof to the interested reader.
		\begin{lem}			Let $r_i$ be defined as in Lemma \ref{Almost Orothgonal Lem}. For $p\geq 2$, the following properties hold:
			\begin{align}\label{Loc Ana f1}
				\begin{cases}
					\displaystyle \|r_i\|^2_{L^2(\S^{n-1})}=&\|r^{\perp}_i\|^2_{L^2(\S^{n-1})}+o(\|r_i\|^2_{L^2(\S^{n-1})} +\|r_i\|^p_{L^p(\S^{n-1})})\\
					\displaystyle \|r_i\|^p_{L^p(\S^{n-1})}=&\|r^{\perp}_i\|^p_{L^p(\S^{n-1})}+o(\|r_i\|^2_{L^2(\S^{n-1})} +\|r_i\|^p_{L^p(\S^{n-1})})
				\end{cases}
			\end{align}
			and
			\begin{align}\label{Loc Ana f2}
				\int_{\B^n}d^{q-2}_{\alpha,\beta}Q^2_{\alpha,\beta}(r_i)=&\int_{\B^n}d^{q-2}_{\alpha,\beta}Q^2_{\alpha,\beta}(r^{\perp}_i)+o(\|r_i\|^2_{L^2(\S^{n-1})} +\|r_i\|^p_{L^p(\S^{n-1})}).
			\end{align}
			Moreover, it also holds that
			\begin{align}\label{Loc Ana f3}
				&\int_{\S^{n-1}}\zeta(r_i)(1-|1+r_i|)^2\nonumber\\
				=&\int_{\S^{n-1}}\zeta(r_i^{\perp})(1-|1+r_i^{\perp}|)^2+o(\|r_i\|^2_{L^2(\S^{n-1})} +\|r_i\|^p_{L^p(\S^{n-1})}).
			\end{align}
		\end{lem}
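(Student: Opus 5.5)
The plan is to write $r_i=r_i^{\perp}+h_i$ with $h_i=\Pi(r_i)\in\mathcal H$. By \eqref{Orothgonal Lem f1}, the coefficients of $h_i$ in the finite-dimensional space $\mathcal H$ are all $O(\epsilon_i)$, where
\[
\epsilon_i:=\|r_i\|^2_{L^2(\S^{n-1})}+\|r_i\|^p_{L^p(\S^{n-1})}.
\]
Since $\mathcal H$ is finite dimensional and spanned by bounded functions, every norm of $h_i$ is comparable to the sum of these coefficients. Hence $\|h_i\|_{L^\infty(\S^{n-1})}\lesssim\epsilon_i$. Because $\epsilon_i\to0$, this gives $\|h_i\|_{L^2}^2+\|h_i\|_{L^p}^p=o(\epsilon_i)$, and this smallness is the only input beyond routine inequalities.

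For the first line of \eqref{Loc Ana f1}, orthogonality gives $\|r_i\|_{L^2}^2=\|r_i^\perp\|_{L^2}^2+\|h_i\|_{L^2}^2$, and the last term is $O(\epsilon_i^2)=o(\epsilon_i)$. For the $L^p$ line (with $p\ge2$), I would use the elementary estimate
\[
\bigl||a+b|^p-|a|^p\bigr|\lesssim |a|^{p-1}|b|+|b|^p .
\]
Applied with $a=r_i^\perp$ and $b=h_i$, and combined with Hölder's inequality, this bounds the difference by $\|r_i^\perp\|_{L^p}^{p-1}\epsilon_i+\epsilon_i^p$. We have $\|r_i^\perp\|_{L^p}\le\|r_i\|_{L^p}+O(\epsilon_i)\to0$, so this is $o(\epsilon_i)$.

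For \eqref{Loc Ana f2}, I would expand the quadratic form:
\[
\int d^{q-2}_{\alpha,\beta}Q^2_{\alpha,\beta}(r_i)=\int d^{q-2}_{\alpha,\beta}Q^2_{\alpha,\beta}(r_i^\perp)+2\int d^{q-2}_{\alpha,\beta}Q_{\alpha,\beta}(r_i^\perp)Q_{\alpha,\beta}(h_i)+\int d^{q-2}_{\alpha,\beta}Q^2_{\alpha,\beta}(h_i).
\]
By \eqref{Gap lem A0}, distinct spherical harmonic degrees are orthogonal for this form, because the weight $d_{\alpha,\beta}$ is radial. Since $r_i^\perp$ only has degrees $l\ge2$ and $h_i$ only has degrees $0$ and $1$, the cross term vanishes exactly. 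The last term is at most $A_0^{\{\alpha,\beta\}}\|h_i\|_{L^2}^2\lesssim\epsilon_i^2$, using the boundedness established in the proof of Lemma \ref{Gap Inequ}.

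The step I expect to be the main obstacle is \eqref{Loc Ana f3}, because the functional $g(a):=\zeta(a)(1-|1+a|)^2$ is not smooth.
\begin{itemize}
\item For $p\ge2$, $\zeta(a)=1$ off $[-2,0]$ and $\zeta(a)=|1+a|^{p-1}$ on $[-2,0]$. The function $g$ is continuous and piecewise $C^1$, and on each piece $|g'(a)|\lesssim|a|+|a|^{p-1}$, since $1-|1+a|$ is Lipschitz.
\item Because $g$ is continuous, the mean value inequality holds across the breakpoints $a=-2$, $a=-1$, $a=0$. This gives
\[
|g(a+b)-g(a)|\lesssim(|a|+|b|+|a|^{p-1}+|b|^{p-1})|b| .
\]
\item Integrating with $a=r_i^\perp$ and $b=h_i$, and using $\|h_i\|_{L^\infty}\lesssim\epsilon_i$, bounds the difference by $\epsilon_i(\|r_i^\perp\|_{L^1}+\|r_i^\perp\|^{p-1}_{L^{p-1}}+\epsilon_i)$.
\item Both norms of $r_i^\perp$ tend to zero, by Hölder's inequality on the sphere together with $\|r_i\|_{L^p}\to0$. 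The whole bound is therefore $o(\epsilon_i)$.
\end{itemize}
The same pointwise argument, with $g(a)=|a|^p$, also recovers the $L^p$ line of \eqref{Loc Ana f1}.
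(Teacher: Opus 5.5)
Your proof is correct and is essentially the argument the paper intends; the paper itself omits the proof and refers to \cite[Lemma 2.9]{Frank&Peteranderl&Read}. You use \eqref{Orothgonal Lem f1} to get $\|\Pi(r_i)\|_{L^\infty(\S^{n-1})}\lesssim \|r_i\|^2_{L^2}+\|r_i\|^p_{L^p}$, show the $\mathcal{H}$-component is negligible in each quantity via elementary Lipschitz-type bounds, and correctly handle the one feature specific to this setting: the cross term in \eqref{Loc Ana f2} vanishes because the radial weight $d^{q-2}_{\alpha,\beta}$ makes the quadratic form in \eqref{Gap lem A0} diagonal in spherical-harmonic degree, which replaces the identity $Q_{0,1}(\mathcal{H})=\mathcal{H}$ used in the harmonic case.
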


		\begin{pro}\label{Loc Bound Prop}
			For any sequence $\{u_i\}\in L^p(\S^{n-1})$ satisfying $\|u_i\|_{L^p(\S^{n-1})}=1$ and $$\inf_{\Psi}\||\S^{n-1}|^{\frac{1}{p}}(u_i)_{\Psi}-1\|_{L^{p}(\S^{n-1})}\to 0,$$ there exists $C_n>0$ such that the following statements hold:
			\begin{itemize}
				\item[(1)] If $\alpha\leq 1$ (i.e.  $p\geq 2$), 
				\begin{align*}
					\liminf_{i\to+\infty}\frac{c^p_{\alpha,\beta}-\|Q_{\alpha,\beta}(u_i)\|^p_{L^q(\B^{n})}}{\inf_{\Psi}\| |\S^{n-1}|^{\frac{1}{p}}(u_i)_{\Psi}-1\|^p_{L^{p}(\S^{n-1})}+\||\S^{n-1}|^{\frac{1}{p}}(u_i)_{\Psi}-1\|^2_{L^{2}(\S^{n-1})}}\geq C_n.
				\end{align*}
				\item[(2)] If $n>\alpha> 1$ (i.e.  $1<p< 2$),
				\begin{align*}
					\liminf_{i\to+\infty}\frac{c^p_{\alpha,\beta}-\|Q_{\alpha,\beta}(u_i)\|^p_{L^q(\B^{n})}}{\inf_{\Psi}\||\S^{n-1}|^{\frac{1}{p}}(u_i)_{\Psi}-1\|^2_{L^{2}(\S^{n-1})}}\geq C_n.
				\end{align*}
			\end{itemize}
		\end{pro}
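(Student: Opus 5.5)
We reduce to local expansion around the extremal $1$. Fix $u_i$ as in the statement; by conformal invariance of both $\|Q_{\alpha,\beta}u\|_{L^q}$ and $\|u\|_{L^p}$ (Appendix), we may replace $u_i$ by $(u_i)_{\Psi_i}$. We take $\Psi_i$ from Lemma \ref{Almost Orothgonal Lem} when $p\ge2$ and from Lemma \ref{Orothgonal Lem} when $1<p<2$. After normalizing, we write $|\S^{n-1}|^{1/p}(u_i)_{\Psi_i}=1+r_i$ with $\|r_i\|_{L^p}\to0$. The quantity to bound from below is then
\[
c^p_{\alpha,\beta}\fint_{\S^{n-1}}|1+r_i|^p-\Bigl(\fint_{\B^n}\ldots\Bigr)
\]
in the form \eqref{Sec 2.23 euq} (up to the normalizing factor $|\S^{n-1}|$). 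For the numerator, we use Lemma \ref{Inequ lem} with $r=p$, $a=r_i$ pointwise and integrate. This gives
\[
\int|1+r_i|^p\ge|\S^{n-1}|+p\int r_i+\tfrac{p(1-\kappa)}{2}\Bigl(\int r_i^2+(p-2)\int\zeta(r_i)(1-|1+r_i|)^2\Bigr)+c_\kappa\int m(r_i),
\]
where $m(r_i)=|r_i|^p$ for $p\ge2$ and $m(r_i)=\min\{|r_i|^p,r_i^2\}$ for $p<2$.

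For the $Q$-term, we write $Q_{\alpha,\beta}(1+r_i)=d_{\alpha,\beta}+Q_{\alpha,\beta}(r_i)$ and apply Lemma \ref{Inequ lem-2} with $r=q$, $a=d_{\alpha,\beta}$, $b=Q_{\alpha,\beta}(r_i)$ (note $q>p$, and we treat $q\ge2$ first). This yields
\[
\int(d+Qr_i)^q\le\int d^q+q\int d^{q-1}Qr_i+\Bigl(\tfrac{q(q-1)}2+\kappa\Bigr)\int d^{q-2}(Qr_i)^2+C_\kappa\|Qr_i\|_q^q.
\]
We need $|b|$ in place of $b$ in the linear term; for that we use the exact second-order Taylor expansion for the linear part and the lemma only for the remainder. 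By \eqref{Variation 1}, $\int d^{q-1}Q r_i/\int d^q=\fint r_i$. We raise to the power $p/q$ and expand $(1+x)^{p/q}$. The linear terms then cancel exactly against $p\fint r_i$; this is the Euler–Lagrange identity. What remains are quadratic terms $(\fint r_i)^2$, which are $o(\cdot)$ by \eqref{Orothgonal Lem f1} in case $p\ge2$. In case $p<2$ only $\int\xi_l r_i=0$ is available, so the mean component must be handled directly. Since the constant direction is an exact zero-mode after renormalizing $\lambda$, we absorb $\fint r_i$ by rescaling $1+r_i$; equivalently, the contribution of $a_0$ in the quadratic forms cancels as in the computation of $f''(0)$ in Lemma \ref{Gap Inequ}. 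The cubic remainder satisfies $\|Qr_i\|_q^q\lesssim\|r_i\|_p^q=o(\|r_i\|_p^{\max(p,2)})$ since $q>\max(p,2)$ when $p\ge2$. For $p<2$ we will need $q>2$ as well; if $q<2$ we use the second bullet of Lemma \ref{Inequ lem-2} instead.

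Collecting terms and using $c^p_{\alpha,\beta}$ normalization, the deficit is bounded below by
\[
\tfrac{p}{2}(1-\kappa)\Bigl[\|r_i\|_2^2+(p-2)\!\int\!\zeta(1-|1+r_i|)^2+\tfrac{2c_\kappa}{p}\!\int m(r_i)\Bigr]-\tfrac{p(q-1)+O(\kappa)}{2}\frac{|\S^{n-1}|}{\int d^q}\int d^{q-2}Q^2r_i-o(\cdot),
\]
up to positive normalizing constants. Next we pass to $r_i^\perp$ using \eqref{Loc Ana f1}–\eqref{Loc Ana f3} (or the exact orthogonality when $p<2$). Then Proposition \ref{Limit Prop} with $\gamma=2c_\kappa/p$ bounds the bracket below by $(p-1)(K-\lambda)\int d^{q-2}Q^2 r_i^\perp$. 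Lemma \ref{Gap Inequ}, i.e. $(p-1)K>(q-1)|\S^{n-1}|/\int d^q$ strictly, then lets us choose $\kappa$ and $\lambda$ small so that a fixed fraction $\theta>0$ of the bracket survives. For $p\ge2$ the bracket dominates $\|r_i\|_2^2+\|r_i\|_p^p$, because $\zeta\ge0$, $(p-2)\ge0$ and $m=|r|^p$. For $p<2$, we use Remark \ref{Rem 1} and the fact that $\int\min\{|r|^p,r^2\}\gtrsim\|r\|_p^2$ for $\|r\|_p$ small; this follows via Hölder, splitting $\{|r|\le1\}$ and $\{|r|>1\}$.

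The main obstacle is the bookkeeping of the linear and first-order terms. These must cancel exactly through \eqref{Variation 1} despite $d_{\alpha,\beta}$ being nonconstant and possibly unbounded. In particular we must justify that the weighted quantities $\int d^{q-2}(Qr_i)^2$ and the error terms are controlled by $\|r_i\|_p$ through $\|Qr\|_q\lesssim\|r\|_p$ and Hölder with $d\in L^q$. The $p<2$ mean-zero issue is the delicate part. There I would first try to reduce $\fint r_i$ to second order by optimizing the scalar $\lambda$.
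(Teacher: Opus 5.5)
Apart from one step, your outline is the paper's proof. It uses the same normalizations (Lemma~\ref{Almost Orothgonal Lem} for $p\ge 2$, Lemma~\ref{Orothgonal Lem} for $1<p<2$). It uses the pointwise inequalities of Lemmas~\ref{Inequ lem} and~\ref{Inequ lem-2}; you are right that the linear term must be the signed one, $q\int_{\B^n} d^{q-1}_{\alpha,\beta}Q_{\alpha,\beta}(r_i)$. The first-order terms cancel via \eqref{Variation 1}, and you pass to $r_i^\perp$ via \eqref{Loc Ana f1}--\eqref{Loc Ana f3}. You then combine Proposition~\ref{Limit Prop} with the strict inequality of Lemma~\ref{Gap Inequ}, and finish with the H\"older bound $\int_{\S^{n-1}}\min\{|r|^p,r^2\}\gtrsim\|r\|^2_{L^p(\S^{n-1})}$. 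Two minor points. First, $q>2$ always holds under \eqref{Index Condi}, since $\alpha+2\beta=(\alpha+\beta)+\beta>0$, so no fallback for $q<2$ is needed. Second, the denominator in (2) should be the $L^p$-distance, which is what you and the paper actually prove; with the $L^2$-distance the claim fails for $u_i\notin L^2$.

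The step you leave unsettled is the mean of $r_i$ when $1<p<2$. The $f''(0)$-type cancellation you invoke holds for the exact second variation, but it is not available here, for three reasons:
\begin{itemize}
\item the numerator is only bounded below by the nonlinear pointwise inequality \eqref{Fundamental Ine 2}, with $\zeta$ and a $(1-\kappa)$ loss;
\item Proposition~\ref{Limit Prop} requires exact membership in $\mathcal H^\perp$;
\item the splitting estimates \eqref{Loc Ana f1}--\eqref{Loc Ana f3} are only available for $p\ge 2$.
\end{itemize}
Choosing $\lambda$ to minimize the $L^p$-distance does not make the mean vanish either.

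Your first suggestion, rescaling, is the right one, but it must make the mean exactly zero and be paired with a comparability estimate; this is what the paper does. Put $\alpha_i=1+\fint_{\S^{n-1}} r_i$ and $\tilde r_i=(1+r_i)/\alpha_i-1$. Then $\tilde r_i\in\mathcal H^\perp$ exactly. By $p$-homogeneity the deficit changes only by the factor $\alpha_i^{p}\to 1$. So your argument applies verbatim to $\tilde r_i$ and gives a lower bound $\gtrsim\|\tilde r_i\|^2_{L^p}$.

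To transfer this back you need $\frac12\|r_i\|_{L^p}\le\alpha_i\|\tilde r_i\|_{L^p}\le 2\|r_i\|_{L^p}$, which is \eqref{Prop vital equ-a}. This is not automatic, because rescaling annihilates constant perturbations. It is exactly where the normalization $\|u_i\|_{L^p}=1$ enters. It gives $\|1+r_i\|_{L^p}=|\S^{n-1}|^{1/p}$, hence $|\S^{n-1}|^{1/p}|\alpha_i-1|\le\|r_i-(\alpha_i-1)\|_{L^p}=\alpha_i\|\tilde r_i\|_{L^p}$. The triangle inequality and H\"older's inequality then give both bounds. Finally, $\|r_i\|_{L^p}$ dominates the infimum over $\Psi$, which completes the case.
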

		
		\begin{pf}
			\textbf{Case 1}: Let $\alpha\leq 1$ (i.e, $p\geq 2$). By Lemma \ref{Almost Orothgonal Lem}, there exists a sequence of conformal transformations $\Psi_i$. Set $r_i=|\S^{n-1}|^{\frac{1}{p}}(u_i)_{\Psi_i}-1$; then   $r_i$ satisfies \eqref{Loc Ana f1} and \eqref{Loc Ana f2}. Using the conformal covariance of $Q_{\alpha,\beta}$, we have
			\begin{align*}
				|\S^{n-1}|\left[c^p_{\alpha,\beta}-\|Q_{\alpha,\beta}(u_i)\|^p_{L^q(\B^{n})}\right]
				=\|1+r_i\|^p_{L^p(\S^{n-1})}c^p_{\alpha,\beta}-\|Q_{\alpha,\beta}(1+r_i)\|^p_{L^q(\B^n)}.
			\end{align*}
			Let $d_{\alpha,\beta}(\xi)=Q_{\alpha,\beta}(1)(\xi)$ and, taking $u\equiv 1$ in  Theorem B \ref{thm B}, we can see
			\begin{align*}
				c_{\alpha,\beta}^p=\frac{1}{|\S^{n-1}|}\left(\int_{\B^n}d^q_{\alpha,\beta}\right)^{\frac{p}{q}}.
			\end{align*}
			Applying the first inequality of Lemma \ref{Inequ lem-2}, we obtain
			\begin{align*}
				&\|Q_{\alpha,\beta}(1+r_i)\|^q_{L^q(\B^n)}\\
				\leq& \int_{\B^n}d^q_{\alpha,\beta}+q\int_{\B^n}d^{q-1}_{\alpha,\beta}Q_{\alpha,\beta}(r_i)
				+\left(\frac{q(q-1)}{2}+\kappa\right)\int_{\B^n}d^{q-2}_{\alpha,\beta}Q^2_{\alpha,\beta}(r_i)
				+C_{\kappa}\|Q_{\alpha,\beta}(r_i)\|^q_{L^q(\B^n)}.
			\end{align*}
			Using $(1+a)^{\frac{p}{q}}\leq 1+\frac{p}{q}a$, we further have
			\begin{align*}
				&c_{\alpha,\beta}^{-p}|\S^{n-1}|^{-1}\|Q_{\alpha,\beta}(1+r_i)\|^p_{L^q(\B^n)} \\
				\leq& 1+\frac{p}{\int_{\B^n}d^q_{\alpha,\beta}}\int_{\B^n}d^{q-1}_{\alpha,\beta}Q_{\alpha,\beta}(r_i)
				+\left(\frac{p(q-1)}{2}+\frac{p\kappa}{q}\right)\frac{\int_{\B^n}d^{q-2}_{\alpha,\beta}Q^2_{\alpha,\beta}(r_i)}{\int_{\B^n}d^q_{\alpha,\beta}}
				+C_{\kappa}\|Q_{\alpha,\beta}(r_i)\|^q_{L^q(\B^n)}.
			\end{align*}
			From the first inequality \eqref{Fundamental Ine 1} in Lemma \ref{Inequ lem}, we deduce
			\begin{align*}
				&\|1+r_i\|^p_{L^p(\S^{n-1})}\\
				\geq& |\S^{n-1}|+p\int_{\S^{n-1}}r_i
				+\frac{(1-\kappa)p}{2}\int_{\S^{n-1}}\bigl(r_i^2+(p-2)\zeta(r_i)(1-|1+r_i|)^2\bigr)
				+c_{\kappa}\|r_i\|^p_{L^p(\S^{n-1})}.
			\end{align*}
			By the definition of $c_{\alpha,\beta}$ and  \eqref{Variation 1}, it follows that
			\begin{align*}
				c^p_{\alpha,\beta}\int_{\S^{n-1}}r_i
				=\left(\int_{\B^n}d^q_{\alpha,\beta}\right)^{\frac{p}{q}-1}\int_{\B^n}d^{q-1}_{\alpha,\beta}Q_{\alpha,\beta}(r_i).
			\end{align*}
			Therefore, we can estimate
			\begin{align*}
				&\|1+r_i\|^p_{L^p(\S^{n-1})}c^p_{\alpha,\beta}-\|Q_{\alpha,\beta}(1+r_i)\|^p_{L^q(\B^n)}\\
				\geq&\frac{(1-\kappa)pc^p_{\alpha,\beta}}{2}\int_{\S^{n-1}}\bigl(r_i^2+(p-2)\zeta(r_i)(1-|1+r_i|)^2\bigr)
				+c_{\kappa}\|r_i\|^p_{L^p(\S^{n-1})}\\
				-&\left(\frac{p(q-1)}{2}+\frac{p\kappa}{q}\right)c^p_{\alpha,\beta}|\S^{n-1}|\frac{\int_{\B^n}d^{q-2}_{\alpha,\beta}Q^2_{\alpha,\beta}(r_i)}{\int_{\B^n}d^q_{\alpha,\beta}}
				-C_{\kappa}\|Q_{\alpha,\beta}(r_i)\|^q_{L^q(\B^n)}.
			\end{align*}
			By \eqref{Loc Ana f2} and \eqref{Loc Ana f3}, we have
			\begin{align*}
				\int_{\B^n}d^{q-2}_{\alpha,\beta}Q^2_{\alpha,\beta}(r_i)=&\int_{\B^n}d^{q-2}_{\alpha,\beta}Q^2_{\alpha,\beta}(r^{\perp}_i)+o(\|r_i\|^2_{L^2(\S^{n-1})}+\|r_i\|^p_{L^p(\S^{n-1})})\\
				\leq&\frac{1}{(p-1)(K_{n,\alpha,\beta}-\lambda)}\int_{\S^{n-1}}(r^{\perp}_i)^2+(p-2)\zeta(r^{\perp}_i)(1-|1+r^{\perp}_i|)^2\\
				+&o(\|r_i\|^2_{L^2(\S^{n-1})}+\|r_i\|^p_{L^p(\S^{n-1})})\\
				=&\frac{1}{(p-1)(K_{n,\alpha,\beta}-\lambda)}\int_{\S^{n-1}}r_i^2+(p-2)\zeta(r_i)(1-|1+r_i|)^2\\
				+&o(\|r_i\|^2_{L^2(\S^{n-1})}+\|r_i\|^p_{L^p(\S^{n-1})}).
			\end{align*}
			Moreover, since $q>p$, then $\|Q_{\alpha,\beta}(r_i)\|^q_{L^q(\B^n)}\lesssim \|r_i\|^q_{L^p(\S^{n-1})}=o(\|r_i\|^p_{L^p(\S^{n-1})})$, we conclude that
			\begin{align*}
				&\|1+r_i\|^p_{L^p(\S^{n-1})}c^p_{\alpha,\beta}-\|Q_{\alpha,\beta}(1+r_i)\|^p_{L^q(\B^n)}\\
				\geq&  \frac{pc^p_{\alpha,\beta}}{2}\left[1-\frac{q-1}{p-1}\frac{|\S^{n-1}|}{\int_{\B^n}d^q_{\alpha,\beta}}\frac{1}{K_{n,\alpha,\beta}-\lambda}+O(\kappa)\right]
				\int_{\S^{n-1}}\bigl(r_i^2+(p-2)\zeta(r_i)(1-|1+r_i|)^2\bigr)\\
				+&c_{\kappa}\|r_i\|^p_{L^p(\S^{n-1})}+o(\|r_i\|^2_{L^2(\S^{n-1})}+\|r_i\|^p_{L^p(\S^{n-1})}).
			\end{align*}
			Finally, invoking the spectral gap inequality \eqref{Gap Inequ-equ} and choosing $\lambda$ and $\kappa$ sufficiently small, we obtain the required lower bound in Case 1.

			\textbf{Case 2}: Let $n>\alpha> 1$ (i.e. $1<p<2$). By Lemma \ref{Orothgonal Lem}, there exists a sequence $\Psi_i$. Setting $r_i:=|\S^{n-1}|^{\frac{1}{p}}(u_i)_{\Psi_i}-1$, we have  $\|r_i\|_{L^p(\S^{n-1})}\to 0$ as $i\to+\infty$ and
			\begin{align*}
				\int_{\S^{n-1}}\xi_lr_i=0 \qquad\mathrm{for}\qquad l=1,2,\cdots,n.
			\end{align*}
			Let $\alpha_i=|\S^{n-1}|^{\frac{1}{p}}\fint_{\S^{n-1}}(u_i)_{\Psi_i}$. Then
			\begin{align*}
				|\S^{n-1}|^{\frac{1}{p}}|\alpha_i-1|
				\leq \||\S^{n-1}|^{\frac{1}{p}}(u_i)_{\Psi_i}-1\|_{L^{p}(\S^{n-1})}
			\end{align*}
			and
			\begin{align*}
				|\S^{n-1}|^{\frac{1}{p}}|\alpha_i-1|
				=|\S^{n-1}|^{\frac{1}{p}}|\alpha_i-\|(u_i)_{\Psi_i}\|_{L^{p}(\S^{n-1})}|
				\leq \||\S^{n-1}|^{\frac{1}{p}}(u_i)_{\Psi_i}-\alpha_i\|_{L^{p}(\S^{n-1})}.
			\end{align*}
			Consequently,
			\begin{align}\label{Prop vital equ-a}
				\frac{1}{2}\||\S^{n-1}|^{\frac{1}{p}}(u_i)_{\Psi_i}-1\|_{L^{p}(\S^{n-1})}
				\leq\||\S^{n-1}|^{\frac{1}{p}}(u_i)_{\Psi_i}-\alpha_i\|_{L^{p}(\S^{n-1})}
				\leq 2 \||\S^{n-1}|^{\frac{1}{p}}(u_i)_{\Psi_i}-1\|_{L^{p}(\S^{n-1})}.
			\end{align}
			Set $\Tilde{r}_i=\frac{|\S^{n-1}|^{\frac{1}{p}}}{\alpha_i}(u_i)_{\Psi_i}-1$. Then $\Tilde{r}_i\in \mathcal{H}^{\perp}$, and
			\begin{align*}
				|\S^{n-1}|\left[c^p_{\alpha,\beta}-\|Q_{\alpha,\beta}(u_i)\|^p_{L^q(\B^{n})}\right]
				=\alpha_i^p\left[\|1+\Tilde{r}_i\|^p_{L^p(\S^{n-1})}c^p_{\alpha,\beta}-\|Q_{\alpha,\beta}(1+\Tilde{r}_i)\|^p_{L^q(\B^n)}\right].
			\end{align*}
			Using the second inequality \eqref{Fundamental Ine 2} in Lemma \ref{Inequ lem}, we obtain
			\begin{align*}
				\|1+\Tilde{r}_i\|^p_{L^p(\S^{n-1})}
				\geq& |\S^{n-1}|+p\int_{\S^{n-1}}\Tilde{r}_i
				+\frac{(1-\kappa)p}{2}\int_{\S^{n-1}}\bigl(\Tilde{r}_i^2+(p-2)\zeta(\Tilde{r}_i)(1-|1+\Tilde{r}_i|)^2\bigr)\\
				&+c_{\kappa}\int_{\S^{n-1}}\min\{|\Tilde{r}_i|^p,|\Tilde{r}_i|^2\}.
			\end{align*}
			Using Proposition \ref{Limit Prop} and applying the same estimates as Case 1, we derive
			\begin{align*}
				&\|1+\Tilde{r}_i\|^p_{L^p(\S^{n-1})}c^p_{\alpha,\beta}-\|Q_{\alpha,\beta}(1+\Tilde{r}_i)\|^p_{L^q(\B^n)}\\
				\geq&  \frac{pc^p_{\alpha,\beta}}{2}\left[1-\frac{q-1}{p-1}\frac{|\S^{n-1}|}{\int_{\B^n}d^q_{\alpha,\beta}}\frac{1}{K_{n,\alpha,\beta}-\lambda}+O(\kappa)\right]
				\int_{\S^{n-1}}\bigl(\Tilde{r}_i^2+(p-2)\zeta(\Tilde{r}_i)(1-|1+\Tilde{r}_i|)^2\bigr) \\
				&+c_{\kappa}\int_{\S^{n-1}}\min\{|\Tilde{r}_i|^p,|\Tilde{r}_i|^2\}+O(\kappa)\int_{\B^n}d^{q-2}_{\alpha,\beta}Q^2_{\alpha,\beta}(\Tilde{r}_i)-C_{\kappa}\|Q_{\alpha,\beta}(r_i)\|^q_{L^q(\B^n)}.
			\end{align*}
			Observe that $\|Q_{\alpha,\beta}(r_i)\|^q_{L^q(\B^n)}\lesssim \|r_i\|^q_{L^p(\S^{n-1})}=o(\|r_i\|^2_{L^p(\S^{n-1})})$, 
			\begin{align*}
				\int_{\S^{n-1}}\min\{|\Tilde{r}_i|^p,|\Tilde{r}_i|^2\}
				\geq& \int_{|\Tilde{r}_i|\leq 1}\Tilde{r}_i^2+\int_{|\Tilde{r}_i|> 1}\Tilde{r}_i^p\\
				\geq& |\S^{n-1}|^{1-\frac{2}{p}}\left(\int_{|\Tilde{r}_i|\leq 1}\Tilde{r}_i^p\right)^{\frac{2}{p}}+\int_{|\Tilde{r}_i|> 1}\Tilde{r}_i^p \\
				\geq& C \|\Tilde{r}_i\|^2_{L^p(\S^{n-1})},
			\end{align*}
			and 
			\begin{align*}
				\int_{\B^n}d^{q-2}_{\alpha,\beta}Q^2_{\alpha,\beta}(\Tilde{r}_i)\lesssim \left(\int_{\B^n}d^q_{\alpha,\beta}\right)^{\frac{q-2}{q}}\left(\int_{\B^n}|Q_{\alpha,\beta}(\Tilde{r}_i)|^q\right)^{\frac{2}{q}} \lesssim\|\Tilde{r}_i\|^2_{L^p(\S^{n-1})}.
			\end{align*} 
			
			Therefore, using the spectral gap inequality \eqref{Gap Inequ-equ} and choosing $\lambda$ and $\kappa$ sufficiently small,  we obtain  
			\begin{align*}
				\|1+\Tilde{r}_i\|^p_{L^p(\S^{n-1})}c^p_{\alpha,\beta}-\|Q_{\alpha,\beta}(1+\Tilde{r}_i)\|^p_{L^q(\B^n)}
				\geq C\|\Tilde{r}_i\|^2_{L^p(\S^{n-1})}.
			\end{align*}
			Using \eqref{Prop vital equ-a}, we conclude that there exist two universal  constants  $C_1,C_2$ such that
			\[
			C_1\|r_i\|^2_{L^p(\S^{n-1})}\leq \|\Tilde{r}_i\|^2_{L^p(\S^{n-1})}\leq C_2 \|r_i\|^2_{L^p(\S^{n-1})},
			\]
			which completes the proof of Case 2.
		\end{pf}

		\textbf{Proof of Theorem \ref{Thm 1}}: Here we only treat Case~1 ($1\geq \alpha$, i.e.  $p\geq 2$); the proof of Case~2 follows exactly the same line of reasoning.
		
		Suppose, to the contrary, that there exists a sequence $u_i\in L^p(\S^{n-1})$ such that
		\begin{align*}
			\frac{c^p_{\alpha,\beta}-\|Q_{\alpha,\beta}u_i\|^p_{L^q(\B^n)}/\|u_i\|^p_{L^p(\S^{n-1})}}{\inf_{\Psi,\lambda}(\|\lambda|\S^{n-1}|^{\frac{1}{p}}(u_i)_{\Psi}-1\|^p_{L^p(\S^{n-1})} +\|\lambda|\S^{n-1}|^{\frac{1}{p}}(u_i)_{\Psi}-1\|^2_{L^2(\S^{n-1})})}\to 0
		\end{align*}
		as $i\to+\infty$. Without loss of generality, we may assume $\|u_i\|_{L^p(\S^{n-1})}=1$. Clearly,
		\begin{align*}
			\inf_{\Psi,\lambda}&\|\lambda|\S^{n-1}|^{\frac{1}{p}}(u_i)_{\Psi}-1\|^p_{L^p(\S^{n-1})} +\|\lambda|\S^{n-1}|^{\frac{1}{p}}(u_i)_{\Psi}-1\|^2_{L^2(\S^{n-1})}\\
			\leq&\|1\|^p_{L^p(\S^{n-1})}+ \|1\|^p_{L^p(\S^{n-1})},
		\end{align*}
		and hence $\|Q_{\alpha,\beta}u_i\|^p_{L^q(\B^n)}\to c^p_{\alpha,\beta}$ as $i\to+\infty$. By Proposition \ref{Prop 1}, we obtain
		\begin{align*}
			\inf_{\Psi,\lambda\in\{\pm 1\}}\|\lambda|\S^{n-1}|^{\frac{1}{p}}(u_i)_{\Psi}-1\|_{L^p(\S^{n-1})}\to 0.
		\end{align*}
		Replacing $u_i$ by $-u_i$ if necessary, we may assume
		\[
		\inf_{\Psi}\||\S^{n-1}|^{\frac{1}{p}}(u_i)_{\Psi}-1\|_{L^p(\S^{n-1})}\to 0.
		\]
		Therefore,
		\begin{align*}
			&\inf_{\Psi,\lambda}\|\lambda|\S^{n-1}|^{\frac{1}{p}}(u_i)_{\Psi}-1\|^p_{L^p(\S^{n-1})} +\|\lambda|\S^{n-1}|^{\frac{1}{p}}(u_i)_{\Psi}-1\|^2_{L^2(\S^{n-1})}\\
			\leq&\inf_{\Psi}\||\S^{n-1}|^{\frac{1}{p}}(u_i)_{\Psi}-1\|^p_{L^p(\S^{n-1})} +\||\S^{n-1}|^{\frac{1}{p}}(u_i)_{\Psi}-1\|^2_{L^2(\S^{n-1})}.
		\end{align*}
		This contradicts Proposition \ref{Loc Bound Prop}.

		\section{The stability of dual operator}\label{Sec 3}
		
		\subsection{Variation and concentration compactness}\label{Sec 3.1}
		
		For $\psi\in L^{2}(\B^{n})$,  recall the dual  operator $S_{\alpha,\beta}$ associated with $Q_{\alpha,\beta}$, defined on the boundary $\S^{n-1}$ by
		\begin{align*}
			S_{\alpha,\beta}(\psi)(\eta)
			:=\int_{\B^{n}}\left(\frac{1-|\xi|^{2}}{2}\right)^{\beta}
			\frac{\psi(\xi)}{|\xi-\eta|^{\,n-\alpha}}\,\ud \xi,
			\qquad \eta\in \S^{n-1}.
		\end{align*}
		In particular, $S_{\alpha,\beta}(\psi)$ is a (weighted) Riesz-type potential of $\psi$ evaluated on the boundary.
		By Fubini's theorem together with   the definitions of $Q_{\alpha,\beta}$ and $S_{\alpha,\beta}$, we obtain the following duality identity: for any
		$\varphi\in L^{2}(\S^{n-1})$ and $\psi\in L^{2}(\B^{n})$,
		\begin{align}\label{Dual f1}
			\int_{\S^{n-1}}\varphi(\eta)\, S_{\alpha,\beta}(\psi)(\eta)\,\ud V_{\S^{n-1}}(\eta)
			=\int_{\B^{n}}Q_{\alpha,\beta}(\varphi)(\xi)\,\psi(\xi)\,\ud \xi ,
		\end{align}
		which shows that  $S_{\alpha,\beta}$ is the $L^{2}$-adjoint of $Q_{\alpha,\beta}$.
		
		The Funk--Hecke formula will be our main tool for diagonalizing spherical convolution operators on  $\S^{n-1}$.
		Let $K\in L^{1}\!\left((-1,1),(1-t^{2})^{\frac{n-3}{2}}\,\ud t\right)$ and consider the zonal kernel $K(\xi\cdot\eta)$.
		Then for each spherical harmonic $Y_{l}\in \mathscr{H}_{l}$, the Funk--Hecke formula (see \cite[Chapter~22]{Abramowitz&Stegun}) asserts that
		\begin{align}\label{Funk-Hecke}
			\int_{\S^{n-1}} K(\xi\cdot\eta)\, Y_{l}(\eta)\,\ud V_{\S^{n-1}}(\eta)
			=\lambda_{l}\,Y_{l}(\xi),\qquad \xi\in \S^{n-1},
		\end{align}
		where the eigenvalue $\lambda_{l}$ is 
		\begin{align*}
			\lambda_{l}
			=(4\pi)^{\frac{n-2}{2}}
			\frac{\Gamma\!\left(\frac{n-2}{2}\right)l!}{\Gamma(l+n-2)}
			\int_{-1}^{1}K(t)\,C_{l}^{\frac{n-2}{2}}(t)\,(1-t^{2})^{\frac{n-3}{2}}\,\ud t.
		\end{align*}
		Here (see \cite[p. 1000. 8.934-1]{Gradshteyn&Ryzhik})
		\begin{align}\label{Gegenbauer polynomial}
			C_l^{\frac{n-2}{2}}(t) = \frac{(-1)^l}{2^l} \frac{\Gamma(n-2+l)\Gamma\left(\frac{n-1}{2}\right)}{\Gamma(n-2)\Gamma\left(\frac{n-1}{2} + l\right)}
			\frac{(1-t^2)^{\frac{3-n}{2}}}{l!}
			\frac{d^l}{dt^l}(1-t^2)^{\frac{n-3}{2}+l}
		\end{align}
		denotes the $l$-th Gegenbauer polynomial with parameter $\frac{n-2}{2}$ .   
		
		For the dual inequality, the minimizer is in general non-constant. 
		Motivated by the role of the constant function $1$ in the primal inequality, 
		we introduce the following radial function:
		\begin{align*}
			\tilde{\mathbf{1}}(\xi)
			=
			\left(\fint_{\B^n} d_{\alpha,\beta}^q \right)^{\frac{1-q}{q}}
			d_{\alpha,\beta}^{\,q-1}(\xi),
		\end{align*}
		which satisfies the normalization condition 
		\begin{align}\label{normalization condition}
			\|\tilde{\mathbf{1}}\|_{L^{q'}(\B^n)}=|\B^n|^{1/q'}.
		\end{align}
		In particular, when $\alpha=0$ and $\beta=1$, we have $\tilde{\mathbf{1}}(\xi)\equiv 1$.
		To be consistent with the definition of $d_{\alpha,\beta}$, we further define
		\begin{align*}
			\widetilde d_{\alpha,\beta}(\eta)
			:=
			S_{\alpha,\beta}(\tilde{\mathbf{1}})(\eta),
			\qquad \eta\in \S^{n-1}.
		\end{align*}
		Since $\tilde{\mathbf{1}}(\xi)$ is radial,   $\widetilde d_{\alpha,\beta}$ is constant.
		Its value can be determined from the equality case in 
		Theorem~B \ref{thm B}, which yields
		\begin{align*}
			\widetilde d_{\alpha,\beta}
			\equiv
			n^{-1}
			\left(\fint_{\B^n} d_{\alpha,\beta}^q \right)^{\frac{1}{q}}.
		\end{align*}

		An important feature of Gluck's dual inequality is its first-variation identity and second-variation inequality, which are essential for the proof of Proposition
		\ref{Dule Loc Bound Prop}.

		\begin{lem}\label{Integral Identity Lem}
			For any $\psi\in L^{q'}(\B^n)$, 
			\begin{align}\label{Integral Identity}
				\fint_{\B^n}\psi\tilde{\mathbf{1}}^{q'-1}=\fint_{\S^{n-1}}\frac{S_{\alpha,\beta}(\psi)}{S_{\alpha,\beta}(\tilde{\mathbf{1}})}.
			\end{align}
			Moreover, if $\int_{\B^{n}}\psi\tilde{\mathbf{1}}^{q'-1}=0$, then
			\begin{align}\label{Integral Identity-2}
				\fint_{\B^n}\psi^2\tilde{\mathbf{1}}^{q'-2}\geq \frac{p'-1}{q'-1}\Tilde{d}_{\alpha,\beta}^{-2}\fint_{\S^{n-1}}S^2_{\alpha,\beta}(\psi).
			\end{align}
		\end{lem}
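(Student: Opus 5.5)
The plan is to mimic the proof of Lemma \ref{Gap Inequ}, now for the dual functional. For fixed $\psi\in C_c^\infty(\B^n)$ we consider
\[
g(\e)=\|S_{\alpha,\beta}(\tilde{\mathbf{1}}+\e\psi)\|_{L^{p'}(\S^{n-1})}\,\|\tilde{\mathbf{1}}+\e\psi\|^{-1}_{L^{q'}(\B^n)} .
\]
By Theorem B and the remark following it, $\tilde{\mathbf{1}}$ is a constant multiple of $Q^{q-1}_{\alpha,\beta}(1)=d^{q-1}_{\alpha,\beta}$, so it is an extremal and $g$ attains its maximum $c_{\alpha,\beta}$ at $\e=0$. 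Since $\tilde{\mathbf{1}}>0$ in $\B^n$ and $\widetilde d_{\alpha,\beta}$ is a positive constant, both norms are differentiable twice at $\e=0$, at least for compactly supported smooth $\psi$. We will then pass to general $\psi\in L^{q'}$ by density.

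The first step is the first variation $g'(0)=0$. Using $S_{\alpha,\beta}(\tilde{\mathbf{1}})\equiv\widetilde d_{\alpha,\beta}$, it reads
\[
\frac{\int_{\S^{n-1}}\widetilde d_{\alpha,\beta}^{\,p'-1}S_{\alpha,\beta}(\psi)}{\int_{\S^{n-1}}\widetilde d_{\alpha,\beta}^{\,p'}}=\frac{\int_{\B^n}\tilde{\mathbf{1}}^{q'-1}\psi}{\int_{\B^n}\tilde{\mathbf{1}}^{q'}} .
\]
The normalization \eqref{normalization condition} gives $\int_{\B^n}\tilde{\mathbf{1}}^{q'}=|\B^n|$. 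Hence the right side equals $\fint_{\B^n}\psi\tilde{\mathbf{1}}^{q'-1}$, and the left side equals $\fint_{\S^{n-1}}S_{\alpha,\beta}(\psi)/\widetilde d_{\alpha,\beta}$. This is exactly \eqref{Integral Identity}. As a consistency check we can also verify it directly. By the duality \eqref{Dual f1}, $\int_{\S^{n-1}}S_{\alpha,\beta}(\psi)=\int_{\B^n}d_{\alpha,\beta}\psi$. Moreover $\tilde{\mathbf{1}}^{q'-1}$ is a multiple of $d_{\alpha,\beta}$, since $(q-1)(q'-1)=1$. The constants then match via $\widetilde d_{\alpha,\beta}=n^{-1}(\fint d^q_{\alpha,\beta})^{1/q}$ and $|\S^{n-1}|=n|\B^n|$. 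Extension to all $\psi\in L^{q'}$ follows by continuity of both sides, using the boundedness of $S_{\alpha,\beta}$ from Theorem B and $\tilde{\mathbf{1}}^{q'-1}\in L^q$.

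The second step is the second variation $g''(0)\le0$. We expand $\log g$ to second order, exactly as in Lemma \ref{Gap Inequ}, and obtain
\[
0\ge (p'-1)\frac{\fint_{\S^{n-1}}S^2_{\alpha,\beta}(\psi)}{\widetilde d_{\alpha,\beta}^{\,2}}-p'\Big(\fint_{\S^{n-1}}\frac{S_{\alpha,\beta}(\psi)}{\widetilde d_{\alpha,\beta}}\Big)^2-(q'-1)\fint_{\B^n}\psi^2\tilde{\mathbf{1}}^{q'-2}+q'\Big(\fint_{\B^n}\psi\tilde{\mathbf{1}}^{q'-1}\Big)^2 .
\]
Here the normalizations by $\int\widetilde d^{p'}_{\alpha,\beta}=|\S^{n-1}|\widetilde d_{\alpha,\beta}^{p'}$ and $\int\tilde{\mathbf{1}}^{q'}=|\B^n|$ turn the integrals into averages. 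If $\int\psi\tilde{\mathbf{1}}^{q'-1}=0$, then by \eqref{Integral Identity} both squared-mean terms vanish, and rearranging gives \eqref{Integral Identity-2}.

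The main obstacle is justifying the second-order expansion of $\|\tilde{\mathbf{1}}+\e\psi\|^{q'}_{L^{q'}}$, because $q'<2$. The map $t\mapsto|t|^{q'}$ is not $C^2$ at $0$, and $\tilde{\mathbf{1}}$ may degenerate or blow up near $\partial\B^n$ (Lemma \ref{Bound d Lem}). We handle this first for $\psi\in C_c^\infty(\B^n)$. On $\mathrm{supp}\,\psi$ the function $\tilde{\mathbf{1}}$ is bounded above and below by positive constants, so for small $\e$ the integrand is smooth and dominated convergence gives the Taylor expansion. The boundary functional is unproblematic: since $p'>p$, $p'>2$ or we can argue similarly, and in any case $\widetilde d_{\alpha,\beta}$ is a positive constant, so $|\widetilde d_{\alpha,\beta}+\e S_{\alpha,\beta}\psi|^{p'}$ expands with remainder controlled by $\|S_{\alpha,\beta}\psi\|_{L^{p'}}$ and $L^\infty$ bounds of $S_{\alpha,\beta}\psi$, which hold for compactly supported $\psi$. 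To extend \eqref{Integral Identity-2} to general $\psi$, we approximate in the weighted space $L^2(\tilde{\mathbf{1}}^{q'-2})$ while keeping the orthogonality constraint, by subtracting a fixed multiple of a compactly supported function with nonzero pairing. This requires continuity of $\psi\mapsto S_{\alpha,\beta}\psi$ from $L^2(\tilde{\mathbf{1}}^{q'-2}\,\ud\xi)$ to $L^2(\S^{n-1})$. That continuity is the adjoint statement of the boundedness of $d^{\frac{q-2}{2}}_{\alpha,\beta}Q_{\alpha,\beta}$ established in Lemma \ref{Compact lem0}, since $\tilde{\mathbf{1}}^{q'-2}\propto d_{\alpha,\beta}^{2-q}$.
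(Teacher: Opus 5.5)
Your proposal is correct and follows the same route as the paper: the first and second variation of $\|S_{\alpha,\beta}(\tilde{\mathbf{1}}+\e\psi)\|_{L^{p'}(\S^{n-1})}\|\tilde{\mathbf{1}}+\e\psi\|^{-1}_{L^{q'}(\B^n)}$ at the extremal $\e=0$, with \eqref{Integral Identity} removing both squared-mean terms under the orthogonality condition. Your extra care is well placed and goes beyond what the paper writes. Working with $\psi\in C_c^\infty(\B^n)$ and then extending in $L^2(\B^n,\tilde{\mathbf{1}}^{q'-2}\ud\xi)$, using the adjoint bound for $d^{\frac{q-2}{2}}_{\alpha,\beta}Q_{\alpha,\beta}$, is more rigorous than the paper's reduction to $\psi\in C^\infty(\overline{\B^n})$. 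That reduction is delicate because $\tilde{\mathbf{1}}^{q'-2}\propto d_{\alpha,\beta}^{2-q}$ can fail to be integrable near $\partial\B^n$ (e.g.\ for $\alpha>1$ with $\beta(q-2)\geq 1$). Your direct duality check of \eqref{Integral Identity} via $(q-1)(q'-1)=1$ also proves the first identity for every $\psi\in L^{q'}(\B^n)$ without any differentiation.
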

		
		\begin{pf} 
			By density, we may assume $\psi\in C^{\infty}(\overline{\mathbb{B}^{n}})$.  
			For any $\e>0$, consider the function
			\begin{align*}
				f(\e)=\|S_{\alpha,\beta}(\tilde{\mathbf{1}}+\e\psi)\|_{L^{p'}(\S^{n-1})}\|\tilde{\mathbf{1}}+\e\psi\|^{-1}_{L^{q'}(\B^n)}.
			\end{align*}
			Then $f(0)=c_{\alpha,\beta}$, and $0$  is a local maximizer of  $f$. Differentiating, 
			\begin{align*}
				f^{'}(\e)=&\,f(\e)\left(\int_{\S^{n-1}}\left(\Tilde{d}_{\alpha,\beta}+\e S_{\alpha,\beta}(\psi)\right)^{p'}\right)^{-1}
				\int_{\S^{n-1}}S_{\alpha,\beta}(\psi)\left(\Tilde{d}_{\alpha,\beta}+\e S_{\alpha,\beta}(\psi)\right)^{p'-1}\\
				&-f(\e)\left(\int_{\B^n}(\tilde{\mathbf{1}}+\e\psi)^{q'}\right)^{-1}\int_{\B^{n}}\psi(\tilde{\mathbf{1}}+\e\psi)^{q'-1}.
			\end{align*}
			Evaluating at $\e=0$ and using the normalization condition \eqref{normalization condition}  yields \eqref{Integral Identity}.

			A direct computation gives
			\begin{align*}
				f^{''}(0)=&(p'-1)\Tilde{d}_{\alpha,\beta}^{-2}f(0)\fint_{\S^{n-1}}S^2_{\alpha,\beta}(\psi)
				-p'f(0)\Tilde{d}^{-2}_{\alpha,\beta}\left(\fint_{\S^{n-1}}S_{\alpha,\beta}(\psi)\right)^2\\
				&-(q'-1)f(0)\fint_{\B^n}\psi^2\tilde{\mathbf{1}}^{q'-2}+f(0)q'\left(\int_{\B^n}\psi\tilde{\mathbf{1}}^{q'-1}\right)^2.
			\end{align*}
			Therefore, if $\int_{\B^n}\psi\tilde{\mathbf{1}}^{q'-1}=0$, then by \eqref{Integral Identity} we obtain
			$$
			\int_{\S^{n-1}} S_{\alpha,\beta}(\psi)=0.
			$$
			Since  $0$ is a local maximum of $f$, we have $f''(0)\le 0$; hence
			\begin{align*}
				\fint_{\B^n}\psi^2\tilde{\mathbf{1}}^{q'-2}\geq \frac{p'-1}{q'-1}\Tilde{d}_{\alpha,\beta}^{-2}\fint_{\S^{n-1}}S^2_{\alpha,\beta}(\psi) ,
			\end{align*}
			which is exactly \eqref{Integral Identity-2}. 
			
		\end{pf}
		
		Using the duality argument for Proposition \ref{Prop 1}, we obtain the following concentration--compactness principle for $S_{\alpha,\beta}$. 
		The proof proceeds analogously to the argument presented in   \cite[Proposition 3.1]{Frank&Peteranderl&Read};  details are omitted here and left to interested readers.

		\begin{pro}
			Let $\{v_i\}_{i=1}^{+\infty}$ be a sequence satisfying
			\begin{align*}
				\|v_i\|_{L^{q'}(\B^n)}\to 1\qquad\mathrm{and}\qquad \|S_{\alpha,\beta}(v_i)\|_{L^{p'}(\S^{n-1})}\to c_{\alpha,\beta}.
			\end{align*}
			Then
			\begin{align*}
				\inf_{\Psi,\lambda\in\{\pm 1\}}\|\lambda|\B^{n}|^{\frac{1}{q'}}(v_i)_{\Psi}-\tilde{\mathbf{1}}\|_{L^{q'}(\B^{n})}\to 0.
			\end{align*}
		\end{pro}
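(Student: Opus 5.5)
The plan is to derive this proposition from Proposition~\ref{Prop 1} by duality, through the extremality relations of Theorem~B and the uniform convexity of $L^{p'}$ and $L^{q'}$. Normalize so that $\|v_i\|_{L^{q'}}=1$, which changes nothing since $\|v_i\|_{L^{q'}}\to1$. Then define the dual test functions
\[
u_i:=\frac{|S_{\alpha,\beta}(v_i)|^{p'-2}S_{\alpha,\beta}(v_i)}{\|S_{\alpha,\beta}(v_i)\|_{L^{p'}}^{p'-1}},
\]
so that $\|u_i\|_{L^p(\S^{n-1})}=1$ and, by the duality identity \eqref{Dual f1},
\[
\int_{\B^n}Q_{\alpha,\beta}(u_i)\,v_i=\int_{\S^{n-1}}u_i\,S_{\alpha,\beta}(v_i)=\|S_{\alpha,\beta}(v_i)\|_{L^{p'}}\to c_{\alpha,\beta}.
\]
Hölder's inequality and Theorem~B then give
\[
c_{\alpha,\beta}\ge\|Q_{\alpha,\beta}(u_i)\|_{L^q}\ge \int Q_{\alpha,\beta}(u_i)v_i\to c_{\alpha,\beta},
\]
so $u_i$ is a maximizing sequence for the primal inequality.

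Proposition~\ref{Prop 1} now supplies conformal maps $\Psi_i$ and signs $\lambda_i$ with
\[
\lambda_i|\S^{n-1}|^{1/p}(u_i)_{\Psi_i}\to1\quad\text{in }L^p(\S^{n-1}).
\]
By the conformal covariance of $Q_{\alpha,\beta}$ and $S_{\alpha,\beta}$ (Appendix~A), with exponents $p$ and $q'$ paired exactly so that the bilinear form $\int_{\B^n} Q(u)v$ is invariant, I replace $u_i,v_i$ by $\lambda_i(u_i)_{\Psi_i},\lambda_i(v_i)_{\Psi_i}$. All norms and the pairing are preserved, so I may assume $|\S^{n-1}|^{1/p}u_i\to1$ in $L^p$. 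Boundedness of $Q_{\alpha,\beta}:L^p\to L^q$ gives
\[
Q_{\alpha,\beta}(u_i)\to |\S^{n-1}|^{-1/p}d_{\alpha,\beta}=:w\quad\text{in }L^q,\qquad \|w\|_{L^q}=c_{\alpha,\beta},
\]
and therefore $\int w\,v_i\to c_{\alpha,\beta}=\|w\|_{L^q}\|v_i\|_{L^{q'}}$.

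The key step, and the main obstacle, is turning this near-equality in Hölder into strong convergence of $v_i$. Set
\[
g:=\frac{w^{q-1}}{\|w\|_{L^q}^{q-1}},
\]
the unique unit-norm element of $L^{q'}$ that attains $\int wg=\|w\|_{L^q}$, and note that $g$ is a positive multiple of $\tilde{\mathbf 1}$. Since $L^{q'}$ is uniformly convex ($1<q'<2$), I will argue as follows. The functional $v\mapsto\int wv/\|w\|_q$ has norm one, and $\int w\,\tfrac{v_i+g}{2}\to\|w\|_q$, which forces $\|\tfrac{v_i+g}{2}\|_{q'}\to1$. Uniform convexity then yields $\|v_i-g\|_{L^{q'}}\to0$. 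Alternatively, I could use the quantitative Clarkson-type inequality
\[
\|v\|^{q'}\ge\|g\|^{q'}+q'\!\int g^{q'-1}(v-g)+c\,\ldots
\]
The delicate points are that $w$ may be unbounded when $\alpha+\beta<1$, which is harmless because only $w\in L^q$ is used, and that the signs must be tracked correctly. Taking $\lambda\in\{\pm1\}$ in the statement absorbs the sign.

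Finally, I unwind the normalizations. Because $\|v_i\|_{L^{q'}}=1$ and the normalization \eqref{normalization condition} holds, $g=|\B^n|^{-1/q'}\tilde{\mathbf 1}$, so
\[
\bigl\||\B^n|^{1/q'}\lambda_i(v_i)_{\Psi_i}-\tilde{\mathbf 1}\bigr\|_{L^{q'}}\to0.
\]
Since $L^{q'}$ convergence is unaffected by the initial rescaling (because $\|v_i\|\to1$), the infimum over $\Psi,\lambda$ tends to zero, which proves the proposition.
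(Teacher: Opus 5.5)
Your proposal is correct and follows the duality route the paper indicates; the paper omits the details and defers to \cite[Proposition 3.1]{Frank&Peteranderl&Read}. You build the dual test functions $u_i$, check that they maximize the primal inequality so that Proposition~\ref{Prop 1} applies, transport by the conformal covariance that leaves $\int_{\B^n}Q_{\alpha,\beta}(u)\,v$ and both norms invariant, and then upgrade near-equality in H\"older to strong $L^{q'}$ convergence toward $|\B^n|^{-1/q'}\tilde{\mathbf{1}}$ via uniform convexity. The supporting details all check out: the identity $(p'-1)p=p'$, positivity of $d_{\alpha,\beta}$, the fact that only $d_{\alpha,\beta}\in L^q$ is needed when $\alpha+\beta<1$, and the normalization $\|\tilde{\mathbf{1}}\|_{L^{q'}}=|\B^n|^{1/q'}$.
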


		\subsection{Spectral gaps}\label{Sec 3.2}
		As in the proof of Theorem \ref{Thm 1}, establishing spectral gaps is essential for the overall argument for the dual operator $S_{\alpha,\beta}$. In the present setting, since $\tilde{d}_{\alpha,\beta}$ is a constant, it suffices to work with $S_{\alpha,\beta}$ alone. However, as mentioned in the introduction, we still need to handle a new obstruction. We begin with the following elementary  inequality.

		\begin{lem}\label{Basic Iequ Lem}
			Let $f,g\in L^2([0,1],\ud \mu)$ and suppose that  $\int_{0}^{1}g(x)h(x)\ud \mu=0$ for some positive function $h(x)\in L^2([0,1],\ud \mu)$. Then
			\begin{align}\label{Basic lneq}
				\left(\int_{0}^{1}f(x)g(x)\ud \mu \right)^2 \leq \left(\int_{0}^{1}f^2\ud \mu-\frac{\left(\int_{0}^{1}f(x)h(x)\ud \mu \right)^2}{\int_{0}^{1}h^2(x)\ud \mu}\right)\|g\|^2_{L^2([0,1],\ud \mu)}.
			\end{align}
		\end{lem}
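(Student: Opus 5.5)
This is a Cauchy--Schwarz inequality with a projection step. The plan is to subtract from $f$ its component along $h$ in the Hilbert space $L^2([0,1],\ud\mu)$. Since $g$ is orthogonal to $h$, this does not change the pairing with $g$, and it makes the norm of $f$ smaller.

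Concretely, set $c=\frac{\int_0^1 fh\,\ud\mu}{\int_0^1 h^2\,\ud\mu}$, which is well defined because $h>0$ forces $\int_0^1 h^2\,\ud\mu>0$. Then define $\tilde f=f-ch$.

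The first step uses the hypothesis $\int_0^1 gh\,\ud\mu=0$, which gives
\begin{align*}
\int_0^1 fg\,\ud\mu=\int_0^1 \tilde f g\,\ud\mu .
\end{align*}
The second step applies the Cauchy--Schwarz inequality:
\begin{align*}
\left(\int_0^1 \tilde f g\,\ud\mu\right)^2\le \|\tilde f\|^2_{L^2([0,1],\ud\mu)}\,\|g\|^2_{L^2([0,1],\ud\mu)} .
\end{align*}
The third step expands the square:
\begin{align*}
\|\tilde f\|^2_{L^2([0,1],\ud\mu)}=\int_0^1 f^2\,\ud\mu-2c\int_0^1 fh\,\ud\mu+c^2\int_0^1 h^2\,\ud\mu=\int_0^1 f^2\,\ud\mu-\frac{\left(\int_0^1 fh\,\ud\mu\right)^2}{\int_0^1 h^2\,\ud\mu}.
\end{align*}
This is exactly the bracket in \eqref{Basic lneq}, so combining the three steps proves the lemma.

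None of the steps is a real obstacle. The only points to check are that $h\not\equiv 0$, so that the denominator is nonzero, and that every integral is finite, which holds because $f,g,h\in L^2([0,1],\ud\mu)$. Positivity of $h$ is used only for nondegeneracy. In the later application, I expect $h$ to be the radial profile associated with $\tilde{\mathbf{1}}^{q'-1}$, and this inequality improves the trivial Cauchy--Schwarz bound on each radial coefficient under the orthogonality condition $\int_{\B^n}\psi\tilde{\mathbf{1}}^{q'-1}=0$.
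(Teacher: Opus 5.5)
Your proof is correct and matches the paper's argument: both project $f$ off $h$ in $L^2([0,1],\ud\mu)$, use $\int_0^1 gh\,\ud\mu=0$ to replace $f$ by $\tilde f$, apply Cauchy--Schwarz, and expand $\|\tilde f\|^2$. Your closing guess about the application is off, though. The paper applies the lemma to the degree-one coefficients $\psi_{1,k}$, with $\ud\mu=r^{n-1}\tilde{\mathbf{1}}^{q'-2}\ud r$, $h=r\tilde{\mathbf{1}}^{2-q'}$ and $f=w_1\tilde{\mathbf{1}}^{2-q'}$. The orthogonality there comes from $\int_{\B^n}\xi_k\psi=0$; the condition $\int_{\B^n}\psi\tilde{\mathbf{1}}^{q'-1}=0$ is used separately, to remove the $l=0$ mode of $S_{\alpha,\beta}(\psi)$.
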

		
		\begin{pf}
			For convenience, set
			\begin{align*}
				\tilde{f}=f-\frac{\int_{0}^{1}f(x)h(x)\ud \mu}{\int_{0}^{1}h^2(x)\ud \mu}\,h(x).
			\end{align*}
			Then $\int_{0}^{1}\tilde{f}(x)h(x)\ud \mu=0$. Using the assumption $\int_{0}^{1}g(x)h(x)\ud \mu=0$, we have
			\begin{align*}
				\int_{0}^{1}f(x)g(x)\ud \mu
				=\int_{0}^{1}\tilde{f}(x)g(x)\ud \mu.
			\end{align*}
			Therefore, by the Cauchy--Schwarz inequality,
			\begin{align}\label{Cauchy--Schwarz inequality}
				\left(\int_{0}^{1}f(x)g(x)\ud \mu \right)^2
				=\left(\int_{0}^{1}\tilde{f}(x)g(x)\ud \mu \right)^2
				\leq \left(\int_{0}^{1}\tilde{f}^2\ud \mu\right)\left(\int_{0}^{1}g^2\ud \mu\right).
			\end{align}
			We now compute  $\int_{0}^{1}\tilde{f}^2\ud \mu$. Expanding $\tilde{f}$ and using $\int_{0}^{1}\tilde{f}(x)h(x)\ud \mu=0$, we have
			\begin{align}\nonumber
				\int_{0}^{1}\tilde{f}^2\ud \mu
				&=\int_{0}^{1}\left(f-\frac{\int_{0}^{1}f(x)h(x)\ud \mu}{\int_{0}^{1}h^2(x)\ud \mu}h(x)\right)^2\ud \mu\\
				\nonumber
				&=\int_{0}^{1}f^2\ud \mu
				-2\frac{\int_{0}^{1}f(x)h(x)\ud \mu}{\int_{0}^{1}h^2(x)\ud \mu}\int_{0}^{1}f(x)h(x)\ud \mu
				+\left(\frac{\int_{0}^{1}f(x)h(x)\ud \mu}{\int_{0}^{1}h^2(x)\ud \mu}\right)^2\int_{0}^{1}h^2(x)\ud \mu\\
				\label{a3.9}
				&=\int_{0}^{1}f^2\ud \mu-\frac{\left(\int_{0}^{1}f(x)h(x)\ud \mu\right)^2}{\int_{0}^{1}h^2(x)\ud \mu}.
			\end{align}
			Substituting (\ref{a3.9}) into (\ref{Cauchy--Schwarz inequality})
			yields \eqref{Basic lneq}. Moreover, equality holds in \eqref{Basic lneq} if $g$ is proportional to $\tilde{f}$; in particular, it holds   for $g=\tilde{f}$.
		\end{pf}
		
		We now state the spectral gap lemma for the dual operator.
		Its proof follows essentially the same pattern as that of Lemma \ref{Gap Lem}, and the derivation of the spectral gap inequality \eqref{Dual gap inequ} requires Lemma \ref{Basic Iequ Lem}.
		
		\begin{lem}\label{Gap Lem 2}
			Assume $\psi\in L^2(\B^{n},\tilde{\mathbf{1}}^{q'-2}\ud \xi)\cap \tilde{\mathcal{H}}^{\perp}$, where $\tilde{\mathcal{H}}=\mathrm{span}\{\tilde{\mathbf{1}}^{q'-1},\xi_1,\xi_2,\cdots,\xi_n\}$. Then
			\begin{align}\label{Dual gap inequ}				
				\sup_{0\not\equiv \psi\in L^2(\B^{n},\tilde{\mathbf{1}}^{q'-2}\ud \xi)\cap \tilde{\mathcal{H}}^{\perp}} \frac{\Tilde{d}^{p'-2}_{\alpha,\beta}\int_{\S^{n-1}}S^2_{\alpha,\beta}(\psi)}{\int_{\B^n}\psi^2\tilde{\mathbf{1}}^{q'-2} }<\frac{q'-1}{p'-1}\frac{\Tilde{d}^{p'}_{\alpha,\beta}|\S^{n-1}|}{|\B^n| }.
			\end{align}
		\end{lem}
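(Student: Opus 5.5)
Write $w:=\tilde{\mathbf{1}}^{q'-2}$, which is radial, and decompose $\psi\in L^2(\B^n,w\,\ud\xi)$ in spherical harmonics, $\psi(r\omega)=\sum_{l}\sum_{k}\psi_{l,k}(r)Y_{l,k}(\omega)$. Since $H(\xi,\eta)=\left(\frac{1-|\xi|^2}{2}\right)^{\beta}|\xi-\eta|^{\alpha-n}$ is zonal in $(\omega,\eta)$ for fixed $r=|\xi|$, the Funk--Hecke formula \eqref{Funk-Hecke} gives
\[
S_{\alpha,\beta}(\psi)=\sum_{l,k}\left(\int_0^1 g_l(r)\psi_{l,k}(r)\,r^{n-1}\ud r\right)Y_{l,k}.
\]
By the duality \eqref{Dual f1} and \eqref{Gap lem equ a}, the kernel is explicitly
\[
g_l(r)=c\,\left(\tfrac{1-r^2}{2}\right)^{\beta+\alpha-1}r^l\varphi_l(r^2),
\]
with $\varphi_l$ as in \eqref{Sec 5 Series varphi}. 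The numerator thus splits as a sum over $(l,k)$ of $\left(\int_0^1 g_l\psi_{l,k}\,r^{n-1}\ud r\right)^2$, and the denominator as $\sum\int_0^1\psi_{l,k}^2 w\,r^{n-1}\ud r$.

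Using the measure $\ud\mu=w\,r^{n-1}\ud r$ and Cauchy--Schwarz, each mode with $l\ge1$ contributes at most $B_l:=\int_0^1 g_l^2w^{-1}r^{n-1}\ud r$ times its denominator. Since $w^{-1}=\tilde{\mathbf{1}}^{2-q'}\propto d_{\alpha,\beta}^{q-2}$ (because $(q-1)(2-q')=q-2$), $B_l$ is a constant multiple of $A^{\{\alpha,\beta\}}_l$. By Lemma~\ref{lem:hypergeo_monotonicity}, $B_l$ is therefore strictly decreasing in $l$.

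The orthogonality to $\xi_1,\dots,\xi_n$ kills only one radial direction in each $l=1$ mode: it requires $\int\psi_{1,k}\,r\,r^{n-1}\ud r=0$, i.e.\ orthogonality to $h=r\,w^{-1}$ in $L^2(\ud\mu)$. Lemma~\ref{Basic Iequ Lem} with $f=g_1w^{-1}$ and this $h$ gives the refined bound
\[
B_1':=B_1-\frac{\left(\int g_1 r\,r^{n-1}\right)^2}{\int r^2w^{-1}r^{n-1}},
\]
which is strictly smaller than $B_1$ because $g_1$ is not proportional to $r$. Similarly, for $l=0$ the constraint $\psi\perp\tilde{\mathbf{1}}^{q'-1}$ means orthogonality in $L^2(\ud\mu)$ to $h_0=\tilde{\mathbf{1}}$. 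Since $g_0\propto d_{\alpha,\beta}\propto\tilde{\mathbf{1}}^{q'-1}\cdot$ (normalization), we have $g_0w^{-1}\propto \tilde{\mathbf{1}}$, so Lemma~\ref{Basic Iequ Lem} gives exactly $0$ for the $l=0$ mode. The supremum is thus bounded by $\max\{B_1',B_2\}$ up to the fixed constant $\tilde d^{p'-2}_{\alpha,\beta}$, and $B_2<B_1$.

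It remains to compare with the right side of \eqref{Dual gap inequ}. For this I use the second variation \eqref{Integral Identity-2}: for any $\psi$ with $\int\psi\tilde{\mathbf{1}}^{q'-1}=0$ it gives
\[
\tilde d^{p'-2}\int_{\S^{n-1}}S^2(\psi)\le\frac{q'-1}{p'-1}\,\frac{\tilde d^{p'}|\S^{n-1}|}{|\B^n|}\int\psi^2w.
\]
Testing this on the $l=1$ optimizer without the $\xi$-constraint (allowed, since every $l=1$ mode is orthogonal to radial $\tilde{\mathbf{1}}^{q'-1}$) shows that $cB_1$ is at most the right side. Hence both $B_2<B_1$ and $B_1'<B_1$ are strictly below it, which yields the strict inequality. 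Attainment or the strictness of the supremum follows from finitely many relevant modes plus monotonicity, since $B_l\le B_2$ for $l\ge2$.

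The main obstacle is the bookkeeping that $g_l^2w^{-1}$ integrated matches $A_l$, in particular checking the power of $d_{\alpha,\beta}$, together with integrability when $\alpha+\beta<1$. Strictness in $B_1'<B_1$ also needs $g_1\not\propto r$, which I would verify from the nonconstant hypergeometric factor or from $(1-r^2)^{\beta+\alpha-1}$ unless both are trivial. In the degenerate case $(\alpha,\beta)=(0,1)$ one gets $B_1'=0$ and still $B_2<B_1$, so the conclusion holds there too.
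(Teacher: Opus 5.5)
Your overall architecture is the paper's: Funk--Hecke splitting into modes, Cauchy--Schwarz in $L^2(\ud\mu)$ mode by mode, monotonicity in $l$ from Lemma~\ref{lem:hypergeo_monotonicity}, the second variation \eqref{Integral Identity-2} to bound the unconstrained $l=1$ value, and Lemma~\ref{Basic Iequ Lem} for the $\xi$-orthogonality. Your two shortcuts are valid. Reading off $g_l$ from the adjoint relation \eqref{Dual f1} and \eqref{Gap lem equ a} reproduces exactly the paper's $w_l$ in \eqref{w_l claim}, without the Gegenbauer integration by parts. Disposing of the $l=0$ mode via $g_0w^{-1}\propto\tilde{\mathbf 1}$ is equivalent to the paper's use of \eqref{Integral Identity}, since $\int_{\S^{n-1}}S_{\alpha,\beta}(\psi)=\int_{\B^n}d_{\alpha,\beta}\psi$.

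The gap is in the strictness $B_1'<B_1$. This, together with $B_2<B_1$, is the entire content of the lemma, because the non-strict bound already follows from \eqref{Integral Identity-2}. With $f=g_1w^{-1}$, $h=rw^{-1}$ and $\ud\mu=w\,r^{n-1}\ud r$, the subtracted term should be $\bigl(\int_0^1 g_1\,r\,w^{-1}r^{n-1}\ud r\bigr)^2/\int_0^1 r^2w^{-1}r^{n-1}\ud r$; you dropped the $w^{-1}$ in the numerator. More importantly, $B_1'<B_1$ holds if and only if $\langle f,h\rangle_{L^2(\ud\mu)}\neq 0$. Non-proportionality of $g_1$ and $r$ is neither necessary nor sufficient for this; it only decides whether $B_1'>0$. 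Your own degenerate case $(\alpha,\beta)=(0,1)$, where $g_1\propto r$, gives $B_1'=0<B_1$, so proportionality is the favourable case, not an obstruction. Conversely, a non-proportional $g_1$ could a priori be $\mu$-orthogonal to $h$, and then $B_1'=B_1$ and strictness would be lost.

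The missing ingredient is positivity: $g_1(r)>0$ on $(0,1)$. You can see this in any of three ways.
\begin{itemize}
\item For $\alpha\ge 0$, the series $F(\frac{n+\alpha}{2},\frac{\alpha}{2};1+\frac n2;r^2)$ has nonnegative coefficients.
\item For $\alpha<0$, the Euler transformation \eqref{a2.15} rewrites it as $(1-r^2)^{1-\alpha}F(1-\frac{\alpha}{2},1+\frac{n-\alpha}{2};1+\frac n2;r^2)>0$.
\item Directly, the Funk--Hecke $l=1$ coefficient of the kernel $(1+r^2-2rt)^{-\frac{n-\alpha}{2}}$, which is increasing in $t$, is positive.
\end{itemize}
Then $\int_0^1 g_1\,r\,w^{-1}r^{n-1}\ud r>0$, which is exactly the paper's \eqref{a3.21}, and the rest of your argument goes through unchanged.
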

		
		\begin{pf}
			For $l\in \mathbb{N}$, fix an orthonormal basis $\{Y_{l,k}\}$ of $\mathcal{H}_{l}$, where $k=1,2,\cdots,N_l:=\dim \mathcal{H}_{l}$. Expand $\psi$ as
			\begin{align*}
				\psi=\sum_{l=0}^{+\infty}\sum_{k=1}^{N_l}\psi_{l,k}(r)Y_{l,k},\qquad
				\psi_{l,k}(r)=\int_{\S^{n-1}}\psi\, Y_{l,k},
			\end{align*}
			so that
			\begin{align*}
				\int_{\B^n}\psi^2\tilde{\mathbf{1}}^{q'-2}
				=\sum_{l=0}^{+\infty}\sum_{k=1}^{N_l}\int_{0}^{1}\psi^2_{l,k}(r)\,r^{n-1}\tilde{\mathbf{1}}^{q'-2}(r)\ud r.
			\end{align*}	
			Applying  the Funk--Hecke formula \eqref{Funk-Hecke} yields 
			\begin{align*}
				S_{\alpha,\beta}(\psi )
				=\sum_{l=0}^{+\infty}\sum_{k=1}^{N_l}\left(\int_{0}^{1}\psi_{l,k}(r)w_l(r)\,r^{n-1}\ud r\right)Y_{l,k},
			\end{align*}
			where
			\begin{align*}
				w_{l}(r)=2^{-\beta}(4\pi)^{\frac{n-2}{2}}
				\frac{\Gamma\!\left(\frac{n-2}{2}\right)l!}{\Gamma(l+n-2)} (1-r^2)^{\beta}\int_{-1}^{1}(r^2+1-2rt)^{-\frac{n-\alpha}{2}}C^{\frac{n-2}{2}}_{l}(t)(1-t^2)^{\frac{n-3}{2}}\ud t.
			\end{align*}
			
			We first claim that
			\begin{align}\label{Const Lem equ-1}
				\int_{-1}^{1}\left(r^2+1-2rs\right)^{-\mu}(1-s^2)^{\nu-\frac{1}{2}}\ud s
				=B\Big(\frac{1}{2},\nu+\frac{1}{2}\Big)F(\mu,\mu-\nu;\mu+1;r^2).
			\end{align}

			To verify this, recall the transformation law  (cf.\ \cite[p.1018: 9.134-3]{Gradshteyn&Ryzhik})
			\begin{align*}
				F\Big(a,b;2b;\frac{4z}{(1+z)^{2}}\Big)=(1+z)^{2a}F\Big(a,a+\frac{1}{2}-b;b+\frac{1}{2};z^{2}\Big),
			\end{align*}
			together with the integral representation \eqref{Pre 1.8}. Setting $t=\frac{1-s}{2}$ and using \eqref{Gamma 2z}, the left-hand side of (\ref{Const Lem equ-1}) becomes
			\begin{align*}
				&2^{2\nu}(1-r)^{-2\mu}\int_{0}^{1}t^{\nu-\frac{1}{2}}(1-t)^{\nu-\frac{1}{2}}
				\left(1+\frac{4rt}{(1-r)^2}\right)^{-\mu}\ud t\\
				=&2^{2\nu}(1-r)^{-2\mu}B\Big(\nu+\frac{1}{2},\nu+\frac{1}{2}\Big)
				F\left(\mu,\nu+\frac{1}{2};2\nu+1;\frac{-4r}{(1-r)^2}\right)\\
				=&B\Big(\frac{1}{2},\nu+\frac{1}{2}\Big)F(\mu,\mu-\nu;\nu+1;r^2),
			\end{align*}
			which establishes \eqref{Const Lem equ-1}.

			Using this, we can simplify $w_l$ as follows
			\begin{align}\label{w_l claim}			w_l(r)
				=&\frac{2^{1-\beta}\pi^{n/2}\Gamma(\frac{n-\alpha}{2}+l)}{\Gamma(\frac{n-\alpha}{2})
					\Gamma(\frac{n}{2}+l)}
				(1-r^2)^{\beta+\alpha-1}r^lF\left(l+\frac{n+\alpha}{2}-1,\frac{\alpha}{2}; l+\frac{n}{2}; r^2\right).
			\end{align}
			The verification proceeds by integrating by parts
			$l$ times and employing  (\ref{Gegenbauer polynomial})  together with \eqref{Const Lem equ-1}: 
			\begin{align*}
				w_{l}(r)=&2^{-\beta}(4\pi)^{\frac{n-2}{2}}\frac{(-1)^l}{2^l}
				\frac{\Gamma\!\left(\frac{n-2}{2}\right)\Gamma\!\left(\frac{n-1}{2}\right)}{\Gamma(n-2)\Gamma\left(\frac{n-1}{2}+l\right)}
				(1-r^2)^{\beta}\int_{-1}^{1}(r^2+1-2rt)^{-\frac{n-\alpha}{2}}\frac{d^l}{dt^l}(1-t^2)^{\frac{n-3}{2}+l}\ud t\\
				=&2^{-\beta}(4\pi)^{\frac{n-2}{2}}
				\frac{\Gamma\!\left(\frac{n-2}{2}\right)\Gamma\!\left(\frac{n-1}{2}\right)\left(\frac{n-\alpha}{2}\right)_l}{\Gamma(n-2)\Gamma\left(\frac{n-1}{2}+l\right)}
				(1-r^2)^{\beta}r^l \int_{-1}^{1}(r^2+1-2rt)^{-\left(\frac{n-\alpha}{2}+l\right)}\left(1-t^2\right)^{\frac{n-3}{2}+l}\ud t\\
				=&2^{-\beta}(4\pi)^{\frac{n-2}{2}}
				\frac{\Gamma\!\left(\frac{n-2}{2}\right)\Gamma\!\left(\frac{n-1}{2}\right)\left(\frac{n-\alpha}{2}\right)_l}{\Gamma(n-2)\Gamma\left(\frac{n-1}{2}+l\right)}
				(1-r^2)^{\beta}r^l B\left(\frac{1}{2},\frac{n-1}{2}+l\right) F\left(\frac{n-\alpha}{2}+l,1-\frac{\alpha}{2};\frac{n}{2}+l;r^2\right)\\
				=&2^{-\beta}(4\pi)^{\frac{n-2}{2}}
				\frac{\Gamma(\frac{1}{2})\Gamma\!\left(\frac{n-2}{2}\right)\Gamma\!\left(\frac{n-1}{2}\right)\left(\frac{n-\alpha}{2}\right)_l}
				{\Gamma(n-2)\Gamma\left(\frac{n}{2}+l\right)}
				(1-r^2)^{\beta}r^l  F\left(\frac{n-\alpha}{2}+l,1-\frac{\alpha}{2};\frac{n}{2}+l;r^2\right)\\
				=&
				\frac{2^{1-\beta}\pi^{n/2}\Gamma(\frac{n-\alpha}{2}+l)}{\Gamma(\frac{n-\alpha}{2})
					\Gamma(\frac{n}{2}+l)}
				(1-r^2)^{\beta+\alpha-1}r^lF\left(\frac{\alpha}{2},l+\frac{n+\alpha}{2}-1; l+\frac{n}{2}; r^2\right),
			\end{align*}
			where the last equality follows from the duplication formula 	\begin{align*}
				\Gamma(n-2)=2^{n-3}\frac{\Gamma\left(\frac{n-2}{2}\right)\Gamma\left(\frac{n-1}{2}\right)}{\sqrt{\pi}}
			\end{align*}
			and transformation formula (\ref{a2.15}). Since $F\left(\frac{\alpha}{2},l+\frac{n+\alpha}{2}-1; l+\frac{n}{2}; r^2\right)=F\left(l+\frac{n+\alpha}{2}-1,\frac{\alpha}{2}; l+\frac{n}{2}; r^2\right)$, 
			this establishes \eqref{w_l claim}.   
			
			Since  $\int_{\B^{n}}\psi\tilde{\mathbf{1}}^{q'-1}=0$ and by Lemma \ref{Integral Identity Lem}, we have $\int_{\S^{n-1}}S_{\alpha,\beta}(\psi)=0$. Hence,
			\begin{align*}
				&\sup_{\substack{\int_{\mathbb{B}^n} \psi \tilde{\mathbf{1}}^{q'-1}=0 \\ 0\not\equiv\psi \in L^2(\mathbb{B}^n,\tilde{\mathbf{1}}^{q'-2}d\xi)}}
				\frac{\|S_{\alpha,\beta}(\psi)\|^2_{L^2(\S^{n-1})}}{\int_{\B^n}\psi^2\tilde{\mathbf{1}}^{q'-2}}\\
				=&\sup_{\substack{\int_{\mathbb{B}^n} \psi \tilde{\mathbf{1}}^{q'-1}=0 \\ 0\not\equiv\psi \in L^2(\mathbb{B}^n,\tilde{\mathbf{1}}^{q'-2}d\xi)}}\frac{\sum_{l=1}^{+\infty}\sum_{k=1}^{N_l}\left(\int_{0}^{1}\psi_{l,k}(r)w_l(r)\,r^{n-1}\ud r\right)^2}{\sum_{l=1}^{+\infty}\sum_{k=1}^{N_l}\int_{0}^{1}\psi^2_{l,k}(r)\,r^{n-1}\tilde{\mathbf{1}}^{q'-2}(r)\ud r}\\
				=&\sup_{l\geq 1}\left(\sup_{ 0\not\equiv\psi  \in L^2_{rad}(\B^n,\tilde{\mathbf{1}}^{q'-2}\ud \xi)}\frac{\left(\int_{0}^{1}\psi(r)w_l(r)\,r^{n-1}\ud r\right)^2}{\int_{0}^{1}\psi^2(r)\,r^{n-1}\tilde{\mathbf{1}}^{q'-2}(r)\ud r} \right),
			\end{align*}
			where
			\begin{align*}
				L^2_{rad}(\B^n,\tilde{\mathbf{1}}^{q'-2}\ud \xi)=\left\{f\in L^2(\B^n,\tilde{\mathbf{1}}^{q'-2}\ud \xi): f \; \textrm{is} \; \textrm{radial} \right\}.
			\end{align*}
			By  Lemma \ref{lem:hypergeo_monotonicity} and \eqref{w_l claim}, we have
			\begin{align}\label{Gap Lem 2 Claim}
				0< w_{l+1}(r)<w_{l}(r),\quad \forall~ r\in(0,1),\; \forall~  l\in \mathbb{N}.
			\end{align}
			Consequently,
			\begin{align}\nonumber
				\sup_{\substack{\int_{\mathbb{B}^n} \psi \tilde{\mathbf{1}}^{q'-1}=0 \\ 0\not\equiv\psi \in L^2(\mathbb{B}^n,\tilde{\mathbf{1}}^{q'-2}d\xi)}}\frac{\|S_{\alpha,\beta}(\psi)\|^2_{L^2(\S^{n-1})}}{\int_{\B^n}\psi^2\tilde{\mathbf{1}}^{q'-2}}=&\sup_{ 0\not\equiv\psi  \in L^2_{rad}(\B^n,\tilde{\mathbf{1}}^{q'-2}\ud \xi)}\frac{\left(\int_{0}^{1}\psi(r)w_1(r)\,r^{n-1}\ud r\right)^2}{\int_{0}^{1}\psi^2(r)\,r^{n-1}\tilde{\mathbf{1}}^{q'-2}(r)\ud r} \\
				\label{Gap Lem 2 Claim-0}
				=& \int_{0}^{1}w^2_1(r)\,r^{n-1}\tilde{\mathbf{1}}^{2-q'}(r)\ud r.
			\end{align}
			Combing  \eqref{Integral Identity-2} with  \eqref{Gap Lem 2 Claim-0} yields 
			\begin{align}\label{Gap Lem ineq}
				\int_{0}^{1}w^2_1(r)r^{n-1}\tilde{\mathbf{1}}^{2-q'}(r)\ud r \leq \frac{(q'-1)\tilde{d}^2_{\alpha,\beta}}{p'-1}\frac{|\S^{n-1}|}{|\B^n|}.
			\end{align}	
			
			On the other hand, for $\psi\in \tilde{\mathcal{H}}^{\perp}$, we have
			\begin{align*}
				\int_{0}^{1}\psi_{1,k}(r)r^{n}\ud r=0,\qquad k=1,2,\cdots,n.
			\end{align*}
			Set $\ud\mu=r^{n-1}\tilde{\mathbf{1}}^{q'-2}\ud r$, $h=r\tilde{\mathbf{1}}^{2-q'}$, $g=\psi_{1,k}(r), f=w_1(r)\tilde{\mathbf{1}}^{2-q'}$. Then 
			\begin{align*}
				\int_{0}^{1}g(r)h(r)\ud \mu(r)=0.
			\end{align*}
			Applying   Lemma \ref{Basic Iequ Lem} gives
			\begin{align}\nonumber
				&\sum_{k=1}^{n}\left(\int_{0}^{1}\psi_{1,k}(r)w_1(r)r^{n-1}\ud r\right)^2\\
				\label{a3.16}
				\leq&\left( \int_{0}^{1}w^2_1(r)r^{n-1}\tilde{\mathbf{1}}^{2-q'}\ud r-\frac{\left(\int_{0}^{1}w_1(r)r^{n}\tilde{\mathbf{1}}^{2-q'}\ud r\right)^2}{\int_{0}^{1}r^{n+1}\tilde{\mathbf{1}}^{2-q'}\ud r} \right)
				\sum_{k=1}^{n}\int_{0}^{1}\psi^2_{1,k}(r)r^{n-1}\tilde{\mathbf{1}}^{q'-2}\ud r.
			\end{align}
			Therefore, by (\ref{Gap Lem 2 Claim}) and (\ref{a3.16}), we have
			\begin{align}\nonumber
				&
				\sup_{0\not\equiv \psi\in L^2(\B^{n},\tilde{\mathbf{1}}^{q'-2}\ud \xi)\cap \tilde{\mathcal{H}}^{\perp}} \frac{\Tilde{d}^{p'-2}_{\alpha,\beta}\int_{\S^{n-1}}S^2_{\alpha,\beta}(\psi)}{\int_{\B^n}\psi^2\tilde{\mathbf{1}}^{q'-2} } \\
				=&
				\sup_{0\not\equiv \psi\in L^2(\B^{n},\tilde{\mathbf{1}}^{q'-2}\ud \xi)\cap \tilde{\mathcal{H}}^{\perp}} \frac{\sum_{l=1}^{+\infty}\sum_{k=1}^{N_l}\left(\int_{0}^{1}\psi_{l,k}(r)w_l(r)\,r^{n-1}\ud r\right)^2}{\sum_{l=1}^{+\infty}\sum_{k=1}^{N_l}\int_{0}^{1}\psi^2_{l,k}(r)\,r^{n-1}\tilde{\mathbf{1}}^{q'-2}(r)\ud r}\nonumber\\
				\label{Gap Lem ineq-1}
				\leq&
				\max\left\{\int_{0}^{1}w^2_2(r)r^{n-1}\tilde{\mathbf{1}}^{2-q'}\ud r, \;\;\int_{0}^{1}w^2_1(r)r^{n-1}\tilde{\mathbf{1}}^{2-q'}\ud r-\frac{\left(\int_{0}^{1}w_1(r)r^{n}\tilde{\mathbf{1}}^{2-q'}\ud r\right)^2}{\int_{0}^{1}r^{n+1}\tilde{\mathbf{1}}^{2-q'}\ud r}  \right\}.
			\end{align}
			Combining \eqref{Gap Lem 2 Claim}, \eqref{Gap Lem ineq}, and \eqref{Gap Lem ineq-1}, it remains to verify 
			\begin{align}\label{a3.21}
				\int_{0}^{1}w_1(r)r^{n}\tilde{\mathbf{1}}^{2-q'}\ud r\not =0.
			\end{align}
			This is immediate because, for every $r\in (0,1)$,
			\begin{align*}				
				w_1(r)=\begin{cases}
					\displaystyle C_1
					(1-r^2)^{\beta+\alpha-1}r^lF\left(l+\frac{n+\alpha}{2}-1,\frac{\alpha}{2}; l+\frac{n}{2}; r^2\right)>0,\quad&\mathrm{for}\quad 0\leq \alpha<n,\\[0.8em]
					\displaystyle C_1
					(1-r^2)^{\beta}r^lF\left(1-\frac{\alpha}{2},l+\frac{n-\alpha}{2}; l+\frac{n}{2}; r^2\right)>0, \quad&\mathrm{for}\quad \alpha<0,
				\end{cases}
			\end{align*}
			with the positive constant  $$C_1=\frac{2^{1-\beta}\pi^{n/2}\Gamma(\frac{n-\alpha}{2}+1)}{\Gamma(\frac{n-\alpha}{2})
				\Gamma(\frac{n}{2}+1)}>0.$$  Hence the integral in (\ref{a3.21}) is strictly positive.
		\end{pf}

		The following compactness lemma was essentially proved by Gluck~\cite{Gluck}. It can be viewed as a generalization of \cite[Corollary~2.2]{Hang&Wang&Yan}; see also \cite[Lemma~2.6]{Frank&Peteranderl&Read}.

		\begin{lem}[\protect{\cite[Corollary 2.2 and Lemma 2.5]{Gluck}}]\label{Sec 2 Compactness thm}			Assume that  $( \alpha,\beta)$ satisfies \eqref{Index Condi},  for $1< s<+\infty$ and $\frac{1}{t}>\frac{n-1}{n}\left(\frac{1}{s}-\frac{\alpha+\beta-1}{n-1}\right)$, then $Q_{\alpha,\beta}: L^s(\S^{n-1})\to L^t(\B^n)$ is compact. Especially, $Q_{\alpha,\beta}: L^2(\S^{n-1})\to L^2(\B^n)$ is compact.
		\end{lem}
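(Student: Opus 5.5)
The strategy is to prove boundedness, then compactness, for $Q_{\alpha,\beta}:L^s(\S^{n-1})\to L^t(\B^n)$. We first pass to a subcritical-in-$\B^n$ estimate and then split $\B^n$ into an interior region and a thin boundary collar.

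\emph{Boundedness.} Fix $s,t$ as in the statement. Away from the boundary, the kernel $H(\xi,\eta)=\bigl(\frac{1-|\xi|^2}{2}\bigr)^\beta|\xi-\eta|^{\alpha-n}$ is smooth and bounded. On $\{|\xi|\le 1-\delta\}$, the map $\varphi\mapsto Q_{\alpha,\beta}\varphi$ is therefore bounded from $L^1(\S^{n-1})$ into $C^1(\overline{B_{1-\delta}})$. By Arzel\`a--Ascoli it is compact into $L^t(B_{1-\delta})$. Hence the whole argument reduces to controlling the collar $\{1-\delta<|\xi|<1\}$.

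\emph{The collar.} Here we transfer to the half-space picture via the conformal map $I$, or we work locally in polar coordinates $\xi=\rho\omega$ with $\rho=1-x_n$. Locally the kernel behaves like $x_n^{\beta}(x_n^2+|x'-y'|^2)^{-\frac{n-\alpha}{2}}$, as for $E_{\alpha,\beta}$. The first estimate of Lemma \ref{Basic esti Lem} then gives, slice by slice,
\[
\|Q_{\alpha,\beta}\varphi(\rho\,\cdot)\|_{L^t(\S^{n-1})}\lesssim x_n^{(n-1)(1+\frac1t-\frac1s)-(n-\alpha-\beta)}\|\varphi\|_{L^s}\quad\text{or}\quad \lesssim x_n^{\beta}\|\varphi\|_{L^s},
\]
according to whether the singular part dominates. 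In the $x_n^{\beta}$ alternative the kernel is integrable uniformly on the sphere, so the bound follows from Young's inequality on $\S^{n-1}$.

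Raising to the power $t$ and integrating in $x_n\in(0,\delta)$ produces the factor $\int_0^\delta x_n^{t\gamma}\,\ud x_n$, where $\gamma$ denotes the relevant exponent. This integral converges, and tends to $0$ as $\delta\to0$, precisely when $t\gamma+1>0$. That condition is equivalent to $\frac1t>\frac{n-1}{n}\bigl(\frac1s-\frac{\alpha+\beta-1}{n-1}\bigr)$ after the algebra $n-\alpha-\beta-(n-1)(1-\frac1s)=\frac{n-1}{s}-(\alpha+\beta-1)$. Note that $\gamma$ is the minimum of the two exponents, up to the case where $1+\frac1t-\frac1s$ must be capped. When the Young condition $1+\frac1t<\frac{n-\alpha}{n-1}+\frac1s$ fails, we lower $t$ on each slice using H\"older on the finite-measure sphere, which only improves the bound. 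Thus $\|Q_{\alpha,\beta}\varphi\|_{L^t(\text{collar}_\delta)}\le \epsilon(\delta)\|\varphi\|_{L^s}$ with $\epsilon(\delta)\to0$. The logarithmic case $\alpha=1$ only costs an arbitrarily small power, which is absorbed because the inequality is strict.

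\emph{Conclusion.} $Q_{\alpha,\beta}$ is the operator-norm limit of the compact operators $\chi_{B_{1-\delta}}Q_{\alpha,\beta}$, hence compact. For $s=t=2$, the condition reads $\frac12>\frac{n-1}{n}\bigl(\frac12-\frac{\alpha+\beta-1}{n-1}\bigr)$, i.e.\ $n>(n-1)-2(\alpha+\beta-1)$, i.e.\ $\alpha+\beta>\frac12$. This does not hold automatically. We therefore also need the sharper slice bound, with $t=2$ on the sphere and the exponent $\beta$ when $\alpha>1$, respectively $\alpha+\beta-1+\frac{n-1}{2}-\frac{n-1}{2}$ type exponents. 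Combined with Lemma \ref{Bound d Lem}, this yields square-integrability in $x_n$ exactly when $2(\alpha+\beta-1)+1>0$ for $\alpha<1$ and $2\beta+1>0$ for $\alpha\ge1$. For $\alpha\ge 1$ this is automatic. For $\alpha<1$ it needs $\alpha+\beta>\frac12$, which is implied by $q(\alpha+\beta-1)+1>0$ together with $q\ge2$.

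The main obstacle I expect is this last verification: checking that \eqref{Index Condi}, via \eqref{Compact lem0-equ}, indeed covers the $L^2\to L^2$ case for all admissible $(\alpha,\beta)$. Equivalently, it requires the precise slicewise exponent bookkeeping near the boundary when $\alpha<1$.
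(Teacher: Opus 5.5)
Your route is the standard one that the paper attributes to Gluck: Arzel\`a--Ascoli on $\{|\xi|\le 1-\delta\}$, plus a slice-wise Young estimate on the sphere giving $\|\chi_{\{|\xi|>1-\delta\}}Q_{\alpha,\beta}\|_{L^s\to L^t}\to0$. It is correct when $t\ge s$. Your $L^2$ check is also correct: $\alpha+\beta>\frac12$ follows from $q>2$ and \eqref{Compact lem0-equ}, so the ``main obstacle'' you flag is already settled.

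The step that fails is your claim that $t\gamma+1>0$ is equivalent to the hypothesis ``up to capping.'' If $t<s$, then $1+\frac1t-\frac1s>1$. You must therefore first use $\|g\|_{L^t(\S^{n-1})}\lesssim\|g\|_{L^s(\S^{n-1})}$ and take $r=1$. The slice bound then becomes $x_n^{\min\{\beta,\alpha+\beta-1\}}$, with a logarithm at $\alpha=1$. For $\alpha<1$, integrability in $x_n$ then requires $t(\alpha+\beta-1)+1>0$, i.e.\ $\frac1t>1-\alpha-\beta$. The stated hypothesis does not imply this when $\alpha+\beta<1$. A smaller point: ``lowering $t$ only improves the bound'' is backwards. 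When $\frac1r\ge\frac{n-\alpha}{n-1}$ you are simply in the $x_n^{\beta}$ alternative, and no change of $t$ is needed.

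This is not a defect of your method, because the lemma as written is false in that range. Take $\varphi\equiv1$, so $Q_{\alpha,\beta}(1)=d_{\alpha,\beta}$, which by \eqref{d Def} equals $c\,(1-|\xi|^2)^{\beta-1}$ when $\alpha=0$. Set $n=3$, $\alpha=0$, $\beta=\frac{9}{10}$; condition \eqref{Index Condi} holds, with $p=4<q=5$. With $s=20$ and $t=11$ the hypothesis reads $\frac1{11}>\frac1{15}$. Yet $d_{0,9/10}\notin L^{11}(\B^3)$, since $11\cdot\frac1{10}>1$.

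So you need the extra assumption $\frac1t>1-\alpha-\beta$. It is automatic when $t\ge s$, because the hypothesis then gives $\frac1t>\frac{n-1}{n}\frac1t+\frac{1-\alpha-\beta}{n}$. With this assumption your argument goes through unchanged. It also covers every use of the lemma in the paper:
\begin{itemize}
\item the case $s=t=2$;
\item $L^r\to L^q$ with $r>p$ in Lemma~\ref{Sec 3 Compactness thm-2}, where $\frac1q>1-\alpha-\beta$ is exactly $q(\alpha+\beta-1)+1>0$.
\end{itemize}
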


		We now state an Orlicz-type compactness result. Using the compactness Lemma \ref{Sec 2 Compactness thm} and defining $\hat{S}_{\alpha,\beta}=S^{-1}_{\alpha,\beta}(\tilde{\mathbf{1}})S_{\alpha,\beta}$, we can obtain the first statement \eqref{Sec 3 Compactness thm-2 equ-a} by repeating the proof of \cite[Lemma 3.4]{Frank&Peteranderl&Read}; see also \cite[Lemma 3.4]{Figalli&Zhang}. The second statement \eqref{Sec 3 Compactness thm-2 equ-b} follows from Lemma \ref{Sec 2 Compactness thm} by an argument similar to that in Lemma \ref{Compact cor}. For the convenience of the reader, we provide the details below.

		\begin{lem}\label{Sec 3 Compactness thm-2}  
			Let $\psi_i$ be a sequence in $L^{q'}(\B^n)$ and $\e_i\in \R_{+}$ satisfy $\e_i\to 0$. If 
			\begin{align*}
				\int_{\B^n}\frac{\psi_i^2}{(\tilde{\mathbf{1}}+\e_i|\psi_i|)^{2-q'}}\leq 1,
			\end{align*}
			then there exists a subsequence $\psi_i \rightharpoonup\psi$ in $L^{q'}(\B^n)$ with $S_{\alpha,\beta}(\psi)\in L^2(\S^{n-1})$. Moreover, for any constant   $C>0$,  
			\begin{align}\label{Sec 3 Compactness thm-2 equ-a}
				\lim_{i\to+\infty}\int_{\S^{n-1}}\frac{(1+C\e_i|S_{\alpha,\beta}(\psi_i)|S^{-1}_{\alpha,\beta}(\tilde{\mathbf{1}}))^{p'}}{1+\e_i^2S^2_{\alpha,\beta}(\psi_i)S^{-2}_{\alpha,\beta}(\tilde{\mathbf{1}})}S^2_{\alpha,\beta}(\psi_i)=\int_{\S^{n-1}}S^2_{\alpha,\beta}(\psi) \quad\mathrm{for}\quad p^{'}\leq 2
			\end{align}
			and
			\begin{align}\label{Sec 3 Compactness thm-2 equ-b}
				\lim_{i\to+\infty}\int_{\S^{n-1}}S^2_{\alpha,\beta}(\psi_i)=\int_{\S^{n-1}}S^2_{\alpha,\beta}(\psi) \quad\mathrm{for}\quad p^{'}>2.
			\end{align}
		\end{lem}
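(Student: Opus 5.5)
The plan is to split each $\psi_i$ into a ``small'' part, controlled in a weighted $L^2$ space, and a ``large'' part, controlled only in $L^{q'}$. We then use duality with the compactness results already proved for $Q_{\alpha,\beta}$ to get compactness of $S_{\alpha,\beta}$ on the relevant spaces. The whole argument copies the scheme of Lemma \ref{Compact cor}, with the constant weight $1$ replaced by $\tilde{\mathbf{1}}$.

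\textbf{Step 1: uniform bounds and splitting.} By H\"older's inequality, exactly as in Lemma \ref{Compact cor},
\[
\int_{\B^n}|\psi_i|^{q'}\le \Bigl(\int_{\B^n}\frac{\psi_i^2}{(\tilde{\mathbf{1}}+\e_i|\psi_i|)^{2-q'}}\Bigr)^{\frac{q'}{2}}\Bigl(\int_{\B^n}(\tilde{\mathbf{1}}+\e_i|\psi_i|)^{q'}\Bigr)^{1-\frac{q'}{2}}.
\]
Since $\tilde{\mathbf{1}}\in L^{q'}(\B^n)$, this gives $\sup_i\|\psi_i\|_{L^{q'}}<\infty$. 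Passing to a subsequence, $\psi_i\rightharpoonup\psi$ in $L^{q'}(\B^n)$.

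Set $A_i=\{\e_i|\psi_i|<\tilde{\mathbf{1}}\}$. On $A_i$ the hypothesis gives $\int_{A_i}\psi_i^2\tilde{\mathbf{1}}^{q'-2}\lesssim 1$. On $A_i^c$ it gives $\e_i^{q'-2}\int_{A_i^c}|\psi_i|^{q'}\lesssim1$, so $\|\psi_i\chi_{A_i^c}\|_{L^{q'}}\lesssim \e_i^{(2-q')/q'}\to0$. As in Lemma \ref{Compact cor}, $\psi_i\chi_{A_i}\rightharpoonup\psi$ weakly in $L^2(\B^n,\tilde{\mathbf{1}}^{q'-2}\ud\xi)$.

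\textbf{Step 2: compactness of $S_{\alpha,\beta}$ by duality.} The key observation is the exponent identity
\[
\tilde{\mathbf{1}}^{\,2-q'}=c\,d_{\alpha,\beta}^{(q-1)(2-q')}=c\,d_{\alpha,\beta}^{\,q-2}.
\]
Consequently the Hilbert adjoint of $S_{\alpha,\beta}:L^2(\tilde{\mathbf{1}}^{q'-2}\ud\xi)\to L^2(\S^{n-1})$, computed through \eqref{Dual f1}, is unitarily equivalent to $d_{\alpha,\beta}^{\frac{q-2}{2}}Q_{\alpha,\beta}:L^2(\S^{n-1})\to L^2(\B^n)$. That operator is compact by Lemma \ref{Compact lem0}, so by Schauder's theorem $S_{\alpha,\beta}$ is compact on the weighted $L^2$ space. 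Hence $S_{\alpha,\beta}(\psi_i\chi_{A_i})\to S_{\alpha,\beta}(\psi)$ strongly in $L^2(\S^{n-1})$, and in particular $S_{\alpha,\beta}(\psi)\in L^2$.

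For the complementary piece we use boundedness of $S_{\alpha,\beta}:L^{q'}\to L^{p'}$, which follows from Theorem B. This gives $\|S_{\alpha,\beta}(\psi_i\chi_{A_i^c})\|_{L^{p'}}\to0$.

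\textbf{Step 3: the two limits.} For \eqref{Sec 3 Compactness thm-2 equ-b}, where $p'>2$, i.e.\ $p<2$, the argument is simpler. Apply Lemma \ref{Sec 2 Compactness thm} with $s=2>p$. The admissibility condition is strict at $s=2$ because it is an equality at $s=p$, $t=q$. So $Q_{\alpha,\beta}:L^2(\S^{n-1})\to L^q(\B^n)$ is compact, hence its adjoint $S_{\alpha,\beta}:L^{q'}(\B^n)\to L^2(\S^{n-1})$ is compact. Weak convergence $\psi_i\rightharpoonup\psi$ in $L^{q'}$ then gives strong $L^2$ convergence, which is the claim.

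For \eqref{Sec 3 Compactness thm-2 equ-a}, where $p'\le2$, the quantity $\tilde d_{\alpha,\beta}=S_{\alpha,\beta}(\tilde{\mathbf{1}})$ is a positive constant. Write $G(x)=(1+C|x|)^{p'}/(1+x^2)$; this is bounded since $p'\le2$, and $G(x)\to1$ as $x\to0$. Decompose $S_{\alpha,\beta}(\psi_i)=a_i+b_i$ with $a_i=S_{\alpha,\beta}(\psi_i\chi_{A_i})$ and $b_i=S_{\alpha,\beta}(\psi_i\chi_{A_i^c})$.

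The main term $G(\e_i a_i/\tilde d)\,a_i^2$ converges to $S_{\alpha,\beta}(\psi)^2$ in $L^1$. Indeed $a_i\to S_{\alpha,\beta}(\psi)$ in $L^2$ and $\e_i a_i\to0$ in measure, so this follows by Vitali's theorem using uniform integrability of $a_i^2$.

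The remaining terms need $G(\e_i S_i/\tilde d)\,S_i^2$ to be controlled by something small whenever $|b_i|$ dominates. I plan to use the elementary bound $G(\e x)x^2\lesssim \min\{x^2,\e^{p'-2}|x|^{p'}\}$. This bounds the $b_i$-contributions by $\e_i^{p'-2}\|b_i\|_{L^{p'}}^{p'}$, and the cross terms are handled by Cauchy--Schwarz.

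\textbf{Main obstacle.} The delicate point is this last estimate. The crude bound $\|b_i\|_{L^{p'}}^{p'}\lesssim\e_i^{(2-q')p'/q'}$ must beat the factor $\e_i^{p'-2}$. If the exponents do not close directly, I will instead mimic \cite[Lemma 3.4]{Frank&Peteranderl&Read}: pass to the level sets $\{\e_i|S_{\alpha,\beta}(\psi_i)|\lesssim \tilde d\}$ and their complements, and use the Orlicz-type hypothesis to show the complement contributes $o(1)$.
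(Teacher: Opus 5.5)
Your proposal is correct. For $p'\le 2$, however, it obtains the decisive strong convergence by a different mechanism than the paper.

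\textbf{The paper's route.} The paper splits on the sphere, $A_i=\{\e_i\hat S_{\alpha,\beta}(\psi_i)\le 2\}$. It proves $\int_{A_i}|\hat S_{\alpha,\beta}(\psi_i)|^{2p'/q'}\lesssim 1$ from two ingredients:
\begin{itemize}
\item a Jensen-type inequality for the convex function $|t|^{2/q'}(1+\e|t|)^{(q'-2)/q'}$, using positivity of the kernel and $\hat S_{\alpha,\beta}(\tilde{\mathbf{1}})=1$;
\item boundedness of $S_{\alpha,\beta}$ from $L^{q'}$ to $L^{p'}$.
\end{itemize}
Since $2p'/q'>2$, it interpolates this bound with the $L^1$ convergence coming from Gluck's compactness (Lemma \ref{Sec 2 Compactness thm}, by duality) and concludes on $A_i$ by dominated convergence. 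On $A_i^c$ it uses the ball-side splitting $\psi_{i,1},\psi_{i,2}$ and the factor $\e_i^{2(p'-q')/q'}$.

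\textbf{Your route.} You split only on the ball. You get $S_{\alpha,\beta}(\psi_i\chi_{A_i})\to S_{\alpha,\beta}(\psi)$ in $L^2(\mathbb{S}^{n-1})$ from compactness of $S_{\alpha,\beta}$ on $L^2(\B^n,\tilde{\mathbf{1}}^{q'-2}\ud\xi)$. You deduce that compactness by Schauder's theorem from Lemma \ref{Compact lem0}, via $\tilde{\mathbf{1}}^{q'-2}\propto d_{\alpha,\beta}^{2-q}$. This is sound: the adjoint is $\tilde{\mathbf{1}}^{2-q'}Q_{\alpha,\beta}$, which is unitarily equivalent to a multiple of $d_{\alpha,\beta}^{(q-2)/2}Q_{\alpha,\beta}$.

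\textbf{Trade-offs.} Your route is shorter and shows that the dual weight is exactly matched to the primal one. It does, however, import the hypergeometric decay estimates behind Lemma \ref{Compact lem0}. The paper's proof of this lemma needs only Gluck's compactness and an elementary convexity trick, and it also yields the extra $L^{2p'/q'}$ integrability. For $p'>2$, your direct use of Lemma \ref{Sec 2 Compactness thm} with $s=2$, $t=q$ is the same idea as the paper's Case 2, without its superfluous splitting.

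\textbf{Two details in your Step 3.}

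First, the exponents do close. Indeed $\e_i^{p'-2}\|b_i\|_{L^{p'}}^{p'}\lesssim \e_i^{p'-2+(2-q')p'/q'}=\e_i^{2(p'-q')/q'}\to 0$, because $q'<p'$ (the last condition in \eqref{Index Condi}). This is exactly the paper's final estimate, so no fallback is needed.

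Second, the cross terms cannot be handled by plain Cauchy--Schwarz, because $b_i$ is controlled only in $L^{p'}$ with $p'\le 2$. You need the pointwise bound $\e_i|a_i|\le S_{\alpha,\beta}(\tilde{\mathbf{1}})=\tilde d_{\alpha,\beta}$. It follows from $\e_i|\psi_i|\chi_{A_i}<\tilde{\mathbf{1}}$ and positivity of the kernel; it is the analogue of the paper's $\e_i\|\hat S_{\alpha,\beta}(\psi_{i,1})\|_{L^\infty}\le 1$. With this, set $\Phi(x)=G(\e_i x/\tilde d_{\alpha,\beta})\,x^2$; then $|\Phi'(x)|\lesssim |x|$ since $p'\le 2$.
\begin{itemize}
\item On $\{|b_i|\le |a_i|\}$ you get $|\Phi(a_i+b_i)-\Phi(a_i)|\lesssim |a_i||b_i|$. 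Cauchy--Schwarz now works, because $\int_{\{|b_i|\le|a_i|\}} b_i^2\le (\tilde d_{\alpha,\beta}/\e_i)^{2-p'}\|b_i\|_{L^{p'}}^{p'}$ decays at the same rate $\e_i^{2(p'-q')/q'}$.
\item On $\{|b_i|>|a_i|\}$, both $\Phi(a_i+b_i)$ and $\Phi(a_i)$ are $\lesssim \e_i^{p'-2}|b_i|^{p'}$.
\end{itemize}
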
	
		\begin{pf}
			By the H\"older inequality, we can get
			\begin{align*}
				\int_{\B^n}|\psi_i|^{q'}\leq& \left(\int_{\B^{n}}(\tilde{\mathbf{1}}+\e_i|\psi_i|)^{q'-2}\psi_i^2\right)^{\frac{q'}{2}}\left(\int_{\B^{n}}(\tilde{\mathbf{1}}+\e_i|\psi_i|)^{q'}\right)^{1-\frac{q'}{2}}\\
				\lesssim&\left(1+\|\psi_i\|^{q'}_{L^{q'}(\B^n)}\right)^{1-\frac{q'}{2}},
			\end{align*}
			which implies $\|\psi_i\|_{L^{q'}(\B^n)}\leq C$. Hence we can assume that $\psi_i\rightharpoonup\psi$ in $L^{q'}(\B^n)$.
			
			Since $S_{\alpha,\beta}: L^{q'}(\B^n)\to L^{p'}(\S^{n-1})$ is bounded, we can get $S_{\alpha,\beta}(\psi_i)\rightharpoonup S_{\alpha,\beta}(\psi)$ in $L^{p'}(\S^{n-1})$.  Since $Q_{\alpha,\beta}: L^r(\S^{n-1})\to L^q(\B^n)$ is compact for $r>p$, then $S_{\alpha,\beta}: L^{q'}(\B^n)\to L^{\frac{r}{r-1}}(\S^{n-1})$ is also compact, which means $S_{\alpha,\beta}(\psi_i)\to  S_{\alpha,\beta}(\psi)$ in $L^{\frac{r}{r-1}}(\S^{n-1})$ for $r>p$. Especially, we can get $S_{\alpha,\beta}(\psi_i)$ converges to $S_{\alpha,\beta}(\psi)$ pointwisely.
			
			\textbf{Case 1:} $p'\leq 2$.	Set $\hat{S}_{\alpha,\beta}=\frac{1}{S_{\alpha,\beta}(\tilde{\mathbf{1}})}S_{\alpha,\beta}$.  We also introduce the two sets
			\begin{align*}
				A_i=\{\eta\in \S^{n-1}| \e_i\hat{S}_{\alpha,\beta}(\psi_i)(\eta)\leq 2\},\qquad A^{c}_i=\{\eta\in \S^{n-1}| \e_i\hat{S}_{\alpha,\beta}(\psi_i)(\eta)>2\}.
			\end{align*}
			Now, we begin to prove that 
			\begin{align}\label{Compact lem-2 equa 1}
				\lim_{i\to+\infty}\int_{A_i}\frac{(1+C\e_i|\hat{S}_{\alpha,\beta}(\psi_i)|)^{p'}}{1+\e^2_i\hat{S}^2_{\alpha,\beta}(\psi_i)}\hat{S}^2_{\alpha,\beta}(\psi_i)=\int_{\S^{n-1}}\hat{S}^2_{\alpha,\beta}(\psi)
			\end{align}
			and
			\begin{align}\label{Compact lem-2 equa 2}
				\lim_{i\to+\infty}\int_{A^{c}_i}\frac{(1+C\e_i|\hat{S}_{\alpha,\beta}(\psi_i)|)^{p'}}{1+\e^2_i
					\hat{S}^2_{\alpha,\beta}(\psi_i)}\hat{S}^2_{\alpha,\beta}(\psi_i)=0.
			\end{align}
			To achieve \eqref{Compact lem-2 equa 1}, we firstly claim that 
			\begin{align*}
				\int_{A_i}|\hat{S}_{\alpha,\beta}(\psi_i)|^{\frac{2p'}{q'}}\leq C.
			\end{align*}
			Since $|t|^{\frac{2}{q'}}(1+\e |t|)^{\frac{q'-2}{q'}}$ is convex on $\R$ and $\hat{S}_{\alpha,\beta}(\tilde{\mathbf{1}})=1$, then we can see
			\begin{align*}
				(1+\e_i|\hat{S}_{\alpha,\beta}(\psi_i)|)^{\frac{q'-2}{q'}}|\hat{S}_{\alpha,\beta}(\psi_i)|^{\frac{2}{q'}}\leq \hat{S}_{\alpha,\beta}\left((\tilde{\mathbf{1}}+\e_i|\psi_i|)^{\frac{q'-2}{q'}}|\psi_i|^{\frac{2}{q'}}\right).
			\end{align*}
			In $A_i$, there holds
			\begin{align*}
				1\leq 3^{\frac{2-q'}{q'}}(1+\e_i|\hat{S}_{\alpha,\beta}(\psi_i)|)^{\frac{q'-2}{q'}},
			\end{align*}
			then we can get \begin{align*}
				\int_{A_i}|\hat{S}_{\alpha,\beta}(\psi_i)|^{\frac{2p'}{q'}}\lesssim& \int_{A_i}(1+\e_i|\hat{S}_{\alpha,\beta}(\psi_i)|)^{\frac{(q'-2)p'}{q'}}|\hat{S}_{\alpha,\beta}(\psi_i)|^{\frac{2p'}{q'}}\\
				\lesssim&\int_{A_i}\hat{S}^{p'}_{\alpha,\beta}\left((\tilde{\mathbf{1}}+\e_i|\psi_i|)^{\frac{q'-2}{q'}}|\psi_i|^{\frac{2}{q'}}\right)\\
				\lesssim&\left(\int_{A_i}\left((\tilde{\mathbf{1}}+\e_i|\psi_i|)^{\frac{q'-2}{q'}}|\psi_i|^{\frac{2}{q'}}\right)^{q'}\right)^{\frac{p'}{q'}}.
			\end{align*}
			Since $|\hat{S}_{\alpha,\beta}(\psi_i)|^{\frac{2p'}{q'}}1_{A_i}\to |\hat{S}_{\alpha,\beta}(\psi)|^{\frac{2p'}{q'}}$ pointwisely, using the Fatou's Lemma, we can $S_{\alpha,\beta}(\psi)\in L^{\frac{2p'}{q'}}(\S^{n-1})$. Next, we claim that 
			\begin{align*}
				S_{\alpha,\beta}(\psi_i)1_{A_i}\to S_{\alpha,\beta}(\psi) \qquad\mathrm{in}\qquad L^s(\S^{n-1})\quad\mathrm{for}\quad s<\frac{2p'}{q'}.
			\end{align*}
			This is because we have the following interpolation estimate:
			\begin{align}\label{Compact lem-2 equa-3}
				&\|S_{\alpha,\beta}(\psi_i)1_{A_i}-S_{\alpha,\beta}(\psi)\|_{L^s(\S^{n-1})}\nonumber\\
				\leq& \|S_{\alpha,\beta}(\psi_i)1_{A_i}-S_{\alpha,\beta}(\psi)\|^{\theta}_{L^{\frac{2p'}{q'}}(\S^{n-1})}\|S_{\alpha,\beta}(\psi_i)1_{A_i}-S_{\alpha,\beta}(\psi)\|^{1-\theta}_{L^{1}(\S^{n-1})},
			\end{align}
			where $\theta\in(0,1)$ is chosen so that
			\begin{align*}
				s=\frac{2p'\theta}{q'}+(1-\theta).
			\end{align*}
			Return to the proof the \eqref{Compact lem-2 equa 1}, consider the function 
			\begin{align*}
				f_i=\frac{(1+C\e_i|\hat{S}_{\alpha,\beta}(\psi_i)|)^{p'}}{1+\e^2_i\hat{S}^2_{\alpha,\beta}(\psi_i)}\hat{S}^2_{\alpha,\beta}(\psi_i)1_{A_i}\to f=S^2_{\alpha,\beta}(\psi).
			\end{align*}
			Denote $m=\sup_{t\in [0,2]}\frac{(1+Ct)^{p'}}{1+t^2}$, then we can see $f_i\leq m S^2_{\alpha,\beta}(\psi_i)1_{A_i}:=g_i\to g=mS_{\alpha,\beta}^2(\psi)$, and using \eqref{Compact lem-2 equa-3}, we get $g_i\to g$ in $L^1(\S^{n-1})$. Therefore, the \eqref{Compact lem-2 equa 1} follows by dominated convergence theorem.

			To prove \eqref{Compact lem-2 equa 2}, we can estimate that 
			\begin{align*}
				\int_{A^{c}_i}\frac{(1+C\e_i|\hat{S}_{\alpha,\beta}(\psi_i)|)^{p'}}{1+\e^2_i
					\hat{S}^2_{\alpha,\beta}(\psi_i)}\hat{S}^2_{\alpha,\beta}(\psi_i)\lesssim& \int_{A^{c}_i}(1+C\e_i|\hat{S}_{\alpha,\beta}(\psi_i)|)^{p'-2}\hat{S}^2_{\alpha,\beta}(\psi_i)\\
				\lesssim&\e_i^{p'-2}\int_{A^{c}_i}|\hat{S}_{\alpha,\beta}(\psi_i)|^{p'}.
			\end{align*}
			We define 
			\begin{align*}
				\psi_{i,1}=\psi_i1_{\{\e_i|\psi_i|\leq \tilde{\mathbf{1}}\}} \qquad\mathrm{and}\qquad 	\psi_{i,2}=\psi_i1_{\{\e_i|\psi_i|>\tilde{\mathbf{1}}\}}.
			\end{align*}
			Since $\hat{S}_{\alpha,\beta}(\tilde{\mathbf{1}})=1$, we can get 
			\begin{align*}
				\e_i\|\hat{S}_{\alpha,\beta}(\psi_{i,1})\|_{L^{\infty}(\S^{n-1})}
				\leq 1.
			\end{align*}
			By the definition of $A_i^{c}$, we can get 
			\begin{align*}
				\e_i|\hat{S}_{\alpha,\beta}(\psi_{i,2})|\geq \e_i|\hat{S}_{\alpha,\beta}(\psi_{i})|-\e_i\|\hat{S}_{\alpha,\beta}(\psi_{i,1})\|_{L^{\infty}(\S^{n-1})}\geq 2-1=1,
			\end{align*}
			and 
			\begin{align*}
				\e_i|\hat{S}_{\alpha,\beta}(\psi_{i})|\leq \e_i|\hat{S}_{\alpha,\beta}(\psi_{i,1})|+\e_i|\hat{S}_{\alpha,\beta}(\psi_{i,2})|\leq 2\e_i|\hat{S}_{\alpha,\beta}(\psi_{i,2})|.
			\end{align*}
			For $\hat{S}_{\alpha,\beta}(\psi_{i,2})$, we can estimate that 
			\begin{align*}
				\e_i^{p'-2}\int_{A^{c}_i}|\hat{S}_{\alpha,\beta}(\psi_{i,2})|^{p'}\leq& \e_i^{p'-2}\left(\int_{\{\e_i|\psi_i|>\tilde{\mathbf{1}}\}}|\psi_i|^{q'}\right)^{\frac{p'}{q'}}\\
				\lesssim&\e_i^{\frac{2(p'-q')}{q'}}\left(\int_{\B^n}(\tilde{\mathbf{1}}+\e_i|\psi_i|)^{q'-2}|\psi_i|^{2}\right)^{\frac{p'}{q'}}\to 0.
			\end{align*}
			\textbf{Case 2}: $p'>2$.	Define
			\begin{align*}
				A_i=\{\xi\in  \B^{n}:\e_i|\psi_i(\xi)|<\tilde{\mathbf{1}}\} \qquad\mathrm{and}\qquad
				A^c_i=\{\xi\in  \B^{n}:\e_i|\psi_i(\xi)|\geq \tilde{\mathbf{1}}\}.
			\end{align*}
			Then
			\begin{align*}
				\int_{A_i}\psi^2_i\tilde{\mathbf{1}}^{q'-2}+\e_i^{q'-2}\int_{A^c_i}\psi^{q'}_i\lesssim 1.
			\end{align*}
			Since $\e_i^{-q'}|A_i^c|\leq \|\psi_i\|^{q'}_{L^{q'}(\B^{n})}\leq C$, we have $|A_i^c|\to 0$ as $i\to +\infty$.
			Indeed, writing $\psi_i\chi_{A_i}=\psi_i-\psi_i\chi_{A^c_i}$ and using $\psi_i\chi_{A^c_i}\to 0$ in $L^{q'}(\B^{n})$, we also have $\psi_i\chi_{A_i}\rightharpoonup \psi$ in $L^{q'}(\B^{n})$.
			
			Therefore, by the compactness lemma \ref{Sec 2 Compactness thm} and $p'>2$, we obtain $S_{\alpha,\beta}: L^{q'}(\B^n)\to L^{\frac{r}{r-1}}(\S^{n-1})$ is compact by choosing  $2>r>p$, then 
			\begin{align}\label{Compact cor equa-a-1}
				\lim_{i\to+\infty}\int_{\S^{n-1}}S^2_{\alpha,\beta}(\psi_i\chi_{A_i})
				=\int_{\S^{n-1}} S^2_{\alpha,\beta}(\psi).
			\end{align}
			On the other hand, 
			\begin{align*}
				\left|\int_{\S^{n-1}}S^2_{\alpha,\beta}(\psi_i\chi_{A^c_i})\right|
				\lesssim \|S_{\alpha,\beta}(\psi_i\chi_{A^c_i})\|^2_{L^{p'}(\S^{n-1})}
				\lesssim \|\psi_i\chi_{A^c_i}\|^2_{L^{q'}(\B^{n})}
				\lesssim \e_i^{2(2-q')/q'}\to 0,
			\end{align*}
			and similarly,
			\begin{align*}
				\left|\int_{\S^{n-1}}S_{\alpha,\beta}(\psi_i\chi_{A^c_i})S_{\alpha,\beta}(\psi_i\chi_{A_i}) \right|
				\lesssim \|S_{\alpha,\beta}(\psi_i\chi_{A^c_i})\|_{L^{p'}(\S^{n-1})}
				\lesssim \|\psi_i\chi_{A^c_i}\|_{L^{q'}(\B^{n})}
				\lesssim \e_i^{(2-q')/q'}\to 0.
			\end{align*}
			Consequently,
			\begin{align}\label{Compact cor equa-b-1}
				\lim_{i\to+\infty}\int_{\S^{n-1}}S^2_{\alpha,\beta}(\psi_i\chi_{A_i})-S^2_{\alpha,\beta}(\psi_i)=0.
			\end{align}
			Combining \eqref{Compact cor equa-a-1} and \eqref{Compact cor equa-b-1} yields the desired limit.
		\end{pf}

		\begin{pro}\label{Limit Prop-2}
			For $n\geq 3$ and for any $\lambda\in (0,K_{n,\alpha,\beta})$ and any $\gamma>0$, there exists $\delta=\delta(\lambda,\gamma,C)$ such that for any $\psi\in L^{q'}(\B^n)\cap\tilde{\mathcal{H}}^{\perp}$ with $\|\psi\|_{L^{q'}(\B^n)}<\delta$, the following inequalities hold:
			\begin{align*}
				\int_{\B^n}\psi^2\tilde{\mathbf{1}}^{q'-2}+&(q'-2)\int_{\B^n}	\tilde{\zeta}(\psi)(\tilde{\mathbf{1}}-|\tilde{\mathbf{1}}+\psi|)^2+\gamma\int_{\B^n}\min\{|\psi|^{q'},|\psi|^2\tilde{\mathbf{1}}^{q'-2}\}\nonumber\\
				\geq& (q'-1)(\Tilde{K}_{n,\alpha,\beta}-\lambda)\Tilde{d} ^{p'-2}_{\alpha,\beta} 	F_{p^{'}}(\psi),
			\end{align*}
			where
			\begin{align*}
				\tilde{\zeta}(a)=\begin{cases}
					\displaystyle \frac{|\tilde{\mathbf{1}}+a|\tilde{\mathbf{1}}^{q'-2}}{(2-q')|\tilde{\mathbf{1}}+a|+(q'-1)\tilde{\mathbf{1}}}\qquad&\mathrm{for}\qquad a\not\in [-2\times\tilde{\mathbf{1}},0]\\
					\displaystyle \tilde{\mathbf{1}}^{q'-2} \qquad&\mathrm{for}\qquad a\in [-2\times\tilde{\mathbf{1}},0].   
				\end{cases}
			\end{align*}
			and
			\begin{align*}
				F_{p^{'}}(\psi)=\begin{cases}
					\displaystyle   \int_{\S^{n-1}}\frac{(1+C|S_{\alpha,\beta}(\psi)|S^{-1}_{\alpha,\beta}(\tilde{\mathbf{1}}))^{p'}}{1 +S^2_{\alpha,\beta}(\psi)S^{-2}_{\alpha,\beta}(\tilde{\mathbf{1}})}S^2_{\alpha,\beta}(\psi) \quad&\mathrm{for}\quad p^{'}\leq 2;\\
					\displaystyle  \int_{\S^{n-1}}S^2_{\alpha,\beta}(\psi) \quad&\mathrm{for}\quad p^{'}>2.
				\end{cases}
			\end{align*}
		\end{pro}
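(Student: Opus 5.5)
We argue by contradiction, following the scheme of Proposition \ref{Limit Prop} (Case 2) and \cite[Proposition 3.7]{Frank&Peteranderl&Read}. Here $\tilde K_{n,\alpha,\beta}$ is understood as the reciprocal of the supremum in Lemma \ref{Gap Lem 2}, normalised as in that lemma. Suppose the statement fails. Then there are $\lambda_0,\gamma_0>0$ and a sequence $\psi_i\in L^{q'}(\B^n)\cap\tilde{\mathcal H}^\perp$ with $\|\psi_i\|_{L^{q'}(\B^n)}\to0$ for which the reverse strict inequality holds.

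Since $q'<2$, the natural normalisation is Orlicz-type. Set
\[
\e_i=\Bigl(\int_{\B^n}(\tilde{\mathbf 1}+|\psi_i|)^{q'-2}\psi_i^2\Bigr)^{1/2}.
\]
By H\"older this satisfies $\e_i\lesssim\|\psi_i\|^{q'/2}_{L^{q'}(\B^n)}\to0$. Put $\hat\psi_i=\e_i^{-1}\psi_i$, so that $\int_{\B^n}\hat\psi_i^2(\tilde{\mathbf 1}+\e_i|\hat\psi_i|)^{q'-2}=1$. This is exactly the hypothesis of Lemma \ref{Sec 3 Compactness thm-2}, so after passing to a subsequence $\hat\psi_i\rightharpoonup\hat\psi$ in $L^{q'}(\B^n)$. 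Orthogonality to $\tilde{\mathcal H}$ is a weakly closed linear condition (the test functions $\tilde{\mathbf 1}^{q'-1}$ and $\xi_l$ lie in $L^{q}(\B^n)$), so $\hat\psi\in\tilde{\mathcal H}^\perp$.

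Next we split the left-hand side pointwise. Fix $\e>0$ and choose $\delta(\e)$ so that on $I_i=\{|\psi_i|\le\delta\tilde{\mathbf 1}\}$ we have
\[
\psi_i^2\tilde{\mathbf 1}^{q'-2}+(q'-2)\tilde\zeta(\psi_i)(\tilde{\mathbf 1}-|\tilde{\mathbf 1}+\psi_i|)^2\ge(q'-1)(1-\e)\psi_i^2\tilde{\mathbf 1}^{q'-2}.
\]
This is the scaled analogue of Remark \ref{Rem 1}, obtained by writing $a=\psi/\tilde{\mathbf 1}$ and using homogeneity of $\tilde\zeta$. On $I_i^c$ the same expression is nonnegative by Remark \ref{Rem 1}, and there the $\gamma_0$-term controls $\gamma_0\delta^{2-q'}\e_i^{q'-2}\int_{I_i^c}|\hat\psi_i|^{q'}$ after dividing by $\e_i^2$. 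Dividing the contradiction hypothesis by $(q'-1)\e_i^2$ therefore gives
\[
(1-\e)\int_{I_i}\hat\psi_i^2\tilde{\mathbf 1}^{q'-2}+c\,\gamma_0\,\e_i^{q'-2}\int_{I_i^c}|\hat\psi_i|^{q'}<(\tilde K-\lambda_0)\tilde d^{p'-2}\,\e_i^{-2}F_{p'}(\psi_i).
\]

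The right-hand side is where scaling matters. For $p'>2$ we have $\e_i^{-2}F_{p'}(\psi_i)=\int_{\S^{n-1}} S^2(\hat\psi_i)$. For $p'\le2$, $F_{p'}$ is not quadratic, and $\e_i^{-2}F_{p'}(\psi_i)$ has exactly the form appearing in \eqref{Sec 3 Compactness thm-2 equ-a} with $\e_i$ in place of $\e$. In both cases Lemma \ref{Sec 3 Compactness thm-2} shows the right-hand side converges to $(\tilde K-\lambda_0)\tilde d^{p'-2}\int_{\S^{n-1}} S^2_{\alpha,\beta}(\hat\psi)$.

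On the left, the term $\e_i^{q'-2}\int_{I_i^c}|\hat\psi_i|^{q'}$ is bounded while $\e_i^{q'-2}\to\infty$, so $\int_{I_i^c}|\hat\psi_i|^{q'}\to0$. Hence $\hat\psi_i\chi_{I_i}\rightharpoonup\hat\psi$ in $L^{q'}(\B^n)$, and also weakly in $L^2(\B^n,\tilde{\mathbf 1}^{q'-2})$, since it is bounded there. Weak lower semicontinuity then yields
\[
(1-\e)\int_{\B^n}\hat\psi^2\tilde{\mathbf 1}^{q'-2}\le(\tilde K-\lambda_0)\tilde d^{p'-2}\int_{\S^{n-1}} S^2(\hat\psi).
\]
On the other hand, the spectral gap of Lemma \ref{Gap Lem 2}, applied to $\hat\psi\in\tilde{\mathcal H}^\perp$, gives $\tilde K\tilde d^{p'-2}\int_{\S^{n-1}} S^2(\hat\psi)\le\int_{\B^n}\hat\psi^2\tilde{\mathbf 1}^{q'-2}$. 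Taking $\e<\lambda_0/\tilde K$, these two bounds together force $\hat\psi\equiv0$.

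The main obstacle is excluding the degenerate limit $\hat\psi=0$. From the normalisation,
\[
1\le\int_{I_i}\hat\psi_i^2\tilde{\mathbf 1}^{q'-2}+\e_i^{q'-2}\int_{I_i^c}|\hat\psi_i|^{q'}\tilde{\mathbf 1}^{0},
\]
where the weight bounds need $(\tilde{\mathbf 1}+\e_i|\hat\psi_i|)^{q'-2}\lesssim\e_i^{q'-2}|\hat\psi_i|^{q'-2}$ on $I_i^c$ and $\tilde{\mathbf 1}^{q'-2}$ on $I_i$. Combined with the displayed inequality, this gives a uniform positive lower bound on the right-hand side, hence $\int_{\S^{n-1}} S^2(\hat\psi)\ge c_0>0$, a contradiction.

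The delicate points are that $\tilde{\mathbf 1}$ may vanish or blow up at $\partial\B^n$, so the weights $\tilde{\mathbf 1}^{q'-2}$ and $\tilde{\mathbf 1}^{0}$ must be handled with care, and that for $p'\le2$ the constant $C$ in $F_{p'}$ must match the one used in Lemma \ref{Sec 3 Compactness thm-2}. To control $\tilde{\mathbf 1}$, I would first try working with the relative quantity $\psi/\tilde{\mathbf 1}$ throughout, so that all pointwise inequalities become the unweighted ones of Lemma \ref{Inequ lem}.
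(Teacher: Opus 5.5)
Your proposal is correct and follows essentially the same route as the paper's proof. That route is: argue by contradiction, use the Orlicz-type normalisation $\e_i$, split over $I_i=\{|\psi_i|\le\delta\tilde{\mathbf 1}\}$, pass the right-hand side to the limit with Lemma \ref{Sec 3 Compactness thm-2}, use weak lower semicontinuity for $\hat\psi_i\chi_{I_i}$, and play the spectral gap of Lemma \ref{Gap Lem 2} off against the nondegeneracy coming from the normalisation. Your version is slightly more careful than the paper's write-up: you note that for $p'\le 2$ the quantity to pass to the limit via \eqref{Sec 3 Compactness thm-2 equ-a} is $\e_i^{-2}F_{p'}(\psi_i)$ rather than $F_{p'}(\hat\psi_i)$, and you check $\hat\psi\in\tilde{\mathcal H}^\perp$ explicitly; the one step you leave implicit, identifying the weak $L^2(\B^n,\tilde{\mathbf 1}^{q'-2}\ud\xi)$ limit of $\hat\psi_i\chi_{I_i}$ with $\hat\psi$, is handled in the paper by testing against $C_c^\infty(\B_r)$ for $r<1$.
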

		
		\begin{pf}

			We argue by contradiction. Assume there exist $\lambda_0,\gamma_0>0$ and a sequence $\{\psi_i\}\subset L^{q'}(\B^n)\cap \mathcal{H}^{\perp}$ with $\|\psi_i\|_{L^{q'}(\B^n)}\to 0$ such that
			\begin{align*}
				\int_{\B^n}\psi_i^2\tilde{\mathbf{1}}^{q'-2}+&(q'-2)\int_{\B^n}	\tilde{\zeta}(\psi_i)(\tilde{\mathbf{1}}-|\tilde{\mathbf{1}}+\psi_i|)^2+\gamma_0\int_{\B^n}\min\{|\psi_i|^{q'},|\psi_i|^2\tilde{\mathbf{1}}^{q'-2}\}\nonumber\\
				<& (q'-1)(\Tilde{K}_{n,\alpha,\beta}-\lambda_0)\Tilde{d} ^{p'-2}_{\alpha,\beta} 	F_{p^{'}}(\hat{\psi}_i).
			\end{align*}
			
			For any $\e>0$, there exists $\delta(\e)\in (0,1)$ such that
			on the set $I_{i}=\{|\psi_i|\leq \delta\times\tilde{\mathbf{1}} \}$
			we have $\tilde{\zeta}(\psi_i)\leq \left(1+\frac{q'-1}{2-q'}\e\right)\tilde{\mathbf{1}}^{q'-2}$; 
			hence
			\begin{align*}
				\psi_i^2\tilde{\mathbf{1}}^{q'-2}+(q'-2)	\tilde{\zeta}(\psi_i)(\tilde{\mathbf{1}}-|\psi_i+\tilde{\mathbf{1}}|)^2\geq (q'-1)(1-\e)\psi_i^2\tilde{\mathbf{1}}^{q'-2}.
			\end{align*}
			Define
			\begin{align*}
				\e_i=\left(\int_{\B^n}(\tilde{\mathbf{1}}+|\psi_i|)^{q'-2}\psi_i^2\right)^{1/2} \qquad\mathrm{and}\qquad \hat{\psi}_i=\e_i^{-1}\psi_i.
			\end{align*}
			It is straightforward that $\e^2_i\lesssim  \|\psi_i\|^{q'}_{L^{q'}(\B^n)}\to 0$ as $i\to+\infty$. Using Remark \ref{Rem 1}, we obtain
			\begin{align}\label{Sec 2 Gap Lem 2 equ-e}
				(1-\e)\int_{I_i}\hat{\psi}_i^2\tilde{\mathbf{1}}^{q'-2}+\frac{\gamma_0}{q'-1}( \e_i)^{q'-2}\delta^{2-q'}\int_{I_i^{c}} |\hat{\psi}_i|^{q'}
				<(\Tilde{K}_{n,\alpha,\beta}-\lambda_0)\tilde{d}^{p'-2}_{\alpha,\beta}	F_{p^{'}}(\hat{\psi}_i).
			\end{align}
			By definition,
			\begin{align*}
				\int_{\B^n}\frac{\hat{\psi}_i^2}{(\tilde{\mathbf{1}}+\e_i\hat{\psi}_i)^{2-q'}}=1.
			\end{align*}
			Using Lemma \ref{Sec 3 Compactness thm-2}, we conclude that $\hat{\psi}_i \rightharpoonup\hat{\psi}$ in $L^{q'}(\B^n)$ with $S_{\alpha,\beta}(\hat{\psi})\in L^2(\S^{n-1})$, and for any constant $C>0$,
			\begin{align*}
				\lim_{i\to+\infty}	F_{p^{'}}(\hat{\psi}_i)
				=\int_{\S^{n-1}}S^2_{\alpha,\beta}(\hat{\psi}).
			\end{align*}
			Therefore,
			\begin{align}\label{Sec 2 Gap Lem 2 equ-f}
				\limsup_{i\to+\infty}(1-\e)\int_{I_i}\hat{\psi}_i^2\tilde{\mathbf{1}}^{q'-2}+&\frac{\gamma_0}{q'-1}( \e_i)^{q'-2}\delta^{2-q'}\int_{I_i^{c}} |\hat{\psi}_i|^{q'}
				\leq (\Tilde{K}_{n,\alpha,\beta}-\lambda_0)\tilde{d}^{p'-2}_{\alpha,\beta}\int_{\S^{n-1}}S^2_{\alpha,\beta}(\hat{\psi}).
			\end{align}
			Moreover,
			\begin{align*}
				1=\int_{\B^n}(\tilde{\mathbf{1}}+|\psi_i|)^{q'-2}\hat{\psi}^2_i
				\lesssim \int_{I_i}|\hat{\psi}_i|^2\tilde{\mathbf{1}}^{q'-2}+\e_i^{q'-2}\int_{I_i^{c}}|\hat{\psi}_i|^{q'},
			\end{align*}
			which implies $\int_{\S^{n-1}}S^2_{\alpha,\beta}(\hat{\psi})\geq C_0>0$; hence $\hat{\psi}\not\equiv 0$. Using \eqref{Sec 2 Gap Lem 2 equ-f} and the definition of $I_i^{c}$, we can get $\e_i^{q'-2}\int_{I_i^{c}}\tilde{\mathbf{1}}^{q'}\leq C$, this leads to $|I_i^{c}|\to 0$. Using $\hat{\psi}_i\chi_{I_i}=\hat{\psi}_i-\hat{\psi}_i\chi_{I^{c}_i}$, we can get $\hat{\psi}_i\chi_{I_i} \rightharpoonup \hat{\psi}$ in $L^{q'}(\B^n)$.
			
			Clearly, $\hat{\psi}_i\chi_{I_i}\tilde{\mathbf{1}}^{q'/2-1}$ is uniformly bounded in $L^2(\B^n)$, we can assume that $\hat{\psi}_i\chi_{I_i}\tilde{\mathbf{1}}^{q'/2-1}\rightharpoonup \hat{\psi}'\tilde{\mathbf{1}}^{q'/2-1}$   We claim that $\hat{\psi}=\hat{\psi}'$ in $L^2(\B^n)$. For any $r<1$, $f\in C_{c}^{\infty}(\B_r)$, $f\tilde{\mathbf{1}}^{q'/2-1}\in C^{\infty}_{c}(\B_r)$, then
			\begin{align*}
				\int_{\B^n}f\hat{\psi}'\tilde{\mathbf{1}}^{q'/2-1}=\lim_{i\to+\infty}\int_{I_i}f\hat{\psi}_i\tilde{\mathbf{1}}^{q'/2-1}=\int_{\B^n}f\hat{\psi}\tilde{\mathbf{1}}^{q'/2-1}
			\end{align*}
			Therefore, we can see $\hat{\psi}=\hat{\psi}'$ in $\B_r$	for any $r\in (0,1)$, it means that $\hat{\psi}=\hat{\psi}'$ in $\B^n$ almost everwhere.
			Using the weak lower continuity of $L^2$ norm for \eqref{Sec 2 Gap Lem 2 equ-e}, we obtain
			\begin{align*}
				(1-\e)\int_{\B^n}\hat{\psi}^2\tilde{\mathbf{1}}^{q'-2}
				\leq(\Tilde{K}_{n,\alpha,\beta}-\lambda_0)\tilde{d}^{p'-2}_{\alpha,\beta}\int_{\S^{n-1}}S^2_{\alpha,\beta}(\hat{\psi})
			\end{align*}
			for any $\e>0$, which again yields a contradiction.
		\end{pf}

		\subsection{Proof of Theorem \ref{Thm 2}}\label{Sec 3.3} 	In this subsection we prove Theorem \ref{Thm 2}. Thanks to our previous preparations, the remaining argument follows the same general strategy as  the one used for the earlier case. The following orthogonality lemma can be obtained by a slight modification of \cite[Proposition~3.9]{Frank&Peteranderl&Read}, replacing $1$ with $\tilde{\mathbf{1}}$.
		\begin{lem}[\protect{\cite[Proposition 3.9]{Frank&Peteranderl&Read}}]\label{Dule Orothgonal Lem} For $1<q'< 2$, let $\{v_i\}_{i=1}^{+\infty}$ be a sequence satisfying 
			\begin{align*}
				\|v_i\|_{L^{q'}(\B^{n})}\to 1\qquad\mathrm{and}\qquad \inf_{\Psi}\||\B^{n}|^{\frac{1}{q'}}(v_i)_{\Psi}-\tilde{\mathbf{1}}\|_{L^{q'}(\B^{n})}\to 0.
			\end{align*}
			Then, there exists a sequence of conformal transformations $\Psi_i$, setting $r_i:=|\B^{n}|^{\frac{1}{q'}}(v_i)_{\Psi_i}-\tilde{\mathbf{1}}$,  such that $\|r_i\|_{L^{q'}(\B^{n})}\to 0$ as $i\to+\infty$ and 
			\begin{align*}
				\int_{\B^{n}}\xi_lr_i=0 \qquad\mathrm{for}\qquad l=1,2,\cdots,n.
			\end{align*}
		\end{lem}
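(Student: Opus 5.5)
We will follow the scheme of \cite[Proposition~3.9]{Frank&Peteranderl&Read}, with the constant function replaced by $\tilde{\mathbf{1}}$. The orthogonality condition will be enforced by a minimisation over the conformal group. Write the conformal transformations of $\B^n$ modulo rotations as $\Psi_\zeta$, $\zeta\in\B^n$, where $\Psi_\zeta$ is the Möbius map sending $0$ to $\zeta$. Fix a bounded, radially increasing weight $\omega(|\xi|)$ with $\omega'>0$. For each $i$ we consider the functional
\[
G_i(\zeta)=\int_{\B^n}\omega(|\xi|)\,(v_i)_{\Psi_\zeta}(\xi)\,\ud\xi ,
\]
or, equivalently after a change of variables, $\int_{\B^n}\omega(|\Psi_\zeta^{-1}(\xi)|)\,v_i(\xi)\,(\det \ud\Psi_\zeta^{-1})^{1-\frac1{q'}}\ud\xi$. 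Because of the Jacobian factor, this is an integral of the fixed function $v_i$ against a weight that depends smoothly on $\zeta$. The Euler--Lagrange equation of this functional at an interior critical point should give exactly $\int_{\B^n}\xi_l r_i=0$, up to the choice of $\omega$. The natural choice is to pick the weight so that the first variation of $G_i$ in the direction of the conformal vector fields (the infinitesimal generators $\xi\mapsto \frac{1+|\xi|^2}{2}e_l-(\xi\cdot e_l)\xi$ on the ball) produces the pairing against $\xi_l$, after using that the $q'$-weighted Lie derivative of a radial function along these fields is a multiple of $\xi_l$ times a radial function. An adjustment of $\omega$ may be needed to make this multiple exactly $\xi_l$. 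Alternatively, and more robustly, we replace $\omega$ by a general degree-$0$-in-$l$ choice and instead define $\Psi_i$ by solving the finite-dimensional system $\Phi_i(\zeta):=\bigl(\int_{\B^n}\xi_l\,(|\B^n|^{1/q'}(v_i)_{\Psi_\zeta}-\tilde{\mathbf{1}})\bigr)_{l=1}^n=0$ directly by a degree argument. This second route is the one we commit to.

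The steps are as follows. First, by hypothesis choose $\Psi_i^0$ with $\||\B^n|^{1/q'}(v_i)_{\Psi_i^0}-\tilde{\mathbf{1}}\|_{L^{q'}}\to0$. Replacing $v_i$ by $(v_i)_{\Psi_i^0}$, we may assume $v_i\to|\B^n|^{-1/q'}\tilde{\mathbf{1}}$ in $L^{q'}$. Second, consider the limit map $\Phi_\infty(\zeta)=\int_{\B^n}\xi\,(\tilde{\mathbf{1}}_{\Psi_\zeta}-\tilde{\mathbf{1}})$. Since $\tilde{\mathbf{1}}_{\Psi_\zeta}$ concentrates near $\zeta/|\zeta|$ as $|\zeta|\to1$ (the weight $\tilde{\mathbf{1}}^{q'}$ is integrable and the transformation pushes its mass to the boundary point), we get $\Phi_\infty(\zeta)\cdot\zeta>0$ on a sphere $|\zeta|=\rho$ for $\rho$ close to $1$, and $\Phi_\infty(0)=0$. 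Hence $\deg(\Phi_\infty,B_\rho,0)=1$. Third, the map $\zeta\mapsto (v)_{\Psi_\zeta}$ is continuous from $\overline{B_\rho}$ into $L^{q'}(\B^n)$, uniformly for $v$ in bounded sets, because the Jacobians are bounded above and below on $\overline{B_\rho}$ and $v\mapsto v_\Psi$ is an isometry. It follows that $\Phi_i\to\Phi_\infty$ uniformly on $\overline{B_\rho}$. By homotopy invariance of the degree, $\Phi_i$ has a zero $\zeta_i\in B_\rho$. Fourth, to get $r_i\to0$ we must also show $\zeta_i\to0$. For this we use that $\Phi_\infty$ has $0$ as its only zero in a neighbourhood (its differential at $0$ is a nonzero multiple of the identity, by radiality of $\tilde{\mathbf{1}}$ and a first-order computation), so we may run the degree argument on smaller balls $B_{\rho'}$ and obtain $\zeta_i\to0$. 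Then $\|r_i\|_{L^{q'}}\le\||\B^n|^{1/q'}(v_i)_{\Psi_{\zeta_i}}-\tilde{\mathbf{1}}_{\Psi_{\zeta_i}}\|+\|\tilde{\mathbf{1}}_{\Psi_{\zeta_i}}-\tilde{\mathbf{1}}\|\to0$.

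The main obstacle is the nondegeneracy: we need $D\Phi_\infty(0)\neq0$ and $\Phi_\infty\cdot\zeta>0$ near $|\zeta|=1$, with $\tilde{\mathbf{1}}$ nonconstant and possibly singular at the boundary. This singularity occurs when $\alpha+\beta<1$, where $d_{\alpha,\beta}^{q-1}$ blows up. For the derivative we would first compute $\frac{d}{dt}\tilde{\mathbf{1}}_{\Psi_{te_l}}|_{t=0}=X_l\tilde{\mathbf{1}}+\frac{1}{q'}(\operatorname{div}X_l)\tilde{\mathbf{1}}$, where $X_l$ is the conformal field. This reduces $D\Phi_\infty(0)$ to a one-dimensional radial integral. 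After integrating by parts this integral becomes $c\int_{\B^n}\tilde{\mathbf{1}}\,(\text{positive radial})$, with boundary terms that vanish because $\tilde{\mathbf{1}}\in L^{q'}$ and the field $X_l$ is tangent to $\S^{n-1}$ and vanishes to first order in the normal direction. If this sign computation fails, we fall back on the boundary-concentration argument alone, which already gives degree one on $B_\rho$ for every $\rho$ close to $1$. We then deduce $\zeta_i\to0$ from the strict positivity of $\Phi_\infty\cdot\zeta$ on all of $B_\rho\setminus\{0\}$, proved by a rearrangement/monotonicity argument using that $\tilde{\mathbf{1}}$ is radial.
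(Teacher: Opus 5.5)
Your argument is correct in substance, but it takes a different route from the one the paper relies on. The paper's only justification is to modify \cite[Proposition~3.9]{Frank&Peteranderl&Read} with $1$ replaced by $\tilde{\mathbf{1}}$, i.e.\ to extremize $F_v(\zeta)=\int_{\B^n}v_{\Psi_\zeta}$. Since $q'>1$, $F_v(\zeta)\to0$ as $|\zeta|\to1$, so an interior extremal point exists. A direct computation shows that $\nabla F_v(\zeta)$ is a nonzero scalar multiple (namely $\pm\frac{2n}{q(1-|\zeta|^2)}$, depending on the convention for $\Psi_\zeta$) of $\int_{\B^n}\xi\,v_{\Psi_\zeta}\,\ud\xi$. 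Since $\int_{\B^n}\xi_l\tilde{\mathbf{1}}=0$, the critical points of $F_{v_i}$ are therefore exactly the zeros of your $\Phi_i$.

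So both approaches look for the same points. They differ only in how a zero near the identity is secured, and that is what gives $r_i\to0$. In the variational route one must know that the extremal point of $F_{v_i}$ stays near $\zeta=0$. For the constant function this is the equality case of H\"older's inequality. For the non-constant $\tilde{\mathbf{1}}$, however, the H\"older bound $F_{\tilde{\mathbf{1}}}\le|\B^n|$ is strict for every $\zeta$ and does not locate the maximizer. One then needs, in addition, that $\zeta=0$ is a nondegenerate critical point of $F_{\tilde{\mathbf{1}}}$, followed by extremization over a small ball. Your degree argument uses only this local information and no compactness in $\zeta$. It thus makes explicit the one point that the phrase ``replacing $1$ by $\tilde{\mathbf{1}}$'' leaves unchecked.

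That nondegeneracy, which you leave conditional, does hold, and it is easier to see without integrating by parts. After the substitution $\eta=\Psi_\zeta(\xi)$,
\[
\Phi_\infty(\zeta)=\int_{\B^n}\Psi_\zeta^{-1}(\eta)\,\tilde{\mathbf{1}}(\eta)\,\bigl(\det\ud\Psi_\zeta^{-1}(\eta)\bigr)^{1/q}\ud\eta .
\]
For $|\zeta|\le\frac12$, the $\zeta$-derivatives of this integrand are bounded by a constant times $\tilde{\mathbf{1}}\in L^1(\B^n)$. Hence $\Phi_\infty$ (and each $\Phi_i$) is $C^1$ near $0$, and no boundary terms arise. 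In your convention $\Psi_\zeta(0)=\zeta$ (the paper's $\Psi_{-\zeta}$) one has $\Psi_{te}^{-1}(\eta)=\eta-tX_e(\eta)+O(t^2)$ and $\det\ud\Psi_{te}^{-1}(\eta)=1+2nt\,e\cdot\eta+O(t^2)$, where $X_e(\eta)=(1+|\eta|^2)e-2(e\cdot\eta)\eta$. Radial symmetry and $\frac2q=\frac{n-\alpha-2\beta}{n}$ then give $D\Phi_\infty(0)=-c\,I$ with
\[
c=\int_{\B^n}\Bigl(1-\frac{2-\alpha-2\beta}{n}\,|\eta|^2\Bigr)\tilde{\mathbf{1}}(\eta)\,\ud\eta>0 .
\]
This is positive because $\alpha+2\beta=(\alpha+\beta)+\beta>0$ by \eqref{Index Condi} and $\tilde{\mathbf{1}}>0$; for $\alpha=0$, $\beta=1$ it gives $c=|\B^n|$. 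So $0$ is an isolated zero of $\Phi_\infty$ with local degree $(-1)^n$.

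By the isometry property, $\sup_{\zeta}|\Phi_i(\zeta)-\Phi_\infty(\zeta)|\lesssim\||\B^n|^{1/q'}v_i-\tilde{\mathbf{1}}\|_{L^{q'}(\B^n)}\to0$. Hence for each $\epsilon>0$ the map $\Phi_i$ has a zero in $B_\epsilon$ once $i$ is large. A diagonal choice gives zeros $\zeta_i\to0$, and your closing triangle inequality finishes the proof.

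You should drop Step 2 and the ``rearrangement'' fallback. Step 2 is not needed once the local degree is available, and as written it is inaccurate. Since $\tilde{\mathbf{1}}_{\Psi_\zeta}\rightharpoonup0$ in $L^{q'}(\B^n)$ as $|\zeta|\to1$, we have $\Phi_\infty(\zeta)\to0$. What concentrates is $|\tilde{\mathbf{1}}_{\Psi_\zeta}|^{q'}$, not the linear pairing, so fixing the sign of $\Phi_\infty\cdot\zeta$ on $\partial B_\rho$ would require a quantitative comparison. Moreover, for the standard $\Psi_\zeta$ with $\Psi_\zeta(0)=\zeta$ the concentration occurs at $-\zeta/|\zeta|$, so the sign you predict is reversed, consistent with $D\Phi_\infty(0)=-c\,I$. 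The fallback is unsupported as stated, but with the computation above it is never invoked.
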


		\begin{pro}\label{Dule Loc Bound Prop}
			For any sequence $\{v_i\}\subset L^{q'}(\B^{n})$ satisfying $\|v_i\|_{L^{q'}(\B^{n})}=1$ and
			\[
			\inf_{\Psi}\||\B^{n}|^{\frac{1}{q'}}(v_i)_{\Psi}-\tilde{\mathbf{1}}\|_{L^{q'}(\B^{n})}\to 0,
			\]
			there exists a constant $C_n>0$ such that
			\begin{align*}
				\liminf_{i\to+\infty}\frac{c^{q'}_{\alpha,\beta}-\|S_{\alpha,\beta}(v_i)\|^{q'}_{L^{p'}(\S^{n-1})}}{\inf_{\Psi}\| |\B^{n}|^{\frac{1}{q'}}(v_i)_{\Psi}-\tilde{\mathbf{1}}\|^{2}_{L^{q'}(\B^{n})}}\geq C_n.
			\end{align*}
		\end{pro}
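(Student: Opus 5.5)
We will mirror Case~2 of Proposition~\ref{Loc Bound Prop}, with the roles of sphere and ball interchanged and $1$ replaced by $\tilde{\mathbf{1}}$. First, by Lemma~\ref{Dule Orothgonal Lem} we choose conformal maps $\Psi_i$ so that $r_i:=|\B^n|^{\frac1{q'}}(v_i)_{\Psi_i}-\tilde{\mathbf{1}}$ satisfies $\|r_i\|_{L^{q'}(\B^n)}\to0$ and $\int_{\B^n}\xi_l r_i=0$ for $l=1,\dots,n$. Next we rescale to enforce the remaining orthogonality. Set $\alpha_i$ via $\int_{\B^n}(|\B^n|^{\frac1{q'}}(v_i)_{\Psi_i}-\alpha_i\tilde{\mathbf{1}})\tilde{\mathbf{1}}^{q'-1}=0$; since $\int\tilde{\mathbf{1}}^{q'}=|\B^n|$, we have $|\alpha_i-1|\lesssim\|r_i\|_{L^{q'}}$. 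Arguing as in \eqref{Prop vital equ-a}, $\tilde r_i:=\alpha_i^{-1}|\B^n|^{\frac1{q'}}(v_i)_{\Psi_i}-\tilde{\mathbf{1}}$ is comparable in $L^{q'}$ to $r_i$. Because $\tilde{\mathbf{1}}$ is radial, $\int\xi_l\tilde{\mathbf{1}}=0$, so $\tilde r_i\in\tilde{\mathcal H}^\perp$. By conformal invariance of $S_{\alpha,\beta}$ and of the norms, the deficit equals, up to a factor $\alpha_i^{q'}|\B^n|^{-1}$ and positive constants,
\[
c_{\alpha,\beta}^{q'}\|\tilde{\mathbf{1}}+\tilde r_i\|^{q'}_{L^{q'}(\B^n)}-\|\tilde d_{\alpha,\beta}+S_{\alpha,\beta}(\tilde r_i)\|^{q'}_{L^{p'}(\S^{n-1})},
\]
where $c_{\alpha,\beta}^{q'}|\B^n|=\|\tilde d_{\alpha,\beta}\|_{L^{p'}}^{q'}$.

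We now expand both terms. For the ball term we apply the $1<r<2$ case of Lemma~\ref{Inequ lem} pointwise with $a=\tilde r_i/\tilde{\mathbf{1}}$, $r=q'$, and multiply by $\tilde{\mathbf{1}}^{q'}$. This yields
\[
\|\tilde{\mathbf{1}}+\tilde r_i\|^{q'}\ge|\B^n|+q'\!\int\tilde{\mathbf{1}}^{q'-1}\tilde r_i+\tfrac{q'(1-\kappa)}2\!\int\bigl(\tilde r_i^2\tilde{\mathbf{1}}^{q'-2}+(q'-2)\tilde\zeta(\tilde r_i)(\tilde{\mathbf{1}}-|\tilde{\mathbf{1}}+\tilde r_i|)^2\bigr)+c_\kappa\!\int\min\{|\tilde r_i|^{q'},\tilde r_i^2\tilde{\mathbf{1}}^{q'-2}\},
\]
and the linear term vanishes. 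For the sphere term we use Lemma~\ref{Inequ lem-2} with $a=\tilde d_{\alpha,\beta}$ constant and $b=S_{\alpha,\beta}(\tilde r_i)$. When $p'\ge2$ we take the first bullet, and the error $C_\kappa\|S(\tilde r_i)\|_{L^{p'}}^{p'}\lesssim\|\tilde r_i\|_{L^{q'}}^{p'}=o(\|\tilde r_i\|^2_{L^{q'}})$. When $p'<2$ we take the second bullet, whose remainder is controlled by exactly the functional $F_{p'}$ of Proposition~\ref{Limit Prop-2} with a suitable $C$. In both cases the linear term $\int S_{\alpha,\beta}(\tilde r_i)$ vanishes by \eqref{Integral Identity}. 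Raising to the power $q'/p'$ via $(1+x)^{q'/p'}\le1+\frac{q'}{p'}x$ is legitimate because $q'<p'$. This leaves a quadratic sphere term of size $\frac{q'(p'-1)}{2}\tilde d^{p'-2}_{\alpha,\beta}F_{p'}(\tilde r_i)$, up to the normalising constants and $O(\kappa)$ errors.

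Finally we compare. We apply Proposition~\ref{Limit Prop-2} to $\tilde r_i$, with the constant $\tilde K_{n,\alpha,\beta}$ being the reciprocal of the supremum in Lemma~\ref{Gap Lem 2}. The strict inequality \eqref{Dual gap inequ}, together with the normalisation $c^{q'}_{\alpha,\beta}|\B^n|=\tilde d^{q'}_{\alpha,\beta}|\S^{n-1}|^{q'/p'}$, makes the coefficient in front of the ball quadratic form strictly positive after choosing $\lambda,\kappa$ small. We then absorb $\gamma\int\min\{\cdot\}$ into $c_\kappa\int\min\{\cdot\}$ by taking $\gamma<c_\kappa/2$. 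This gives a lower bound $\gtrsim\int\min\{|\tilde r_i|^{q'},\tilde r_i^2\tilde{\mathbf{1}}^{q'-2}\}$, and we convert it to $\|\tilde r_i\|^2_{L^{q'}}$ by Hölder as in Case~2 of Proposition~\ref{Loc Bound Prop}. The weight $\tilde{\mathbf{1}}^{q'-2}$ must be handled here: on $\{|\tilde r_i|\le\tilde{\mathbf{1}}\}$ we have $\int|\tilde r_i|^{q'}\le(\int\tilde r_i^2\tilde{\mathbf{1}}^{q'-2})^{q'/2}(\int\tilde{\mathbf{1}}^{q'})^{1-q'/2}$. The main obstacle is the bookkeeping in the $p'\le2$ regime, namely checking that the Lemma~\ref{Inequ lem-2} remainder is genuinely dominated by $F_{p'}$ uniformly, and that the constants in \eqref{Dual gap inequ} match the second-variation normalisation exactly so the gap yields a positive coefficient. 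I would verify this first by testing on $\psi$ with $S_{\alpha,\beta}(\psi)$ in the degree-one spherical harmonics, where \eqref{Integral Identity-2} should be sharp.
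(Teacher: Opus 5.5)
Your proposal is correct and follows essentially the same route as the paper's proof. The shared steps are: the orthogonality and rescaling via $\alpha_i$; Lemma~\ref{Inequ lem} on the ball with $a=\tilde r_i/\tilde{\mathbf{1}}$; Lemma~\ref{Inequ lem-2} on the sphere, split according to $p'$; cancellation of the linear terms via \eqref{Integral Identity}; and Proposition~\ref{Limit Prop-2} combined with the gap \eqref{Dual gap inequ}. Your split of the final H\"older step at $|\tilde r_i|\le\tilde{\mathbf{1}}$, rather than the paper's $|\tilde r_i|\le 1$, is in fact the correct one.

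The only slip is at the endpoint $p'=2$ ($\alpha=1$), where $\|S_{\alpha,\beta}(\tilde r_i)\|^{p'}_{L^{p'}}$ is not $o(\|\tilde r_i\|^2_{L^{q'}})$. It is harmless: there $(a+b)^2=a^2+2ab+b^2$ is exact, so the remainder term is absent. Alternatively, put $p'=2$ in the second case, as the paper does.
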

		
		\begin{pf}
			Clearly, $1<q'<2$, while $p'$ may be larger than $2$, so we divide the proof into two cases.

			By Lemma \ref{Dule Orothgonal Lem}, there exists a sequence $\{\Psi_i\}$ such that, setting
			\[
			r_i:=|\B^{n}|^{\frac{1}{q'}}(v_i)_{\Psi_i}-\tilde{\mathbf{1}},
			\]
			we have $\|r_i\|_{L^{q'}(\B^{n})}\to 0$ as $i\to+\infty$ and
			\begin{align*}
				\int_{\B^{n}}\xi_lr_i=0 \qquad\mathrm{for}\qquad l=1,2,\cdots,n.
			\end{align*}
			Let $\alpha_i=|\B^{n}|^{\frac{1}{q'}-1}\int_{\B^{n}}(v_i)_{\Psi_i}\tilde{\mathbf{1}}^{q'-1}$. Then, using the H\"older inequality, we can get
			\begin{align*}
				|\B^{n}|^{\frac{1}{q'}}|\alpha_i-1|
				\leq \||\B^{n}|^{\frac{1}{q'}}(v_i)_{\Psi_i}-\tilde{\mathbf{1}}\|_{L^{q'}(\B^{n})},
			\end{align*}
			and 
			\begin{align*}
				|\B^{n}|^{\frac{1}{q'}}|\alpha_i-1|
				=|\B^{n}|^{\frac{1}{q'}}\big|\alpha_i-\|(v_i)_{\Psi_i}\|_{L^{q'}(\B^{n})}\big|
				\leq \||\B^{n}|^{\frac{1}{q'}}(v_i)_{\Psi_i}-\alpha_i\tilde{\mathbf{1}}\|_{L^{q'}(\B^{n})}.
			\end{align*}
			Consequently,
			\begin{align}\label{Prop vital equ-a-1}
				\frac{1}{2}\||\B^{n}|^{\frac{1}{q'}}(v_i)_{\Psi_i}-\tilde{\mathbf{1}}\|_{L^{q'}(\B^{n})}
				\leq\||\B^{n}|^{\frac{1}{q'}}(v_i)_{\Psi_i}-\alpha_i\tilde{\mathbf{1}}\|_{L^{q'}(\B^{n})}
				\leq 2 \||\B^{n}|^{\frac{1}{q'}}(v_i)_{\Psi_i}-\tilde{\mathbf{1}}\|_{L^{q'}(\B^{n})}.
			\end{align}
			Define the new normalization function
			\[
			\Tilde{r}_i=\frac{|\B^{n}|^{\frac{1}{q'}}}{\alpha_i}(v_i)_{\Psi_i}-\tilde{\mathbf{1}},
			\]
			noticing that $\tilde{\mathbf{1}}$ is a radial function, then $\int_{\B^n}\xi_l\tilde{\mathbf{1}}=0$ for $l=1,2,\cdots,n$, which means that  $\Tilde{r}_i\in \tilde{\mathcal{H}}^{\perp}$. And, 
			\begin{align*}
				|\B^{n}|\left[c^{q'}_{\alpha,\beta}-\|S_{\alpha,\beta}(v_i)\|^{q'}_{L^{p'}(\S^{n-1})}\right]
				=\alpha_i^{q'}\left[\|\tilde{\mathbf{1}}+\Tilde{r}_i\|^{q'}_{L^{q'}(\B^{n})}c^{q'}_{\alpha,\beta}-\|S_{\alpha,\beta}(\tilde{\mathbf{1}}+\Tilde{r}_i)\|^{q'}_{L^{p'}(\S^{n-1})}\right].
			\end{align*}
			Using the second inequality \eqref{Fundamental Ine 2} in Lemma \ref{Inequ lem}, we obtain
			\begin{align*}
				\|\tilde{\mathbf{1}}+\Tilde{r}_i\|^{q'}_{L^{q'}(\B^{n})}
				\geq& |\B^{n}|+q'\int_{\B^{n}}\tilde{\mathbf{1}}^{q'-1}\Tilde{r}_i+\frac{(1-\kappa)q'}{2}\int_{\B^{n}}\big(\tilde{\mathbf{1}}^{q'-2}\Tilde{r}_i^2+(q'-2)\tilde{\zeta}(\Tilde{r}_i)(\tilde{\mathbf{1}}-|\tilde{\mathbf{1}}+\Tilde{r}_i|)^2\big)\\
				&+c_{\kappa}\int_{\B^{n}}\min\{|\Tilde{r}_i|^{q'},|\Tilde{r}_i|^2\tilde{\mathbf{1}}^{q'-2}\}.
			\end{align*}
			
			\textbf{Case 1:} $\alpha>1$ ( i.e., $p'>2$). By the first inequality in Lemma \ref{Inequ lem-2}, we have
			\begin{align*}
				\|S_{\alpha,\beta}(\tilde{\mathbf{1}}+\Tilde{r}_i)\|^{p'}_{L^{p'}(\S^{n-1})}
				\leq& \Tilde{d}^{p'}_{\alpha,\beta}|\S^{n-1}|+p'\Tilde{d}^{p'-1}_{\alpha,\beta}\int_{\S^{n-1}}S_{\alpha,\beta}(\Tilde{r}_i)\\
				&+\left(\frac{p'(p'-1)}{2}+\kappa\right)\Tilde{d}^{p'-2}_{\alpha,\beta}\int_{\S^{n-1}}S^2_{\alpha,\beta}(\Tilde{r}_i)
				+C_{\kappa}\|S_{\alpha,\beta}(\Tilde{r}_i)\|^{p'}_{L^{p'}(\S^{n-1})}.
			\end{align*}
			Taking $v=\tilde{\mathbf{1}}$ in Gluck's theorem (see Theorem B \ref{thm B}) gives 
			\begin{align*}
				c_{\alpha,\beta}|\B^n|^{\frac{1}{q'}}=\Tilde{d}_{\alpha,\beta}|\S^{n-1}|^{\frac{1}{p'}}.
			\end{align*}
			Using $(1+a)^{\frac{q'}{p'}}\leq 1+\frac{q'}{p'}a$ for $a>-1$, we obtain
			\begin{align*}
				&c^{-q'}_{\alpha,\beta}|\B^n|^{-1}\|S_{\alpha,\beta}(\tilde{\mathbf{1}}+\Tilde{r}_i)\|^{q'}_{L^{p'}(\S^{n-1})}\\
				\leq& 1+\frac{q'}{\Tilde{d}_{\alpha,\beta}|\S^{n-1}|}\int_{\S^{n-1}}S_{\alpha,\beta}(\Tilde{r}_i)
				+\left(\frac{q'(p'-1)}{2}+\frac{q'\kappa}{p'}\right)\frac{\int_{\S^{n-1}}S^2_{\alpha,\beta}(\Tilde{r}_i)}{\Tilde{d}^2_{\alpha,\beta}|\S^{n-1}|}
				+C_{\kappa}\|S_{\alpha,\beta}(\Tilde{r}_i)\|^{p'}_{L^{p'}(\S^{n-1})}.
			\end{align*}			
			Applying \cite[Lemma 2.1(ii)]{Figalli&Zhang}, Proposition \ref{Limit Prop-2} and the identity \eqref{Integral Identity}, and arguing as in the derivation of the corresponding expansion, we obtain
			\begin{align*}
				&\|\tilde{\mathbf{1}}+\Tilde{r}_i\|^{q'}_{L^{q'}(\B^{n})}c^{q'}_{\alpha,\beta}-\|S_{\alpha,\beta}(\tilde{\mathbf{1}}+\Tilde{r}_i)\|^{q'}_{L^{p'}(\S^{n-1})}\\
				\geq& \frac{q'c^{q'}_{\alpha,\beta}}{2}\left[1-\frac{p'-1}{q'-1}\frac{|\B^{n}|}{\int_{\S^{n-1}}\Tilde{d}^{p'}_{\alpha,\beta}}\frac{1}{\Tilde{K}_{n,\alpha,\beta}-\lambda}+O(\kappa)\right]
				\int_{\B^{n}}\big(\tilde{\mathbf{1}}^{q'-2}\Tilde{r}_i^2+(q'-2)\tilde{\zeta}(\Tilde{r}_i)(\tilde{\mathbf{1}}-|\tilde{\mathbf{1}}+\Tilde{r}_i|)^2\big)\\
				&+c_{\kappa}\int_{\B^{n}}\min\{|\Tilde{r}_i|^{q'},|\Tilde{r}_i|^2\tilde{\mathbf{1}}^{q'-2}\}
				+O(\kappa)\int_{\S^{n-1}}\Tilde{d}^{p'-2}_{\alpha,\beta}S^2_{\alpha,\beta}(\Tilde{r}_i)
				-C_{\kappa}\|S_{\alpha,\beta}(\Tilde{r}_i)\|^{p'}_{L^{p'}(\S^{n-1})}.
			\end{align*}
			Notice that 
			\[
			\|S_{\alpha,\beta}(\Tilde{r}_i)\|^{p'}_{L^{p'}(\S^{n-1})}\lesssim \|\Tilde{r}_i\|^{p'}_{L^{q'}(\B^{n})}
			=o\big(\|\Tilde{r}_i\|^2_{L^{q'}(\B^{n})}\big),
			\]
			and
			\begin{align*}
				\int_{\B^{n}}\min\{|\Tilde{r}_i|^{q'},|\Tilde{r}_i|^2\tilde{\mathbf{1}}^{q'-2}\}
				\geq& \int_{|\Tilde{r}_i|\leq 1}\Tilde{r}_i^2\tilde{\mathbf{1}}^{q'-2}+\int_{|\Tilde{r}_i|> 1}\Tilde{r}_i^{q'}\\
				\geq& \left(\int_{|\Tilde{r}_i|\leq 1}\Tilde{r}_i^{q'}\right)^{\frac{2}{q'}}\left(\int_{|\Tilde{r}_i|\leq 1}\tilde{\mathbf{1}}^{q'}\right)^{\frac{q'-2}{q'}}
				+\int_{|\Tilde{r}_i|> 1}\Tilde{r}_i^{q'}
				\geq C \|\Tilde{r}_i\|^2_{L^{q'}(\B^{n})}.
			\end{align*}
			Choosing $\lambda$ and $\kappa$ sufficiently small, we obtain
			\begin{align*}
				\|\tilde{\mathbf{1}}+\Tilde{r}_i\|^{q'}_{L^{q'}(\B^{n})}c^{q'}_{\alpha,\beta}-\|S_{\alpha,\beta}(\tilde{\mathbf{1}}+\Tilde{r}_i)\|^{q'}_{L^{p'}(\S^{n-1})}
				\geq C\|\Tilde{r}_i\|^2_{L^{q'}(\B^{n})}+O(\kappa)\int_{\S^{n-1}}S^2_{\alpha,\beta}(\Tilde{r}_i).
			\end{align*}
			Since $p'>2$, we have
			\[
			\int_{\S^{n-1}}S^2_{\alpha,\beta}(\Tilde{r}_i)\lesssim\|S_{\alpha,\beta}(\Tilde{r}_i)\|^2_{L^{p'}(\S^{n-1})}\lesssim\|\Tilde{r}_i\|^2_{L^{q'}(\B^{n})}.
			\]
			In addition, by \eqref{Prop vital equ-a-1} there exist universal constants $C_1,C_2>0$ such that
			\[
			C_1\|r_i\|^2_{L^{q'}(\B^{n})}\leq \|\Tilde{r}_i\|^2_{L^{q'}(\B^{n})}\leq C_2\|r_i\|^2_{L^{q'}(\B^{n})}.
			\]
			This yields the desired lower bound in Case~1.\\
			
			\textbf{Case 2:} $\alpha\leq 1$ (i.e., $1<p'\leq 2$). By the second inequality in Lemma \ref{Inequ lem-2}, we have
			\begin{align*}
				\|S_{\alpha,\beta}(\tilde{\mathbf{1}}+\Tilde{r}_i)\|^{p'}_{L^{p'}(\S^{n-1})}
				\leq& \Tilde{d}^{p'}_{\alpha,\beta}|\S^{n-1}|+p'\Tilde{d}^{p'-1}_{\alpha,\beta}\int_{\S^{n-1}}S_{\alpha,\beta}(\Tilde{r}_i)\\
				&+\left(\frac{p'(p'-1)}{2}+\kappa\right)\int_{\S^{n-1}}\frac{\big(|\Tilde{d}_{\alpha,\beta}|+C_{\kappa}|S_{\alpha,\beta}(\Tilde{r}_i)|\big)^{p'}}{\Tilde{d}^2_{\alpha,\beta}+S^2_{\alpha,\beta}(\Tilde{r}_i)}S^2_{\alpha,\beta}(\Tilde{r}_i).
			\end{align*}
			Using $(1+a)^{\frac{q'}{p'}}\leq 1+\frac{q'}{p'}a$ for $a>-1$, we obtain
			\begin{align*}
				&c^{-q'}_{\alpha,\beta}|\B^n|^{-1}\|S_{\alpha,\beta}(\tilde{\mathbf{1}}+\Tilde{r}_i)\|^{q'}_{L^{p'}(\S^{n-1})}\\
				\leq& 1+\frac{q'}{\Tilde{d}_{\alpha,\beta}|\S^{n-1}|}\int_{\S^{n-1}}S_{\alpha,\beta}(\Tilde{r}_i)
				+\left(\frac{q'(p'-1)}{2}+\frac{q'\kappa}{p'}\right)
				\times\\
				&\frac{1}{\Tilde{d}^2_{\alpha,\beta}|\S^{n-1}|}\int_{\S^{n-1}}\frac{(1+C_{\kappa}|S_{\alpha,\beta}(\Tilde{r}_i)S^{-1}_{\alpha,\beta}(\tilde{\mathbf{1}})|)^{p'}}{1 +S^2_{\alpha,\beta}(\Tilde{r}_i)S^{-2}_{\alpha,\beta}(\tilde{\mathbf{1}})}S^2_{\alpha,\beta}(\Tilde{r}_i).
			\end{align*}
			Similarly, applying  \cite[Lemma 2.1(i)]{Figalli&Zhang} and Proposition \ref{Limit Prop-2}, we obtain
			\begin{align*}
				&\|\tilde{\mathbf{1}}+\Tilde{r}_i\|^{q'}_{L^{q'}(\B^{n})}c^{q'}_{\alpha,\beta}-\|S_{\alpha,\beta}(\tilde{\mathbf{1}}+\Tilde{r}_i)\|^{q'}_{L^{p'}(\S^{n-1})}\\
				\geq& \frac{(1-\kappa)q'c^{q'}_{\alpha,\beta}}{2}\int_{\B^{n}}\big(\tilde{\mathbf{1}}^{q'-2}\Tilde{r}_i^2+(q'-2)\tilde{\zeta}(\Tilde{r}_i)(\tilde{\mathbf{1}}-|\tilde{\mathbf{1}}+\Tilde{r}_i|)^2\big)
				+c_{\kappa}\int_{\B^{n}}\min\{|\Tilde{r}_i|^{q'},|\Tilde{r}_i|^2\tilde{\mathbf{1}}^{q'-2}\}\\
				&-c^{q'}_{\alpha,\beta}|\B^n|\left(\frac{q'(p'-1)}{2}+\frac{q'\kappa}{p'}\right)\frac{1}{\Tilde{d}^2_{\alpha,\beta}|\S^{n-1}|}
				\int_{\S^{n-1}}\frac{(1+C_{\kappa}|S_{\alpha,\beta}(\Tilde{r}_i)S^{-1}_{\alpha,\beta}(\tilde{\mathbf{1}})|)^{p'}}{1 +S^2_{\alpha,\beta}(\Tilde{r}_i)S^{-2}_{\alpha,\beta}(\tilde{\mathbf{1}})}S^2_{\alpha,\beta}(\Tilde{r}_i)\\
				\geq&\frac{q'c^{q'}_{\alpha,\beta}}{2}\left(1-\frac{(p'-1)|\B^n|}{(q'-1)(\tilde{K}_{n,\alpha,\beta}-\lambda)\tilde{d}_{\alpha,\beta}^{p'}|\S^{n-1}|}+O(\kappa)\right)
				\int_{\B^{n}}\big(\tilde{\mathbf{1}}^{q'-2}\Tilde{r}_i^2+(q'-2)\tilde{\zeta}(\Tilde{r}_i)(\tilde{\mathbf{1}}-|\tilde{\mathbf{1}}+\Tilde{r}_i|)^2\big)\\
				&+\frac{c_{\kappa}}{2}\int_{\B^{n}}\min\{|\Tilde{r}_i|^{q'},|\Tilde{r}_i|^2\tilde{\mathbf{1}}^{q'-2}\}.
			\end{align*}
			Choosing $\lambda$ and $\kappa$ sufficiently small and using Lemma \ref{Gap Lem 2}, we obtain
			\begin{align*}
				\|\tilde{\mathbf{1}}+\Tilde{r}_i\|^{q'}_{L^{q'}(\B^{n})}c^{q'}_{\alpha,\beta}-\|S_{\alpha,\beta}(\tilde{\mathbf{1}}+\Tilde{r}_i)\|^{q'}_{L^{p'}(\S^{n-1})}
				\geq\frac{c_{\kappa}}{2}\int_{\B^{n}}\min\{|\Tilde{r}_i|^{q'},|\Tilde{r}_i|^2\tilde{\mathbf{1}}^{q'-2}\}.
			\end{align*}
			The remaining arguments are analogous to Case~1 and are omitted.
		\end{pf}
		
		\textbf{Proof of Theorem \ref{Thm 2}.}
		Assume, for contradiction, that a sequence  $\{v_i\}\subset L^{q'}(\B^n)$ satisfies
		\begin{align*}
			\frac{c^{q'}_{\alpha,\beta}-\|S_{\alpha,\beta}v_i\|^{q'}_{L^{p'}(\S^{n-1})}/\|v_i\|^{q'}_{L^{q'}(\B^n)}}{\inf_{\Psi,\lambda}\|\lambda|\B^n|^{\frac{1}{q'}}(v_i)_{\Psi}-\tilde{\mathbf{1}}\|^{2}_{L^{q'}(\B^n)} } \to 0
		\end{align*}
		as $i\to+\infty$. Without loss of generality, we  assume $\|v_i\|_{L^{q'}(\B^n)}=1$. Since
		\begin{align*}
			\inf_{\Psi,\lambda}\|\lambda|\B^n|^{\frac{1}{q'}}(v_i)_{\Psi}-\tilde{\mathbf{1}}\|^{q'}_{L^{q'}(\B^n)}
			\leq \|\tilde{\mathbf{1}}\|^{q'}_{L^{q'}(\B^n)}=|\B^n|,
		\end{align*}
		we have $\|S_{\alpha,\beta}v_i\|^{q'}_{L^{p'}(\S^{n-1})}\to c^{q'}_{\alpha,\beta}$ as $i\to+\infty$. By Proposition \ref{Prop 1}, we obtain
		\begin{align*}
			\inf_{\Psi,\lambda\in\{\pm 1\}}\|\lambda|\B^n|^{\frac{1}{q'}}(v_i)_{\Psi}-\tilde{\mathbf{1}}\|_{L^{q'}(\B^n)}\to 0.
		\end{align*}
		Replacing $v_i$ by $-v_i$ if necessary, we may assume
		\[
		\inf_{\Psi}\||\B^n|^{\frac{1}{q'}}(v_i)_{\Psi}-\tilde{\mathbf{1}}\|_{L^{q'}(\B^n)}\to 0.
		\]
		Therefore,
		\begin{align*}
			\inf_{\Psi,\lambda}\|\lambda|\B^n|^{\frac{1}{q'}}(v_i)_{\Psi}-\tilde{\mathbf{1}}\|^{2}_{L^{q'}(\B^n)}
			\leq \inf_{\Psi}\||\B^n|^{\frac{1}{q'}}(v_i)_{\Psi}-\tilde{\mathbf{1}}\|^{2}_{L^{q'}(\B^n)}.
		\end{align*}
		This contradicts Proposition \ref{Dule Loc Bound Prop}.
		
		\medspace

		\textbf{Optimality of the exponent of distance functional}:
		We now verify that the exponents appearing in the distance functionals of Theorems \ref{Thm 1} and \ref{Thm 2} are optimal.
		The argument follows closely the construction given in \cite[Section 4]{Frank&Peteranderl&Read}; hence we only point out the necessary modifications relative to \cite[Section 4]{Frank&Peteranderl&Read}.

		(1).  Optimality for Theorem~\ref{Thm 1}.
		We treat the two possible values of the exponent separately.
		
		\begin{itemize}
			\item \textbf{The quadratic power of $L^p(\S^{n-1})$.}  
			Choose
			\[
			u_{\e}:=\lambda_{\e}\bigl(1+\e\varphi\bigr),
			\]
			where \(0\not\equiv\varphi\in C^{\infty}(\mathbb{S}^{n-1})\subset L^{p}(\mathbb{S}^{n-1})\) satisfies
			\begin{align}\label{Appendix euq-e}
				\int_{\mathbb{S}^{n-1}}\varphi=0,\qquad
				\int_{\mathbb{S}^{n-1}}\varphi\,\eta_{i}=0\quad(i=1,\dots,n),
			\end{align}
			and \(\lambda_{\e}>0\) is taken so that \(\|u_{\e}\|_{L^{p}(\S^{n-1})}=1\). Then we obtain
			$
			\lambda_{\e}=|\S^{n-1}|^{-1/p}(1+O(\e^2)).
			$
			Firstly, using \eqref{Appendix euq-e} and the similar proof as Lemma \ref{Gap Inequ}, we have
			\begin{align*}
				c^p_{\alpha,\beta}-
				\|Q_{\alpha,\beta}(u_{\e})\|^p_{L^q(\B^{n})}=&\frac{p(p-1)}{2} \e^2c^p_{\alpha,\beta}\left(\fint_{\S^{n-1}}\varphi^2-  \frac{q-1}{p-1}\,
				\frac{\int_{\B^n}d^{q-2}_{\alpha,\beta}Q^2_{\alpha,\beta}(\varphi)}{\int_{\B^n}d^q_{\alpha,\beta}}\right)+o(\e^2) \\
				\simeq& \e^2\fint_{\S^{n-1}}\varphi^2+o(\e^2).
			\end{align*}
			On the one hand, we easily obtain
			\begin{align*}
				\inf_{\Psi}\||\S^{n-1}|^{\frac{1}{p}}(u_\e)_{\Psi}-1\|_{L^p(\S^{n-1})} \lesssim \e.
			\end{align*}
			On the other hand, by the definition of infimum, there exists $\Psi_{\xi_{\e}}$ such that
			\begin{align}\label{Appendix euq-d}
				\||\S^{n-1}|^{\frac{1}{p}}(u_\e)_{\Psi_{\xi_{\e}}}-1\|_{L^p(\S^{n-1})}\leq 2\inf_{\Psi}\||\S^{n-1}|^{\frac{1}{p}}(u_\e)_{\Psi}-1\|_{L^p(\S^{n-1})}\lesssim \e,
			\end{align}
			which yields
			\begin{align*}
				\|1_{\Psi_{\xi_{\e}}}-1\|_{L^p(\S^{n-1})} -\e\|(\varphi)_{\Psi_{\xi_{\e}}}\|_{L^p(\S^{n-1})}  \lesssim\|(1+\e\varphi)_{\Psi_{\xi_{\e}}}-1\|_{L^p(\S^{n-1})}\lesssim \e.
			\end{align*}
			Hence, we obtain $\xi_{\e}\to 0$. Moreover, using \eqref{Appendix equ-a}, we get
			\begin{align}\label{Appendix euq-b}
				1_{\Psi_{\xi_{\e}}}(\eta)-1=(\theta_{\e}\cdot\eta+o(1))|\xi_{\e}|,\qquad \theta_{\e}=\frac{2(n-1)}{p}|\xi_{\e}|^{-1}\xi_{\e}.
			\end{align}
			Then we obtain $|\xi_{\e}|\lesssim\e$. Therefore, using the inequality in \cite[Lemma 2.1 (i)]{Figalli&Zhang}, we estimate
			\begin{align}\label{Appendix euq-c}
				\|(1+\e\varphi)_{\Psi_{\xi_{\e}}}-1\|^p_{L^p(\S^{n-1})}\geq& \e^p\|(\varphi)_{\Psi_{\xi_{\e}}}\|^p_{L^p(\S^{n-1})}-p\e\int_{\S^{n-1}}|1_{\Psi_{\xi_{\e}}}-1|^{p-2}(1_{\Psi_{\xi_{\e}}}-1)(\varphi)_{\Psi_{\xi_{\e}}}\nonumber\\
				\geq& \e^p\|\varphi\|^p_{L^p(\S^{n-1})}-o(\e^p).
			\end{align}
			For the estimate of the second term, it follows from
			\begin{align*}
				\int_{\S^{n-1}}|1_{\Psi_{\xi_{\e}}}-1|^{p-2}(1_{\Psi_{\xi_{\e}}}-1)(\varphi)_{\Psi_{\xi_{\e}}}=p\e|\xi_{\e}|^{p-1}\int_{\S^{n-1}}|\theta_{\e}\cdot\eta|^{p-2}\theta_{\e}\cdot\eta (\varphi)_{\Psi_{\xi_{\e}}}+o(\e^{p}).
			\end{align*}
			Moreover, without loss of generality, we can choose $\e_i$ such that $\theta_{\e_i}\to \theta$, where $|\theta|=2(n-1)/p$. Taking $\varphi=\eta_1\eta_2$ for $i\not=j$, we obtain
			\begin{align*}
				\lim_{\e\to 0}\int_{\S^{n-1}}|\theta_{\e}\cdot\eta|^{p-2}\theta_{\e}\cdot\eta (\varphi)_{\Psi_{\xi_{\e}}}
				=\int_{\S^{n-1}}|\theta\cdot\eta|^{p-2}\theta\cdot\eta \eta_1\eta_2=0.
			\end{align*}
			For the second equality, we can find an orthonormal matrix $A$ such that 
			$\theta = |\theta| A e_1$, where $e_1=(1,0,\cdots,0)$. Hence we find
			\begin{align*}
				\int_{\S^{n-1}}|\theta\cdot\eta|^{p-2}\theta\cdot\eta \eta_1\eta_2=&\int_{\S^{n-1}}|\theta\cdot A\eta|^{p-2}\theta\cdot A\eta (A\eta)_1(A\eta)_2\\
				=&\int_{\S^{n-1}}|\eta_1|^{p-2}\eta_1 (A\eta)_1(A\eta)_2,    \end{align*}
			but for all $1\leq i,j\leq n$, there also holds
			\begin{align*}
				\int_{\S^{n-1}}|\eta_1|^{p-2}\eta_1 \eta_i\eta_j=0.    \end{align*}
			Using \eqref{Appendix euq-d} and \eqref{Appendix euq-c}, we conclude that
			\begin{align*}
				\e\lesssim \inf_{\Psi}\||\S^{n-1}|^{\frac{1}{p}}(u_\e)_{\Psi}-1\|_{L^p(\S^{n-1})}\lesssim \e.
			\end{align*}
			It yields
			\begin{align}\label{Sec 4 euq-1}
				\lim_{i\to +\infty}\frac{c^p_{\alpha,\beta}-\|Q_{\alpha,\beta}(u_{\e_i})\|^p_{L^q(\B^{n})}}{\inf_{\Psi}\||\S^{n-1}|^{\frac{1}{p}}(u_{\e_i})_{\Psi}-1\|^2_{L^{p}(\S^{n-1})}}<+\infty.
			\end{align}
			Hence the exponent $2$ of the $L^p(\S^{n-1})$ distance cannot be decreased.

			\item \textbf{The power $p$ of $L^p(\S^{n-1})$.}
			Take a point \(\xi_{*}\in\mathbb{S}^{n-1}\) and let \(\xi\to\xi_{*}\). Set
			\[
			u_{\e,\xi}:=\lambda_{\e,\xi}\bigl(1+\e\,\left(\det \ud\Psi_{\xi}\big|_{\partial\B^n}\right)^{1/p}\bigr),
			\]
			where \(\lambda_{\e,\xi}>0\) is chosen so that \(\|u_{\e,\xi}\|_{L^{p}}=1\).
			Following the argument in \cite{Frank&Peteranderl&Read}, we obtain
			\begin{align}\label{Sec 4 euq-3}
				\liminf_{\e\to 0^+,\ \xi\to\xi_{*}}
				\||\S^{n-1}|^{\frac{1}{p}}(u_{\e,\xi})_{\Psi}-1\|_{L^p(\S^{n-1})}=0
			\end{align}
			and
			\begin{align}\label{Sec 4 euq-4}
				\limsup_{\e\to 0^+,\ \xi\to\xi_{*}}
				\frac{c^p_{\alpha,\beta}-
					\|Q_{\alpha,\beta}(u_{\e,\xi})\|^p_{L^q(\B^{n})}}
				{\||\S^{n-1}|^{\frac{1}{p}}(u_{\e,\xi})_{\Psi}-1\|^p_{L^{p}(\S^{n-1})}}
				<+\infty.
			\end{align}
			Thus the exponent $p$ of the $L^p(\S^{n-1})$ distance cannot be decreased. 
			For the power $2$ of $L^2(\S^{n-1})$, one may also follow the argument on page~30 of \cite{Frank&Peteranderl&Read}.
		\end{itemize}

		(2)   Optimality for Theorem~\ref{Thm 2}.
		For the dual setting we proceed analogously. Define
		\[
		v_{\e}:=\lambda_{\e}\bigl(\tilde{\mathbf{1}}+\e\psi\bigr), \qquad \psi=\xi_1\xi_2,
		\]
		with \(\lambda_{\e}>0\) chosen so that \(\|v_{\e}\|_{L^{q'}(\B^n)}=1\). Noticing that
		\begin{align*}
			\tilde{\mathbf{1}}_{\Psi_{\Psi_{\xi_{\e}}}}-\tilde{\mathbf{1}}
			=
			\left(\fint_{\B^n} d_{\alpha,\beta}^q \right)^{\frac{1-q}{q}}
			\left[(d^{q-1}_{\alpha,\beta})_{\Psi_{\xi_{\e}}}-d^{q-1}_{\alpha,\beta}\right],
		\end{align*}
		and that
		$
		(d^{q-1}_{\alpha,\beta})_{\Psi_{\xi_{\e}}}
		=
		\big((d_{\alpha,\beta})_{\Psi_{\xi_{\e}}}\big)^{q-1},
		$
		therefore, if $\Psi_{\xi_{\e}}$ tends to the identity map, we obtain
		\begin{align*}
			(d^{q-1}_{\alpha,\beta})_{\Psi_{\xi_{\e}}}-d^{q-1}_{\alpha,\beta}
			&=
			\big((d_{\alpha,\beta})_{\Psi_{\xi_{\e}}}\big)^{q-1}-d^{q-1}_{\alpha,\beta} 
			=
			d^{q-2}_{\alpha,\beta} \big((d_{\alpha,\beta})_{\Psi_{\xi_{\e}}}-d_{\alpha,\beta}\big)(q-1)(1+o(1))\\
			&=
			d^{q-2}_{\alpha,\beta}(q-1)(1+o(1))Q_{\alpha,\beta}(1_{\Psi_{\xi_{\e}}}-1).
		\end{align*}
		Using asymptotic formula \eqref{Appendix euq-b} and \eqref{Gap lem equ a}, we can get $$Q_{\alpha,\beta}(1_{\Psi_{\xi_{\e}}}-1)(\xi)=(\theta_{\e}\cdot\xi |\xi|^{-1}+o(1))d'_{\alpha,\beta}(|\xi|)|\xi_{\e}|,$$
		where
		\begin{align*}
			d'_{\alpha,\beta}(|\xi|)=\frac{\pi^{\frac{n}{2}}2^{-\beta}(n-\alpha)}{\Gamma(1+\frac{n}{2})}(1-|\xi|^2)^{\beta+\alpha-1}
			F\left(\frac{n+\alpha}{2},\frac{\alpha}{2}; 1+\frac{n}{2}; |\xi|^2\right)    \end{align*}
		Noticing that $d'_{\alpha,\beta}$ has the same asymptotic behavior as $d_{\alpha,\beta}$ described in Lemma~\ref{Bound d Lem}, applying the same analysis as before yields
		\[
		\lim_{i\to +\infty}
		\inf_{\Psi}
		\bigl\||\mathbb{B}^{n}|^{1/q'}(v_{\e_i})_{\Psi}-\tilde{\mathbf{1}}\bigr\|_{L^{q'}(\B^n)}^{2}=0,
		\qquad
		\lim_{i\to +\infty}
		\frac{c_{\alpha,\beta}^{\,q'}
			-\|S_{\alpha,\beta}(v_{\e_i})\|_{L^{p'}(\mathbb{S}^{n-1})}^{\,q'}}
		{\displaystyle\inf_{\Psi}
			\bigl\||\mathbb{B}^{n}|^{1/q'}(v_{\e_i})_{\Psi}-\tilde{\mathbf{1}}\bigr\|_{L^{q'}(\B^n)}^{2}}
		<+\infty.
		\]
		Therefore, the exponent \(2\) in the \(L^{q'}\)-distance in Theorem~\ref{Thm 2} is also optimal.
		
		\section{Appendix: conformal invariance}\label{Appendix}
		In this appendix, we verify the conformal invariance of the operators
		$E_{\alpha,\beta}$ and $Q_{\alpha,\beta}$. The argument follows essentially the same lines as that given by Frank, Peteranderl and Read  (see \cite{Frank&Peteranderl&Read}, Appendix A).
		
		\textbf{Conformal transformations in $\R^n_{+}$.}
		For $\lambda>0$ and $x_0'\in \R^{n-1}$, the nontrivial  conformal map in $\R^n_{+}$ is given by
		\begin{align*}
			\Phi_{\lambda,x_0'}(x',x_n)=\frac{1}{\lambda}\bigl(x'-x_0',\,x_n\bigr),
			\qquad (x',x_n)\in\R^n_{+}.
		\end{align*}
		Then $\det \ud\Phi_{\lambda,x_0'}=\lambda^{-n}$ and
		$\det \ud\Phi_{\lambda,x_0'}\big|_{\partial\R^n_{+}}=\lambda^{-(n-1)}$.
		For $f\in L^p(\R^{n-1})$ and $g\in L^q(\R^n_{+})$, define
		\begin{align*}
			f_{\Phi_{\lambda,x_0'}}(x')
			:=\lambda^{-(n-1)/p}\,f\!\left(\frac{x'-x_0'}{\lambda}\right),
			\qquad
			g_{\Phi_{\lambda,x_0'}}(x',x_n)
			:=\lambda^{-n/q}\,g\!\left(\frac{x'-x_0'}{\lambda},\frac{x_n}{\lambda}\right).
		\end{align*}
		When the parameters $(\lambda,x_0')$ are clear,  we simply write 
		$\Phi$.
		
		\begin{lem}\label{Appendix lem 1}
			For any conformal transformation $\Phi$ in $\R^n_{+}$ and any $f\in L^p(\R^{n-1})$, we have
			\begin{align*}
				E_{\alpha,\beta}(f_{\Phi})=\bigl(E_{\alpha,\beta}(f)\bigr)_{\Phi},
			\end{align*}
			and
			\begin{align*}
				\|f_{\Phi}\|_{L^p(\R^{n-1})}=\|f\|_{L^p(\R^{n-1})},
				\qquad
				\|E_{\alpha,\beta}(f_{\Phi})\|_{L^q(\R^n_{+})}
				=\|E_{\alpha,\beta}(f)\|_{L^q(\R^n_{+})}.
			\end{align*}
		\end{lem}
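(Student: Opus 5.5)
The plan is to verify both claims by direct change of variables in the defining integral of $E_{\alpha,\beta}$, using that $\Phi=\Phi_{\lambda,x_0'}$ is the composition of a horizontal translation and a dilation. These are the nontrivial maps listed above; if one also wants inversions, they can be handled by the same computation on the ball via $Q_{\alpha,\beta}$ and the kernel $H$. The exponent bookkeeping will rely on the identity
\[
\frac{n-1}{p}+(\alpha+\beta-1)+\ldots
\]
being consistent with $\frac{n}{q}=\frac{n-\alpha-2\beta}{2}$. Concretely, I will check that $-\frac{n-1}{p}+(n-1)+\beta-(n-\alpha)=-\frac{n}{q}$. Since $\frac{n-1}{p}=\frac{n+\alpha-2}{2}$, the left side equals $\frac{-n-\alpha+2+2n-2+2\beta-2n+2\alpha}{2}=\frac{\alpha+2\beta-n}{2}=-\frac nq$. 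So the exponents match.

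\textbf{Step 1: covariance.} I will write out
\[
E_{\alpha,\beta}(f_\Phi)(x)=\lambda^{-(n-1)/p}\int_{\R^{n-1}}\frac{x_n^\beta f\bigl((y'-x_0')/\lambda\bigr)}{(x_n^2+|x'-y'|^2)^{\frac{n-\alpha}{2}}}\,\ud y'
\]
and substitute $y'=x_0'+\lambda z'$, so that $\ud y'=\lambda^{n-1}\ud z'$. I then factor $\lambda$ out of the kernel, using
\[
x_n^2+|x'-x_0'-\lambda z'|^2=\lambda^2\Bigl((x_n/\lambda)^2+\bigl|(x'-x_0')/\lambda-z'\bigr|^2\Bigr),
\]
together with $x_n^\beta=\lambda^\beta (x_n/\lambda)^\beta$. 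This gives
\[
E_{\alpha,\beta}(f_\Phi)(x)=\lambda^{-\frac{n-1}{p}+(n-1)+\beta-(n-\alpha)}\,E_{\alpha,\beta}(f)\Bigl(\frac{x'-x_0'}{\lambda},\frac{x_n}{\lambda}\Bigr).
\]
By the exponent identity above, this is exactly $\lambda^{-n/q}E_{\alpha,\beta}(f)(\Phi(x))=(E_{\alpha,\beta}f)_\Phi(x)$.

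\textbf{Step 2: norms.} Both norm identities follow by the same substitution. For the boundary norm, $\int|f_\Phi|^p=\lambda^{-(n-1)}\int|f((x'-x_0')/\lambda)|^p\,\ud x'=\|f\|_p^p$. For the interior norm, $\int_{\R^n_+}|g_\Phi|^q=\lambda^{-n}\int|g(\Phi(x))|^q\,\ud x=\|g\|_q^q$, noting that $\Phi$ maps $\R^n_+$ onto itself. Applying this with $g=E_{\alpha,\beta}f$ and combining with Step 1 gives the last identity.

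The only real obstacle is the exponent arithmetic in Step 1, since it is where the specific values of $p$ and $q$ enter. Everything else is Fubini-free, with absolute convergence guaranteed by Theorem A.
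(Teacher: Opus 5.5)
Your proposal is correct and follows essentially the same route as the paper: reduce to $\Phi=\Phi_{\lambda,x_0'}$, substitute $y'=x_0'+\lambda z'$ in the kernel, and conclude with the exponent identity $\alpha+\beta-1-\frac{n-1}{p}=-\frac{n}{q}$; your check is the same identity, since $(n-1)+\beta-(n-\alpha)=\alpha+\beta-1$. The norm identities then follow from the same change of variables, and your aside about inversions is unnecessary, because the paper defines $f_\Phi$ and $g_\Phi$ only for the maps $\Phi_{\lambda,x_0'}$.
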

		
		\begin{pf}
			Without loss of generality, take  $\Phi=\Phi_{\lambda,x_0'}$.
			A direct computation gives
			\begin{align*}
				E_{\alpha,\beta}(f_{\Phi_{\lambda,x_0'}})(x',x_n)
				&=\int_{\R^{n-1}}
				\frac{x_n^{\beta}\,\lambda^{-(n-1)/p}\,f\!\left(\frac{y'-x_0'}{\lambda}\right)}
				{\bigl(x_n^2+|x'-y'|^2\bigr)^{\frac{n-\alpha}{2}}}\,\ud y' \\
				&=\lambda^{-n/q}\,
				E_{\alpha,\beta}(f)\!\left(\frac{x'-x_0'}{\lambda},\frac{x_n}{\lambda}\right),
			\end{align*}
			where we used the identity
			\[
			\alpha+\beta-1-\frac{n-1}{p}=-\frac{n}{q}.
			\]
			Hence,
			\[
			E_{\alpha,\beta}(f_{\Phi})
			=\bigl(E_{\alpha,\beta}(f)\bigr)_{\Phi}
			:=\lambda^{-n/q}\,E_{\alpha,\beta}(f)\!\left(\frac{x'-x_0'}{\lambda},\frac{x_n}{\lambda}\right).
			\]
			and the norm equalities  follow by a change of variables.
		\end{pf}
		
		Recall that the conformal map from the upper half-space $\R^n_{+}$ to the unit ball $\B^n$
		is given by
		\[
		I:(\R^n_{+},|\ud x|^2)\longrightarrow (\B^n,|\ud \xi|^2),
		\qquad
		\xi=I(x):=-e_n+\frac{2(x+e_n)}{|x+e_n|^2}.
		\]
		For $x=(x',x_n)\in\R^n_{+}$ and $y=(y',0)\in\partial\R^n_{+}$, set $\xi=I(x)$ and $\eta=I(y)$.
		One computes 
		\begin{align*}
			\xi
			&=\left(
			\frac{2x'}{(1+x_n)^2+|x'|^2},
			\frac{1-x_n^2-|x'|^2}{(1+x_n)^2+|x'|^2}
			\right),\\
			\eta
			&=\left(
			\frac{2y'}{1+|y'|^2},
			\frac{1-|y'|^2}{1+|y'|^2}
			\right).
		\end{align*}
		Consequently,
		\begin{align}\label{Sec 4.2 boundary equ b}
			|\xi+e_n|^2=\frac{4}{(1+x_n)^2+|x'|^2},
			\qquad
			|\eta+e_n|^2=\frac{4}{1+|y'|^2}.
		\end{align}
		Moreover, 
		\begin{align}\label{Sec 4.2 boundary equ c}
			2|\xi+e_n|^{-1}|\eta+e_n|^{-1}|\xi-\eta|
			=|x-y|,
			\qquad
			x_n=\frac{1-|\xi|^2}{|\xi+e_n|^2}.
		\end{align}
		Pulling back from $\R^n_{+}$ to $\B^n$ and using \eqref{Sec 4.2 boundary equ b}--\eqref{Sec 4.2 boundary equ c}, we obtain
		\begin{align*}
			E_{\alpha,\beta}(f)\circ I^{-1}(\xi)
			&=\int_{\S^{n-1}}
			\frac{
				\left(\frac{1-|\xi|^2}{|\xi+e_n|^2}\right)^{\beta}
				\,f\circ I^{-1}(\eta)
			}{
				\bigl(2|\xi+e_n|^{-1}|\eta+e_n|^{-1}|\xi-\eta|\bigr)^{n-\alpha}
			}
			\left(\frac{2}{|\eta+e_n|^2}\right)^{n-1}
			\ud V_{\S^{n-1}}(\eta).
		\end{align*}
		Therefore, if $u$ is defined by \eqref{Introdu u} and $Q_{\alpha,\beta}(u)$ is defined by \eqref{Introdu Q},
		then we obtain the desired integral representation
		\begin{align*}
			Q_{\alpha,\beta}(u)(\xi)
			=\frac{(1-|\xi|^2)^{\beta}}{2^{\beta}}
			\int_{\S^{n-1}} |\xi-\eta|^{\alpha-n}\,u(\eta)\,\ud V_{\S^{n-1}}(\eta).
		\end{align*}
		
		\textbf{Conformal transformations in $\B^n$.}
		Fix $\xi\in\B^n$ and define the (nontrivial) conformal map $\Psi_{\xi}:\B^n\to\B^n$ by
		\begin{align*}
			\Psi_{\xi}(\eta)
			=\frac{(1-|\xi|^2)(\eta-\xi)-|\eta-\xi|^2\,\xi}
			{1-2\xi\cdot\eta+|\xi|^2|\eta|^2}.
		\end{align*}
		Its restriction to $\S^{n-1}$ is a conformal transformation of $\S^{n-1}$  and
		\begin{align*}
			\Psi_{\xi}^{*}g_{\S^{n-1}}
			=\left(\frac{1-|\xi|^2}{1-2\xi\cdot \eta+|\xi|^2}\right)^2 g_{\S^{n-1}}.
		\end{align*}
		A straightforward computation implies that
		\begin{align}\label{Appendix equ-a}
			\det \ud\Psi_{\xi}(\eta)
			=\left(\frac{1-|\xi|^2}{1-2\xi\cdot \eta+|\xi|^2|\eta|^2}\right)^{n},
			\qquad
			\det \ud\Psi_{\xi}\big|_{\partial\B^n}(\eta)
			=\left(\frac{1-|\xi|^2}{1-2\xi\cdot \eta+|\xi|^2}\right)^{n-1}.
		\end{align}
		For simplicity, we write $\Psi$ in place of $\Psi_{\xi}$ when $\xi$ is clear from the context; more details   can be found in the appendix of \cite{Frank&Peteranderl&Read}.
		
		Applying Lemma~\ref{Appendix lem 1},  we immediately obtain the analogous conformal invariance for the operator $Q_{\alpha,\beta}$.
		\begin{lem}
			For any conformal transformation $\Psi$ in $\B^n$ and any $u\in L^p(\S^{n-1})$, we have
			\begin{align*}
				Q_{\alpha,\beta}(u_{\Psi})=\bigl(Q_{\alpha,\beta}(u)\bigr)_{\Psi},
			\end{align*}
			and
			\begin{align*}
				\|u_{\Psi}\|_{L^p(\S^{n-1})}=\|u\|_{L^p(\S^{n-1})},
				\qquad
				\|Q_{\alpha,\beta}(u_{\Psi})\|_{L^q(\B^n)}
				=\|Q_{\alpha,\beta}(u)\|_{L^q(\B^n)}.
			\end{align*}
		\end{lem}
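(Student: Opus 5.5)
Since every conformal map of $\B^n$ is the composition of a rotation and some $\Psi_\xi$ (and rotations commute with everything in sight, as both the kernel $H(\xi,\eta)$ and the measures are rotation invariant), it suffices to treat $\Psi=\Psi_\xi$. I would not compute directly on the ball. Instead, I would transport the statement to $\R^n_{+}$ via the map $I$ and reduce it to Lemma~\ref{Appendix lem 1}. The point is that $I\circ\Phi\circ I^{-1}$ ranges over a family of conformal self-maps of $\B^n$ as $\Phi$ ranges over the dilations/translations $\Phi_{\lambda,x_0'}$. Together with rotations, this family generates all conformal maps of $\B^n$. So I would first check that the definitions \eqref{Introdu u} and \eqref{Introdu Q} intertwine the actions: $u\mapsto u_\Psi$ corresponds to $f\mapsto f_\Phi$, and $v\mapsto v_\Psi$ with exponent $1/q$ corresponds to $g\mapsto g_\Phi$.

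The key steps, in order, are as follows.

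First, I would verify that the pull-back factors are exactly Jacobian powers. By \eqref{Sec 4.2 boundary equ b}, $\det \ud I^{-1}|_{\partial\B^n}(\eta)=\bigl(\tfrac{2}{|\eta+e_n|^2}\bigr)^{n-1}$ and $\det\ud I^{-1}(\xi)=\bigl(\tfrac{2}{|\xi+e_n|^2}\bigr)^{n}$. Using $\frac{n+\alpha-2}{2}=\frac{n-1}{p}$ and $\frac{n-\alpha-2\beta}{2}=\frac nq$, we get $u=(f\circ I^{-1})\,(\det \ud I^{-1}|_{\partial})^{1/p}$ and $Q_{\alpha,\beta}u=(E_{\alpha,\beta}f\circ I^{-1})(\det\ud I^{-1})^{1/q}$. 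In other words, $u=f_{I^{-1}}$ and $Q_{\alpha,\beta}u=(E_{\alpha,\beta}f)_{I^{-1}}$ in the same weighted-composition notation.

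Second, I would use the chain rule for Jacobians, $(h_{A})_{B}=h_{A\circ B}$, which holds for any exponent. With it, the identity $E_{\alpha,\beta}(f_\Phi)=(E_{\alpha,\beta}f)_\Phi$ from Lemma~\ref{Appendix lem 1} transfers to $Q_{\alpha,\beta}(u_\Psi)=(Q_{\alpha,\beta}u)_\Psi$ for $\Psi=I\circ\Phi\circ I^{-1}$.

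Third, I would treat rotations about the $e_n$-axis directly. These are immediate because $|R\xi-R\eta|=|\xi-\eta|$ and $|R\xi|=|\xi|$. Together with the conjugated dilation–translation group, they act transitively on $\B^n$ with the isotropy group $O(n)$. To get the full isotropy group I would also need rotations not fixing $e_n$; these too satisfy the identity by the same kernel invariance. Hence the covariance holds for all $\Psi$.

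Finally, the norm identities are changes of variables. For instance, $\int_{\S^{n-1}}|u_\Psi|^p=\int|u\circ\Psi|^p\det\ud\Psi|_{\partial}=\int|u|^p$, and similarly on $\B^n$ with exponent $q$ for $(Q_{\alpha,\beta}u)_\Psi$.

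The only step requiring care is the bookkeeping in the first step. There I must check that the constant $2$-powers and the factor $2^{-\beta}$ appearing in $H$ match, which amounts to confirming the exponent identity $\alpha+\beta-1-\frac{n-1}{p}=-\frac nq$. The group-generation claim in the third step is standard, and I would cite \cite{Frank&Peteranderl&Read} for it.
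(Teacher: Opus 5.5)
Your proposal is correct and follows essentially the paper's route: the paper's one-line proof ``applying Lemma~\ref{Appendix lem 1}'' amounts to reading \eqref{Introdu u} and \eqref{Introdu Q} as weighted compositions with $I^{-1}$ (exponents $1/p$ and $1/q$) and conjugating the dilations and translations $\Phi_{\lambda,x_0'}$ by $I$, exactly as you do. Your extra steps fill in what the paper leaves implicit: covariance under all of $O(n)$ via kernel invariance, closure under composition by the Jacobian chain rule, and the transitivity argument showing these maps generate every conformal map of $\B^n$. They are needed because every conjugate $I\circ\Phi_{\lambda,x_0'}\circ I^{-1}$ fixes $-e_n$, so a general $\Psi_\xi$ is not of that form; your opening reduction to $\Psi_\xi$ is harmless, but it is the generation argument that actually completes the proof.
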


		\noindent{\textbf{Acknowledgments.} Q. Yang was supported by the National Natural Science Foundation of China (No.12471056). S. Zhang was supported by the
			Postdoctoral Fellowship Program and China Postdoctoral Science Foundation under Grant Number (BX20250062), as well as the Shui Mu Tsinghua Scholar Program.}  		
		
		{\noindent\small{\bf Data availability:} Data sharing not applicable to this article as no datasets were generated or analysed during the current study.
			\section*{Declarations}
			{\noindent\small{\bf Conflict of interest:} On behalf of all authors, the corresponding author states that there is no conflict of interest.		
				
				\bibliographystyle{unsrt}

			\end{document}